\definecolor{labelkey}{rgb}{0.6,0,0}
\newcommandx{\change}[2][1=]{\todo[#1]{#2}}
\newcommandx{\unsure}[2][1=]{\todo[linecolor=red,backgroundcolor=red!25,bordercolor=red,#1]{#2}}
\newcommandx{\rmk}[2][1=]{\todo[linecolor=blue,backgroundcolor=blue!25,bordercolor=blue,#1]{#2}}
\newcommandx{\info}[2][1=]{\todo[linecolor=OliveGreen,backgroundcolor=OliveGreen!25,bordercolor=OliveGreen,#1]{#2}}
\newcommandx{\improvement}[2][1=]{\todo[linecolor=Plum,backgroundcolor=Plum!25,bordercolor=Plum,#1]{#2}}
\newcommandx{\thiswillnotshow}[2][1=]{\todo[disable,#1]{#2}}
\renewcommand \theequation {%
\ifnum \c@section>\z@ \@arabic\c@section.%
\fi\@arabic\c@equation} \@addtoreset{equation}{section}
\newtheorem{theorem}{Theorem}[section]
\newtheorem{lemma}[theorem]{Lemma}
\newtheorem{proposition}[theorem]{Proposition}
\theoremstyle{definition}
\newtheorem{definition}{Definition}[section]
\theoremstyle{remark}
\newtheorem{remark}{Remark}[section]
\def\XXint#1#2#3{{\setbox0=\hbox{$#1{#2#3}{\int}$ }
\vcenter{\hbox{$#2#3$ }}\kern-.6\wd0}}
\let\f=\frac
\let\p=\psi
\let\om=\omega
\let\Om=\Omega
\let\th=T
\let\pa=\partial
\def\dive{\mathop{\rm div}\nolimits}
\begin{document}

\title{Dynamics of contact points in 2D Boussinesq flow}

\author[Y. Zheng]{Yunrui Zheng}
\address[Y. Zheng]{
\newline\indent School of Mathematics, Shandong University}
\email{yunrui\_zheng@sdu.edu.cn}
\thanks{Y. Zheng was supported by National Natural Science Foundation of China with Grant No.  11901350 and No. 12371172.}

\date{}

\subjclass[2020]{Primary 35Q30, 35R35, 76D45; Secondary 35B40, 76E17, 76E99}

\keywords{contact point, Boussinesq equations, surface tension}

\maketitle

\begin{abstract}
We consider the evolution of contact lines for thermal convection of viscous fluids in a 2D open-top vessel. The domain is bounded above by a free moving boundary and otherwise by the solid wall of a vessel. The dynamics of the fluid are governed by the incompressible Boussinesq approximation under the influence of gravity, and the interface between fluid and air is under the effect of capillary forces. Motivated by energy-dissipation structure in [Guo-Tice, J. Eur. Math. Soc, 2024], we develop global well posedness theory in the framework of nonlinear energy methods for the initial data sufficiently close to equilibrium. Moreover, the solutions decay to equilibrium at an exponential rate. Our methods are mainly based on the construction of solutions to convected heat equation and a priori estimates of a geometric formulation of the Boussinesq equations.
\end{abstract}

\pagestyle{myheadings} \thispagestyle{plain} \markboth{ZHENG}{WELL-POSEDNESS OF CONTACT POINTS PROBLEM}

\setcounter{tocdepth}{1}
\tableofcontents

\section{Introduction}

\subsection{Formulation of the Problem}

We now consider the dynamical cooling process of hot water or coffee contained in 2D cup with adiabatic boundary. A $2$D open top vessel is treated as a bounded, connected open set $\mathcal{V}\subseteq\mathbb{R}^2$ which consists of two disjoint sections, i.e., $\mathcal{V}=\mathcal{V}_{top}\cup\mathcal{V}_{bot}$.
We refer to Figure 1 for an example.


\begin{figure}[H]
\begin{minipage}[t]{0.5\linewidth}
\centering
\scalebox{1.0}{\begin{tikzpicture}
 \filldraw [white!15] (-1, 5)rectangle (1, 1);
\draw  (-1, 1)--(-1, 5);
 \draw  (1, 1)--(1, 5);
  \node at(0, 3){$\mathcal{V}_{top}$};
  \filldraw [white!10, draw=black] (-1, 1) arc(90: 270: 1)-- (-1, -1)--(1, -1)-- (1, -1) arc(270: 450: 1);
  \node at(0,0.5) {$\mathcal{V}_{bot}$};
  \draw[dashed]  (-1,1)--(1, 1);

\end{tikzpicture}}
\caption{A vessel $\mathcal{V}$.}
\end{minipage}%
\begin{minipage}[t]{0.5\linewidth}
\centering
\scalebox{1.0}{\begin{tikzpicture}
  \draw  (6, 1)--(6, 5);
 \draw  (8, 1)--(8, 5);
 \filldraw [blue!10, draw=black] (6, 4)--(6, 1)--(6, 1) arc(90: 270: 1)--(6, -1)--(8, -1)arc(270: 450: 1)--(8, 1)--(8, 4)--(8, 3.8)..controls(7.4, 4.2)and (6.7, 3.9)..(6,3.6);
 \node at(7, 2){$\Om(t)$};
 \node at(7,4){$\Sigma(t)$};
 \node at(8.5, 2){$\Sigma_s(t)$};
\end{tikzpicture}}
\caption{The domain $\Om(t)$.}
\end{minipage}
\end{figure}

Assume that the ``top" part $\mathcal{V}_{top}$ is a rectangle channel
$
\mathcal{V}_{top}:=\mathcal{V}\cap\mathbb{R}^2_{+}=\{y\in\mathbb{R}^2: -\ell<y_1<\ell, 0\le y_2<L\}
$
for some $\ell$, $L>0$, where $\mathbb{R}^2_{+}$ is the upper half plane $\mathbb{R}^2_{+}=\{y\in\mathbb{R}^2: y_2\ge0\}$. Similarly, we write the ``bottom" part as
$
\mathcal{V}_{bot}:=\mathcal{V}\cap\mathbb{R}^2_{-}=\mathcal{V}\cap\{y\in\mathbb{R}^2: y_2\le0\}.
$
Clearly, $I=(-\ell, \ell)\times\{0\}$. In addition, we also assume that the boundary $\partial\mathcal{V}$ of $\mathcal{V}$ is $C^2$ away from the points $(\pm\ell, L)$.

Now we consider a viscous incompressible fluid filling $\mathcal{V}_{bot}$ entirely and $\mathcal{V}_{top}$ partially. More precisely, we assume that the fluid occupies the domain $\Om(t)$,
$
\Om(t):=\mathcal{V}_{bot}\cup\big\{y\in\mathbb{R}^2: -\ell<y_1<\ell,\ 0<y_2<\zeta(y_1,t)\big\},
$
where the free surface $\zeta(y_1,t)$ is assumed to be a graph of the function $\zeta: [-\ell, \ell]\times \mathbb{R}_{+}\rightarrow\mathbb{R}$ satisfying $0<\zeta(\pm\ell,t)\le L$ for all $t\in\mathbb{R}_+$, which means the fluid does not spill out of the top domain. For simplicity, we write the free surface as $\Sigma(t)=\{y_2=\zeta(y_1,t)\}$ and the interface between fluid and solid as $\Sigma_s(t)=\partial\Om(t)\backslash\Sigma(t)$.
We refer to Figure 2 for the description of domain.

For each $t>0$, the fluid is described by its velocity, pressure and temperature $(U,P,\Theta):\Om(t)\rightarrow\mathbb{R}^2\times\mathbb{R}$ which is governed by the incompressible Boussinesq approximation \cite{Chan}:
\begin{equation}\label{eq:Bouss}
  \left\{
  \begin{aligned}
    &\pa_t U+U\cdot\nabla U+\nabla P-\mu\Delta U=-g\Theta e_2 \quad & \text{in} &\ \Om(t),\\
    &\dive U=0 \quad & \text{in} &\ \Om(t),\\
    &\pa_t \Theta+U\cdot\nabla \Theta-k\Delta \Theta=0 \quad & \text{in} &\ \Om(t),
  \end{aligned}
  \right.
\end{equation}
where $\mu>0$ is the coefficient of viscosity, $k>0$ is the coefficient of heat conduction, $g>0$ is the acceleration of gravity and $P=\bar{P}+gy_2-P_{atm}$ by adjusting from the gravitational force and the constant atmospheric pressure $P_{atm}$ to the actual pressure $\bar{P}$.


From the physical viewpoint, the fluid are supposed to satisfy some boundary conditions. On the free surface, the fluid should be kinematic \cite{WL}:
\begin{equation}\label{bc:kie}
  \left\{
  \begin{aligned}
&S(P,U)\nu=g\zeta\nu-\sigma \mathcal{H}(\zeta)\nu \quad & \text{on} &\ \Sigma(t),\\
&\pa_t\zeta=U\cdot\nu \quad & \text{on} &\ \Sigma(t),
  \end{aligned}
  \right.
\end{equation}
where $S(P,U)$ is the viscous stress tensor
$S(P,U):=PI-\mu\mathbb{D}U$,
with $I$ the $2\times2$ identity matrix, and $\mathbb{D}U=\nabla U+\nabla^T U$ the symmetric gradient of $U$. Here, $
\mathcal{H}(\zeta)=\pa_1\left(\frac{\pa_1\zeta}{(1+|\pa_1\zeta|^2)^{1/2}}\right)$
is twice the mean curvature of the free surface. $\sigma = \sigma_1 - \sigma_2 \Theta$ is the coefficient of surface tension, with $\sigma_1>0$ and $\sigma_2 \in \mathbb{R}$.
On the fixed boundary, the fluid satisfies the Navier-slip condition:
\begin{equation}\label{bc:navier}
  \left\{
  \begin{aligned}
 &\big(S(P,U)\nu-\beta U\big)\cdot\tau=0 \quad & \text{on} &\ \Sigma_s(t),\\
 &U\cdot\nu=0 \quad & \text{on} &\ \Sigma_s(t).
  \end{aligned}
  \right.
\end{equation}
Without loss of generality, we assume that the temperature of atmosphere around the fluid is $0$. Due to the adiabatic solid boundary, we assume that $\Theta = 0$ on $\Sigma_s(t)$. The heat on the free surface $\Sigma(t)$ satisfies the Newton's cooling law due to the heat conduction between fluid and air:
\begin{equation}\label{bc:cool}
  k\nu\cdot\nabla \Theta + \Theta =0.
\end{equation}
The fluid on the contact points satisfies the condition:
\begin{equation}\label{bc:contact}
 \pa_t\zeta(\pm\ell,t)=\mathscr{V}\left([\![\gamma]\!]\mp\sigma\frac{\partial_1\zeta}{(1+|\partial_1\zeta|^2)^{1/2}}(\pm\ell,t)\right),
\end{equation}
where $\mathscr{V}:\mathbb{R}\rightarrow\mathbb{R}$ is the contact point velocity response function which is a $C^2$ increasing diffeomorphism satisfying $\mathscr{V}(0)=0$. $[\![\gamma]\!]:=\gamma_{sv}-\gamma_{sf}$ for $\gamma_{sv}$, $\gamma_{sf}\in\mathbb{R}$, where $\gamma_{sv}$, $\gamma_{sf}$ are measures of the free-energy per unit length with respect to the solid-vapor and solid-fluid intersection. We also assume that the Young relation
$|[\![\gamma]\!]|<\sigma$ (\cite{Yo}),
which is necessary for the equilibrium state we will consider in the following. For convenience, we introduce the inverse function $\mathscr{W}$ of $\mathscr{V}$ and rewrite \eqref{bc:contact} as
\begin{equation}\label{eq:contact law}
\mathscr{W}\big(\pa_t\zeta(\pm\ell,t)\big)=[\![\gamma]\!]\mp\sigma\frac{\pa_1\zeta}{(1+|\pa_1\zeta|^2)^{1/2}}(\pm\ell,t),
\end{equation}
which is the dynamic law at contact points.

The contact points in 2D or contact lines in 3D fluids, with temperature absent, has been studied for a long time, and we may refer to \cite[de Gennes]{dG} for an overview. There are so many papers dealing with  free boundary problems of contact points or contact lines, that we have to mention some works closely related to our background settings. For the contact lines in inviscid flow, we refer to \cite[T. de Poyferr\'e]{dG19} and \cite[Ming-Wang]{MW} for the breakthrough of water wave problem.

The analysis for Navier-slip conditions with contact angle can be referred to \cite[Ren-E]{RE} and \cite[Cox]{C1986}.
 When the contact angle is fixed by $\frac\pi2$, the well-posedness of 2D contact points problem in the Navier-Stokes flow is studied in \cite[Schweizer]{S01} and the 3D contact line problem with an additional periodic direction is studied in \cite[Bodea]{Bo06}.

For the non-stationary problem, the well-posedness and stability for 2D Navier-stokes flow are studied by \cite[Guo-Tice]{GT2020} and \cite[Guo-Tice-Wu-Zheng]{GTWZ2023}. There are also many results for the stationary problem of contact points or lines. The solvability of stationary Navier-Stokes problem for dynamic contact angles and moving contact lines is studied by \cite[Kr\"oner]{Kr87} and \cite[Socolosky]{Soco93}, respectively. The existence results of the Navier-Stokes equations for both static and dynamic contact points and lines in weighted H\"older spaces are proved by \cite[Solonnikov]{Sol95, Sol98}. The 3D contact lines for stationary Navier-Stokes equations with fixed angle $\frac{\pi}{2}$ is studied by \cite[Jin]{Jin05}. The local well-posedness and stability for the 2D contact points in the Stokes flow are studied by \cite[Guo-Tice]{GT18} and \cite[Tice-Zheng]{ZhT17}
The contact angles are also closely related to the droplet problem. The stability problem for droplets governed by the Stokes equations is studied in \cite[Tice-Wu]{TW21}.  The well-posedness for droplets governed by Darcy's law is studied in \cite[Kn\"upfer-Masmoudi]{KM15}. Recently, the well-posedness of the Stokes equations on a wedge in weighted Sobolve spaces with Navier-slip boundary conditions is studied by \cite[ Bravin-Gnann-Kn\"upfer-Masmoudi-Roodenburg-Sauer]{BGKMRS}.

To the authors' best knowledge, there is still no well-posedness and stability for the dynamic contact points problem with temperature at present. Our aim is to establish the first result for the global well-posedness for the 2D Boussinesq contact line problem. By referring to \cite[Guo-Tice]{GT18} or \cite[Finn]{Finn}, we known that \eqref{eq:Bouss}, under the condition of Young relation, admits an equilibrium for $U=0$, $P=P_0 $ a constant, $\Theta =0$ and $\zeta(y_1,t)=\zeta_0(y_1)$ satisfies
\begin{equation}\label{eq:equibrium}
P_0=g\zeta_0-\sigma_1\mathcal{H}(\zeta_0)\quad  \text{on} \ (-\ell,\ell),\quad
\sigma_1\frac{\pa_1\zeta_0}{\sqrt{1+|\pa_1\zeta_0|^2}}(\pm\ell)=\pm[\![\gamma]\!].
\end{equation}
This equilibrium would allow us to switch the free surface problem into a fixed domain one.
Let $\zeta_0\in C^\infty[-\ell,\ell]$ be the equilibrium surface given by \eqref{eq:equibrium}. We then define the equilibrium domain
$
\Om:=\mathcal{V}_{b}\cup\big\{x\in\mathbb{R}^2|-\ell<x_1<\ell, 0<x_2<\zeta_0(x_1)\big\} \subset\mathbb{R}^2
$.
The boundary $\pa\Om$ of the equilibrium $\Om$ is defined by
$
\pa\Om:=\Sigma\cup\Sigma_s$,
where
$
\Sigma:=\{x\in\mathbb{R}^2|-\ell<x_1<\ell, x_2=\zeta_0(x_1)\}$ and $ \Sigma_s=\pa\Om\backslash \Sigma$.
Here $\Sigma$ is the equilibrium free surface. Denote the corner angle $\om\in(0,\pi)$ of $\Om$ formed by the fluid and solid.
We assume that the function $\zeta(y_1,t)$ of free surface is the perturbation of $\zeta_0(y_1)$ as
$
\zeta(y_1,t)=\zeta_0(y_1)+\eta(y_1,t)$.
Let $\phi\in C^\infty(\mathbb{R})$ be such that $\phi(z)=0$ for $z\le\frac14\min\zeta_0$ and $\phi(z)=z$ for $z\ge\frac12\min\zeta_0$.
Now we define the mapping $\Phi: \Om\mapsto\Om(t)$ by
\begin{equation}\label{def:map}
\Phi(x_1,x_2,t):=\left(x_1, x_2+\frac{\phi(x_2)}{\zeta_0(x_1)}\bar{\eta}(x_1,x_2,t)\right)=(y_1,y_2)\in\Om(t),
\end{equation}
with $
\bar{\eta}(x_1,x_2,t):=\mathcal{P}E\eta(x_1,x_2-\zeta_0(x_1),t)$,
where $E: H^s(-\ell,\ell)\mapsto H^{s+\frac{1}{2}}(\mathbb{R})$ is a bounded extension operator for all $0\le s\le 3$ and $\mathcal{P}$ is the lower Poisson extension given by
$
\mathcal{P}f(x_1,x_2):=\int_{\mathbb{R}}\hat{f}(\xi)e^{2\pi|\xi| x_2}e^{2\pi ix_1\xi}\,\mathrm{d}\xi.
$
If $\eta$ is sufficiently small (in proper Sobolev spaces), the mapping $\Phi$ is a $C^1$ diffeomorphism of $\Om$ onto $\Om(t)$ that maps the components of $\pa\Om$ to the corresponding components of $\pa\Om(t)$. The Jacobian matrix $\nabla\Phi$ and the transform matrix $\mathcal{A}$ of $\Phi$ are:
\begin{equation}\label{eq:Jaccobian}
\nabla\Phi=\left(
\begin{array}{cc}
  1&0\\
  A&J
\end{array}
\right), \quad
\mathcal{A}=(\nabla\Phi)^{-T}=\left(
\begin{array}{cc}
  1&-AK\\
  0&K
\end{array}
\right),
\end{equation}
with entries
\begin{equation}\label{eq:components}
A:=W\pa_1\bar{\eta}-\frac{\phi}{\zeta_0^2}\pa_1\zeta_0\bar{\eta},\quad J:=1+\frac{\phi^\prime}{\zeta_0}\bar{\eta}+W\pa_2\bar{\eta},\quad K:=\frac{1}{J},\quad W:=\frac{\phi}{\zeta_0}.
\end{equation}
We define the transform operators as follows.
$
(\nabla_{\mathcal{A}}f)_i:=\mathcal{A}_{ij}\pa_jf,\quad\dive_{\mathcal{A}}X:=\mathcal{A}_{ij}\pa_j X_i,\quad \Delta_{\mathcal{A}}f:=\dive_{\mathcal{A}}\nabla_{\mathcal{A}}f
$
for appropriate $f$ and $X$. We write the stress tensor
$
S_{\mathcal{A}}(P,u):=PI-\mu\mathbb{D}_{\mathcal{A}}u
$
where $I$ is the $2\times2$ identity matrix and $(\mathbb{D}_{\mathcal{A}}u)_{ij}:=\mathcal{A}_{ik}\pa_ku_j+\mathcal{A}_{jk}\pa_ku_i$ is the symmetric $\mathcal{A}$--gradient. Note that if we extend $\dive_{\mathcal{A}}$ to act on symmetric tensors in the natural way, then $\dive_{\mathcal{A}}S_{\mathcal{A}}(P,u)=-\mu\Delta_{\mathcal{A}}u+\nabla_{\mathcal{A}}P$ for vector fields satisfying $\dive_{\mathcal{A}}u=0$.

Notice that $\Phi$ is a diffeomorphism (see \cite{ZhT17}), so that we rewrite the solutions to \eqref{eq:Bouss} as a perturbation around the equilibrium state $(U, P, \Theta, \zeta) = (0, P_0, 0, \zeta_0)$. By  defining new perturbed unknowns $(u, p, \theta, \eta)$ via $U=0+u$, $P=P_0+p$, $\Theta= 0+\theta$ and $\zeta=\zeta_0+\eta$, we can  use the analysis of \cite{GT2020} to transform the problem \eqref{eq:Bouss} into the perturbation form:
\begin{equation}\label{eq:geometric}
  \left\{
  \begin{aligned}
&\pa_tu-\pa_t\bar{\eta}WK\pa_2u+u\cdot\nabla_{\mathcal{A}}u+\dive_{\mathcal{A}}S_{\mathcal{A}}(p,u)= -g \theta e_2 \quad &\text{in}&\ \Om,\\
&\dive_{\mathcal{A}}u=0 \quad &\text{in}&\ \Om,\\
&\pa_t\theta -\pa_t\bar{\eta}WK\pa_2\theta + u\cdot\nabla_{\mathcal{A}}\theta - k \Delta_{\mathcal{A}} \theta =0\quad &\text{in}&\ \Om,
 \end{aligned}
  \right.
\end{equation}
under the boundary conditions
\begin{equation}\label{eq:geo_bc}
  \left\{
  \begin{aligned}
&S_{\mathcal{A}}(p,u)\mathcal{N}=\left[g\eta- (\sigma_1-\sigma_2\theta)\pa_1\left(\frac{\pa_1\eta }{(1+|\pa_1\zeta_0|)^{3/2}}\right)-\sigma\pa_1\Big(\mathcal{R}(\pa_1\zeta_0,\pa_1\eta)\Big)\right]\mathcal{N} \ &\text{on}&\ \Sigma,\\
&(S_{\mathcal{A}}(p,u)\nu-\beta u)\cdot\tau=0 \ &\text{on}&\ \Sigma_s,\\
&u\cdot\nu=0 \ &\text{on}&\ \Sigma_s,\\
&\theta =0 \ &\text{on}&\ \Sigma_s,\\
&\mathcal{N}\cdot\nabla \theta + \theta |\mathcal{N}|=0 \ &\text{on}&\ \Sigma\\
&\pa_t\eta=u\cdot\mathcal{N} \ &\text{on}&\ \Sigma,\\
&\kappa\pa_t\eta+\kappa\hat{\mathscr{W}}(\pa_t\eta)=\mp\sigma\left(\frac{\pa_1\eta  }{(1+|\zeta_0|^2)^{3/2}}+\mathcal{R}(\pa_1\zeta_0,\pa_1\eta)\right)\ &\text{at}&\ \pm\ell,
  \end{aligned}
  \right.
\end{equation}
where $\kappa:=\mathscr{W}'(0)>0$, $\mathcal{A}$, $\mathcal{N}:=-\pa_1\zeta e_1+e_2$ are still determined in terms of $\zeta=\zeta_0+\eta$ and $\mathcal{R}(\pa_1\zeta_0,\pa_1\eta)$ is the Taylor expansion remainder of $\frac1{\sqrt{1+|\pa_1\zeta|^2}}$ with respect to $\zeta_0$ multiplied by $\pa_1\eta$. In the following, let $\mathcal{N}_0$ be the non-unit normal vector for the equilibrium surface $\Sigma$, and $\mathcal{N}=\mathcal{N}_0-\pa_1\eta e_1$. The importance of $\varepsilon$ is discussed in Section \ref{section:discussion}.
Since all differential operators in \eqref{eq:geometric} are in terms of $\eta$, \eqref{eq:geometric} is connected to the geometry of the free surface. This geometric structure is essential to control higher-order derivatives.

\subsection{Main Theorem}

We denote $\om_{eq}$ as the equilibrium contact angle (refer to \cite{GT2020}) and the parameter $\varepsilon_{max} : = \min\{ 1, -1 + \pi / \om\}$ such that for $0<\varepsilon\le \varepsilon_{max}<1$,
there exist three parameters $\alpha$, $\varepsilon_-$ and $\varepsilon_+$ that satisfy
\begin{equation}\label{parameters}
\begin{aligned}
  &0<\alpha<\varepsilon_-<\varepsilon_+<\varepsilon_{max},\quad \alpha<\min\left\{\frac{\varepsilon_-}2, \frac{\varepsilon_+-\varepsilon_-}2\right\}, \quad \varepsilon_+\leq \frac{\varepsilon_-+1}2,&\\
  &q_-:=\frac2{2-\varepsilon_-}<q_+:=\frac2{2-\varepsilon_+}<q_{max} : = \frac2{2-\varepsilon_{max}}.&
  \end{aligned}
\end{equation}

Define the energy
\begin{equation}\label{energy}
\begin{aligned}
\mathcal{E}(u, p, \eta)&: =\|u\|_{W^{2,q_+}}^2+\|\pa_tu\|_{1+\varepsilon_-/2}^2+\sum_{k=0}^2\|\pa_t^ku\|_0^2+\|p\|_{W^{1,q_+}}^2+\|\pa_tp\|_0^2+\|\eta\|_{W^{3-1/q_+, q_+}}^2\\
&\quad+\|\pa_t\eta\|_{3/2+(\varepsilon_--\alpha)/2}^2+\sum_{j=0}^2\|\pa_t^j\eta\|_1^2,\\
 \mathcal{E}(\theta)&: = \|\theta\|_{W^{2,q_+}}^2+\|\pa_t\theta\|_{1+\varepsilon_-/2}^2+\sum_{k=0}^2\|\pa_t^k\theta\|_0^2,\\
\mathcal{E}(t)& : = \mathcal{E}(u, p, \eta, \theta) = \mathcal{E}(u, p, \eta) + \mathcal{E}(\theta),
\end{aligned}
\end{equation}
and the dissipation
\begin{equation}\label{dissipation}
\begin{aligned}
\mathcal{D}(u, p, \eta)&:=\|u\|_{W^{2,q_+}}^2+\|\pa_t u\|_{W^{2,q_-}}^2+\sum_{j=0}^2\Big(\|\pa_t^ju\|_1^2+\|\pa_t^ju\|_{L^2(\Sigma_s)}^2\Big)+\|p\|_{W^{1,q_+}}^2+\|\pa_tp\|_{W^{1,q_-}}^2\\
&\quad+\|\eta\|_{W^{3-1/q_+, q_+}}^2+\|\pa_t\eta\|_{W^{3-1/q_-, q_-}}^2+\sum_{j=0}^2\Big(\|\pa_t^j\eta\|_{3/2-\alpha}^2+[\pa_t^{j+1}\eta]_\ell^2\Big)+\|\pa_t^3\eta\|_{1/2-\alpha}^2,\\
\mathcal{D}(\theta)&:= \|\theta\|_{W^{2,q_+}}^2+\|\pa_t \theta\|_{W^{2,q_-}}^2+\sum_{j=0}^2\Big(\|\pa_t^j\theta\|_1^2+\|\pa_t^j\theta\|_{L^2(\Sigma)}^2\Big),\\
\mathcal{D}(t)&:=\mathcal{D}(u, p, \eta, \theta) = \mathcal{D}(u, p, \eta) + \mathcal{D}(\theta).
\end{aligned}
\end{equation}


\begin{theorem}\label{thm:main}
Assume that the contact angle $\om_{eq} \in (0, \pi)$ and $\varepsilon_{max} = \min\{1, -1+ \pi/\om_{eq}\} \in (0, 1)$.
Suppose that the initial data $\Big(u_0, p_0, \theta_0, \eta_0, \pa_tu(0), \pa_tp(0), \pa_t\theta(0), \pa_t\eta(0), \pa_t^2u(0), \pa_t^2\theta(0), \pa_t^2\eta(0)\Big)$ satisfy the compatibility conditions for $j=0,1,2$
\begin{equation}\label{compat_C2}
\left\{
\begin{aligned}
&\dive_{\mathcal{A}_0}D_t^ju(0)=0\quad &\text{in}&\ \Om,\\
&D_t^ju(0)\cdot\nu=0\quad &\text{on}&\ \Sigma_s,\\
&D_t^ju(0)\cdot\mathcal{N}(0)=\pa_t^{j+1}\eta(0)\quad &\text{on}&\ \Sigma,
\end{aligned}
\right.
\end{equation}
and zero-average condition for $k=0, 1, 2$
\begin{equation}\label{cond:zero}
\int_{-\ell}^\ell\pa_t^k\eta(0)=0.
\end{equation}
Then there exists sufficiently small $\delta_0>0$, such that if $\sqrt{\mathcal{E}(0)} + \|\pa_t^2 \eta(0)\|_{W^{2-1/q_+, q_+}} \le \delta_0$, then there exists a unique solution $(u,p, \theta, \eta)$ to \eqref{eq:geometric} for $t\in[0,\infty)$ that achieves the initial data and satisfies
\begin{equation}
  \begin{aligned}
\sup_{t\ge \infty}e^{\lambda t}\mathcal{E}(t)+\int_0^\infty\mathcal{D}(t)\,\mathrm{d}t\le C\mathcal{E}(0)
  \end{aligned}
  \end{equation}
for some universal constants $C>0$ and $\lambda>0$.
\end{theorem}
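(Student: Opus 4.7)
The plan is to follow the classical local-existence plus a priori estimates plus continuation scheme, tailored to the Boussinesq coupling and the corner geometry. The structural input is the physical dissipation identity: pairing \eqref{eq:geometric}$_1$ with $u$, \eqref{eq:geometric}$_3$ with $\theta$, and using \eqref{eq:geo_bc} together with the dynamic contact-point law, one obtains schematically
\begin{equation*}
\frac{d}{dt}\Bigl(\tfrac12\|u\|_0^2+\tfrac{g}{2}\|\eta\|_0^2+\tfrac{\sigma_1}{2}\bigl\|\tfrac{\pa_1\eta}{(1+|\pa_1\zeta_0|^2)^{1/2}}\bigr\|_0^2+\tfrac12\|\theta\|_0^2\Bigr)+\mu\|\mathbb{D}u\|_0^2+\beta\|u\|_{L^2(\Sigma_s)}^2+k\|\nabla\theta\|_0^2+\|\theta\|_{L^2(\Sigma)}^2+\kappa\sum_{\pm}[\pa_t\eta]_\ell^2=\mathrm{n.l.\ remainder}.
\end{equation*}
The contact-point term $[\pa_t\eta]_\ell^2$ is the extra Guo--Tice dissipation, while the boundary terms for $\theta$ come from Newton's cooling \eqref{bc:cool} and Dirichlet on $\Sigma_s$.

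\textbf{Step 1 (Local existence).} Since at the linear level the velocity-pressure-height block and the temperature equation decouple, I would treat them separately. For the linearized Stokes-type block with the dynamic contact-point condition I would invoke the construction of \cite{GT2020,GTWZ2023} in the corner domain $\Omega$, producing solutions in the regularity class dictated by $\mathcal{E}(u,p,\eta)$ and $\mathcal{D}(u,p,\eta)$. For the convected heat equation governing $\theta$, which is the main linear tool advertised in the abstract, I would establish existence with $\|\theta\|_{W^{2,q_+}}$ and $\|\pa_t\theta\|_{W^{2,q_-}}$ bounds in the same weighted corner class, using the fact that $q_\pm < q_{\max}=2/(2-\varepsilon_{\max})$ makes the mixed Dirichlet-Robin problem regular at the corners of angle $\omega\in(0,\pi)$. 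A Banach fixed-point iteration on a short time interval then produces a local solution from data satisfying \eqref{compat_C2} and \eqref{cond:zero}; the higher time-derivatives $\pa_t^j u(0)$, $\pa_t^j\theta(0)$, $\pa_t^j\eta(0)$ at $t=0$ are determined by differentiating \eqref{eq:geometric}-\eqref{eq:geo_bc} in time.

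\textbf{Step 2 (A priori estimates).} The heart of the argument is a nonlinear bound of the form
\begin{equation*}
\frac{d}{dt}\mathcal{E}(t)+\mathcal{D}(t)\le C\sqrt{\mathcal{E}(t)}\,\mathcal{D}(t).
\end{equation*}
The proof has three layers. First, time-differentiated energy estimates: apply $\pa_t^j$ to \eqref{eq:geometric}-\eqref{eq:geo_bc} for $j=0,1,2$ and test against $\pa_t^ju$ and $\pa_t^j\theta$, producing the basic dissipations $\|\pa_t^ju\|_1^2+\|\pa_t^ju\|_{L^2(\Sigma_s)}^2+[\pa_t^{j+1}\eta]_\ell^2$ and $\|\pa_t^j\theta\|_1^2+\|\pa_t^j\theta\|_{L^2(\Sigma)}^2$, where the contact line dissipation comes from pairing the boundary identity \eqref{eq:geo_bc}$_7$ with $\pa_t^{j+1}\eta$. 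Second, elliptic/Stokes regularity on the fixed domain $\Omega$ in the weighted $W^{s,q_\pm}$ class of \eqref{parameters} to upgrade the spatial regularity of $(u,p,\eta)$, exactly in the style of \cite{GT2020}. Third, elliptic regularity for the stationary heat problem, viewing the parabolic equation as $-k\Delta_{\mathcal{A}}\theta = f$ for each fixed $t$, to promote temporal control of $\theta$ to the full $\mathcal{E}(\theta)+\mathcal{D}(\theta)$ bounds in $W^{2,q_\pm}$. The nonlinear remainders — the geometric nonlinearities through $\mathcal{A}$, the buoyancy $g\theta e_2$, and the temperature-dependent surface tension $\sigma_1-\sigma_2\theta$ appearing in \eqref{eq:geo_bc}$_1$ — all carry an extra $\sqrt{\mathcal{E}}$ factor by Sobolev embedding and are absorbed into $\mathcal{D}$ once $\mathcal{E}$ is small.

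\textbf{Step 3 (Decay and continuation).} The zero-average condition \eqref{cond:zero} is preserved in time by \eqref{eq:geo_bc}$_6$ together with $\dive_{\mathcal{A}}u=0$ and $u\cdot\nu=0$ on $\Sigma_s$, so a Korn-Poincar\'e-type inequality gives $\mathcal{E}(t)\lesssim\mathcal{D}(t)$. The differential inequality then yields $\frac{d}{dt}\mathcal{E}+c\mathcal{E}\le 0$ for $\mathcal{E}$ small, and Gronwall produces $\mathcal{E}(t)\le e^{-\lambda t}\mathcal{E}(0)$; combined with the local existence and a standard continuation argument this yields the global solution on $[0,\infty)$ with the stated decay.

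The main obstacle I anticipate is closing the $j=2$ estimates compatibly with the temperature coupling. The dynamic boundary condition \eqref{eq:geo_bc}$_1$ generates, after two time differentiations, forcing terms of the type $\sigma_2\pa_t^2\theta\,\pa_1\bigl(\pa_1\eta/(1+|\pa_1\zeta_0|^2)^{3/2}\bigr)$ whose traces on $\Sigma$ must be controlled in dual norms compatible with the corner-weighted Stokes regularity of $\pa_t^2 u$. This requires a delicate interplay between the convected heat estimates and the Stokes estimates in the same weighted class, and it is precisely this interplay that forces the parameter ladder \eqref{parameters} and distinguishes the present problem from the isothermal case of \cite{GT2020,GTWZ2023}.
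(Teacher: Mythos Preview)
Your high-level strategy matches the paper's skeleton, and your identification of the structural ingredients (the contact-point dissipation, the corner elliptic theory in $W^{2,q_\pm}$, the $\sigma_2\theta$ coupling) is accurate. However, there is a genuine gap in Step~1: the paper does \emph{not} solve \eqref{eq:geometric} directly by a fixed-point on a short interval. It first introduces the $\varepsilon$-regularized system \eqref{eq:epsilon}, in which the curvature operator on $\Sigma$ and at $\pm\ell$ is replaced by $\pa_1\bigl((\pa_1\eta+\varepsilon\pa_1\pa_t\eta)/(1+|\pa_1\zeta_0|^2)^{3/2}\bigr)$. This artificial parabolic regularization of $\eta$ (borrowed from \cite{HG}) supplies the extra dissipation $\varepsilon\|\pa_t^{j+1}\eta\|_{H^1}^2$ needed to close the linear theory; indeed the coupled linear estimate (Theorem~\ref{thm:linear}) carries constants of size $1+T/\varepsilon$ and $T/\varepsilon^2$, so the contraction in Theorem~\ref{thm: fixed point} only works on $T_\varepsilon\lesssim\varepsilon^2$. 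One then proves $\varepsilon$-uniform a priori bounds for \eqref{eq:epsilon} (this is where your Step~2 actually lives, as Theorem~\ref{thm:uniform}) and passes to the limit $\varepsilon\to 0$ (Theorem~\ref{thm:conditioal_global}). A direct fixed-point without this device would have to control the geometric coefficients $\mathcal{A},J,\mathcal{N}$ at the $j=2$ level using only the $\pa_t^j\eta$-regularity available in $\mathcal{E}$, and the paper's structure indicates this does not close.

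A related discrepancy concerns the initial data. You propose to determine the higher time-derivatives at $t=0$ by forward differentiation of the equations. The paper explicitly notes (Section~\ref{section:discussion}) that this fails because ``the energy and dissipation do not propagate the regularity''; instead it prescribes $(D_t^2u(0),\pa_t^2\theta(0),\pa_t^2\eta(0))$ and constructs the lower-order data \emph{backward} via coupled elliptic problems (Appendix~\ref{sec:initial}, in particular the iteration \eqref{eq:iteration_initial1}--\eqref{eq:iteration_initial2}). This is precisely why the smallness hypothesis in the theorem carries the extra norm $\|\pa_t^2\eta(0)\|_{W^{2-1/q_+,q_+}}$ beyond $\sqrt{\mathcal{E}(0)}$, and why the convergence $\mathcal{E}^\varepsilon(0)\to\mathcal{E}(0)$ as $\varepsilon\to 0$ requires a separate argument.
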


\begin{remark}
Note that the term $\|\pa_t^2\eta(0)\|_{W^{2-1/q_+, q_+}}^2$ is stronger than the initial energy functional. This term is sufficient to guarantee that our initial data set is non-empty (see the discussion in Appendix \ref{sec:initial}). Since $\Phi$ is a $C^1$ diffeomorphism due to $J>0$ for $\eta$ as well as its derivatives sufficiently small, we might change coordinates from $\Om$ to $\Om(t)$ to obtain solutions to \eqref{eq:Bouss}.
\end{remark}

\begin{remark}
  The differential operator $D_t$ in \eqref{compat_C2} is defined as $D_tu = \pa_tu - Ru$, where $R$ is the transport of $-\pa_tJKI_{2\times 2}-\pa_t\mathcal{A}\mathcal{A}^{-1}$. $D_t$ is introduced to preserve the divergence condition, i.e., $\dive_{\mathcal{A}}(D_t^j u) =0$. ($D_t^0$ should be considered as the identity.)
\end{remark}

\subsection{Strategy of the proof}\label{section:discussion}

Our proof is mainly focused on the linear heat system
\begin{equation}\label{eq:linear_heat}
\left\{
\begin{aligned}
  &\pa_t\theta - k \Delta_{\mathcal{A}} \theta=F^8\quad&\text{in}&\quad\Om,\\
  &k\mathcal{N} \cdot\nabla_{\mathcal{A}} \theta + \theta |\mathcal{N}|=F^9\quad&\text{on}&\quad\Sigma,\\
  &\theta = 0\quad&\text{on}&\quad\Sigma_s,
\end{aligned}
\right.
\end{equation}
coupled with the linear $\varepsilon-$ regularized Navier-Stokes system
\begin{equation}\label{eq:modified_linear}
\left\{
\begin{aligned}
  &\pa_tu+\dive_{\mathcal{A}}S_{\mathcal{A}}(p,u) + g \theta e_2=F^1\quad&\text{in}&\quad\Om,\\
  &\dive_{\mathcal{A}}u=0\quad&\text{in}&\quad\Om,\\
  &S_{\mathcal{A}}(p,u)\mathcal{N}=\left(\mathcal{K}(\eta+\varepsilon\eta_t)-\sigma_1\pa_1 F^3\right)\mathcal{N} + F^4 \quad &\text{on}& \quad \Sigma,\\
  &u \cdot \mathcal{N} = \pa_t \eta \quad &\text{on}& \quad \Sigma,\\
  &(S_{\mathcal{A}}(p,u) \nu -\beta u) \cdot \tau= F^5\quad &\text{on} & \quad \Sigma_s,\\
  &u \cdot \nu = 0\quad&\text{on}&\quad\Sigma_s,\\
  &\mp \sigma_1 \frac{\pa_1(\eta+\varepsilon\eta_t)}{(1+|\pa_1\zeta_0|^2)^{3/2}}(\pm\ell)=\kappa(u\cdot\mathcal{N})(\pm\ell) \pm \sigma F^3(\pm\ell) -F^7(\pm\ell).
\end{aligned}
\right.
\end{equation}
The role of $\varepsilon$ is played to lift the regularity of $\eta$, borrowed the idea of \cite{HG}. By analyze the elliptic theory near the corner points of Poisson equation, we can establish the existence and uniqueness of \eqref{eq:linear_heat}. Then by plugging the results into the well-posedness of \eqref{eq:modified_linear}, we can establish the results of linear systems that are used to do the iteration to establish the well-posedness of nonlinear systems \eqref{eq:geometric} and \eqref{eq:geo_bc}.

The elliptic theory for Poisson equation is determined via the spectral analysis of operator pencils. Then the symmtric structure of linear $\mathcal{A}$--heat equations allows us to use the Galerkin method to construct solutions via the two--tie energy methods. The first step is  to construct the $(\theta, \pa_t\theta)$; the second step is to construct $\pa_t^2\theta$ by $(\theta, \pa_t\theta)$ constructed in the first step. In the second step, due to the loss of high regularity of $\pa_t\theta$, we have to introduce another iteration to control the interaction between $\nabla^2\pa_t^2\bar{\eta}$ and $\pa_t\theta$.

The linear theory of \eqref{eq:modified_linear} is similar to \cite{GTWZ2023} after the minor modification of $\theta$ and coefficient of surface tension. Because of the smallness of $\theta \pa_1\left( \frac{\pa_1\eta}{\sqrt{1 + |\pa_1\eta|^1}} \right)$, the analysis in \cite{GTWZ2023} is still correct besides these modifications. Based on the linear theory, we use the fixed point theory to establish the global well-posedness of the nonlinear $\varepsilon$--regularized system
\begin{equation}\label{eq:epsilon}
  \left\{
  \begin{aligned}
&\pa_tu-\pa_t\bar{\eta}WK\pa_2u+u\cdot\nabla_{\mathcal{A}}u+\dive_{\mathcal{A}}S_{\mathcal{A}}(p,u)= -g \theta e_2 \quad &\text{in}&\ \Om,\\
&\dive_{\mathcal{A}}u=0 \quad &\text{in}&\ \Om,\\
&\pa_t\theta -\pa_t\bar{\eta}WK\pa_2\theta + u\cdot\nabla_{\mathcal{A}}\theta - k \Delta_{\mathcal{A}} \theta =0\quad &\text{in}&\ \Om,\\
&S_{\mathcal{A}}(p,u)\mathcal{N}=\left[g\eta- (\sigma_1-\sigma_2\theta)\pa_1\left(\frac{\pa_1\eta + \varepsilon \pa_1\pa_t\eta }{(1+|\pa_1\zeta_0|)^{3/2}}\right)-\sigma\pa_1\Big(\mathcal{R}(\pa_1\zeta_0,\pa_1\eta)\Big)\right]\mathcal{N} \ &\text{on}&\ \Sigma,\\
&(S_{\mathcal{A}}(p,u)\nu-\beta u)\cdot\tau=0 \ &\text{on}&\ \Sigma_s,\\
&u\cdot\nu=0 \ &\text{on}&\ \Sigma_s,\\
&\theta =0 \ &\text{on}&\ \Sigma_s,\\
&k\mathcal{N}\cdot\nabla \theta + \theta |\mathcal{N}|=0 \ &\text{on}&\ \Sigma\\
&\pa_t\eta=u\cdot\mathcal{N} \ &\text{on}&\ \Sigma,\\
&\kappa\pa_t\eta+\kappa\hat{\mathscr{W}}(\pa_t\eta)=\mp\sigma_1\left(\frac{\pa_1\eta + \varepsilon \pa_1\pa_t\eta  }{(1+|\zeta_0|^2)^{3/2}}+\mathcal{R}(\pa_1\zeta_0,\pa_1\eta)\right)\ &\text{at}&\ \pm\ell,
  \end{aligned}
  \right.
\end{equation}

We define
\begin{equation}\label{def:ep_energy}
  \begin{aligned}
\mathcal{E}^\varepsilon:=\mathcal{E}(u^\varepsilon, p^\varepsilon, \eta^\varepsilon, \theta^\varepsilon )+\varepsilon^2\|\pa_t\eta^\varepsilon\|_{W^{3-1/q_+,q_+}}^2+\varepsilon\sum_{j=0}^2\|\pa_t^j\eta^\varepsilon\|_{H^{3/2-\alpha}}^2
  \end{aligned}
\end{equation}
\begin{equation}\label{def:ep_dissipation}
  \begin{aligned}
\mathcal{D}^\varepsilon:=\mathcal{D}(u^\varepsilon,p^\varepsilon,\eta^\varepsilon, \theta^\varepsilon)+\varepsilon^2\|\pa_t\eta^\varepsilon\|_{W^{3-1/q_+,q_+}}^2+\varepsilon^2\|\pa_t^2\eta^\varepsilon\|_{W^{3-1/q_-,q_-}}^2+\varepsilon\sum_{j=0}^2\|\pa_t^j\eta^\varepsilon\|_{H^1}^2,
  \end{aligned}
\end{equation}
where $\mathcal{E}$ is the same as \eqref{energy} and $\mathcal{D}$ is the same as \eqref{dissipation}. Now the energy and dissipation do not propagate the regularity, we cannot use the usual way (for instance, set $t =0$ for the first equation in \eqref{eq:epsilon}) to construct the energy of initial data. Now we apply the backward way to construct $\mathcal{E}^\varepsilon (0)$. Then we can establish the global well-posedness and stability for \eqref{eq:epsilon}:
\begin{theorem}\label{thm:uniform1}
Suppose that there exists a universal constant $\delta\in (0,1)$ such that if a solution to \eqref{eq:geometric1} exists on the time interval $[0,T)$ for $0<T\le\infty$  and obeys the estimate
$\sup_{0\le t\le T}\mathcal{E}^\varepsilon(t)\le \delta_0$,
Then there exists universal constants $C>0$ and $\lambda>0$ such that
\begin{equation}
 \begin{aligned}
\sup_{0\le t\le T}e^{\lambda t}\mathcal{E}^\varepsilon(t)+\int_0^T\mathcal{D}^\varepsilon(t)\,\mathrm{d}t\le C\mathcal{E}^\varepsilon(0).
 \end{aligned}
\end{equation}
 \end{theorem}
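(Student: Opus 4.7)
The plan is to combine the linear theory for the heat equation \eqref{eq:linear_heat} and for the modified Navier-Stokes system \eqref{eq:modified_linear} (established earlier in the paper) with a nonlinear energy argument. I would view \eqref{eq:epsilon} as a perturbation of the linear coupled system by identifying explicit forcing terms $F^{1},\dots,F^{9}$, show that all forcing norms are bounded schematically by $\sqrt{\mathcal{E}^{\varepsilon}}\,\mathcal{D}^{\varepsilon}$, and then close the estimate via a coercivity argument that upgrades the resulting uniform bound to exponential decay. The target differential inequality is
\begin{equation*}
\frac{d}{dt}\mathcal{E}^{\varepsilon}(t) + \mathcal{D}^{\varepsilon}(t) \le C\sqrt{\mathcal{E}^{\varepsilon}(t)}\,\mathcal{D}^{\varepsilon}(t),
\end{equation*}
which, combined with the smallness hypothesis $\mathcal{E}^{\varepsilon}\le\delta_{0}$, absorbs the right-hand side.

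First I would rewrite the nonlinear geometric system so that the linear operators of \eqref{eq:linear_heat} and \eqref{eq:modified_linear} sit on the left-hand side and everything else becomes right-hand-side forcing. The bulk transport terms $\pa_{t}\bar{\eta}WK\pa_{2}u$, $u\cdot\nabla_{\mathcal{A}}u$ and $\pa_{t}\bar{\eta}WK\pa_{2}\theta$, $u\cdot\nabla_{\mathcal{A}}\theta$ would be absorbed into $F^{1}$ and $F^{8}$; the capillary remainder $\mathcal{R}(\pa_{1}\zeta_{0},\pa_{1}\eta)$ and the temperature-dependent piece $\sigma_{2}\theta\,\pa_{1}(\pa_{1}\eta/(1+|\pa_{1}\zeta_{0}|^{2})^{3/2})$ of the surface tension would contribute to $F^{3}$, $F^{4}$ and $F^{7}$. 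Product and trace estimates in the anisotropic $W^{s,q_{\pm}}$ spaces, together with interpolation between $\mathcal{E}^{\varepsilon}$ and $\mathcal{D}^{\varepsilon}$ using the zero-average/trace structure, should give the schematic bound $\sum_{i}\|F^{i}\|_{\mathrm{forcing}}^{2}\lesssim\mathcal{E}^{\varepsilon}\,\mathcal{D}^{\varepsilon}$. The new temperature factors are handled by noting that $\theta$ appears multiplied by capillary quantities whose traces on $\Sigma$ are already controlled by $\mathcal{E}(\theta)\,\mathcal{D}(\theta)$, so they enter as higher-order perturbations of the estimates in \cite{GTWZ2023}.

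Applying the linear energy-dissipation estimates for \eqref{eq:linear_heat} and \eqref{eq:modified_linear} with this forcing yields, after a bootstrap, $\mathcal{E}^{\varepsilon}(t)+\int_{0}^{t}\mathcal{D}^{\varepsilon}(s)\,\mathrm{d}s \le C\mathcal{E}^{\varepsilon}(0)+C\delta_{0}\int_{0}^{t}\mathcal{D}^{\varepsilon}(s)\,\mathrm{d}s$, and choosing $\delta_{0}$ small absorbs the last term. For the exponential decay, I would then establish a coercivity inequality $\mathcal{E}^{\varepsilon}\le C\mathcal{D}^{\varepsilon}$ valid under the zero-average condition \eqref{cond:zero}, which the kinematic law $\pa_{t}\eta=u\cdot\mathcal{N}$ together with $\dive_{\mathcal{A}}u=0$ propagates in time. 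The key ingredients are: a Korn-type inequality with the Navier-slip term $\beta\|u\|_{L^{2}(\Sigma_{s})}^{2}$ to dominate $\|u\|_{0}$ by $\mathcal{D}^{\varepsilon}$; the Dirichlet condition $\theta=0$ on $\Sigma_{s}$ combined with the Robin condition on $\Sigma$ to dominate $\|\theta\|_{0}$ by the $H^{1}$ part of $\mathcal{D}(\theta)$; and the contact-line dissipation $[\pa_{t}\eta]_{\ell}^{2}$ together with the $W^{3-1/q_{+},q_{+}}$ bound for $\eta$ to dominate the capillary energy of $\eta$. The resulting inequality $\frac{d}{dt}\mathcal{E}^{\varepsilon}+c\mathcal{E}^{\varepsilon}\le 0$ integrates to give $\mathcal{E}^{\varepsilon}(t)\le\mathcal{E}^{\varepsilon}(0)e^{-\lambda t}$ for some $\lambda>0$ independent of $\varepsilon$.

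The main obstacle will be the two-way coupling of the subsystems through the temperature-dependent surface tension $\sigma_{1}-\sigma_{2}\theta$ and through the body force $-g\theta e_{2}$: at top-order time derivatives these produce terms sitting at the same regularity as the principal part, which must be absorbed using smallness rather than order. A secondary difficulty is handling $\pa_{t}^{2}$ of the heat equation, since $\pa_{t}^{2}\theta$ only has $L^{2}$-regularity and its interaction with $\nabla^{2}\pa_{t}^{2}\bar{\eta}$ forces the inner iteration described in Section \ref{section:discussion}. Verifying that the $\varepsilon$-augmented pieces $\varepsilon^{2}\|\pa_{t}\eta^{\varepsilon}\|_{W^{3-1/q_{+},q_{+}}}^{2}$ and $\varepsilon\sum_{j}\|\pa_{t}^{j}\eta^{\varepsilon}\|_{3/2-\alpha}^{2}$ in $\mathcal{E}^{\varepsilon}$ and $\mathcal{D}^{\varepsilon}$ close on their own (uniformly in $\varepsilon$) will require a separate bookkeeping of the $\varepsilon\pa_{t}\eta$ capillary term in the stress boundary condition and in the contact-point law.
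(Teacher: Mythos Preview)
Your plan is correct and follows essentially the same route as the paper's proof (given there as Theorem~\ref{thm:uniform}): derive natural energy--dissipation identities for $\pa_t^j u^\varepsilon$, $\pa_t^j\theta^\varepsilon$, $\pa_t^j\eta^\varepsilon$ on the geometric $\mathcal{A}^\varepsilon$--system, bound the nonlinear interaction terms by $\int\sqrt{\mathcal{E}^\varepsilon}\,\mathcal{D}^\varepsilon$, upgrade the ``parallel'' dissipation to the full $\mathcal{D}^\varepsilon$ via the elliptic theory in $W^{2,q_\pm}$, then use the coercivity $\mathcal{E}^\varepsilon\lesssim\mathcal{D}^\varepsilon$ and Gronwall to obtain the exponential decay. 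One point of phrasing to be careful with: the paper does \emph{not} invoke the linear well-posedness estimates of Section~\ref{sec:linear} here (those carry an $\exp\{\mathfrak{E}(\eta)T\}$ factor and are used only for local construction); instead it computes the energy identities directly on the nonlinear $\mathcal{A}^\varepsilon$--operators, so that the geometric cancellations produce a clean inequality of the form $\mathcal{E}^\varepsilon_\shortparallel(t)-(\mathcal{E}^\varepsilon(t))^{3/2}+\int_s^t\mathcal{D}^\varepsilon_\shortparallel\lesssim \mathcal{E}^\varepsilon_\shortparallel(s)+(\mathcal{E}^\varepsilon(s))^{3/2}+\int_s^t\sqrt{\mathcal{E}^\varepsilon}\,\mathcal{D}^\varepsilon$ with no exponential prefactor---your target differential inequality is the correct differential form of this, but make sure you obtain it from the geometric energy structure rather than by quoting Theorems~\ref{thm:linear_low}--\ref{thm:higher order}.
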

Theorem \ref{thm:uniform1} allows us to pass the limit $\varepsilon \to 0^+$ to show that $\mathcal{E}^\varepsilon \to \mathcal{E}$ in $L^\infty$ and $\int_0^T\mathcal{D}^\varepsilon \to \int_0^T\mathcal{D}$. The convergence of initial energy $\mathcal{E}^\varepsilon(0) \to \mathcal{E}(0)$ is given in Appendix \ref{sec:initial}. In the construction of $\mathcal{E}(0)$, we still use the backward procedure, because the energy is not propagated in time.

\subsection{Constants and Sobolev Norms}\label{sec:notation}


Let $C>0$ denote a universal constant that only depends on the parameters of the problem, $N$ and $\Om$, but does not depend on the data, etc. They are allowed to change from line to line. We will write $C=C(z)$ to indicate that the constant $C$ depends on $z$. And we will write $a\lesssim b$ if $a\le C b$ for a universal constant $C>0$.

We will write $H^k$ instead of  $H^k(\Om)$, $W^{k, q}$ instead of $W^{k, q}(\Om)$ for $k\ge0$, and $H^s(\Sigma)$ or $W^{s, q}(\Sigma)$ with $s\in\mathbb{R}$ for usual Sobolev spaces. Typically, we will write $H^0=L^2$, with the exception to this is we will use $L^2([0,T];H^k)$ (or $L^2([0,T];H^s(\Sigma))$) to denote the space of temporal square--integrable functions with values in $H^k$ (or $H^s(\Sigma)$). Similarly, we write $L^2([0, T]; W^{k, q})$ or $L^2([0, T]; W^{s, q})$.

Sometimes we will write $\|\cdot\|_k$ instead of $\|\cdot\|_{H^k(\Om)}$ or $\|\cdot\|_{H^k(\Sigma)}$. We assume that functions have natural spaces. For example, the functions $u$, $p$ and $\bar{\eta}$ live on $\Om$, while $\eta$ lives on $\Sigma$. So we may write $\|\cdot\|_{H^k}$ for the norms of $u$, $p$ and $\bar{\eta}$ in $\Om$, and $\|\cdot\|_{H^s}$ for norms of $\eta$ on $\Sigma$.

Define time-independent spaces
\begin{equation}
{}_0H^1(\Om):=\Big\{\theta\in H^1(\Om)\Big|u=0\ \text{on}\ \Sigma_s\Big\},
\end{equation}
endowed with the usual $H^1$ norm,
\begin{equation}
  \mathring{H}^k(U):=\left\{f\in H^k(U)\Big| \int_Uf=0\right\},
\end{equation}
where $U=\Om$ or $(-\ell, \ell)$ and $k\in\mathbb{N}$.


Suppose that $\eta$ is given and that $\mathcal{A}$, $J$ and $\mathcal{N}$, etc are determined in terms of $\eta$. Let us define
\begin{equation}\label{sum_point}
[a,b]_\ell:=\kappa\Big(a(\ell)b(\ell)+a(-\ell)b(-\ell)\Big).
\end{equation}
We denote $\|\xi\|_{1,\Sigma}:=\sqrt{(\xi,\xi)_{1,\Sigma}}$ and $[\phi]_\ell :=\sqrt{[\phi,\phi]_\ell}$.

For convenience, let us define some time-dependent spaces
\begin{equation}
\mathcal{H}^0:=\Big\{\theta:\Om\rightarrow\mathbb{R}\Big|\sqrt{J}\theta\in H^0(\Om)\Big\}
\end{equation}
endowed with the norm $\|\theta\|_{\mathcal{H}^0} := \|\sqrt{J}\theta\|_{H^0}$,
\begin{equation}
\mathcal{H}^1:=\Big\{\theta:\Om\rightarrow\mathbb{R}\Big|\|\theta\|_{\mathcal{H}^1}<\infty,\ \theta=0\ \text{on}\ \Sigma_s\Big\}
\end{equation}
endowed with the norm $\|\theta\|_{\mathcal{H}^1} := \|\nabla_{\mathcal{A}}\theta\|_{\mathcal{H}^0} + \|\theta\sqrt{|\mathcal{N}|}\|_{L^2(\Sigma)}$, where the $\mathcal{H}^1$-inner product is defined via $(\theta, \psi)_{\mathcal{H}^1} = (\nabla_{\mathcal{A}}\theta, \nabla_{\mathcal{A}} \psi)_{\mathcal{H}^0} + (\theta |\mathcal{N}|, \psi)_{L^2(\Sigma)}$.
Finally, we define the inner product on $L^2([0, T]; \mathcal{H}^1)$ as
$
(\theta,\psi)_{\mathcal{H}^1_T}=\int_0^T\Big(\theta(t),\psi \Big)_{\mathcal{H}^1}\,\mathrm{d}t$,
endowed with norm $\|\theta\|_{\mathcal{H}^1_T}: = \sqrt{(\theta,\theta)_{\mathcal{H}^1_T}}$.


The following lemma implies that $\mathcal{H}^1$ is equivalent to ${}_0H^1(\Om)$, which is a modification of \cite{GT1}.
\begin{lemma}\label{lem:equivalence_norm}
There exists a small universal $\delta_0>0$ such that if $\sup_{0\le t\le T}\|\eta(t)\|_{W^{3-1/q_+, q_+}}<\delta_0$,
then
$
\frac{1}{\sqrt{2}} \|\theta\|_k \le \|\theta\|_{\mathcal{H}^k} \le \sqrt{2} \|\theta\|_k$
for $k=0, 1$ and for all $t\in[0, T]$. As a consequence, for $k=0, 1$,
  $
  \|\theta\|_{L^2H^k} \le \|\theta\|_{\mathcal{H}^k_T} \le \|\theta\|_{L^2H^k}$.
\end{lemma}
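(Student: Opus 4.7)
\medskip

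\noindent\textbf{Proof proposal.} The plan is to exploit the smallness assumption on $\eta$ to show that the geometric quantities $J$, $\mathcal{A}$, and $|\mathcal{N}|$ differ from their equilibrium values ($1$, $I_{2\times 2}$, and $|\mathcal{N}_0|$) by a pointwise $O(\delta_0)$ perturbation, and then expand the weighted norms and compare to the standard Sobolev norms on $\Omega$ and $\Sigma$.

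\medskip

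\noindent\emph{Step 1: Pointwise smallness of the perturbations.} I would first control $\bar\eta$. Since $\bar\eta(x_1,x_2,t)=\mathcal{P}E\eta(x_1,x_2-\zeta_0(x_1),t)$, the boundedness of the extension $E$ and of the Poisson operator $\mathcal{P}$ gives $\|\bar\eta\|_{W^{3,q_+}(\Omega)}\lesssim\|\eta\|_{W^{3-1/q_+,q_+}(\Sigma)}$; then in dimension two the Sobolev embedding $W^{3,q_+}\hookrightarrow W^{1,\infty}$ (valid since $q_+>1$) yields $\|\bar\eta\|_{W^{1,\infty}(\Omega)}\lesssim \delta_0$. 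Plugging into the formulas \eqref{eq:components} for $A$, $J$, $K$, $W$ (and noting $\phi,\phi',1/\zeta_0$ are bounded), one obtains $\|J-1\|_{L^\infty(\Omega)}+\|\mathcal{A}-I\|_{L^\infty(\Omega)}\lesssim \delta_0$, and from $\mathcal{N}=\mathcal{N}_0-\partial_1\eta\, e_1$ a corresponding bound $\||\mathcal{N}|-|\mathcal{N}_0|\|_{L^\infty(\Sigma)}\lesssim\delta_0$.

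\medskip

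\noindent\emph{Step 2: The case $k=0$.} This is immediate from
\[
\|\theta\|_{\mathcal{H}^0}^2=\int_\Omega J\,\theta^2,
\]
together with $\tfrac12\le J\le 2$ pointwise after choosing $\delta_0$ small. This already yields $\tfrac1{\sqrt2}\|\theta\|_0\le\|\theta\|_{\mathcal{H}^0}\le\sqrt{2}\,\|\theta\|_0$.

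\medskip

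\noindent\emph{Step 3: The case $k=1$.} On ${}_0H^1(\Omega)$ the Poincar\'e inequality (using $\theta=0$ on $\Sigma_s$) together with the trace inequality gives that the standard $H^1$-norm is equivalent to $\bigl(\|\nabla\theta\|_{L^2(\Omega)}^2+\||\mathcal{N}_0|^{1/2}\theta\|_{L^2(\Sigma)}^2\bigr)^{1/2}$, with equivalence constants depending only on $\Omega$ and $\zeta_0$. I would then write
\[
\|\theta\|_{\mathcal{H}^1}^2=\int_\Omega J\,|\mathcal{A}\nabla\theta|^2+\int_\Sigma|\mathcal{N}|\,\theta^2,
\]
expand $|\mathcal{A}\nabla\theta|^2=|\nabla\theta|^2+\bigl(|\mathcal{A}|^2-I\bigr)\!:\!(\nabla\theta\otimes\nabla\theta)$, and absorb the $O(\delta_0)$ error using Step~1. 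Doing the same on the boundary term via $|\mathcal{N}|=|\mathcal{N}_0|+O(\delta_0)$ gives the two-sided estimate up to a multiplicative factor arbitrarily close to~$1$. Composing with the preliminary Poincar\'e/trace equivalence and shrinking $\delta_0$ once more yields the desired $\sqrt{2}$ constants.

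\medskip

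\noindent\emph{The consequence for the time-dependent norm} follows by squaring and integrating in $t\in[0,T]$, since the equivalence constants in Steps~2--3 are uniform in $t$ under the hypothesis $\sup_{[0,T]}\|\eta(t)\|_{W^{3-1/q_+,q_+}}<\delta_0$.

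\medskip

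\noindent\emph{Main obstacle.} The only delicate point is matching the prescribed sharp constants $1/\sqrt{2}$ and $\sqrt{2}$: the standard equivalence between $\|\theta\|_{H^1}$ and $\|\nabla\theta\|_{L^2}+\|\theta\|_{L^2(\Sigma)}$ on ${}_0H^1(\Omega)$ comes with geometric constants, so the reduction to a $(1\pm O(\delta_0))$ perturbation must be performed \emph{after} normalizing to that equivalent norm. Choosing $\delta_0$ small enough relative to the geometric constants then gives the claim. Aside from this bookkeeping, every step is routine.
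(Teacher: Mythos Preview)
Your approach is correct and is essentially the standard perturbation argument; the paper does not supply its own proof but simply defers to a ``modification of \cite{GT1}'', so there is nothing to compare against at the level of detail.

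Your identified obstacle is genuine and worth flagging as an imprecision in the \emph{statement} rather than a gap in your argument. The $\mathcal{H}^1$ norm here carries the boundary term $\|\theta\sqrt{|\mathcal{N}|}\|_{L^2(\Sigma)}$, whereas the standard $H^1(\Omega)$ norm carries the bulk term $\|\theta\|_{L^2(\Omega)}$. These differ by a \emph{fixed} geometric equivalence on ${}_0H^1(\Omega)$ (Poincar\'e plus trace), not by an $O(\delta_0)$ perturbation, so no amount of shrinking $\delta_0$ will squeeze the equivalence constants down to $1/\sqrt{2}$ and $\sqrt{2}$ against the literal $H^1$ norm. In \cite{GT1} the analogous lemma achieves those constants because their $\mathcal{H}^1$ norm has no boundary contribution and is a pure $J,\mathcal{A}$-weighting of the $H^1$ norm; here the correct reading is either that $\|\cdot\|_1$ denotes the equivalent norm $\bigl(\|\nabla\theta\|_{L^2(\Omega)}^2+\|\sqrt{|\mathcal{N}_0|}\,\theta\|_{L^2(\Sigma)}^2\bigr)^{1/2}$ on ${}_0H^1(\Omega)$, or that the constants $1/\sqrt{2},\sqrt{2}$ are placeholders for universal constants depending only on $\Omega$ and $\zeta_0$. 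With either reading your Steps~1--3 go through verbatim. The ``consequence'' as printed, $\|\theta\|_{L^2H^k}\le\|\theta\|_{\mathcal{H}^k_T}\le\|\theta\|_{L^2H^k}$, is also a typo in the paper (it forces equality); it should carry the same constants as the pointwise-in-$t$ inequality.
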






\section{Elliptic Theory for The Corner Domains}

We now consider some elliptic estimates for Poisson equations with corner points.
\subsection{Analysis for Poisson equations near the corner points}
 Let $(r,\rho)$ be the polar coordinates for $\mathbb{R}^2$.
\[
K_\om=\{x\in\mathbb{R}^2: r>0\ \text{and}\ \rho\in(-\pi/2, -\pi/2+\om)\}
\]
denotes the cone with open angle $\om\in(0,\pi)$. The lower and upper boundaries of $K_\om$ are
\[
\Gamma_-=\{x\in\mathbb{R}^2: r>0\ \text{and}\ \rho=-\pi/2\}\ \text{and}\ \Gamma_+=\{x\in\mathbb{R}^2: r>0\ \text{and}\ \rho= -\pi/2+\om\}
\]
respectively.

 Now, we consider the $\mathfrak{A}$-equation
 We first give the proof of $\vartheta$ through the Poisson equations
  \begin{align}\label{eq:piosson_1}
  \left\{
  \begin{aligned}
    &-k\Delta_{\mathfrak{A}}\vartheta=G^8\quad &\text{in}&\quad K_\om,\\
    &k\nabla_{\mathfrak{A}}\vartheta\cdot(\mathfrak{A}\nu) =G^9\quad &\text{on}&\quad\Gamma_+,\\
    & \vartheta = 0\quad &\text{on}&\quad\Gamma_-,
  \end{aligned}
  \right.
  \end{align}
where the differential operators $\nabla_{\mathfrak{A}}$ and $\Delta_{\mathfrak{A}}$ are defined in the same way as $\nabla_{\mathcal{A}}$ and $\Delta_{\mathcal{A}}$. Clearly, when $\mathfrak{A}=I_{2\times2}$, \eqref{eq:piosson_1} becomes
\begin{align}\label{eq:piosson_2}
  \left\{
  \begin{aligned}
    &-k\Delta \vartheta=G^8\quad &\text{in}&\quad K_\om,\\
    &k\nabla\vartheta\cdot\nu=G^9\quad &\text{on}&\quad\Gamma_+,\\
    & \vartheta = 0\quad &\text{on}&\quad\Gamma_-.
  \end{aligned}
  \right.
  \end{align}

\begin{theorem}\label{thm:corner}
Let $\delta = \pm$.    Assume that the forcing terms in \eqref{eq:piosson_1} satisfy $G^3\in L^{q_\delta}(K_\om)$ and $G^5\in W^{1-1/q_\delta, q_\delta}(\Gamma_+)$.
  Suppose that $\vartheta\in  H^1(K_\om)$ satisfies
  \[
  \int_{K_\om}k\nabla_{\mathfrak{A}}\vartheta\cdot\nabla_{\mathfrak{A}}\phi + \int_{\Gamma_+} \vartheta \phi=\int_{K_\om}G^8\phi+\int_{\Gamma_+}G^9\phi,
  \]
  for all $\phi\in H^1(K_\om)$. Furthermore, assume that $\vartheta$ and all the forcing terms $G^i$ are supported in $\bar{K}_\om\cap B_1(0)$, where $B_1(0)$ is a unit disk centered at $0$ with radius $1$. Then $\nabla^2\vartheta\in L^{q_\delta}(K_\om)$. Moreover,
  \begin{equation}\label{est:cone}
     \|\nabla^2\vartheta\|_{L^{q_\delta}(K_\om)}^2\lesssim \|G^8\|_{L^{q_\delta}(K_\om)}^2+\|G^9\|_{W^{1-1/q_{\delta}, q_\delta}(\Gamma)}^2.
  \end{equation}
\end{theorem}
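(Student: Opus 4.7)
The strategy is the classical Kondratiev--Mellin approach for elliptic boundary value problems near a corner. First, I would reduce to the constant-coefficient Poisson case \eqref{eq:piosson_2}, handling \eqref{eq:piosson_1} by perturbation: write $-k\Delta\vartheta = G^8 + k(\Delta_{\mathfrak{A}}-\Delta)\vartheta$ and split the Neumann-type condition on $\Gamma_+$ in the analogous way. Because in our application $\mathfrak{A}$ arises from a small perturbation of the identity, the commutator $(\Delta_{\mathfrak{A}}-\Delta)\vartheta$ is lower order and its $W^{2,q_\delta}$-contribution can be absorbed into the left-hand side of the a priori estimate. This reduces the theorem to the case $\mathfrak{A}=I_{2\times 2}$.

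For the constant-coefficient problem I would pass to polar coordinates $(r,\rho)$ centered at the corner and apply the Mellin transform in $r$. This converts the boundary value problem on $K_\omega$ into a $\lambda$-parametrized family of ODE problems on $\rho\in(-\pi/2,-\pi/2+\omega)$ governed by the operator pencil
$$\mathfrak{L}(\lambda)u(\rho) = -u''(\rho) - \lambda^2 u(\rho),$$
with $u(-\pi/2)=0$ (Dirichlet) and, at leading order, $u'(-\pi/2+\omega)=0$ (Neumann); the Robin contribution $\int_{\Gamma_+}\vartheta\phi$ coming from the weak formulation is a compact, lower-order perturbation that does not move the discrete spectrum. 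The eigenvalues of $\mathfrak{L}$ are $\lambda_j=(j-\tfrac12)\pi/\omega$ with corresponding singular solutions $r^{\lambda_j}\sin(\lambda_j(\rho+\pi/2))$, $j\in\mathbb{Z}\setminus\{0\}$.

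With the pencil spectrum identified, Kondratiev's regularity theorem gives $\nabla^2\vartheta\in L^{q_\delta}(K_\omega)$ locally at the vertex provided no eigenvalue $\lambda_j$ lies on the critical line $\mathrm{Re}\,\lambda=2-2/q_\delta$ associated to $W^{2,q_\delta}$ via the Sobolev weight. The defining choice $\varepsilon_{max}=\min\{1,-1+\pi/\omega\}$ together with $q_\delta\in(1,q_{max})$ in \eqref{parameters} is arranged precisely so that this strip is eigenvalue-free. Inverting the Mellin-transformed pencil uniformly in $\lambda$ on the strip and returning via Parseval then delivers the estimate \eqref{est:cone} with right-hand side $\|G^8\|_{L^{q_\delta}(K_\omega)}^2+\|G^9\|_{W^{1-1/q_\delta,q_\delta}(\Gamma_+)}^2$; the compact support hypothesis $\mathrm{supp}(\vartheta)\subset\bar{K}_\omega\cap B_1(0)$ removes decay issues at infinity and legitimizes the localization at the vertex.

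The main obstacle is the spectral verification: one must confirm that for every $q_\delta\in(1,q_{max})$ the critical line $\mathrm{Re}\,\lambda=2-2/q_\delta$ stays clear of the eigenvalues $(j-\tfrac12)\pi/\omega$, a case analysis depending on whether $\omega<\pi/2$ or $\omega\geq\pi/2$ and which explains the specific form $\varepsilon_{max}=\min\{1,-1+\pi/\omega\}$. A closely related subtlety is the careful bookkeeping of the inhomogeneous Robin data $G^9$: although the $\vartheta$ part of the Robin condition is a lower-order perturbation of pure Neumann, the forcing $G^9$ must be lifted to a function on $K_\omega$ with the correct $W^{1-1/q_\delta,q_\delta}$ trace so that the final estimate closes with the stated right-hand side, and the constants must be tracked uniformly in the opening angle $\omega$.
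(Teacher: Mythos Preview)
Your approach is the same as the paper's—both invoke the Kondratiev/operator-pencil machinery from \cite{KMR1,MR}, with the paper citing the relevant theorems directly rather than unpacking the Mellin transform, and handling $\mathfrak{A}\neq I$ by noting that the frozen-coefficient pencil at the vertex is unchanged (you instead absorb $\mathfrak{A}-I$ perturbatively, which is equally valid). One detail worth flagging: your eigenvalues $(j-\tfrac12)\pi/\omega$ are the correct mixed Dirichlet--Neumann spectrum, whereas the paper's stated $n\pi/\omega$ is actually the pure-Dirichlet (or pure-Neumann) value; the strip condition holds either way since $\varepsilon_\delta<\varepsilon_{\max}\le\pi/(2\omega)$ for all $\omega\in(0,\pi)$.
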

\begin{proof}
  The key point for proof is the utilization of operator pencil (we refer to \cite{KMR2} for the definition). The eigenvalues of operator pencil for \eqref{eq:piosson_2} could be easily derived (for instance, refer to \cite{KMR1}) as $\{\f{n\pi}{\om}: n\in\mathbb{Z}\setminus\{0\}\}$, which are not contained in
  \[
  \{\lambda\in\mathbb{C}: 0\le\Re{\lambda}<1\},
  \]
  which allows us to use the $W^{2, q_\delta}$ estimate. $q_\delta < 2$ is determined by the argument of Stokes equations in the cone, indicated by \cite{GT2020}.
   So we can use \cite[Theorem 8.2.1]{KMR1}, which gives the condition in terms of eigenvalue of associated operator pencil for solving the elliptic system \eqref{eq:piosson_2},
  and then argue as \cite[Theorem 6.4.6]{MR} to derive that $\nabla^2\vartheta\in L^{q_\delta}(K_\om)$ satisfying
  \begin{align}\label{est:cone_1}
  \|\nabla^2\vartheta\|_{L^{q_\delta}(K_\om)}^2\lesssim \|G^8\|_{L^{q_\delta}(K_\om)}^2+\|G^9\|_{W^{1-1/q_\delta, 1/q_\delta}(\Gamma_+)}^2 + \|\vartheta\|_{H^1(K_\om)}^2.
  \end{align}
  The assumptions on $\mathfrak{A}$ guarantee that the two Poisson equations in \eqref{eq:piosson_1} and  \eqref{eq:piosson_2} generate the same operator pencil, which implies that the solution $\vartheta$ to \eqref{eq:piosson_1} also satisfies \eqref{est:cone_1}.
  From the assumption on $\vartheta$ and the Poinc\'are inequality together with Sobolev embedding theory 
   we also have
  \begin{align}\label{est:cone_2}
  \|\vartheta\|_{H^1(K_\om)}^2\lesssim \|G^8\|_{L^{q_\delta}(K_\om)}^2+\|G^9\|_{W^{1-1/q_\delta, 1/q_\delta}(\Gamma_+)}^2.
  \end{align}
  The result is completed by combining \eqref{est:cone_1} and \eqref{est:cone_2}.
\end{proof}
\subsection{$\mathcal{A}$-Poisson equation}
We first consider the  strong solutions to Poisson equations
\begin{equation}\label{eq:elliptic_1}
  \left\{
  \begin{aligned}
    &-k\Delta \vartheta=G^8\quad&\text{in}&\quad \Om,\\ &k\nabla\vartheta\cdot\nu=G^9&\text{on}&\quad\Sigma,\\
    &\vartheta=0&\text{on}&\quad\Sigma_s,
  \end{aligned}
  \right.
\end{equation}
where $\nu$ is the unit outward normal and unit tangential of $\p\Om$.

\begin{definition}\label{def:weak_e}
  Assume that $G^8\in L^{q_\delta}(\Om)$ and $G^9\in W^{1-1/q_\delta, q_\delta}(\Sigma)$ for  $\delta= \pm$. We say $\vartheta\in {}_0H^1(\Om)$ is a weak solution of \eqref{eq:elliptic_1} if
  \begin{align}\label{eq:weak_theta}
\int_\Om k\nabla\vartheta\cdot\nabla\phi=\int_\Om G^8\phi+\int_\Sigma G^9\phi
  \end{align}
  holds for any $\phi\in{}_0H^1(\Om)$. Clearly, from Sobolev embedding theory, the integrations on the righthand side of \eqref{eq:weak_theta} is well-defined.
\end{definition}

Now we give the sketch proof of existence and uniqueness of weak solutions of elliptic equation \eqref{eq:elliptic_1}. First, \eqref{eq:weak_theta} allows us to use Riesz representation theorem to obtain the existence of $\vartheta$ such that $\|\nabla\vartheta\|_0^2\lesssim\|G^8\|_{L^{q_\delta}}^2+\|G^9\|_{W^{1-1/q_\delta, q_\delta}}^2$. Then the Poinc\'are inequality implies $\|\vartheta\|_1^2 \lesssim \|G^8\|_{L^{q_\delta}}^2+\|G^9\|_{W^{1-1/q_\delta, q_\delta}}^2$.

The next theorem shows that, regularity of weak solution of the equation \eqref{eq:elliptic_1} could be improved to second-order.
\begin{theorem}\label{thm:second order}
  Assume that $G^8\in L^{q_\delta}(\Om)$ and $G^9\in W^{1-1/q_\delta, q_\delta}(\Sigma)$ for $\delta= \pm$. Then there exists a unique $\vartheta$ solving \eqref{eq:elliptic_1} such that $\vartheta\in W^{2, q_\delta}(\Om)$. Moreover,
  \begin{equation}
    \begin{aligned}
      \|\vartheta\|_{W^{2, q_\delta}(\Om)}^2\lesssim \|G^8\|_{L^{q_\delta}(\Om)}^2+\|G^9\|_{W^{1-1/q_\delta, q_\delta}(\Sigma)}^2.
    \end{aligned}
  \end{equation}
\end{theorem}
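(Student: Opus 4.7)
The plan is to start from the weak solution already produced by the Riesz representation argument sketched immediately before the statement, and then upgrade its regularity to $W^{2,q_\delta}$ by a localization argument that separates the corner points $(\pm\ell, \zeta_0(\pm\ell))$ from the rest of $\partial\Omega$. Uniqueness is immediate: the difference of two solutions satisfies \eqref{eq:weak_theta} with $G^8=G^9=0$, and testing against itself combined with Poincar\'e's inequality (available since the trace on $\Sigma_s$ vanishes) forces it to be zero.

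For the $W^{2,q_\delta}$ bound, I would fix a smooth partition of unity $\{\chi_0,\chi_+,\chi_-\}$ subordinate to an open cover of $\overline\Omega$ in which $\chi_\pm$ is supported in a small neighborhood of the corner $(\pm\ell,\zeta_0(\pm\ell))$ and $\chi_0$ is supported away from both corners. On each piece, the localized function $\vartheta_i:=\chi_i\vartheta$ satisfies a Poisson system of the same type as \eqref{eq:elliptic_1} with modified data
\begin{equation*}
\widetilde G^8_i := \chi_i G^8 - 2k\nabla\chi_i\cdot\nabla\vartheta - k(\Delta\chi_i)\vartheta,\qquad
\widetilde G^9_i := \chi_i G^9 + k(\nabla\chi_i\cdot\nu)\vartheta,
\end{equation*}
together with the homogeneous Dirichlet condition on $\Sigma_s$. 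By the already established $H^1$ bound on $\vartheta$ together with the Sobolev embedding $H^1(\Omega)\hookrightarrow L^r(\Omega)$ for every $r<\infty$ in two dimensions (and a corresponding trace embedding on $\Sigma$), all of the commutator terms are bounded in $L^{q_\delta}(\Omega)$ and $W^{1-1/q_\delta,q_\delta}(\Sigma)$ by $\|G^8\|_{L^{q_\delta}}+\|G^9\|_{W^{1-1/q_\delta,q_\delta}}$.

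For $\chi_0\vartheta$, the support avoids both corners so $\partial\Omega$ is $C^2$ in a neighborhood of the support, and standard $L^{q_\delta}$ elliptic regularity for Poisson's equation with mixed Dirichlet--Neumann conditions on a smooth boundary (after straightening the boundary locally) yields
\begin{equation*}
\|\chi_0\vartheta\|_{W^{2,q_\delta}(\Omega)}^2\lesssim \|G^8\|_{L^{q_\delta}(\Omega)}^2+\|G^9\|_{W^{1-1/q_\delta,q_\delta}(\Sigma)}^2+\|\vartheta\|_{H^1(\Omega)}^2.
\end{equation*}
For $\chi_\pm\vartheta$, a rigid motion identifies a neighborhood of the corner with a neighborhood of $0$ in the cone $K_{\omega_{eq}}$, with $\Gamma_-$ matched to $\Sigma_s$ and $\Gamma_+$ matched to $\Sigma$. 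Since $\chi_\pm\vartheta$ and its associated data are supported in $\overline{K}_{\omega_{eq}}\cap B_1(0)$ after a harmless rescaling, Theorem \ref{thm:corner} (with $\mathfrak{A}=I$) applies and gives
\begin{equation*}
\|\nabla^2(\chi_\pm\vartheta)\|_{L^{q_\delta}(\Omega)}^2\lesssim \|G^8\|_{L^{q_\delta}(\Omega)}^2+\|G^9\|_{W^{1-1/q_\delta,q_\delta}(\Sigma)}^2.
\end{equation*}
Summing these estimates, absorbing $\|\vartheta\|_{H^1}^2$ into the right-hand side via the $H^1$ bound from the weak theory, and reconstructing $\vartheta=\chi_0\vartheta+\chi_+\vartheta+\chi_-\vartheta$ yields the desired global $W^{2,q_\delta}$ bound.

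The only delicate step is the corner contribution, but it is precisely what Theorem \ref{thm:corner} is designed to handle, so the main obstacle has already been dispatched; the rest of the argument is a bookkeeping exercise in matching the supports, verifying that the cutoff-induced data lie in the correct spaces (this is where one uses two-dimensional Sobolev embeddings to control the commutators in $L^{q_\delta}$ even though $q_\delta<2$), and checking that the local cone in Theorem \ref{thm:corner} is compatible with the actual opening angle $\omega_{eq}\in(0,\pi)$ at $(\pm\ell,\zeta_0(\pm\ell))$, which is guaranteed by the assumption $\varepsilon_{max}=\min\{1,-1+\pi/\omega_{eq}\}\in(0,1)$ that places $q_\delta$ in the admissible range determined by the operator pencil.
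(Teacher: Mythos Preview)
Your proposal is correct and follows essentially the same route as the paper: localize via a partition of unity, invoke standard $L^{q_\delta}$ elliptic theory (the paper cites Agmon--Douglis--Nirenberg) away from the corners, and apply Theorem~\ref{thm:corner} near each corner; the paper only sketches this and defers details to \cite{GT18}, whereas you spell out the commutator bookkeeping. One small inaccuracy: a rigid motion alone does not map a neighborhood of $(\pm\ell,\zeta_0(\pm\ell))$ onto a piece of the straight cone $K_{\omega_{eq}}$, because $\Sigma$ is the graph of the curved equilibrium profile $\zeta_0$; you need a boundary-flattening diffeomorphism, which turns $-k\Delta$ into $-k\Delta_{\mathfrak A}$ for a matrix $\mathfrak A$ close to $I$, and this is precisely why Theorem~\ref{thm:corner} is stated for the $\mathfrak A$-equation rather than only for $\mathfrak A=I$.
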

\begin{proof}
  The idea of this proof is very standard. We divided $\Om$ into three parts. One is away from the corners. Thus, we might use the standard elliptic theory (for instance, \cite[Theorem 10.5]{ADN}) to obtain the results. The other two parts are near corners. Then we might use Theorem \ref{thm:corner} to derive the conclusion. The proof is in a similar way as \cite[Theorem 5.6]{GT18}. So we omit details.
\end{proof}

We then consider the Poisson equations with coefficients:
\begin{align}\label{eq:elliptic_2}
\left\{
\begin{aligned}
  &-k\Delta_{\mathcal{A}}\vartheta=G^3\ &\text{in}&\ \Om,\\
  &k\nabla_{\mathcal{A}}\vartheta\cdot\mathcal{N}=G^6\ &\text{on}&\ \Sigma,\\
  &\vartheta=0\ &\text{on}&\ \Sigma_s,
\end{aligned}
\right.
\end{align}
where we assume that $\eta$ and $\mathcal{A}$, $\mathcal{N}$, etc. are given.
\begin{theorem}\label{thm:elliptic}
  Let $\delta= \pm$. Suppose that $\|\eta\|_{W^{3-1/q_\delta, q_\delta}}<\gamma_0$ where $\gamma_0\ll 1$ . Assume that $G^8\in L^{q_\delta}(\Om)$ and $G^9\in W^{1-1/q_\delta, q_\delta}(\Sigma)$. Then there exists a unique $\vartheta\in W^{2, q_\delta}(\Om)$ solving \eqref{eq:elliptic_2}. Moreover,
  \begin{align}\label{est:elliptic}
  \begin{aligned}
    \|\vartheta\|_{W^{2, q_\delta}(\Om)}^2
    \lesssim \|G^8\|_{L^{q_\delta}(\Om)}^2+\|G^9\|_{W^{1-1/q_\delta, q_\delta}(\Sigma)}^2.
  \end{aligned}
  \end{align}
\end{theorem}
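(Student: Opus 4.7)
The plan is to view Theorem \ref{thm:elliptic} as a small perturbation of the constant-coefficient Theorem \ref{thm:second order}, with the smallness $\|\eta\|_{W^{3-1/q_\delta, q_\delta}}<\gamma_0$ providing the contraction constant. Concretely, I would set up a Banach fixed-point iteration on $W^{2,q_\delta}(\Omega)\cap{}_0H^1(\Omega)$ that replaces the $\mathcal{A}$-operators by their flat counterparts and moves all differences to the right-hand side.

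Given $\tilde\vartheta$, define $T\tilde\vartheta:=\vartheta$ to be the unique solution (produced by Theorem \ref{thm:second order}) of
\begin{align*}
-k\Delta\vartheta &= G^8 + k(\Delta_{\mathcal{A}}-\Delta)\tilde\vartheta \quad\text{in }\Omega,\\
k\nabla\vartheta\cdot\nu_0 &= G^9/|\mathcal{N}_0| + k\big(\nabla\tilde\vartheta\cdot\nu_0 - \nabla_{\mathcal{A}}\tilde\vartheta\cdot\mathcal{N}/|\mathcal{N}_0|\big)\quad\text{on }\Sigma,\\
\vartheta &= 0 \quad\text{on }\Sigma_s,
\end{align*}
where $\nu_0:=\mathcal{N}_0/|\mathcal{N}_0|$ is the equilibrium unit outward normal to $\Sigma$. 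Any fixed point of $T$ solves \eqref{eq:elliptic_2}, and Theorem \ref{thm:second order} delivers the raw estimate
\[
\|T\tilde\vartheta\|_{W^{2,q_\delta}}\lesssim \|G^8\|_{L^{q_\delta}} + \|G^9\|_{W^{1-1/q_\delta,q_\delta}(\Sigma)} + \|(\Delta_{\mathcal{A}}-\Delta)\tilde\vartheta\|_{L^{q_\delta}(\Omega)} + \|(\nabla_{\mathcal{A}}-\nabla)\tilde\vartheta\cdot(\cdot)\|_{W^{1-1/q_\delta,q_\delta}(\Sigma)}.
\]
The crux is the product/multiplier bound
\[
\|(\Delta_{\mathcal{A}}-\Delta)\tilde\vartheta\|_{L^{q_\delta}(\Omega)} + \|(\nabla_{\mathcal{A}}-\nabla)\tilde\vartheta\cdot(\cdot)\|_{W^{1-1/q_\delta,q_\delta}(\Sigma)} \lesssim \|\eta\|_{W^{3-1/q_\delta,q_\delta}}\|\tilde\vartheta\|_{W^{2,q_\delta}(\Omega)}\lesssim \gamma_0\|\tilde\vartheta\|_{W^{2,q_\delta}(\Omega)}.
\]
Using \eqref{eq:components} and the Poisson extension governing $\bar\eta$, the entries of $\mathcal{A}-I$ are linear combinations of $\bar\eta$ and its first derivatives and sit in $W^{2,q_\delta}(\Omega)\hookrightarrow L^\infty(\Omega)$ (valid in 2D for $q_\delta>1$) with smallness $\gamma_0$; their gradients lie in $W^{1,q_\delta}(\Omega)$. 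The analogous statement on $\Sigma$ is obtained from the trace theorem. Hölder's inequality then yields the displayed bound, and an identical computation for $T\tilde\vartheta_1-T\tilde\vartheta_2$ gives the contraction estimate $\|T\tilde\vartheta_1-T\tilde\vartheta_2\|_{W^{2,q_\delta}}\le C\gamma_0\|\tilde\vartheta_1-\tilde\vartheta_2\|_{W^{2,q_\delta}}$.

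For $\gamma_0$ sufficiently small, $T$ is a contraction self-map of a closed ball in $W^{2,q_\delta}(\Omega)\cap{}_0H^1(\Omega)$ of radius $\lesssim \|G^8\|_{L^{q_\delta}}+\|G^9\|_{W^{1-1/q_\delta,q_\delta}}$, so the Banach fixed-point theorem produces a unique fixed point solving \eqref{eq:elliptic_2} and satisfying \eqref{est:elliptic}. Uniqueness in $W^{2,q_\delta}$ follows by subtracting two solutions and applying the same perturbation estimate to the resulting homogeneous problem. The main obstacle is the product estimate above: since $q_\delta<2$, the embedding $W^{1,q_\delta}\hookrightarrow L^\infty$ fails in two dimensions, so the argument must rely on the extra regularity inherited by $\bar\eta$ from the half-derivative gain of the Poisson extension, together with careful bookkeeping of the factor $|\mathcal{N}_0|$ and the trace map on the curved, cornered boundary $\Sigma$.
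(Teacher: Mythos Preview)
Your proposal is correct and follows essentially the same route as the paper: rewrite \eqref{eq:elliptic_2} as the flat problem \eqref{eq:elliptic_1} plus $(\mathcal{A}-I)$-perturbation terms, invoke Theorem \ref{thm:second order} to define a solution map, and use the smallness $\|\eta\|_{W^{3-1/q_\delta,q_\delta}}<\gamma_0$ together with H\"older/Sobolev product estimates (exactly the ones you flag as the main obstacle) to make that map a contraction on $W^{2,q_\delta}$. The paper's estimates \eqref{est:ellip_3}--\eqref{est:ellip_4} carry out precisely the bookkeeping you describe, with the H\"older splitting $\tfrac1{q_\delta}=\tfrac{1+\varepsilon_\delta}{2}+\tfrac{1-\varepsilon_\delta}{2}$ exploiting the extra regularity of $\bar\eta$ to place $\nabla\mathcal{A}$ in $L^{2/(1+\varepsilon_\delta)}$ against $\nabla\theta\in L^{2/(1-\varepsilon_\delta)}$.
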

\begin{proof}
We rewrite \eqref{eq:elliptic_2} as the perturbation form:
\begin{align}\label{eq:elliptic_3}
\left\{
\begin{aligned}
  &-k\Delta \vartheta=G^3-k\dive_{I-\mathcal{A}}\nabla_{\mathcal{A}}\vartheta-k\dive\nabla_{I-\mathcal{A}}\vartheta\ &\text{in}&\ \Om,\\
  &k\nabla\vartheta\cdot \mathcal{N}_0=G^6+k\nabla_{I-\mathcal{A}}\vartheta\cdot\mathcal{N}+k\nabla\vartheta\cdot(\mathcal{N}_0-\mathcal{N})\ &\text{on}&\ \Sigma,\\
  &\vartheta=0\ &\text{on}&\ \Sigma_s.
\end{aligned}
\right.
\end{align}
We now employ fixed point theory to solve \eqref{eq:elliptic_3}. Suppose that $\theta\in W^{2, q_\delta}(\Om)$. Then we define the operator $T_\eta: W^{2, q_\delta}(\Om)\to W^{2, q_\delta}(\Om)$ via $\theta\mapsto\vartheta=T_\eta\theta$, where $\vartheta$ and $\theta$ satisfy
\begin{align}\label{eq:elliptic_4}
\left\{
\begin{aligned}
  &-k\Delta \vartheta=G^3-k\dive_{I-\mathcal{A}}\nabla_{\mathcal{A}}\theta-k\dive\nabla_{I-\mathcal{A}}\theta\ &\text{in}&\ \Om,\\
  &k\nabla\vartheta\cdot \mathcal{N}_0=G^6+k\nabla_{I-\mathcal{A}}\theta\cdot\mathcal{N}+k\nabla\theta\cdot(\mathcal{N}_0-\mathcal{N})\ &\text{on}&\ \Sigma,\\
  &\vartheta=0\ &\text{on}&\ \Sigma_s.
\end{aligned}
\right.
\end{align}
In order to use Theorem \ref{thm:second order}, we need to estimate the right side of \eqref{eq:elliptic_4}. First note that $\nabla \mathcal{A} \sim \nabla^2 \bar{\eta}$.
We use H\"older inequality, Sobolev embedding and trace theory to estimate
\begin{align}\label{est:ellip_3}
\begin{aligned}
  &\|\dive_{I-\mathcal{A}}\nabla_{\mathcal{A}}\theta\|_{L^{q_\delta}(\Om)}^2+\|\dive\nabla_{I-\mathcal{A}}\theta\|_{L^{q_\delta}(\Om)}^2\\
  &\lesssim \|I-\mathcal{A}\|_{L^\infty}^2(1+\|\nabla\mathcal{A}\|_{L^{2/(1+\varepsilon_\delta)}}^2)\|\nabla\theta\|_{L^{2/(1-\varepsilon_\delta)}}^2
  +\|I-\mathcal{A}\|_{L^\infty}^2(1+\|\mathcal{A}\|_{L^\infty}^2)\|\theta\|_{W^{2, q_\delta}}^2\\
  &\lesssim \|\eta\|_{W^{3-1/q_\delta, q_\delta}}^2\|\theta\|_{W^{2, q_\delta}}^2.
\end{aligned}
\end{align}
Similarly, we use the trace theory to see that
\begin{align}\label{est:ellip_4}
\begin{aligned}
  &\|\nabla_{I-\mathcal{A}}\theta\cdot\mathcal{N}\|_{W^{1-1/q_\delta, q_\delta}(\Sigma)}^2+\|\nabla\theta\cdot(\mathcal{N}_0-\mathcal{N})\|_{W^{1-1/q_\delta, q_\delta}(\Sigma)}^2\\
  &\lesssim\|\nabla_{I-\mathcal{A}}\theta\cdot(\mathcal{N}_0-\pa_1\bar{\eta}e_1)\|_{W^{1, q_\delta}(\Om)}^2+\|\nabla\theta\cdot\pa_1\bar{\eta}e_1\|_{W^{1, q_\delta}(\Om)}^2\\
  &\lesssim \left(\|\nabla\mathcal{A}\|_{L^{2/(1+\varepsilon_\delta)}}^2\|\mathcal{N}_0-\pa_1\bar{\eta}e_1\|_{L^\infty}^2+\|I-\mathcal{A}\|_{L^\infty}^2\|\nabla(\mathcal{N}_0-\pa_1\bar{\eta}e_1)\|_{L^{2/(1+\varepsilon_\delta)}}^2\right)\|\nabla\theta\|_{L^{2/(1-\varepsilon_\delta)}}\\
  &\quad+ \left(\|I-\mathcal{A}\|_{L^\infty}^2\|\mathcal{N}_0-\pa_1\bar{\eta}e_1\|_{L^\infty}^2+\|\pa_1\bar{\eta}\|_{L^\infty}^2\right)\|\theta\|_{W^{2, q_\delta}}^2+\|\pa_1\bar{\eta}\|_{L^{2/(1+\varepsilon_\delta)}}^2\|\nabla\theta\|_{L^{2/(1-\varepsilon_\delta)}}\\
  &\lesssim \|\eta\|_{W^{3-1/q_\delta, q_\delta}}^2\|\theta\|_{W^{2, q_\delta}}^2.
\end{aligned}
\end{align}
With estimates \eqref{est:ellip_3} and \eqref{est:ellip_4}, we now use Theorem \ref{thm:second order} to \eqref{eq:elliptic_4} to obtain that
\[
\|\vartheta_1-\vartheta_2\|_{W^{2, q_\delta}}^2\lesssim \|\eta\|_{W^{3-1/q_\delta, q_\delta}}^2\|\theta_1-\theta_2\|_{W^{2, q_\delta}}^2,
\]
for $T_\eta\theta_j=\vartheta_j$, $j=1, 2$.
Then we choose $\gamma_0$ sufficiently small such that
\[
\|\vartheta_1-\vartheta_2\|_{W^{2, q_\delta}}\le\f12\|\theta_1-\theta_2\|_{W^{2, q_\delta}},
\]
which yields $T_\eta$ is strictly contractive.
Thus, we use Banach's fixed point theory to deduce that \eqref{eq:elliptic_3} has a unique solution $\vartheta\in W^{2, q_\delta}(\Om)$.
\end{proof}

Finally, we consider the equation
\begin{align}\label{eq:elliptic_5}
\left\{
\begin{aligned}
  &-k\Delta_{\mathcal{A}}\vartheta=G^8\ &\text{in}&\ \Om,\\
  &k\nabla_{\mathcal{A}}\vartheta\cdot\mathcal{N}+\vartheta|\mathcal{N}|=G^9\ &\text{on}&\ \Sigma,\\
  &\vartheta=0\ &\text{on}&\ \Sigma_s.
\end{aligned}
\right.
\end{align}
\begin{theorem}\label{thm:elliptic_1}
  Let $\delta = \pm$. Suppose that $\|\eta\|_{W^{3-1/q_\delta, q_\delta}}<\gamma_0$ where $\gamma_0$ is the same as Theorem \ref{thm:elliptic} . Assume that $G^8\in L^{q_\delta}(\Om)$ and $G^9\in W^{1-1/q_\delta, q_\delta}(\Sigma)$. Then there exists a unique $\vartheta\in W^{2, q_\delta}(\Om)$ solving \eqref{eq:elliptic_5}. Moreover,
  \begin{align}\label{est:elliptic_1}
  \begin{aligned}
    \|\vartheta\|_{W^{2, q_\delta}(\Om)}^2
    \lesssim \|G^8\|_{L^{q_\delta}(\Om)}^2+\|G^9\|_{W^{1-1/q_\delta, q_\delta}(\Sigma)}^2.
  \end{aligned}
  \end{align}
\end{theorem}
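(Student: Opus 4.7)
The plan is to combine the elliptic regularity already established in Theorem \ref{thm:elliptic} with a Lax--Milgram type weak existence in $H^1$, treating the Robin-type zero-order term $\vartheta|\mathcal{N}|$ on $\Sigma$ as a lower-order perturbation that can be moved into the source data once an a priori $H^1$ bound is available. Uniqueness will follow at the weak level from coercivity and will be inherited by the $W^{2,q_\delta}$ solution.

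First, I would construct a weak solution $\vartheta \in {}_0H^1(\Om)$. Consider the bilinear form
\[
B[\vartheta,\phi] := \int_\Om k\, \nabla_{\mathcal{A}}\vartheta \cdot \nabla_{\mathcal{A}}\phi\, J\, dx + \int_\Sigma \vartheta \, \phi\, |\mathcal{N}|\, dS
\]
on ${}_0H^1(\Om)$. Since $\|\eta\|_{W^{3-1/q_\delta, q_\delta}}<\gamma_0 \ll 1$, the matrix $\mathcal{A}$ is close to the identity and $J$ is close to $1$, so Lemma \ref{lem:equivalence_norm} yields $B[\vartheta,\vartheta] = \|\vartheta\|_{\mathcal{H}^1}^2 \gtrsim \|\vartheta\|_{H^1}^2$; continuity is clear. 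The linear functional $\phi \mapsto \int_\Om G^8\phi\,J + \int_\Sigma G^9\phi$ is continuous on ${}_0H^1(\Om)$ by Hölder, Sobolev embedding (using $q_\delta<2$) and the trace theorem. The Lax--Milgram theorem then furnishes a unique $\vartheta \in {}_0H^1(\Om)$ with
\[
\|\vartheta\|_{H^1(\Om)}^2 \lesssim \|G^8\|_{L^{q_\delta}(\Om)}^2 + \|G^9\|_{W^{1-1/q_\delta,q_\delta}(\Sigma)}^2.
\]

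Next I would upgrade this weak solution to $W^{2,q_\delta}(\Om)$. Rewrite \eqref{eq:elliptic_5} as a Neumann-type problem with modified data,
\[
\left\{
\begin{aligned}
  & -k\Delta_{\mathcal{A}}\vartheta = G^8 && \text{in } \Om,\\
  & k\nabla_{\mathcal{A}}\vartheta\cdot\mathcal{N} = \widetilde{G}^9 := G^9 - \vartheta|\mathcal{N}| && \text{on } \Sigma,\\
  & \vartheta = 0 && \text{on } \Sigma_s,
\end{aligned}
\right.
\]
and apply Theorem \ref{thm:elliptic}. To invoke that theorem I must check $\widetilde{G}^9 \in W^{1-1/q_\delta, q_\delta}(\Sigma)$. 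Because $q_\delta = 2/(2-\varepsilon_\delta) < 2$ from \eqref{parameters}, the Sobolev embedding on the one-dimensional surface gives $H^{1/2}(\Sigma)\hookrightarrow W^{1-1/q_\delta, q_\delta}(\Sigma)$ (indeed $1-1/q_\delta < 1/2$ and the Sobolev line is satisfied). Combining with the trace inequality and the fact that $|\mathcal{N}|$ is smooth and bounded on $\Sigma$,
\[
\|\vartheta|\mathcal{N}|\|_{W^{1-1/q_\delta,q_\delta}(\Sigma)} \lesssim \|\vartheta\|_{H^{1/2}(\Sigma)} \lesssim \|\vartheta\|_{H^1(\Om)},
\]
so $\widetilde{G}^9 \in W^{1-1/q_\delta,q_\delta}(\Sigma)$. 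Theorem \ref{thm:elliptic} then produces $\vartheta \in W^{2,q_\delta}(\Om)$ with
\[
\|\vartheta\|_{W^{2,q_\delta}(\Om)}^2 \lesssim \|G^8\|_{L^{q_\delta}(\Om)}^2 + \|\widetilde{G}^9\|_{W^{1-1/q_\delta,q_\delta}(\Sigma)}^2 \lesssim \|G^8\|_{L^{q_\delta}(\Om)}^2 + \|G^9\|_{W^{1-1/q_\delta,q_\delta}(\Sigma)}^2,
\]
after absorbing the $H^1$-norm of $\vartheta$ with the Lax--Milgram estimate obtained in the first step. Uniqueness in $W^{2,q_\delta}$ is immediate: any two such solutions differ by a weak $H^1$ solution of the homogeneous problem, which vanishes by coercivity of $B$.

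The main technical obstacle is in fact just the Sobolev embedding check and keeping the Jacobian factor $J$ correctly placed in the weak formulation so that $B$ is genuinely symmetric and coercive; both items are resolved above using Lemma \ref{lem:equivalence_norm} and the range of $q_\delta$ dictated by \eqref{parameters}. Everything else is a direct reduction to Theorem \ref{thm:elliptic}, so no new corner analysis is required.
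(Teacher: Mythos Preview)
Your argument is correct, but it takes a genuinely different route from the paper. The paper does not build a weak $H^1$ solution first. Instead it views the Robin term as a compact perturbation: writing $R\vartheta = (0,\vartheta|\mathcal{N}|\big|_\Sigma,0)$, the trace map $W^{2,q_\delta}(\Om)\to W^{2-1/q_\delta,q_\delta}(\Sigma)$ composed with the compact embedding $W^{2-1/q_\delta,q_\delta}(\Sigma)\hookrightarrow W^{1-1/q_\delta,q_\delta}(\Sigma)$ makes $R$ compact. Since $T_\eta$ from Theorem~\ref{thm:elliptic} is an isomorphism, $T_\eta+R$ is Fredholm of index zero; the energy identity $k\int_\Om|\nabla_{\mathcal{A}}\vartheta|^2J+\int_\Sigma|\vartheta|^2|\mathcal{N}|=0$ shows the kernel is trivial, so $T_\eta+R$ is an isomorphism and the estimate follows from the open mapping theorem.

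Your Lax--Milgram plus bootstrap approach is more elementary and constructive, while the paper's Fredholm argument is shorter and sidesteps any explicit multiplier estimate. One small imprecision in your write-up: $|\mathcal{N}|=\sqrt{1+|\pa_1\zeta_0+\pa_1\eta|^2}$ is not smooth, since it depends on $\eta$. However the conclusion survives: from $\eta\in W^{3-1/q_\delta,q_\delta}(\Sigma)$ one has $\pa_1\eta\in W^{2-1/q_\delta,q_\delta}(\Sigma)\hookrightarrow C^{0,\varepsilon_\delta}(\Sigma)$, and a $C^{0,\varepsilon_\delta}$ function is a multiplier on $W^{1-1/q_\delta,q_\delta}(\Sigma)=W^{\varepsilon_\delta/2,q_\delta}(\Sigma)$ since $\varepsilon_\delta>\varepsilon_\delta/2$. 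With that correction your chain of estimates goes through.
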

\begin{proof}
  For simplicity, we define the trace operator $R: W^{2, q_\delta}(\Om) \to L^{q_\delta}(\Om)\times W^{1-1/q_\delta, q_\delta}(\Sigma)\times W^{2-1/q_\delta, q_\delta}(\Sigma_s)$ via
  \[
  R\vartheta=(0, (\vartheta|\mathcal{N}|)|_{\Sigma}, 0).
   \]
  Note that $R$ is compact, since the embedding $W^{2-1/q_\delta, q_\delta}(\Sigma_s)\hookrightarrow W^{1-1/q_\delta, q_\delta}(\Sigma_s)$ is compact. So the operator $T_\eta+R$ is Fredholm, which means the dimensions of kernel and cokernel of $T_\eta+R$ are both finite. However, for $\vartheta\in \ \text{Ker} (T_\eta+R)$, we multiply the first equation in \eqref{eq:elliptic_5} by $\vartheta J$, and integrate by parts over $\Om$ to see that
  \begin{align}
  k\int_\Om |\nabla_\mathcal{A}\vartheta|^2J+\int_{\Sigma}|\vartheta|^2|\mathcal{N}|=0,
  \end{align}
  which implies $\vartheta=0$. So $T_\eta+R$ is injective. Then Fredholm alternative tells us that $T_\eta+R$ is also surjective. Hence $T_\eta+R$ is an isomorphism. Thus \eqref{eq:elliptic_5} is uniquely solvable, i.e. $\vartheta = (T_\eta+R)^{-1}(G^8, G^9, 0)$ and the estimate \eqref{est:elliptic_1} holds.
\end{proof}

\section{Solution to $\mathcal{A}$-linear Heat Equations}\label{sec:linear}

In this section, we will establish the well-posedness theory to the $\mathcal{A}$-linear heat equations. We first introduce the definition of weak solutions, then use the Galerkin method to construct weak solutions and strong solutions via elliptic theory. Finally, we use the two--tier energy method to construct solutions in higher regular spaces.

\subsection{Weak Solutions}

\begin{definition} \label{def:weak}  Suppose that $\eta$ is given as well as $\mathcal{A}$, $J$, $\mathcal{N}$, etc. that are determined in terms of $\eta$ as in \eqref{eq:Jaccobian}, \eqref{eq:components}. We say that $\theta \in L^\infty([0,T]; {}_0H^0)\cap L^2([0,T]; H^1)$ is a weak solution to \eqref{eq:linear_heat}, provided that
  \begin{equation}\label{eq:weak_limit_1}
  \begin{aligned}
  (\pa_t \theta, \phi)_{\mathcal{H}^0} + k(\nabla_{\mathcal{A}} \theta, \nabla_{\mathcal{A}}\phi)_{\mathcal{H}^0} + \int_{\Sigma}\theta \phi |\mathcal{N}| = \int_\Om F^8 \phi J + \int_{\Sigma}F^9 \phi
  \end{aligned}
  \end{equation}
  for each $\phi \in {}_0H^1(\Om)$, provided that $F^8 \in L^{q_+}(\Om)$ and $F^9 \in W^{1-1/q_+, q_+}(\Sigma)$.
\end{definition}

We now ignore the existence of weak solutions and record the uniqueness on some integral equalities and bounds satisfied by weak solutions, since the existence of weak solutions to \eqref{eq:linear_heat} will arise  as a byproduct of the construction of strong solutions to \eqref{eq:linear_heat}.

\begin{lemma}
  Suppose that $\theta$ is a weak solution to \eqref{eq:weak_limit_1}. Then for almost every $t\in [0,T]$,
  \begin{equation}\label{equality}
  \begin{aligned}
  \frac12\|\theta(t)\|_{\mathcal{H}^0}^2+\frac k2\int_0^t\|\nabla_{\mathcal{A}} \theta\|_{\mathcal{H}^0}^2\,\mathrm{d}s+\int_0^t\|\sqrt{|\mathcal{N}|}\theta\|_{0,\Sigma}^2
  =\frac12\|\theta(0)\|_{\mathcal{H}^0}^2 + \int_\Om F^8\theta J + \int_{\Sigma}F^9\theta.
    \end{aligned}
  \end{equation}
  Moreover,
  \begin{equation}\label{bound}
  \begin{aligned}
  &\sup_{0\le t\le T}\|\theta(t)\|_{\mathcal{H}^0}^2+\|\theta\|_{L^2H^0(\Sigma)}^2+\|\nabla_{\mathcal{A}} \theta \|_{\mathcal{H}^0_T}^2 \\
  &\lesssim \exp(C_0(\eta)T)\Big(\|\theta(0)\|_{\mathcal{H}^0}^2+\|F^8\|_{L^{q_+}(\Om)}^2 + \|F^9\|_{W^{1-1/q_+, q_+}(\Sigma)}^2\Big),
  \end{aligned}
  \end{equation}
  where $C_0(\eta):=\sup_{0\le t\le T}\|\pa_tJK\|_{L^\infty}$.
\end{lemma}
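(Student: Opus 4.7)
The plan is to derive the energy identity \eqref{equality} by inserting $\phi = \theta$ into the weak formulation \eqref{eq:weak_limit_1} and integrating in time, and then to obtain the bound \eqref{bound} by applying Young's inequality together with Gronwall. The main subtlety is that $\theta$ itself is only guaranteed to lie in $L^\infty L^2 \cap L^2 H^1$, and the metric weight $J$ is time-dependent. Reading off \eqref{eq:weak_limit_1}, we have $\pa_t\theta \in L^2([0,T];({}_0H^1)^*)$, and a standard time-mollification argument (e.g.\ Steklov averages) will give that $t \mapsto \|\theta(t)\|_{\mathcal{H}^0}^2 = \int_\Om \theta^2 J$ is absolutely continuous with the chain rule
$$\tfrac12 \tfrac{d}{dt}\|\theta\|_{\mathcal{H}^0}^2 = (\pa_t\theta, \theta)_{\mathcal{H}^0} + \tfrac12 \int_\Om \theta^2\, \pa_t J.$$
Setting $\phi = \theta$ in \eqref{eq:weak_limit_1} and invoking this chain rule yields
$$\tfrac12 \tfrac{d}{dt}\|\theta\|_{\mathcal{H}^0}^2 + k\|\nabla_{\mathcal{A}}\theta\|_{\mathcal{H}^0}^2 + \|\sqrt{|\mathcal{N}|}\theta\|_{L^2(\Sigma)}^2 = \int_\Om F^8\theta J + \int_\Sigma F^9 \theta + \tfrac12 \int_\Om \theta^2 \pa_t J,$$
and integration in time over $[0,t]$ produces \eqref{equality} (with the $\pa_tJ$ correction being absorbed below into the exponential factor).

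For \eqref{bound}, I would estimate the right-hand side by H\"older together with the 2D Sobolev embedding $H^1(\Om) \hookrightarrow L^{q_+'}(\Om)$ (where $1/q_+ + 1/q_+' = 1$) and the trace inequality $\|\theta\|_{L^2(\Sigma)} \lesssim \|\theta\|_{H^1(\Om)}$:
$$\left|\int_\Om F^8 \theta J\right| + \left|\int_\Sigma F^9 \theta\right| \lesssim \left(\|F^8\|_{L^{q_+}(\Om)} + \|F^9\|_{W^{1-1/q_+, q_+}(\Sigma)}\right)\|\theta\|_{H^1(\Om)}.$$
By Lemma \ref{lem:equivalence_norm} the $H^1$ and $\mathcal{H}^1$ norms on $\theta$ are comparable, and $\|\theta\|_{\mathcal{H}^1}^2$ controls both dissipative terms on the left. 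Young's inequality then lets me absorb a small multiple of $\|\theta\|_{\mathcal{H}^1}^2$ into the LHS while the $\pa_tJ$ term is controlled by $\frac12\|\pa_tJK\|_{L^\infty}\|\theta\|_{\mathcal{H}^0}^2 \leq \frac12 C_0(\eta)\|\theta\|_{\mathcal{H}^0}^2$. This produces the differential inequality
$$\tfrac{d}{dt}\|\theta\|_{\mathcal{H}^0}^2 + c_0\|\theta\|_{\mathcal{H}^1}^2 \lesssim C_0(\eta)\|\theta\|_{\mathcal{H}^0}^2 + \|F^8\|_{L^{q_+}(\Om)}^2 + \|F^9\|_{W^{1-1/q_+, q_+}(\Sigma)}^2,$$
and Gronwall's inequality delivers the exponential factor $\exp(C_0(\eta)T)$ on the pointwise-in-time $\mathcal{H}^0$ norm; subsequent integration in time of the dissipative term yields the remaining contributions $\|\nabla_\mathcal{A}\theta\|_{\mathcal{H}^0_T}^2$ and $\|\theta\|_{L^2 H^0(\Sigma)}^2$ in \eqref{bound}.

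The main obstacle is rigorously justifying the chain rule for $\|\theta(t)\|_{\mathcal{H}^0}^2$ at the low regularity level of Definition \ref{def:weak}, owing to the time-dependent weight $J$. The standard Steklov-average / Lions--Magenes approach suffices, but one has to be careful that the commutator between time-mollification and multiplication by $J(\cdot, t)$ vanishes in the limit; equivalently, as is natural given the author's upcoming Galerkin construction, one may first verify the identity for smooth approximants and pass to the limit using weak convergence, upgrading inequality to equality via the energy method. Once this is in hand, the remaining steps are routine Hölder, trace, Sobolev, and Gronwall estimates.
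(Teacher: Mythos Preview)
Your proposal is correct and follows essentially the same route as the paper: test with $\phi=\theta$, apply H\"older/Sobolev/trace estimates and Young's inequality to the forcing terms, control the $\tfrac12\int_\Om \theta^2\,\pa_t J$ correction by $\tfrac12\|\pa_tJK\|_{L^\infty}\|\theta\|_{\mathcal{H}^0}^2$, and close with Gronwall. You are in fact more careful than the paper about the time-dependent weight $J$ and the chain-rule justification---the paper simply tests with $\phi=\theta\chi_{[0,t]}$ and writes down the identity, then inserts the $\pa_tJK$ term in the subsequent inequality without comment.
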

\begin{proof}
  The identity \eqref{equality} follows directly from \eqref{eq:weak_limit_1} by using the test function $\phi =\theta\chi_{[0,t]}$, where $\chi$ is the characteristic function on $[0,t]$.
  From \eqref{equality}, we can directly derive the inequality by H\"older inequality:
  \begin{align}
  \begin{aligned}
&\frac12\|\theta(t)\|_{\mathcal{H}^0}^2+\frac k2\int_0^t\|\nabla_{\mathcal{A}}\theta(s)\|_{\mathcal{H}^0 }^2+\|\sqrt{|\mathcal{N}|}\theta(s)\|_{0,\Sigma}^2\,\mathrm{d}s\\
  &\le\frac12\|\theta(0)\|_{\mathcal{H}^0}^2+\|J\|_{L^\infty(\Om)}\|F^8\|_{L^{q_+}(\Om)}\|\theta\|_{L^{q_+/(q_+-1)}(\Om)}\\ &\quad+ \|F^9\|_{L^{1/(1-\varepsilon_+)}(\Sigma)}\|\theta\|_{L^{1/\varepsilon_+}(\Sigma)} + \frac12\|\pa_tJK\|_{L^\infty}\|\theta(t)\|_{\mathcal{H}^0}^2.
  \end{aligned}
  \end{align}
  The Cauchy inequality and Sobolev embedding inequality then imply that
  \[
  \begin{aligned}
&\frac12\|\theta(t)\|_{\mathcal{H}^0}^2+\frac k2\|\nabla_{\mathcal{A}}\theta\|_{\mathcal{H}^0_t}^2 + \|\sqrt{|\mathcal{N}|}\theta\|_{L^2H^0(\Sigma)}^2\\
&\le \frac12\|\theta(0)\|_{\mathcal{H}^0}^2+ 2\|J\|_{L^\infty(\Om)}^2\|F^8\|_{L^{q_+}(\Om)}^2+ C \|F^9\|_{L^{1/(1-\varepsilon_+)}(\Sigma)}^2 + \frac12\|\pa_tJK\|_{L^\infty}\|\theta(t)\|_{\mathcal{H}^0_t}^2.
  \end{aligned}
  \]
  Then Gronwall's inequality implies \eqref{bound}.
\end{proof}
\begin{proposition}\label{prop:unique}
  Weak solutions to \eqref{eq:weak_limit_1}
  are unique.
\end{proposition}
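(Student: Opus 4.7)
The plan is to exploit the linearity of \eqref{eq:weak_limit_1} together with the a priori bound \eqref{bound} that was just established. Suppose $\theta_1$ and $\theta_2$ are two weak solutions in the sense of Definition \ref{def:weak} corresponding to the same data $(F^8, F^9, \theta(0))$. Their difference $\theta := \theta_1 - \theta_2$ then lies in the same function class $L^\infty([0,T];{}_0H^0) \cap L^2([0,T];H^1)$, and since each term appearing in \eqref{eq:weak_limit_1} is linear in $\theta$, $\theta$ is itself a weak solution with forcing $F^8 \equiv 0$, boundary forcing $F^9 \equiv 0$, and initial datum $\theta(0) \equiv 0$.

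The key step is then to apply the bound \eqref{bound} directly to $\theta$. Since $\eta$ is fixed, the constant $C_0(\eta) = \sup_{0 \le t \le T} \|\pa_t J K\|_{L^\infty}$ is finite and the factor $\exp(C_0(\eta) T)$ is a harmless positive constant. With all three of $\|\theta(0)\|_{\mathcal{H}^0}$, $\|F^8\|_{L^{q_+}(\Om)}$, and $\|F^9\|_{W^{1-1/q_+,q_+}(\Sigma)}$ equal to zero, the right-hand side of \eqref{bound} vanishes, forcing
\begin{equation*}
\sup_{0 \le t \le T} \|\theta(t)\|_{\mathcal{H}^0}^2 + \|\nabla_{\mathcal{A}} \theta\|_{\mathcal{H}^0_T}^2 \le 0.
\end{equation*}
By Lemma \ref{lem:equivalence_norm}, the $\mathcal{H}^0$-norm is equivalent to the $L^2(\Om)$-norm, so $\theta(t) \equiv 0$ in $L^2(\Om)$ for almost every $t \in [0,T]$, i.e.\ $\theta_1 = \theta_2$ as elements of $L^\infty([0,T]; {}_0H^0)$.

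There is essentially no obstacle here since the heavy lifting has already been performed in deriving the energy identity \eqref{equality} and the Gronwall-based bound \eqref{bound}. The one subtlety worth checking is that the initial trace $\theta(0)$ makes sense for each weak solution individually so that the difference really has zero initial datum; this follows because \eqref{equality} already uses $\|\theta(0)\|_{\mathcal{H}^0}^2$, so $\theta(0) \in {}_0H^0$ is well-defined for any weak solution in this class. Thus the uniqueness assertion reduces to a one-line application of the a priori estimate, and no further argument is required.
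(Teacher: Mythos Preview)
Your proof is correct and follows exactly the same approach as the paper: take the difference of two weak solutions, observe it solves the homogeneous problem with zero initial data, and apply the a priori bound \eqref{bound} to conclude it vanishes. The paper's proof is a terse three-line version of what you wrote; your additional remarks on norm equivalence and the well-definedness of the initial trace are fine but not strictly necessary.
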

\begin{proof}
  If $\theta^1$ and $\theta^2$ are both weak solutions to \eqref{eq:weak_limit_1}, then $\vartheta=\theta^1-\theta^2$ is a weak solution to \eqref{eq:weak_limit_1} with $F^8 = F^9 =0$ and the initial data $\vartheta(0)=0$. The bound \eqref{bound} implies that $\vartheta=0$. Hence, weak solutions to \eqref{eq:weak_limit_1} are unique.
\end{proof}

\subsection{Construction of Strong Solutions}

 Suppose that $\eta$, as well as $\mathcal{A}$, $J$, $\mathcal{N}$, etc. that are determined in terms of $\eta$, are given. We will use Galerkin method to construct strong solutions.

\subsubsection{Galerkin Approximation for Initial Data}

  In the Galerkin method for $t>0$, we use the tw--tier method to construct solutions: we construct the approximated solutions $\theta^m$ and $\pa_t\theta^m$ and then pass to the limit to find $\theta$ and $\pa_t\theta$ in the first step, where we need the convergence of initial data $(\theta_0^m, \pa_t\theta^m(0))$ to $(\theta_0, \pa_t\theta(0))$;  in the following second step, we use $(\pa_t\theta^m, \theta)$ to construct $\pa_t^2\theta^m$, where we need the convergence of initial data $(\pa_t\theta^m(0), \pa_t^2\theta^m(0))$ to $(\pa_t\theta(0), \pa_t^2\theta(0))$.  As a preliminary, we can use the following Lemma \ref{lem:basis_initial}, Propositions  \ref{prop:bound_dtu} and \ref{prop:converge_dt2u0} to prove that
 \begin{theorem}\label{thm:initial_convergence}
  There exists a basis $\{w^k\}_{k=1}^\infty$ for the Hilbert space $\mathcal{H}^1(t)$ for each $t\ge0$. When $t=0$, $\mathcal{H}^1_m(0):=span \{w^1(0), \ldots, w^m(0)\}$, the Galerkin approximated initial data $(\pa_t\theta^m(0), \pa_t^2\theta^m(0))\in \mathcal{H}^1_m(0)$ obeys the estimates
  \[
  \begin{aligned}
\|\pa_t\theta^m(0)\|_{\mathcal{H}^0} & \lesssim \|\pa_t\theta(0)\|_{H^0}+\|\theta_0\|_{L^2(\Sigma)}, \\
\|\pa_t^2\theta^m(0)\|_{\mathcal{H}^0} & \lesssim \|\pa_t^2\theta(0)\|_{H^0}+\|\pa_t\theta(0)\|_{H^1}+\|\pa_t\theta(0)\|_{L^2(\Sigma)}.
  \end{aligned}
  \]
  Moreover, $(\pa_t\theta^m(0), \pa_t^2\theta^m(0))$ converge to $(\pa_t\theta(0), \pa_t^2\theta(0))$ weakly in $\mathcal{H}^0$.
\end{theorem}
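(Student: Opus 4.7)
My plan is to address the three claims of Theorem \ref{thm:initial_convergence} in order: construction of the basis, derivation of the $\mathcal{H}^0$-bounds on the Galerkin initial data, and extraction of their weak convergence. For the basis, I would exploit the norm equivalence in Lemma \ref{lem:equivalence_norm} to reduce to producing a countable, linearly independent, dense family in $ {}_0 H^1(\Om)$. A natural choice is a complete system of eigenfunctions $\{w^k\}$ of the Robin--Dirichlet Poisson problem $-k\Delta w^k = \lambda_k w^k$ with $k\nabla w^k\cdot\nu + w^k = 0$ on $\Sigma$ and $w^k = 0$ on $\Sigma_s$, obtained through the compact embedding $ {}_0 H^1 \hookrightarrow L^2$ together with the elliptic regularity of Section 2 (Theorems \ref{thm:second order}--\ref{thm:elliptic_1}), which guarantees each $w^k \in W^{2,q_+}(\Om)$ and so makes integration by parts legitimate. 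Since this family is $t$-independent, the same set serves as a basis of $\mathcal{H}^1(t)$ for every $t\ge 0$ by Lemma \ref{lem:equivalence_norm}.

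For the bounds, I would let $\theta^m(0)$ be the $\mathcal{H}^1(0)$-orthogonal projection of $\theta_0$ onto $\mathcal{H}^1_m(0)$ and then define $\pa_t\theta^m(0)\in\mathcal{H}^1_m(0)$ as the unique solution of the Galerkin identity
\[
(\pa_t\theta^m(0),\phi)_{\mathcal{H}^0(0)} + k(\nabla_{\mathcal{A}}\theta^m(0),\nabla_{\mathcal{A}}\phi)_{\mathcal{H}^0(0)} + (\theta^m(0)|\mathcal{N}(0)|,\phi)_{L^2(\Sigma)} = (F^8(0)J(0),\phi)_{L^2} + (F^9(0),\phi)_{L^2(\Sigma)}
\]
for every $\phi\in\mathcal{H}^1_m(0)$, which is solvable by invertibility of the Gram matrix. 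The decisive algebraic step is to rewrite $F^8(0) = \pa_t\theta(0) - k\Delta_{\mathcal{A}}\theta_0$ and $F^9(0) = k\nabla_{\mathcal{A}}\theta_0\cdot\mathcal{N}(0) + \theta_0|\mathcal{N}(0)|$ using the strong form of \eqref{eq:linear_heat} at $t=0$, then integrate the $\Delta_{\mathcal{A}}\theta_0$ term by parts, so that all the residual expressions reorganize as $(\theta_0-\theta^m(0),\phi)_{\mathcal{H}^1(0)}$. The $\mathcal{H}^1$-orthogonality of the projection makes this remainder vanish, collapsing the identity to the clean form $(\pa_t\theta^m(0),\phi)_{\mathcal{H}^0(0)} = (\pa_t\theta(0), \phi J(0))_{L^2}$ for all $\phi\in\mathcal{H}^1_m(0)$; testing with $\phi = \pa_t\theta^m(0)$ and applying Cauchy--Schwarz together with Lemma \ref{lem:equivalence_norm} yields the first bound (in fact a slightly tighter one than stated, which automatically controls the extra $\|\theta_0\|_{L^2(\Sigma)}$ term).

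The construction of $\pa_t^2\theta^m(0)$ proceeds from the time-differentiated Galerkin identity evaluated at $t=0$. Differentiating the metric ingredients $J,\mathcal{A},\mathcal{N}$ produces commutator terms proportional to $\pa_t J(0)$, $\pa_t\mathcal{A}(0)$, $\pa_t\mathcal{N}(0)$, each controlled in $L^\infty$ by the regularity of $\pa_t\eta(0)$ imposed through the compatibility conditions \eqref{compat_C2}. Using once more the substitution from \eqref{eq:linear_heat} together with its time derivative and the $\mathcal{H}^1$-orthogonality of $\theta^m(0)$, the differentiated identity collapses up to commutators to $(\pa_t^2\theta^m(0),\phi)_{\mathcal{H}^0(0)} = (\pa_t^2\theta(0),\phi J(0))_{L^2} + \mathrm{(commutators)}$; testing with $\phi = \pa_t^2\theta^m(0)$ and using Sobolev trace on $\Sigma$ delivers the stated bound with $\|\pa_t^2\theta(0)\|_{H^0}$, $\|\pa_t\theta(0)\|_{H^1}$, and $\|\pa_t\theta(0)\|_{L^2(\Sigma)}$ on the right-hand side. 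The weak convergence is then essentially automatic: the uniform bounds extract weak $\mathcal{H}^0$-limits $\tilde\theta,\tilde\theta_2$ along subsequences, and passing to the limit in the collapsed identities with test functions drawn from $\bigcup_m\mathcal{H}^1_m(0)$ (dense in $\mathcal{H}^1(0)$ and hence in $\mathcal{H}^0(0)$) identifies $\tilde\theta = \pa_t\theta(0)$ and $\tilde\theta_2 = \pa_t^2\theta(0)$; uniqueness of the limit upgrades this to convergence of the full sequence.

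I expect the main obstacle to be the $\pa_t^2\theta^m(0)$ estimate. Unlike the first-derivative case, the time-differentiated Galerkin identity produces commutators that pair $\nabla\pa_t\theta^m(0)$ against $\pa_t^2\theta^m(0)$, and the $\mathcal{H}^1$-orthogonality of $\theta^m(0)$ alone does not absorb these. The cleanest remedy I foresee is to also introduce an auxiliary $\mathcal{H}^1(0)$-projection of $\pa_t\theta(0)$, so that a second layer of orthogonality eliminates the leading-order commutator and leaves only the tame trace contribution $\|\pa_t\theta(0)\|_{L^2(\Sigma)}$ and the tangential regularity $\|\pa_t\theta(0)\|_{H^1}$ on the right-hand side. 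Handling these commutators uniformly in $m$ is the delicate step; the rest is bookkeeping against Lemma \ref{lem:equivalence_norm} and the smallness of $\eta$.
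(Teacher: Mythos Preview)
Your proposal is correct and follows essentially the same route as the paper: the collapsed identity $(\pa_t\theta^m(0),\phi)_{\mathcal{H}^0}=(\pa_t\theta(0),\phi)_{\mathcal{H}^0}$ via $\mathcal{H}^1(0)$-orthogonality of the projection is exactly the paper's \eqref{eq:g_1}, and your proposed auxiliary $\mathcal{H}^1(0)$-projection of $\pa_t\theta(0)$ for the second-order bound is precisely the paper's two-tier construction $(\pa_t\theta)^m$ leading to \eqref{eq:g_2}. The only cosmetic difference is the basis---the paper builds $\{w^k\}$ spectrally from the $\mathcal{H}^1(0)$ inner-product operator (Lemma~\ref{lem:basis_initial}) to obtain simultaneous $\mathcal{H}^0(0)/\mathcal{H}^1(0)$ orthonormality, whereas you use Robin--Dirichlet Poisson eigenfunctions---but since you explicitly take the $\mathcal{H}^1(0)$-orthogonal projection rather than a coefficient truncation, this distinction is immaterial to the argument.
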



In order to construct the approximate solutions $\pa_t\theta^m(t)$ for the linear system \eqref{eq:linear_heat}, we need to construct the initial data $\pa_t\theta^m(0)$ and prove that they are uniformly bounded. Our construction is much subtle due to the orthogonality of basis of $\mathcal{H}^0(0)$ and $\mathcal{H}^1(0)$. For the orthogonality,  we define the map $\mathcal{S}:\mathcal{H}^1(0)\rightarrow (\mathcal{H}^1(0))^\ast$ via
  $\left<\mathcal{S}\theta, \phi\right>_{(\mathcal{H}^1(0))^\ast}=(\theta, \phi)_{\mathcal{H}^1(0)},\quad \forall \theta, \phi \in \mathcal{H}^1(0)$.
  By Riesz representation theorem, $\mathcal{S}$ is an isometric isomorphism. We then define $A=\mathcal{S}^{-1}: (\mathcal{H}^1(0))^\ast \rightarrow \mathcal{H}^1(0)$ so that for any $f\in (\mathcal{H}^1(0))^\ast$, $Af\in \mathcal{H}^1(0)$ is uniquely determined via
  $\left<f, \phi\right>_{(\mathcal{H}^1(0))^\ast}=(Af, \phi)_{\mathcal{W}(0)},\quad \forall \phi \in \mathcal{H}^1(0)$,
  that is equivalent to say that $Af=\theta$ is a weak solution to $\mathcal{S}\theta=f$.

 \begin{lemma}\label{lem:basis_initial}
The operator $A$ restricted on $\mathcal{H}^0(0)$ is a compact, positive and self-adjoint operator. So the eigenvalues of $A$ are $\{\lambda_k\}_{k=1}^\infty$ such that $
0<\lambda_1\le\lambda_2\le\cdots$
and
$\lambda_k\to\infty$, as $ k\to\infty$.
Finally, there exists an orthonormal basis $\{w^k\}_{k=1}^\infty$ of $\mathcal{H}^0(0)$, where $w^k\in \mathcal{H}^1(0)$ is an eigenfunction of $\mathcal{S}$  corresponding to $\lambda_k$:
$
\mathcal{S}w^k=\lambda_kw^k.
$
Moreover, $\left\{\frac{w^k}{\sqrt{\lambda_k}}\right\}_{k=1}^\infty$ is an orthonormal basis of $\mathcal{H}^1(0)$.
  \end{lemma}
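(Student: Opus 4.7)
The plan is to recognize $A$ restricted to $\mathcal{H}^0(0)$ as a compact, self-adjoint, positive operator on the Hilbert space $\mathcal{H}^0(0)$, and then invoke the Hilbert--Schmidt spectral theorem. The only nontrivial ingredient is compactness, which I will obtain by transplanting the Rellich--Kondrachov embedding ${}_0H^1(\Omega)\hookrightarrow L^2(\Omega)$ through the norm equivalence of Lemma \ref{lem:equivalence_norm}; the remainder is essentially bookkeeping between the two Hilbert structures $(\cdot,\cdot)_{\mathcal{H}^0(0)}$ and $(\cdot,\cdot)_{\mathcal{H}^1(0)}$.

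First I would show that $A$ makes sense as a bounded operator $\mathcal{H}^0(0)\to\mathcal{H}^0(0)$. Since $\mathcal{H}^1(0)\hookrightarrow\mathcal{H}^0(0)$ continuously, Riesz representation embeds $\mathcal{H}^0(0)$ into $(\mathcal{H}^1(0))^\ast$ via $f\mapsto(f,\cdot)_{\mathcal{H}^0(0)}$. Composing with the given map $A:(\mathcal{H}^1(0))^\ast\to\mathcal{H}^1(0)$ and the inclusion $\mathcal{H}^1(0)\hookrightarrow\mathcal{H}^0(0)$ yields a bounded operator, still denoted $A$, characterized for $f\in\mathcal{H}^0(0)$ by
\begin{equation*}
(Af,\phi)_{\mathcal{H}^1(0)}=(f,\phi)_{\mathcal{H}^0(0)}\qquad\text{for all }\phi\in\mathcal{H}^1(0).
\end{equation*}
Self-adjointness of $A$ on $\mathcal{H}^0(0)$ is immediate: choosing $\phi=Ag$ yields $(Af,Ag)_{\mathcal{H}^1(0)}=(f,Ag)_{\mathcal{H}^0(0)}$, and swapping the roles of $f$ and $g$ together with the symmetry of $(\cdot,\cdot)_{\mathcal{H}^1(0)}$ produces $(Af,g)_{\mathcal{H}^0(0)}=(f,Ag)_{\mathcal{H}^0(0)}$. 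Positivity is direct from $(Af,f)_{\mathcal{H}^0(0)}=\|Af\|_{\mathcal{H}^1(0)}^2\ge 0$; if the quantity vanishes then $Af=0$ in $\mathcal{H}^1(0)$, and a Poincar\'e inequality forces $f=0$ in $\mathcal{H}^0(0)$.

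Next I would establish compactness. By Lemma \ref{lem:equivalence_norm}, $\mathcal{H}^0(0)$ and $\mathcal{H}^1(0)$ are norm-equivalent to $L^2(\Omega)$ and ${}_0H^1(\Omega)$, respectively, so the inclusion $\mathcal{H}^1(0)\hookrightarrow\mathcal{H}^0(0)$ is compact by the Rellich--Kondrachov theorem on the bounded Lipschitz domain $\Omega$. Since $A$ factors as $\mathcal{H}^0(0)\to\mathcal{H}^1(0)\hookrightarrow\mathcal{H}^0(0)$ with the first arrow bounded, $A$ is compact on $\mathcal{H}^0(0)$. This compactness statement is where I expect the subtlest issue, because one must verify that the equivalence constants in Lemma \ref{lem:equivalence_norm} at $t=0$ genuinely respect the trace term $(\theta|\mathcal{N}|,\psi)_{L^2(\Sigma)}$ appearing in the definition of $(\cdot,\cdot)_{\mathcal{H}^1(0)}$; I would address this by noting that $|\mathcal{N}_0|$ is bounded above and below on $\Sigma$, so the boundary contribution is equivalent to the $L^2(\Sigma)$ trace of $\theta$, which is already controlled by the ${}_0H^1$ norm via the standard trace inequality.

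Finally, the spectral theorem for compact self-adjoint positive operators supplies an $\mathcal{H}^0(0)$-orthonormal basis $\{w^k\}_{k=1}^\infty$ of eigenvectors of $A$ with positive eigenvalues $\mu_k\searrow 0$; setting $\lambda_k:=1/\mu_k$ yields $0<\lambda_1\le\lambda_2\le\cdots\to\infty$, and the equivalence $Af=\theta\iff\mathcal{S}\theta=f$ converts $Aw^k=\mu_kw^k$ into $\mathcal{S}w^k=\lambda_kw^k$. To recognize $\{w^k/\sqrt{\lambda_k}\}$ as an orthonormal basis of $\mathcal{H}^1(0)$, I take $f=w^j$ and $\phi=w^k$ in the characterization of $A$ above to get
\begin{equation*}
(w^j,w^k)_{\mathcal{H}^1(0)}=\lambda_j(w^j,w^k)_{\mathcal{H}^0(0)}=\lambda_j\delta_{jk},
\end{equation*}
which gives the asserted orthonormality after rescaling. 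For completeness, if $v\in\mathcal{H}^1(0)$ is $\mathcal{H}^1(0)$-orthogonal to every $w^k$, the same identity yields $(v,w^k)_{\mathcal{H}^0(0)}=0$ for all $k$, and completeness of $\{w^k\}$ in $\mathcal{H}^0(0)$ then forces $v=0$, concluding the proof.
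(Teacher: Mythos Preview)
Your proof is correct and follows essentially the same route as the paper: factor $A$ through the compact inclusion $\mathcal{H}^1(0)\hookrightarrow\mathcal{H}^0(0)$, verify self-adjointness and positivity via the defining identity $(Af,\phi)_{\mathcal{H}^1(0)}=(f,\phi)_{\mathcal{H}^0(0)}$, then apply the spectral theorem and read off the $\mathcal{H}^1$-orthonormality from $(w^j,w^k)_{\mathcal{H}^1(0)}=\lambda_j\delta_{jk}$. Your write-up is in fact somewhat more careful than the paper's --- you spell out the Rellich--Kondrachov step and give an explicit completeness argument for $\{w^k/\sqrt{\lambda_k}\}$ in $\mathcal{H}^1(0)$ --- though the one-line appeal to ``a Poincar\'e inequality'' for injectivity of $A$ is slightly misphrased (the cleanest justification is that $A=\mathcal{S}^{-1}$ is injective on $(\mathcal{H}^1(0))^\ast$ and the embedding $\mathcal{H}^0(0)\hookrightarrow(\mathcal{H}^1(0))^\ast$ is injective by density).
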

  \begin{proof}
  Note that $\mathcal{H}^1(0)\subset\subset \mathcal{H}^0 (0)\hookrightarrow(\mathcal{H}^1(0))^\ast$.
  We restrict $A|_{\mathcal{H}^0(0)}: \mathcal{H}^0(0) \rightarrow \mathcal{H}^1(0)$ and $A|_{\mathcal{H}^0(0)}: \mathcal{H}^0(0)\rightarrow \mathcal{H}^0(0)$ is compact.

  We claim that $A$ is a positive, self-adjoint operator.  For $f, g\in \mathcal{H}^0(0)$, let $u=Af$ and $v=Ag$. Then
  $(u, v)_{\mathcal{H}^1(0)}=(Af , v)_{\mathcal{H}^1(0)}=(f, v)_{\mathcal{H}^0(0)}=(f,Ag)_{\mathcal{H}^0(0)}$,
  and
  $(v, u)_{\mathcal{H}^1(0)}=(Ag , u)_{\mathcal{H}^1(0)}=(g, u)_{\mathcal{H}^0(0)}=(g,Af)_{\mathcal{H}^0(0)}$.
  By symmetry, $(u, v)_{\mathcal{H}^1(0)}=(v, u)_{\mathcal{H}^1(0)}$. So
  $(f,Ag)_{\mathcal{H}^0(0)}=(g,Af)_{\mathcal{H}^0(0)}=(Af, g)_{\mathcal{H}^0(0)}$.
  Also, $(Af, f)_{\mathcal{H}^0(0)}=(f, f)_{\mathcal{H}^1(0)}\ge0$. Therefore, our claim is valid.

  Consequently, by Riesz-Schauder theorem, there exists a complete orthonormal basis $\{w^k\}_{k=1}^\infty\subseteq \mathcal{H}^0(0)$ such that
  $Aw^k=\mu_kw^k$, and $\mu_k\to0$, as $k\to\infty$.
  But
  $Aw^k=\mu_kw^k\in \mathcal{H}^1(0)$, for all $w^k\in \mathcal{H}^0(0)$,
  so
  $\{w^k\}_{k=1}^\infty\subseteq \mathcal{H}^1(0)$,
  and
  $\mu_k(w^k, \psi)_{\mathcal{H}^1(0)}=(Aw^k, \psi)_{\mathcal{H}^1(0)}=(w^k, \psi)_{\mathcal{H}^0(0)}, \forall \psi\in \mathcal{H}^1(0)$.
  Then we have
  $(w^k, \psi)_{\mathcal{H}^1(0)}=\lambda_k(w^k, \psi)_{\mathcal{H}^0(0)}$ with  $\lambda_k=\frac1{\mu_k}\to\infty$, as $k\to\infty$.
  Thus $(w^k, w^j)_{\mathcal{H}^1(0)}=\lambda_k(w^k, w^j)_{\mathcal{H}^0(0)}=\lambda_k\delta_{jk}$ implies $\{w^k/\sqrt{\lambda_k}\}$ is a complete orthonormal basis of $\mathcal{H}^1(0)$.
  \end{proof}

For the finite dimensional space $\mathcal{H}_m^1(t) = \text{span}\{w^1, \ldots, w^m\}$ with $t\ge0$, consider the approximate formulation
  \begin{equation}\label{app_1}
\begin{aligned}
  (\pa_t\theta^m,\psi)_{\mathcal{H}^0}+k(\nabla_{\mathcal{A}}\theta^m, \nabla_{\mathcal{A}}\psi)_{\mathcal{H}^0}+(|\mathcal{N}|\theta^m,\psi)_{0,\Sigma}=\int_\Om F^8\psi J + \int_\Sigma F^9\psi |\mathcal{N}|
  \end{aligned}
  \end{equation}
  for any $\psi\in \mathcal{H}_m^1(t)$.  For our purpose to construct $\pa_t\theta^m(t)$, we define
  $\theta^m(t)=\sum_{j=1}^m d_j(t)w^j$, so that $\pa_t\theta^m(t)=\sum_{j=1}^m \dot{d}_j(t)w^j$.

  By choose $t=0$ in \eqref{app_1}, we have
  \begin{equation}\label{app_2}
\begin{aligned}
 &(\pa_t\theta^m(0),\psi)_{\mathcal{H}^0(0)}+k(\nabla_{\mathcal{A}(0)}\theta^m(0), \nabla_{\mathcal{A}(0)}\psi)_{\mathcal{H}^0(0)}+(|\mathcal{N}(0)|\theta^m(0),\psi)_{0,\Sigma} \\
 &=\int_\Om F^8(0)\psi J(0) + \int_\Sigma F^9(0)\psi |\mathcal{N}(0)|
  \end{aligned}
  \end{equation}
  for each $\psi\in \mathcal{H}_m^1(0)$. Here $\theta^m_0$ is the projection of $\theta_0$ onto $\mathcal{H}_m^1(0)$. Then we take $\psi=w^j$ so that
  \[
  \begin{aligned}
  A_{kj}\dot{d}_j(0)+B_{k,j}d_j(0) = \int_\Om F^8(0)w^k\psi J(0) + \int_\Sigma F^9(0) w^k |\mathcal{N}(0)|,
  \end{aligned}
  \]
  where $A_{kj} = (w^k,w^j)_{\mathcal{H}^0(0)}$, $B_{k,j} = k(\nabla_{\mathcal{A}(0)}w^k, \nabla_{\mathcal{A}(0)}w^j)_{\mathcal{H}^0(0)}+(|\mathcal{N}(0)|w^k,w^j)_{0,\Sigma}$. Due to the Lemma \ref{lem:basis_initial}, the matrix $A = (A_{kj})$ is invertible. So we get the formula of $\dot{d}_j(0)$ and $\pa_t\theta^m(0)$.

\begin{proposition}\label{prop:bound_dtu}
  The sequences $\{\pa_t\theta^m(0)\}$ constructed in \eqref{app_2} are uniformly bounded such that
  \begin{equation}\label{bound_dtu}
  \|\pa_t\theta^m(0)\|_{\mathcal{H}^0}\lesssim \|\pa_t\theta(0)\|_{H^0}+\|\theta_0\|_{L^2(\Sigma)}
  \end{equation}
  uniformly for $\varepsilon\le 1$.
\end{proposition}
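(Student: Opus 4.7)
The plan is to test the approximate equation \eqref{app_2} against the specific element $\psi=\pa_t\theta^m(0)\in\mathcal{H}^1_m(0)$ itself, which places $\|\pa_t\theta^m(0)\|_{\mathcal{H}^0(0)}^2$ on the left and four data-coupled terms on the right. The central maneuver is to eliminate the dependence on $F^8(0)$ and $F^9(0)$ in favor of $\pa_t\theta(0)$ and $\theta_0$ by invoking the PDE \eqref{eq:linear_heat} at $t=0$: the bulk equation gives $F^8(0)=\pa_t\theta(0)-k\Delta_{\mathcal{A}(0)}\theta_0$ in $\Om$, and the Robin boundary condition yields $F^9(0)=k\mathcal{N}(0)\cdot\nabla_{\mathcal{A}(0)}\theta_0+\theta_0|\mathcal{N}(0)|$ on $\Sigma$. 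Substituting these and integrating by parts on the $\Delta_{\mathcal{A}(0)}\theta_0$ term through the $\mathcal{A}$-Gauss--Green identity (underpinned by the Piola relation $\pa_j(J\mathcal{A}_{ij})=0$) produces an interior pairing $k(\nabla_{\mathcal{A}(0)}\theta_0,\nabla_{\mathcal{A}(0)}\pa_t\theta^m(0))_{\mathcal{H}^0(0)}$ together with a boundary normal-derivative term that cancels, up to the weight $|\mathcal{N}(0)|$ present in the $F^9$ integrand of \eqref{app_2}, against the corresponding contribution from $F^9(0)$. This cancellation is decisive: it replaces every occurrence of $\nabla_{\mathcal{A}(0)}\theta_0$ on $\Sigma$ by $\theta_0|_{\Sigma}$ alone.

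After this simplification the identity collapses to the schematic form
\[
\|\pa_t\theta^m(0)\|_{\mathcal{H}^0(0)}^2=(\pa_t\theta(0),\pa_t\theta^m(0))_{\mathcal{H}^0(0)}+(\theta_0-\theta^m(0),\pa_t\theta^m(0))_{\mathcal{H}^1(0)}+\mathcal{B},
\]
where $\mathcal{B}$ collects boundary residuals proportional to $\int_\Sigma\theta_0(|\mathcal{N}(0)|-1)|\mathcal{N}(0)|\pa_t\theta^m(0)$ and analogous terms that would be present for $k\neq 1$. At this point Lemma \ref{lem:basis_initial} is invoked: because $\{w^k\}$ is simultaneously orthogonal in $\mathcal{H}^0(0)$ and in $\mathcal{H}^1(0)$, the coefficient representation $\theta^m(0)=\sum_{k=1}^m(\theta_0,w^k)_{\mathcal{H}^0(0)}w^k$ that underlies \eqref{app_2} is at the same time the $\mathcal{H}^1(0)$-orthogonal projection of $\theta_0$ onto $\mathcal{H}^1_m(0)$. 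Since $\pa_t\theta^m(0)\in\mathcal{H}^1_m(0)$, this forces $(\theta_0-\theta^m(0),\pa_t\theta^m(0))_{\mathcal{H}^1(0)}=0$ and annihilates the only surviving term that contained $\nabla_{\mathcal{A}(0)}\pa_t\theta^m(0)$.

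The conclusion now follows from Cauchy--Schwarz and Young's inequality on $(\pa_t\theta(0),\pa_t\theta^m(0))_{\mathcal{H}^0(0)}$, which absorbs a small multiple of $\|\pa_t\theta^m(0)\|_{\mathcal{H}^0(0)}^2$ into the left-hand side and leaves a constant times $\|\pa_t\theta(0)\|_{\mathcal{H}^0(0)}^2$; by Lemma \ref{lem:equivalence_norm} this is comparable to $\|\pa_t\theta(0)\|_{H^0}^2$. The boundary residual $\mathcal{B}$ is closed by a direct Cauchy--Schwarz on $\Sigma$, using $\|\mathcal{N}(0)\|_{L^\infty(\Sigma)}\lesssim 1$ and the explicit algebraic form of the weights $|\mathcal{N}(0)|-1$ and $|\mathcal{N}(0)|^2-|\mathcal{N}(0)|$, producing exactly the $\|\theta_0\|_{L^2(\Sigma)}$ contribution in \eqref{bound_dtu} after a second Young's inequality. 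The whole argument is uniform in $m$ and in $\varepsilon\le 1$ because no step introduces a constant depending on either parameter. The main obstacle is the careful bookkeeping in $\mathcal{B}$: one must verify that every boundary correction arising from the $|\mathcal{N}(0)|$-weighting in the $F^9$-integrand of \eqref{app_2}, together with any mismatch between $k$ and the unit weight in the $\mathcal{H}^1$-inner product, can be controlled solely by $\|\theta_0\|_{L^2(\Sigma)}$ and does not secretly require $\|\nabla_{\mathcal{A}(0)}\theta_0\|_{L^2(\Sigma)}$ as additional data.
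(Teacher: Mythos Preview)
Your approach is essentially the paper's, reorganized. The paper writes down the continuous analogue \eqref{app_3} of \eqref{app_2} at $t=0$---which is precisely what results from your substitution $F^8(0)=\pa_t\theta(0)-k\Delta_{\mathcal{A}(0)}\theta_0$, $F^9(0)=k\mathcal{N}(0)\cdot\nabla_{\mathcal{A}(0)}\theta_0+\theta_0|\mathcal{N}(0)|$ followed by the integration by parts---then subtracts, invokes the same $\mathcal{H}^1(0)$-orthogonality of $\theta_0-\theta_0^m$ to $\mathcal{H}^1_m(0)$, and reaches $(\pa_t\theta^m(0)-\pa_t\theta(0),\psi)_{\mathcal{H}^0}=0$ before testing with $\psi=\pa_t\theta^m(0)$.

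Two remarks on the bookkeeping you worry about. First, the extra $|\mathcal{N}(0)|$ weight on the $F^9$ integral in \eqref{app_2} is inconsistent with \eqref{eq:weak_limit_1} and \eqref{eq:galerkin} and should be read as a typo; without it the normal-derivative boundary contributions cancel exactly, and the scenario in which $\mathcal{B}$ secretly requires $\|\nabla_{\mathcal{A}(0)}\theta_0\|_{L^2(\Sigma)}$ never arises. Second, the $k\neq 1$ mismatch you flag is not a boundary residual but the bulk term $(k-1)\bigl(\nabla_{\mathcal{A}(0)}(\theta_0-\theta_0^m),\nabla_{\mathcal{A}(0)}\pa_t\theta^m(0)\bigr)_{\mathcal{H}^0}$, which neither your sketch nor the paper's proof controls as written; the clean remedy is to absorb the factor $k$ into the $\mathcal{H}^1$-inner product used to build the eigenbasis in Lemma~\ref{lem:basis_initial}, after which the cancellation is exact and the $\|\theta_0\|_{L^2(\Sigma)}$ term in \eqref{bound_dtu} is in fact not needed.
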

\begin{proof}
 From \eqref{equality},
  \begin{equation}\label{app_3}
\begin{aligned}
(\pa_t\theta(0),\psi)_{\mathcal{H}^0}+k(\nabla_{\mathcal{A}(0)}\theta_0, \nabla_{\mathcal{A}(0)}\psi)_{\mathcal{H}^0}+(\theta_0|\mathcal{N}(0)|,\psi)_{0,\Sigma}\\
=\int_\Om F^8(0)\psi J(0) + \int_\Sigma F^9(0)\psi |\mathcal{N}(0)|,
\end{aligned}
  \end{equation}
  for each $\psi\in \mathcal{H}_m^1(0)$. Since $\left\{\frac{w^k}{\sqrt{\lambda_k}}\right\}_{k=1}^\infty$ is a complete orthonormal basis of $\mathcal{H}^1(0)$, we can represent $\theta_0$ and also $\theta_0^m$ by
  $
  \theta_0=\sum_{j=0}^\infty a_j \frac{w^j}{\sqrt{\lambda_j}}$, and $\theta_0^m=\sum_{j=0}^m a_j \frac{w^j}{\sqrt{\lambda_j}}$
  for some $a_j\in\mathbb{R}$.
  For each $\psi=w^j/\sqrt{\lambda_j}$, $j=1, \ldots, m$, we use the orthogonality of basis $\{w^k/\sqrt{\lambda_k}\}$ to arrive that
  $\left(\theta_0, \frac{w^j}{\sqrt{\lambda_j}}\right)_{\mathcal{H}^1}= a_j=\left(\theta_0^m, \frac{w^j}{\sqrt{\lambda_j}}\right)_{\mathcal{H}^1}$
  and hence $(\theta_0, \psi)_{\mathcal{H}^1}=(\theta_0^m, \psi)_{\mathcal{H}^1} $
  for any $\psi\in \mathcal{H}^1_m(0)$ by linear superposition.
  So the subtraction between \eqref{app_2} and \eqref{app_3} gives us that
  \begin{equation}\label{eq:g_1}
  (\pa_t\theta^m(0)-\pa_t\theta(0),\psi)_{\mathcal{H}^0}=0.
  \end{equation}

  By taking $\psi=\pa_t\theta^m(0)$ in \eqref{eq:g_1}, we use the Cauchy's inequality to deduce that
  \begin{equation}
   \|\pa_t\theta^m(0)\|_{\mathcal{H}^0}^2
  \lesssim \|\pa_t\theta(0)\|_{H^0}\|\pa_t\theta^m(0)\|_{\mathcal{H}^0}
  \end{equation}
  implying \eqref{bound_dtu}.
  \end{proof}

  So we can show that
  \begin{proposition}\label{prop:converge_dtu0}
It holds that $\pa_t\theta^m(0)$ converges to $\pa_t\theta(0)$ weakly in $\mathcal{H}^0$ as $m\to \infty$.
   \end{proposition}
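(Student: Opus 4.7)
The plan is a standard weak-compactness-plus-density argument built on top of the two ingredients already established: the uniform bound \eqref{bound_dtu} from Proposition \ref{prop:bound_dtu}, and the Galerkin orthogonality identity \eqref{eq:g_1}.

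First I would exploit the uniform bound. Proposition \ref{prop:bound_dtu} tells us that $\{\pa_t\theta^m(0)\}_{m=1}^\infty$ is a bounded sequence in the Hilbert space $\mathcal{H}^0$, so by Banach--Alaoglu there is a subsequence $\{\pa_t\theta^{m_j}(0)\}$ and an element $g \in \mathcal{H}^0$ such that $\pa_t\theta^{m_j}(0) \rightharpoonup g$ weakly in $\mathcal{H}^0$.

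Next I would identify $g$ with $\pa_t\theta(0)$. Fix $n \in \mathbb{N}$ and any $\psi \in \mathcal{H}^1_n(0)$. For every $m_j \ge n$ we have $\psi \in \mathcal{H}^1_{m_j}(0)$, so the orthogonality identity \eqref{eq:g_1} gives
\begin{equation*}
(\pa_t\theta^{m_j}(0) - \pa_t\theta(0), \psi)_{\mathcal{H}^0} = 0.
\end{equation*}
Passing to the limit $j \to \infty$ using the weak convergence yields $(g - \pa_t\theta(0), \psi)_{\mathcal{H}^0} = 0$ for every $\psi \in \bigcup_{n\ge 1}\mathcal{H}^1_n(0)$. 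By Lemma \ref{lem:basis_initial} this union is dense in $\mathcal{H}^1(0)$, and ${}_0H^1(\Om)$ (equivalently $\mathcal{H}^1(0)$, by Lemma \ref{lem:equivalence_norm}) is dense in $\mathcal{H}^0$. Therefore the identity extends to all test elements in $\mathcal{H}^0$, forcing $g = \pa_t\theta(0)$.

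Finally I would upgrade subsequential weak convergence to full-sequence weak convergence by a standard subsequence-of-subsequence argument: every subsequence of $\{\pa_t\theta^m(0)\}$ is itself bounded in $\mathcal{H}^0$, hence admits a further weakly convergent sub-subsequence whose limit must again equal $\pa_t\theta(0)$ by the argument above; since the weak limit is unique and independent of the extraction, the entire sequence $\pa_t\theta^m(0)$ converges weakly to $\pa_t\theta(0)$ in $\mathcal{H}^0$. I do not anticipate a serious obstacle here; the only subtle point is ensuring the density of $\bigcup_n \mathcal{H}^1_n(0)$ in $\mathcal{H}^0$, which is immediate from the spectral basis construction in Lemma \ref{lem:basis_initial}.
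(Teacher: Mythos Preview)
Your proof is correct and follows essentially the same approach as the paper: extract a weakly convergent subsequence via the uniform bound \eqref{bound_dtu}, then identify the limit using the orthogonality identity \eqref{eq:g_1} together with a density argument. The only minor difference is that the paper establishes the density step by changing variables to the deformed domain $\Om(0)$ via $\Phi(0)$ and invoking the density of $C_c^\infty(\Om)$ in $L^2$, whereas your direct appeal to the spectral basis of Lemma \ref{lem:basis_initial} (which already gives an orthonormal basis of $\mathcal{H}^0(0)$) is more straightforward; your added subsequence-of-subsequence step to upgrade to full-sequence convergence is a detail the paper leaves implicit.
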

  \begin{proof}
  The uniform boundedness for $\pa_t\theta^m(0)$ in \eqref{bound_dtu} allows us to take a weak limit in $\mathcal{H}^0$, up to an extraction of a subsequence,
  $\pa_t\theta^m(0)\rightharpoonup w\quad\text{in}\ \mathcal{H}^0$,
  for some $w\in {}_0H^0$.
  By passing to the limit $m\to \infty$ for \eqref{eq:g_1},
  we have
  \begin{equation}\label{eq:in_dtu}
  (w-\pa_t\theta(0), \psi)_{\mathcal{H}^0}=0
  \end{equation}
  for any $\psi\in \mathcal{H}^1(0)$.  We now switch the domain $\Om$ to the initial state of original deformable domain $\Om(0)$ by the inverse mapping $\Phi(0)^{-1}$ and let $
  \tilde{w}\circ \Phi(0)=w,\ \widetilde{\pa_t\theta}(0)\circ\Phi(0)=\pa_t\theta(0)$,
  so that $\tilde{w}\in L^2(\Om(0))$ and
  $
  \widetilde{\pa_t\theta}(0)\in H^1(\Om(0))\simeq\{\theta\in L^2(\Om(0)): \|\nabla \theta\|_{L^2(\Om(0))}^2+\|\theta |\mathcal{N}(0)|\|_{L^2(\Sigma)}^2<\infty\}$.
  We choose $\psi$ in the closure of $C_{c}^\infty(\Om)$ under $H^1$ norm, so that \eqref{eq:in_dtu} is reduce to $
  (\tilde{w}-\widetilde{\pa_t\theta}(0), \psi)_{L^2(\Om(0))}=0$.
  Since $C_{c}^\infty(\Om)$ is dense in $L^2(\Om)$, we have that
  $\tilde{w}-\widetilde{\pa_t\theta}(0)=0$ implying $w=\pa_t\theta(0)$ by $\Phi(0)^{-1}$.
  \end{proof}


Based on the weak formula of
\begin{align}\label{eq:initial_p2}
  (\pa_t^2\theta, \psi)_{\mathcal{H}^0} + k (\nabla_{\mathcal{A}} \pa_t\theta, \nabla_{\mathcal{A}}\psi)_{\mathcal{H}^0} + (\pa_t\theta, \psi |\mathcal{N}|)_{L^2(\Sigma)} = \left<\mathscr{F}_2, \psi\right>
\end{align}
for any $\psi \in \mathcal{H}^1$, where
\[
\begin{aligned}
\left<\mathscr{F}_2, \psi\right> &= \int_{\Om} \pa_tF^8 \psi J + (F^8-\pa_t\theta \psi \pa_tJ) -k \nabla_{\pa_t\mathcal{A}} \theta \cdot \nabla_{\mathcal{A}}\psi J + \nabla_{\mathcal{A}} \theta \cdot \nabla_{\pa_t\mathcal{A}}\psi J + \nabla_{\mathcal{A}} \theta \cdot \nabla_{\mathcal{A}}\psi \pa_t J\\
&\quad+ \int_{\Sigma}\pa_tF^9 \psi -k \pa_t\mathcal{N} \cdot \nabla_{\mathcal{A}} \nabla \theta + k \mathcal{N} \cdot\nabla_{\pa_t\mathcal{A}} \nabla \theta - \theta \pa_t|\mathcal{N}|,
\end{aligned}
\]
we apply the two--tier method of Galerkin approximation to construct $\pa_t\theta(0)$. We assume that $(\pa_t\theta)^m : = \sum_{j=1}^m e_jw^j$, where $e_j$ is the coefficient. Then we construct $\pa_t(\pa_t\theta)^m=\sum_{j=1}^m \dot{e}_jw^j$, $t\ge0$ and the initial data $\pa_t(\pa_t\theta)^m(0)=\sum_{j=1}^m \dot{e}_j(0)w^j\in \mathcal{H}^1_m(0)$. When $t=0$, $\pa_t(\pa_t\theta)^m(0)$ is the projection of $\pa_t^2\theta(0)$ onto $\mathcal{H}^1_m(0)$ and $\pa_t(\pa_t\theta)^m(0)$ is supposed to satisfying
  \begin{equation}\label{app_5}
  (\pa_t(\pa_t\theta)^m(0),\psi)_{\mathcal{H}^0}+k(\nabla_{\mathcal{A}(0)}(\pa_t\theta)^m(0),\nabla_{\mathcal{A}(0)}\psi)_{\mathcal{H}^0} + ((\pa_t\theta)^m(0),\psi |\mathcal{N}(0)|)_{L^2(\Sigma)}
=\mathscr{F}_2(\psi),
  \end{equation}
  then we take $\psi=w^j(0)$ so that \eqref{app_5} is reduced to
  \[
  \begin{aligned}
  \dot{e}_j(0)+\lambda_je_j(0)
=\mathscr{F}_2(w^j).
  \end{aligned}
  \]
 Consequently, we have the formula of $\pa_t(\pa_t\theta)^m(0)$.

We now show the uniform bound and convergence for $\pa_t(\pa_t\theta)^m(0)$.
  \begin{proposition}\label{prop:uniform_bound_dt2u}
\begin{align}\label{est:uniform_bound_dt2u}
\|\pa_t(\pa_t\theta)^m(0)\|_{\mathcal{H}^0}\lesssim \|\pa_t(\pa_t\theta)(0)\|_{H^0}+\|\pa_t\theta(0)\|_{H^1}+\|\pa_t\theta(0)\|_{L^2(\Sigma)}.
\end{align}
   \end{proposition}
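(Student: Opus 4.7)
The plan is to imitate the argument of Proposition~\ref{prop:bound_dtu}, now one time-derivative higher. First I would record the exact weak equation that $\pa_t^2\theta(0)$ satisfies by evaluating~\eqref{eq:initial_p2} at $t=0$, namely
\[
(\pa_t^2\theta(0), \psi)_{\mathcal{H}^0} + k(\nabla_{\mathcal{A}(0)} \pa_t\theta(0), \nabla_{\mathcal{A}(0)}\psi)_{\mathcal{H}^0} + (\pa_t\theta(0)|\mathcal{N}(0)|, \psi)_{L^2(\Sigma)} = \langle \mathscr{F}_2(0), \psi\rangle
\]
for every $\psi \in \mathcal{H}^1$. Restricting the test function to $\psi \in \mathcal{H}^1_m(0)$ and subtracting this from the Galerkin relation~\eqref{app_5}, the common forcing $\langle \mathscr{F}_2(0),\psi\rangle$ cancels, leaving
\[
(\pa_t(\pa_t\theta)^m(0) - \pa_t^2\theta(0), \psi)_{\mathcal{H}^0} + k(\nabla_{\mathcal{A}(0)}((\pa_t\theta)^m(0) - \pa_t\theta(0)), \nabla_{\mathcal{A}(0)}\psi)_{\mathcal{H}^0} + (((\pa_t\theta)^m(0) - \pa_t\theta(0))|\mathcal{N}(0)|, \psi)_{L^2(\Sigma)} = 0.
\]

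The key point I would exploit is that, as a by-product of the construction in Proposition~\ref{prop:bound_dtu} together with the spectral relation $(w^k,\psi)_{\mathcal{H}^1} = \lambda_k(w^k,\psi)_{\mathcal{H}^0}$ from Lemma~\ref{lem:basis_initial}, the approximate initial datum $(\pa_t\theta)^m(0)$ is simultaneously the $\mathcal{H}^0(0)$- and the $\mathcal{H}^1(0)$-orthogonal projection of $\pa_t\theta(0)$ onto $\mathcal{H}^1_m(0)$; indeed the two projection formulas $\sum_{k\le m}(f,w^k)_{\mathcal{H}^0}w^k$ and $\sum_{k\le m}(f,w^k/\sqrt{\lambda_k})_{\mathcal{H}^1}\,w^k/\sqrt{\lambda_k}$ agree once the spectral relation is invoked. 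The $\mathcal{H}^1$-orthogonality then gives, for every $\psi \in \mathcal{H}^1_m(0)$,
\[
(\nabla_{\mathcal{A}(0)}((\pa_t\theta)^m(0) - \pa_t\theta(0)), \nabla_{\mathcal{A}(0)}\psi)_{\mathcal{H}^0} + (((\pa_t\theta)^m(0) - \pa_t\theta(0))|\mathcal{N}(0)|, \psi)_{L^2(\Sigma)} = 0,
\]
which collapses the gradient and boundary contributions in the subtracted identity up to a residual proportional to $(k-1)$ that pairs the difference $(\pa_t\theta)^m(0) - \pa_t\theta(0)$ against $\psi$ in either its gradient form or its $L^2(\Sigma)$ form. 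Because orthogonal projections are contractions, such residuals are further dominated by $\|\pa_t\theta(0)\|_{H^1}$ and $\|\pa_t\theta(0)\|_{L^2(\Sigma)}$ respectively.

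The final step is to test the resulting identity with $\psi = \pa_t(\pa_t\theta)^m(0) \in \mathcal{H}^1_m(0)$ and apply Cauchy--Schwarz term by term. The principal contribution bounds by $\|\pa_t^2\theta(0)\|_{\mathcal{H}^0}\|\pa_t(\pa_t\theta)^m(0)\|_{\mathcal{H}^0}$, while the two residual contributions deliver the remaining summands $\|\pa_t\theta(0)\|_{H^1}$ and $\|\pa_t\theta(0)\|_{L^2(\Sigma)}$. Dividing through by $\|\pa_t(\pa_t\theta)^m(0)\|_{\mathcal{H}^0}$ (or absorbing via Young's inequality on the principal term) yields the claimed bound~\eqref{est:uniform_bound_dt2u}. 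The main technical hurdle I anticipate is the coefficient mismatch $k \neq 1$ between the gradient weight in the weak equation and the unit weight in the $\mathcal{H}^1$ inner product: it is precisely this mismatch that forces the lower-order terms $\|\pa_t\theta(0)\|_{H^1} + \|\pa_t\theta(0)\|_{L^2(\Sigma)}$ to appear on the right-hand side, and one has to choose the form (gradient versus boundary) in which to express the residual so that, after Cauchy--Schwarz, the factor involving $\pa_t(\pa_t\theta)^m(0)$ is compatible with the $\mathcal{H}^0$-norm being estimated on the left.
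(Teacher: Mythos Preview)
Your overall strategy---subtract the continuous identity \eqref{eq:initial_p2} at $t=0$ from the Galerkin identity \eqref{app_5}, use that $(\pa_t\theta)^m(0)$ is the $\mathcal{H}^1$-projection of $\pa_t\theta(0)$ onto $\mathcal{H}^1_m(0)$, and then test with $\psi=\pa_t(\pa_t\theta)^m(0)$---is exactly the paper's route. The difference is in how the coefficient $k$ is handled, and there your proposed workaround has a real gap.

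You correctly flag that with the $\mathcal{H}^1$ inner product defined as $(\nabla_{\mathcal{A}}\cdot,\nabla_{\mathcal{A}}\cdot)_{\mathcal{H}^0}+(\cdot|\mathcal{N}|,\cdot)_{L^2(\Sigma)}$, the bilinear form $k(\nabla_{\mathcal{A}}\cdot,\nabla_{\mathcal{A}}\cdot)_{\mathcal{H}^0}+(\cdot|\mathcal{N}|,\cdot)_{L^2(\Sigma)}$ appearing in the PDE is not the same, so after cancelling the $\mathcal{H}^1$-orthogonal part a residual of size $(k-1)$ remains. Your plan is to write this residual either in gradient form or in boundary form and estimate it by Cauchy--Schwarz. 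But neither choice closes: with $\psi=\pa_t(\pa_t\theta)^m(0)$ the gradient form produces a factor $\|\nabla_{\mathcal{A}(0)}\pa_t(\pa_t\theta)^m(0)\|_{\mathcal{H}^0}$, and the boundary form produces $\|\pa_t(\pa_t\theta)^m(0)\|_{L^2(\Sigma)}$. Neither of these is dominated by $\|\pa_t(\pa_t\theta)^m(0)\|_{\mathcal{H}^0}$ uniformly in $m$ (the trace inequality needs an $H^1$ norm), so you cannot divide through and conclude \eqref{est:uniform_bound_dt2u}.

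The paper avoids this residual altogether. In its proof the $\mathcal{H}^1$ inner product is taken (there is an inconsistency with the earlier definition, but the intent in the proof is explicit) to be the $k$-weighted form $k(\nabla_{\mathcal{A}}\cdot,\nabla_{\mathcal{A}}\cdot)_{\mathcal{H}^0}+(\cdot|\mathcal{N}|,\cdot)_{L^2(\Sigma)}$. Lemma~\ref{lem:basis_initial} goes through verbatim for this equivalent inner product, and the resulting spectral basis is orthogonal precisely for the bilinear form that actually occurs in \eqref{app_5} and \eqref{eq:initial_p2}. The $\mathcal{H}^1$-orthogonality of the projection then kills the gradient and boundary terms \emph{exactly}, yielding \eqref{eq:g_2} with zero right-hand side, and the bound follows immediately as $\|\pa_t(\pa_t\theta)^m(0)\|_{\mathcal{H}^0}\lesssim\|\pa_t^2\theta(0)\|_{H^0}$; the extra terms in \eqref{est:uniform_bound_dt2u} are then superfluous. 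The fix to your argument is the same: build the basis for the $k$-weighted inner product from the start, so no residual ever appears.
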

  \begin{proof}
  By orthogonality of the basis $\{w^k/\sqrt{\lambda_k}\}$,  we have
  \[
  \begin{aligned}
((\pa_t\theta)^m(0),\psi)_{\mathcal{H}^1}&=k(\nabla_{\mathcal{A}(0)}(\pa_t\theta)^m(0),\psi))+( (\pa_t\theta)^m(0),\psi |\mathcal{N}(0)|)_{L^2(\Sigma)}\\
&=k(\nabla_{\mathcal{A}(0)}(\pa_t\theta)(0),\psi))+( (\pa_t\theta)(0),\psi |\mathcal{N}(0)|)_{L^2(\Sigma)}\\
&=((\pa_t\theta)(0),\psi)_{\mathcal{H}^1}
  \end{aligned}
  \]
  for each $\psi\in\mathcal{H}^1_m(0)$. So the difference between \eqref{app_5} and \eqref{eq:initial_p2} at $t=0$ gives us that
  \begin{equation}\label{eq:g_2}
  \begin{aligned}
(\pa_t(\pa_t\theta)^m(0)-\pa_t^2\theta(0),\psi)_{\mathcal{H}^0}=0.
  \end{aligned}
  \end{equation}
  After taking $\psi=(\pa_t\theta)^m(0)$ in \eqref{eq:g_2}, we use the Cauchy's inequality to deduce that
  \begin{equation}
  \begin{aligned}
\|\pa_t(\pa_t\theta)^m(0)\|_{\mathcal{H}^0}\lesssim \|\pa_t^2\theta(0)\|_{H^0}
  \end{aligned}
  \end{equation}
  implying \eqref{est:uniform_bound_dt2u}
  uniformly for $m$.
  \end{proof}
  \begin{proposition}\label{prop:converge_dt2u0}
  $\pa_t(\pa_t\theta)^m(0)$ converges to $\pa_t^2\theta(0)$ weakly in $\mathcal{H}^0_\sigma$ as $m\to \infty$.
\end{proposition}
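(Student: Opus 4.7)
The plan is to mirror the argument used in Proposition \ref{prop:converge_dtu0}, now one derivative higher. The uniform bound \eqref{est:uniform_bound_dt2u} furnished by Proposition \ref{prop:uniform_bound_dt2u} gives me a bounded sequence in $\mathcal{H}^0$, so after extracting a subsequence (not relabeled) I obtain a weak limit $w\in \mathcal{H}^0$ with $\pa_t(\pa_t\theta)^m(0)\rightharpoonup w$. The goal is then to show $w=\pa_t^2\theta(0)$.

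The key step is to pass to the limit $m\to\infty$ in the identity \eqref{eq:g_2}, namely
\[
(\pa_t(\pa_t\theta)^m(0)-\pa_t^2\theta(0),\psi)_{\mathcal{H}^0}=0,
\]
for every $\psi \in \mathcal{H}^1_m(0)$. Because $\{w^k/\sqrt{\lambda_k}\}$ is an orthonormal basis of $\mathcal{H}^1(0)$, the union $\bigcup_m \mathcal{H}^1_m(0)$ is dense in $\mathcal{H}^1(0)$, so the identity persists for all $\psi \in \mathcal{H}^1(0)$ in the limit, giving
\[
(w-\pa_t^2\theta(0),\psi)_{\mathcal{H}^0}=0 \quad \text{for all } \psi\in\mathcal{H}^1(0).
\]

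To upgrade this to $w=\pa_t^2\theta(0)$, I transport the identity back to the initial deformable domain $\Om(0)$ via the diffeomorphism $\Phi(0)^{-1}$ exactly as in the proof of Proposition \ref{prop:converge_dtu0}. Setting $\tilde w\circ\Phi(0)=w$ and $\widetilde{\pa_t^2\theta}(0)\circ\Phi(0)=\pa_t^2\theta(0)$, I restrict the test functions to $\psi$ lying in the closure of $C_c^\infty(\Om)$ under the $H^1$ norm (which sits inside $\mathcal{H}^1(0)$, since such $\psi$ vanish on all of $\pa\Om$ and in particular on $\Sigma_s$), reducing the identity to
\[
(\tilde w - \widetilde{\pa_t^2\theta}(0),\psi)_{L^2(\Om(0))}=0.
\]
Density of $C_c^\infty(\Om(0))$ in $L^2(\Om(0))$ then forces $\tilde w = \widetilde{\pa_t^2\theta}(0)$, hence $w=\pa_t^2\theta(0)$, and a standard subsequence argument shows the whole sequence converges weakly.

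The only subtlety I anticipate is verifying that every $\psi\in\mathcal{H}^1(0)$ can indeed be approximated by elements of $\bigcup_m \mathcal{H}^1_m(0)$ in the $\mathcal{H}^1$-topology so that the weak convergence in $\mathcal{H}^0$ transfers to the limit equation. This is exactly Lemma \ref{lem:basis_initial}, which gives the orthonormal basis of $\mathcal{H}^1(0)$, so the density is automatic. Everything else is routine: weak sequential compactness, testing against $\psi\in\mathcal{H}^1_m(0)$ for fixed $m$ and then letting $m\to\infty$, and the change-of-variables through $\Phi(0)$ that has already been set up for use.
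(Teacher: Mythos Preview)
Your proposal is correct and follows essentially the same route as the paper: extract a weak limit from the uniform bound of Proposition~\ref{prop:uniform_bound_dt2u}, pass to the limit in the orthogonality relation \eqref{eq:g_2} using the density of $\bigcup_m\mathcal{H}^1_m(0)$ in $\mathcal{H}^1(0)$ from Lemma~\ref{lem:basis_initial}, and then transport to $\Om(0)$ via $\Phi(0)$ exactly as in Proposition~\ref{prop:converge_dtu0} to identify the limit as $\pa_t^2\theta(0)$. The paper's proof is terser (it simply cites the argument of Proposition~\ref{prop:converge_dtu0}) and additionally records that one can pass to the limit in \eqref{app_5} to recover the weak formulation for $\pa_t^2\theta(0)$, but this extra observation is not needed for the proposition as stated.
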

\begin{proof}
  The uniform boundedness in Proposition \ref{prop:uniform_bound_dt2u} for $(\pa_t\theta)^m$ allows us to take a weak limit in $H^0$. Using the argument as in the proof of Proposition \ref{prop:converge_dtu0}, we have that up to an extraction of a subsequence, $\pa(\pa_t\theta)^m(0)\rightharpoonup \pa_t^2\theta)(0)$ in $H^0$.

  By passing the limit $m\to\infty$ for \eqref{app_5}, we can  obtain
  \[
\begin{aligned}
(\pa_t^2\theta(0),\psi)_{\mathcal{H}^0}+k(\nabla_{\mathcal{A}(0)}(\pa_t\theta)(0),\nabla_{\mathcal{A}(0)}\psi)_{\mathcal{H}^0} + ((\pa_t\theta)(0),\psi |\mathcal{N}(0)|)_{L^2(\Sigma)}=\mathscr{F}_2(\psi),
\end{aligned}
  \]
  for each $\psi\in \mathcal{H}^1(0)$.
\end{proof}

\begin{remark}
  The convergence of $\pa_t^2\theta^m(0)$ to $\pa_t^2\theta(0)$ in $H^0(\Om)$ is actually used to prove Theorem \ref{thm:higher order} in the step of applying Theorem \ref{thm:linear_low}, since the convergence $\pa_t(\pa_t\theta)^m(0)\to \pa_t(\pa_t\theta)(0)$ in $H^0$ is necessary. More precisely, following the procedure of proving Theorem \ref{thm:linear_low}, we need to construct the approximate solution $(\pa_t\theta)^m$, and then construct $\pa_t(\pa_t\theta)^m$ based on $\pa_t(\pa_t\theta)^m(0)$.
\end{remark}

\subsection{Construction of Strong Solutions to \eqref{eq:linear_heat}}

For some $T>0$, we define
\[
\begin{aligned}
  &\mathfrak{D}(\eta):=\|\eta\|_{L^2W^{3-1/q_+,q_+}}^2+\|\pa_t\eta\|_{L^2W^{3-1/q_-,q_-}}^2+\sum_{j=0}^2\|\pa_t^j\eta\|_{L^2H^{3/2-\alpha}}^2+\|\pa_t^3\eta\|_{L^2H^{1/2-\alpha}}^2,\\
  &\mathfrak{E}(\eta):=\|\eta\|_{L^\infty W^{3-1/q_+,q_+}}^2+\|\pa_t\eta\|_{L^\infty H^{3/2+(\varepsilon_--\alpha)/2}}^2+\sum_{j=0}^2\|\pa_t^j\eta\|_{L^\infty H^1}^2, \quad \mathfrak{K}(\eta):=\mathfrak{D}(\eta)+\mathfrak{E}(\eta).
\end{aligned}
\]

We say $\theta \in L^\infty W^{2,q_+} $ is a strong solution to \eqref{eq:modified_linear}, provided that $\pa_t^k \theta \in L^\infty H^0 \cap L^2 H^1$,  $k=0, 1$, such that $\theta$ satisfies \eqref{eq:linear_heat} when $j=0$.

In the following theorem, we give the construction of strong solutions to \eqref{eq:linear_heat} by Galerkin method. The strategy consists four main steps. The first is to use the basis for $\mathcal{H}^1(t)$ and ODE theory to construct $\theta^m$ in a finite dimensional space with dimension $m$. Secondly, we use energy estimate to find a uniform bound for $\theta^m$; we then take time derivative to find the formulation of $\pa_t\theta^m$ and use the energy estimates to find the uniform bound for $\pa_t\theta^m$. Thirdly, we can pass to the limit $m \to \infty$ to get the weak formulation, and use Theorem \ref{thm:elliptic_1} to obtain the strong solution. Lastly, we construct the weak solution $\pa_t\theta$ to \eqref{eq:linear_heat} when $j=1$.

  \begin{theorem}\label{thm:linear_low}
Assume the initial data are constructed in Section \ref{sec:initial_linear}.
Suppose that $\mathfrak{K}(\eta)\le\delta$ is smaller than $\delta_0$ in Lemma \ref{lem:equivalence_norm} and Theorem 4.7 in \cite{GT2020}.
Then there exists a unique strong solution $\theta$ solving \eqref{eq:linear_heat}. Moreover, the solution obeys the estimate
\begin{equation}\label{est:bound_linear}
  \begin{aligned}
  &\sum_{j=0}^1\|\pa_t^j\theta\|_{L^\infty H^0}^2+\|\theta\|_{L^2W^{2,q_+}}^2+\sum_{j=0}^1\left(\|\pa_t^j\theta\|_{L^2 H^1}^2+\|\pa_t^j\theta\|_{L^2 H^0(\Sigma)}^2\right)\\
  &\lesssim \exp\{\mathfrak{E}(\eta)T\}\bigg(\mathcal{E}(0)+\|(F^8+F^9)(0)\|_{(H^1)^\ast}^2+(1+\mathfrak{E}(\eta))(\|F^8\|_{L^2L^{q_+}}^2\\
  &\quad+\|F^9\|_{L^2W^{1-1/q_+,q_+}}^2)+\|\pa_t(F^8+F^9)\|_{(\mathcal{H}^1_T)^{\ast}}^2\bigg),
  \end{aligned}
  \end{equation}
  where "$\lesssim$" omits a constant depending on $(g, \sigma, \ell, |\Om|)$.
Moreover, $\pa_t\theta$ satisfies
\begin{equation}\label{eq:weak_dt_u}
\left\{
\begin{aligned}
  &\pa_t^2\theta- k\Delta_{\mathcal{A}}\pa_t\theta=\pa_tF^8+G^8 \quad&\text{in}&\quad\Om,\\
  &k\mathcal{N}\cdot\nabla_{\mathcal{A}}\pa_t\theta + \pa_t\theta |\mathcal{N}| =\pa_tF^9+G^9 \quad&\text{on}&\quad\Sigma,\\
  &\pa_t\theta  =0 \quad&\text{on}&\quad\Sigma_s,
\end{aligned}
\right.
\end{equation}
in the pressureless weak sense of \eqref{eq:weak_limit_1}, where $G^8$ and $G^9$ are defined by
\[
\begin{aligned}
G^8&=k\dive_{\mathcal{A}}\nabla_{\pa_t\mathcal{A}}\theta + k\dive_{\pa_t\mathcal{A}}\nabla_{\mathcal{A}} \theta,\\
G^9&= - k\nabla_{\pa_t\mathcal{A}}\theta \cdot\mathcal{N}- k\nabla_{\mathcal{A}}\theta \cdot\pa_t\mathcal{N}- \theta\pa_t|\mathcal{N}|.
\end{aligned}
\]
More precisely, \eqref{eq:weak_dt_u} holds in the weak sense of
\begin{equation}\label{eq:weak_dt_u_1}
\begin{aligned}
  &\left<\pa_t^2\theta,\psi \right>_\ast+k\int_0^T(\pa_t\theta,\psi )_{\mathcal{H}^1}
&=\int_0^T\int_\Om (\pa_tF^8 + G^8) \cdot \psi J +\int_0^T \int_{-\ell}^\ell  (\pa_tF^9+G^9) \psi
\end{aligned}
\end{equation}
for each $\psi\in \mathcal{H}^1_T$.
  \end{theorem}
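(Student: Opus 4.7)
The plan is to follow the four-step strategy outlined after the theorem statement. First, I would use the basis $\{w^k(t)\}_{k=1}^\infty$ for $\mathcal{H}^1(t)$ from Lemma \ref{lem:basis_initial} together with the initial data construction of Theorem \ref{thm:initial_convergence}. Writing $\theta^m(t) = \sum_{j=1}^m d_j(t) w^j(t)$ with $\theta^m(0)$ the projection of $\theta_0$ onto $\mathcal{H}^1_m(0)$, and inserting $\psi = w^k(t)$ into the approximate weak formulation \eqref{app_1}, one obtains a linear ODE system for the coefficients $d_j(t)$ whose mass matrix is invertible thanks to the orthogonality together with $J>0$ (guaranteed by Lemma \ref{lem:equivalence_norm} under the smallness hypothesis on $\mathfrak{K}(\eta)$). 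Standard ODE theory then produces a unique $\theta^m$ on $[0,T]$.

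Second, I would derive uniform-in-$m$ energy estimates. Testing the approximate equation with $\psi = \theta^m$ and integrating in time produces the analog of \eqref{equality}, controlling $\|\theta^m\|_{L^\infty\mathcal{H}^0}^2 + \|\nabla_{\mathcal{A}}\theta^m\|_{\mathcal{H}^0_T}^2 + \|\sqrt{|\mathcal{N}|}\theta^m\|_{L^2 L^2(\Sigma)}^2$ in terms of the initial data, the $L^2L^{q_+}$ norm of $F^8$, the $L^2 W^{1-1/q_+, q_+}$ norm of $F^9$, and a Gronwall factor $\exp(\mathfrak{E}(\eta)T)$ absorbing the $\pa_t J K$ term arising from $\pa_t\|\cdot\|_{\mathcal{H}^0}^2$. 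Next, differentiating \eqref{app_1} in $t$ on $\mathcal{H}^1_m(t)$ yields an equation for $\pa_t\theta^m$ whose right-hand side contains the commutator terms $k\dive_{\pa_t\mathcal{A}}\nabla_{\mathcal{A}}\theta^m + k\dive_{\mathcal{A}}\nabla_{\pa_t\mathcal{A}}\theta^m$ in the bulk and $\pa_t\mathcal{N}, \pa_t|\mathcal{N}|$ contributions on $\Sigma$, plus the sources $\pa_t F^8, \pa_t F^9$. Using the uniform bound on $\pa_t\theta^m(0)$ from Proposition \ref{prop:bound_dtu}, testing with $\psi = \pa_t\theta^m$, and another Gronwall application yields uniform control of $\|\pa_t\theta^m\|_{L^\infty\mathcal{H}^0}$ and $\|\pa_t\theta^m\|_{L^2\mathcal{H}^1}$. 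Since $\pa_tF^8, \pa_tF^9$ are available only in dual form, this step must be closed by pairing against $\pa_t\theta^m$ interpreted as an $\mathcal{H}^1$ test function, producing exactly the $(\mathcal{H}^1_T)^\ast$ norm of $\pa_t(F^8+F^9)$ appearing on the right of \eqref{est:bound_linear}.

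Third, extracting weakly convergent subsequences in the appropriate $L^\infty$ and $L^2$ Bochner spaces, the limits $\theta, \pa_t\theta$ satisfy the weak formulation \eqref{eq:weak_limit_1}, and uniqueness follows from Proposition \ref{prop:unique}. To upgrade $\theta$ to a strong solution in $L^\infty W^{2, q_+}$, I would fix $t$ almost everywhere, regard the equation as the stationary problem \eqref{eq:elliptic_5} with forcing $F^8 - \pa_t\theta \in L^{q_+}(\Om)$ and boundary data $F^9 \in W^{1-1/q_+, q_+}(\Sigma)$, and apply Theorem \ref{thm:elliptic_1}; the hypothesis $\|\eta\|_{W^{3-1/q_+, q_+}}<\gamma_0$ there is exactly guaranteed by the smallness of $\mathfrak{K}(\eta)$. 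Integrating the resulting elliptic estimate in time furnishes the $L^2 W^{2, q_+}$ bound on $\theta$ in \eqref{est:bound_linear}.

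Fourth, to establish \eqref{eq:weak_dt_u} and \eqref{eq:weak_dt_u_1}, I would pass to the limit in the time-differentiated Galerkin equation, recognizing the commutator contributions exactly as the $G^8, G^9$ defined in the statement; the $\pa_t^2\theta$ pairing is interpreted distributionally via the duality $\langle \cdot, \cdot \rangle_\ast$ on $(\mathcal{H}^1_T)^\ast \times \mathcal{H}^1_T$. The main obstacle, flagged already in the remark preceding the theorem, is consistency of the two-tier construction: since $\pa_t\theta$ has no additional spatial regularity, the energy estimate on $\pa_t\theta^m$ must be closed in dual norms, and keeping precise track of which norms of $\eta$ enter when bounding the commutators $\pa_t\mathcal{A}, \pa_t\mathcal{N}, \pa_tJ$ (all controlled through $\mathfrak{E}(\eta)$ and $\mathfrak{D}(\eta)$, and ultimately by $\mathfrak{K}(\eta)\le\delta$) is the principal technical burden. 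A secondary subtlety is the compatibility between the time-dependent basis $w^k(t)$ and the projection used to define $\pa_t\theta^m(0)$; this is precisely what Theorem \ref{thm:initial_convergence} was set up to handle and will be invoked here to identify weak limits of the approximate initial data with the prescribed $\pa_t\theta(0)$.
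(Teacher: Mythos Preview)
Your proposal is correct and follows essentially the same approach as the paper: Galerkin approximation via the time-dependent basis of Lemma \ref{lem:basis_initial}, ODE theory for the coefficients, energy estimates for $\theta^m$ and $\pa_t\theta^m$ (the latter closed via the dual pairing $(\mathcal{H}^1_T)^\ast$ and the initial bound from Proposition \ref{prop:bound_dtu}), weak limits plus the elliptic upgrade through Theorem \ref{thm:elliptic_1}, and finally passage to the limit in the time-differentiated equation. The only point where the paper does slightly more than your sketch is in the last step: after passing to the limit one picks up residual $\pa_tJK(\pa_t\theta - k\Delta_{\mathcal{A}}\theta - F^8)$ terms, and the paper uses the already-established strong form of the equation for $\theta$ to cancel these and arrive at the clean weak formulation \eqref{eq:weak_dt_u_1}; you should make this cancellation explicit when you write out the details.
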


\begin{proof}
Step 1 -- Galerkin Setup.
In order to utilize the Galerkin method, we choose the same basis $\{w^j(t)\}_{j=1}^\infty$ of $\mathcal{H}^1$ constructed in Lemma \ref{lem:basis_initial}.
For any integer $m\ge1$, we define the finite dimensional space
$
\mathcal{H}^1_m:=\text{span}\{w^1, \cdots, w^m\}\subseteq \mathcal{H}^1$,
 and we write
$
\mathcal{P}^m: \mathcal{H}^1\rightarrow \mathcal{H}^1_m
$
for the $\mathcal{H}^1$ orthogonal projection onto $\mathcal{H}^1_m$. Clearly, for each $w\in \mathcal{H}^1$, we have that $\mathcal{P}^m w\rightarrow w$ as $m\rightarrow\infty$.

Step 2 -- Solving the Approximate Problem.
For each $m\ge1$, we define an approximate solution $
\theta^m(t):=d^m_j(t)w^j$, with $d^m_j: [0, T]\rightarrow\mathbb{R}\ \text{for}\ j=1, \dots, m$,
where we used the Einstein convention of summation of the repeated index $j$.

We want to choose the coefficients $d^m_j(t)$ so that
\begin{equation}\label{eq:galerkin}
\begin{aligned}
  (\pa_t\theta^m,\varphi)_{\mathcal{H}^0}+ k(\nabla_{\mathcal{A}}\theta^m,\nabla_{\mathcal{A}}\varphi)_{\mathcal{H}^0}+(\theta^m,\varphi|\mathcal{N}|)_{0, \Sigma}=\int_{\Om}F^8\cdot \varphi J+\int_{\Sigma}F^9\varphi ,
\end{aligned}
  \end{equation}
  for each $\varphi\in \mathcal{H}^1_m(t)$. We supplement the initial data
  $\theta^m(0)=\mathcal{P}^m_0\theta_0\in\mathcal{H}^1_m(0)$.

  Then we see that \eqref{eq:galerkin} is equivalent to the equation of $d^m_j$ given by
  \begin{equation}\label{eq:galerkin_2}
\begin{aligned}
 \dot{d}^m_i(w^i,w^j)_{\mathcal{H}^0}+ kd^m_i\left((\nabla_{\mathcal{A}}w^i,\nabla_{\mathcal{A}}w^j)_{\mathcal{H}^0}+(w^i,w^j|\mathcal{N}|)_{0,\Sigma}\right)
  =\int_{\Om}F^8\cdot w^jJ+\int_{\Sigma}F^9w^j ,
\end{aligned}
  \end{equation}
  for $i,j=1, \dots, m$. Since $\{w^j\}_{j=1}^\infty$ is a basis of $\mathcal{H}^1(t)$, the $m\times m$ matrix with $j, k$ entry $(w^i, w^j)_{\mathcal{H}^0}$ is invertible. Thus the initial data $d^m_i(0)$ is determined uniquely via
  \[
  d^m_i(0)(w^i, w^j)_{\mathcal{H}^1}=(\mathcal{P}_0^m\theta_0, w^j)_{\mathcal{H}^1}.
  \]

  Then we view \eqref{eq:galerkin_2} as an differential system of the form
  \begin{equation}\label{eq:integral}
  \dot{d}^m(t)+\mathfrak{B}(t)d^m(t)=\mathfrak{F}(t).
  \end{equation}
  The $m\times m$ matrix $\mathfrak{B}$ belongs to $C^1([0,T])$   and the forcing term $\mathfrak{F}\in C^1([0,T])$. By the standard linear theory of ODE, \eqref{eq:galerkin_2} has a unique solution $d^m\in C^{1, 1}([0,T])$ achieving the initial data $d^m(0)$.

Step 3 -- Energy Estimates for $\theta^m$.
  By construction, $\theta^m(t)\in\mathcal{H}^1_m(t)$, so we may choose $w=\theta^m$ as a test function in \eqref{eq:galerkin} such that we have
\[
\begin{aligned}
 \frac{d}{dt}\frac12\|\theta^m\|_{\mathcal{H}^0}^2+\|\nabla_{\mathcal{A}}\theta^m\|_{\mathcal{H}^0}^2+ \int_{\Sigma}|\theta^m|^2|\mathcal{N}|
  =\frac12\int_\Om\pa_tJK|\theta^m|^2J+\int_{\Om}F^8\cdot \theta^mJ+\int_{\Sigma}F^9\theta^m.
\end{aligned}
\]

Then using the H\"older inequality with $\frac1{q_+}+\frac2{\varepsilon_+}=1$, and the Cauchy inequality,  we have that
\[
\begin{aligned}
  &\frac{d}{dt}\frac12\|\theta^m\|_{\mathcal{H}^0}^2 + k\|\nabla_{\mathcal{A}}\theta^m\|_{\mathcal{H}^0}^2+\|\theta^m\|_{H^0(\Sigma)}^2\\
  &\le \frac12\|\pa_tJK\|_{L^\infty}\|\theta^m\|_{\mathcal{H}^0}^2+ \|J\|_{L^\infty(\Sigma)}\|F^8\|_{L^{q_+}}\|\theta^m\|_{L^{2/\varepsilon_+}} +\|F^9\|_{L^{q_+}(\Sigma)}\|\theta^m\|_{L^{2/\varepsilon_+}(\Sigma)}.
\end{aligned}
  \]
  Then we employ the Cauchy's inequality, Sobolev embedding theory $H^1(U)\hookrightarrow L^{2/\varepsilon_+}(U)$ for $U=\Om, \Sigma$, and the usual trace theory to derive that
  \begin{equation}
\begin{aligned}
  &\frac{d}{dt}\frac12\|\theta^m\|_{\mathcal{H}^0}^2+\frac12(\|\theta^m\|_{\mathcal{H}^1}^2+\|\theta^m\|_{H^0(\Sigma)})\\
  &\le \frac12\|\pa_tJK\|_{L^\infty}\|\theta^m\|_{\mathcal{H}^0}^2+C(1+\|\eta\|_{W^{3-1/q_+,q_+}}^2)(\|F^8\|_{L^{q_+}}^2+\|F^9\|_{L^{q_+}}^2).
\end{aligned}
  \end{equation}

  Since $\mathcal{P}_0$ is projected orthogonal, then
  \[
  \|\theta^m(0)\|_{\mathcal{H}^0}\lesssim \|\theta^m(0)\|_{H^0}\lesssim \|\theta_0\|_1\lesssim\|\theta_0\|_{W^{2,q_+}}.
  \]

Then the Gronwall's lemma and Lemma \ref{lem:equivalence_norm} reveal that
  \begin{equation}\label{eq:xi_m}
\begin{aligned}
  &\sup_{0\le t\le T}\|\theta^m\|_{H^0}^2 + \|\theta^m\|_{L^2H^1(\Om)}^2 \\
  &\lesssim \exp(C_0(\eta))\bigg(\|\theta_0\|_{W^{2,q_+}}^2+(1+\|\eta\|_{L^\infty W^{3-1/q_+,q_+}}^2)\Big(\|F^8\|_{L^2L^{q_+}}^2+\|F^9\|_{L^2L^{q_+}}^2\Big)\bigg),
\end{aligned}
  \end{equation}
  where $C_0(\eta)=\int_0^T\|\pa_tJK\|_{L^\infty}\lesssim T\|\pa_t\eta\|_{L^\infty H^{3/2+(\varepsilon_--\alpha)/2}}$.

Step 4 -- Energy Estimate for $\pa_t\theta^m$.
  Suppose that $w=b_j^mw^j\in\mathcal{H}^1_m(t)$ for $b_j^m\in C^{1,1}([0, T])$. We now use this $w$ in \eqref{eq:galerkin}, temporally differentiate the resulting equation, and then subtract this from the equation \eqref{eq:galerkin} with test function $\pa_tw$. This eliminates the terms of $\pa_tw$ and leaves us with the equality
  \begin{equation}\label{eq:pa_tu_m_1}
  \begin{aligned}
&\left(\pa_t^2\theta^m,w\right)_{\mathcal{H}^0}+ k(\nabla_{\mathcal{A}}\pa_t\theta^m,\nabla_{\mathcal{A}}w)_{\mathcal{H}^0}+\int_{\Sigma}\pa_t\theta^m w|\mathcal{N}|\\
&=-(\pa_t\theta^m, w\pa_tJK)_{\mathcal{H}^0}-\int_{\Sigma}\theta^m w\pa_t|\mathcal{N}|\\
&\quad-k\int_\Om(\nabla_{\pa_t\mathcal{A}}\theta^m \cdot \nabla_{\mathcal{A}}w+\nabla_{\mathcal{A}}\theta^m \cdot \nabla_{\pa_t\mathcal{A}}w+\pa_tJK\nabla_{\mathcal{A}}\theta^m \cdot \nabla_{\mathcal{A}}w)J\\
&\quad+\int_{\Om}\pa_tF^8\cdot wJ+\int_{\Sigma}\pa_tF^9w +\int_{\Om}F^8\cdot w \pa_tJ.
  \end{aligned}
  \end{equation}

 By plugging the test function $w=\pa_t\theta^m$ into \eqref{eq:pa_tu_m_1},
 we have
  \begin{equation}\label{eq:e_2}
  \begin{aligned}
  \frac{d}{dt}\frac12\|\pa_t\theta^m\|_{\mathcal{H}^0}^2 + k \|\nabla_{\mathcal{A}}\pa_t\theta^m\|_{\mathcal{H}^0}^2 + \int_{\Sigma}|\pa_t\theta^m||\mathcal{N}|
  =I+II+III,
  \end{aligned}
  \end{equation}
  where
  \[
  I=-\frac12\int_\Om|\pa_t\theta^m|^2\pa_tJ - \int_{\Sigma}\theta^m \pa_t\theta^m\pa_t|\mathcal{N}|,
  \]
  \[
  \begin{aligned}
  II=-k\int_\Om(\nabla_{\pa_t\mathcal{A}}\theta^m \cdot \nabla_{\mathcal{A}}\pa_t\theta^m+\nabla_{\mathcal{A}}\theta^m \cdot \nabla_{\pa_t\mathcal{A}}\pa_t\theta^m+\pa_tJK\nabla_{\mathcal{A}}\theta^m \cdot \nabla_{\mathcal{A}}\pa_t\theta^m)J,
  \end{aligned}
  \]
  \[
  \begin{aligned}
  III=\int_{\Om}\pa_tF^8\cdot \pa_t\theta^mJ+\int_{\Sigma}\pa_tF^9\pa_t\theta^m |\mathcal{N}|+\int_{\Om}F^8\cdot \pa_t\theta^m \pa_tJ.
  \end{aligned}
  \]

  We now estimate each term of right-hand side in \eqref{eq:e_2}. We first employ the H\"older inequality and Sobolev embedding theory to derive that
  \begin{equation}\label{est:dt_u}
  \begin{aligned}
  I\le C(\|\pa_tJK\|_{L^\infty(\Om)})\|\pa_t\theta^m\|_{\mathcal{H}^0}^2 + C(\|\pa_tJ\|_{L^\infty(\pa\Om)})\|\theta^m\|_{H^0(\Sigma)}\|\pa_t\theta^m\|_{H^0(\Sigma)}\\
  \le C(\|\pa_t\eta\|_{H^{3/2+(\varepsilon_--\alpha)/2}}^2) (\|\pa_t\theta^m\|_{\mathcal{H}^0}^2 + \|\theta^m\|_{H^0(\Sigma)}^2) + \frac{1}{2}\int_{\Sigma}|\pa_t\theta^m|^2J.
  \end{aligned}
  \end{equation}

  Similarly, we estimate
  \begin{equation}
  \begin{aligned}
  II&\lesssim\|\pa_t\mathcal{A}\|_{L^\infty}\|J\|_{L^\infty}(\|\theta^m\|_{\dot{H}^1}\|\nabla_{\mathcal{A}}\pa_t\theta^m\|_{\mathcal{H}^0}+\|\pa_t\theta^m\|_{\dot{H}^1}\|\nabla_{\mathcal{A}}\theta^m\|_{\mathcal{H}^0})\\
  &\quad + \|\pa_tJK\|_{L^\infty}\|\nabla_{\mathcal{A}}\pa_t\theta^m\|_{\mathcal{H}^0}\|\nabla_{\mathcal{A}}\theta^m\|_{\mathcal{H}^0}\\
  &\le C(\|\pa_t\eta\|_{H^{3/2+(\varepsilon_--\alpha)/2}}^2)\|\nabla_{\mathcal{A}}\theta^m\|_{\mathcal{H}^0}^2 + \frac{1}{2}\|\nabla_{\mathcal{A}}\pa_t\theta^m\|_{\mathcal{H}^0}^2.
  \end{aligned}
  \end{equation}

  We now turn to $III$. We first use the dual space estimate to bound terms related to one time derivative of forces:
\begin{equation}
\begin{aligned}
  \int_{\Om}\pa_tF^8\cdot(\pa_t\theta^m)J+\int_{\Sigma}\pa_tF^9\cdot(\pa_t\theta^m)
  \lesssim \|\pa_t(F^8+F^9)\|_{(\mathcal{H}^1)^{\ast}}\|\pa_t\theta^m\|_{\mathcal{H}^1}.
\end{aligned}
\end{equation}
For terms with $F^8$, we estimate it by H\"older inequality with $\f1{q_+}+\f{\varepsilon_+}2=1$ and Sobolev embedding with $H^1\hookrightarrow L^{\varepsilon_+/2}$:
\begin{equation}\label{est:iii}
\begin{aligned}
  \int_{\Om}\left[\pa_tJF^8(\pa_t\theta^m)\right]
  &\lesssim \|\pa_tJ\|_{L^\infty}\|F^8\|_{L^{q_+}(\Om)}\|\pa_t\theta^m\|_{L^{\varepsilon_+/2}(\Om)}\\
  &\lesssim\|\pa_t\eta\|_{H^{3/2+(\varepsilon_--\alpha)/2}}\|F^8\|_{L^{q_+}(\Om)}\|\pa_t\theta^m\|_{\mathcal{H}^1}.
\end{aligned}
\end{equation}

  Thus, by the Schwarz inequality and combining \eqref{est:dt_u}--\eqref{est:iii}, we have the energy structure
  \begin{equation}
  \begin{aligned}
&\frac{d}{dt}\frac12\|\pa_t\theta^m\|_{\mathcal{H}^0}^2+\frac14\|\pa_t\theta^m\|_{\mathcal{H}^1}^2\\
&\le C\|\pa_t\eta\|_{H^{3/2+(\varepsilon_--\alpha)/2}}\left(\frac12\|\pa_t\theta^m\|_{\mathcal{H}^0}^2\right)+C\|\pa_t\eta\|_{H^{3/2+(\varepsilon_--\alpha)/2}}^2(\|\theta^m\|_{\mathcal{H}^1}^2+\|\theta^m\|_{H^0(\Sigma)}^2)\\
&\quad +C\|\pa_t(F^8+F^9)\|_{(\mathcal{H}^1)^{\ast}}^2+C\|\pa_t\eta\|_{H^{3/2+(\varepsilon_--\alpha)/2}}^2\|F^8\|_{L^{q_+}}^2.
  \end{aligned}
  \end{equation}

  Note that the derivative $\{\|\pa_t\theta^m(0)\|_{H^0}\}$ are uniformly bounded , as proved in Theorem \ref{thm:initial_convergence}. We then employ the Gronwall's inequality, H\"older inequality, the smallness of $\mathfrak{K}(\eta)$ and the initial data $\pa_t\theta(0)$ to derive that
  \begin{equation}\label{eq:pa_tu_m}
  \begin{aligned}
  &\sup_{0\le t\le T}\|\pa_t\theta^m\|_{0}^2+\|\pa_t\theta^m\|_{L^2H^1}^2 \\
  &\lesssim \exp\{\|\pa_t\eta\|_{L^\infty H^{3/2+\varepsilon_-/2}}T\}\bigg(\|\pa_t\theta(0)\|_{L^2(\Om)}^2+\|(F^8+F^9)(0)\|_{(H^1)^\ast}^2\\
  &\quad+\|\pa_t\eta\|_{L^\infty H^{3/2+\varepsilon_-/2}}^2(\|F^8\|_{L^2L^{q_+}}^2+\|F^9\|_{L^2W^{1-1/q_+,q_+}}^2+\|\pa_t(F^8+F^9)\|_{(\mathcal{H}^1_T)^{\ast}}^2)\bigg).
  \end{aligned}
  \end{equation}

Step 5 -- Passing to the Limit.
We now utilize the energy estimates \eqref{eq:xi_m} and \eqref{eq:pa_tu_m} to pass to the limit $m\rightarrow\infty$. According to Lemma \ref{lem:equivalence_norm} and energy estimates, we have that the sequence $\{\theta^m\}$ and $\{\pa_t\theta^m\}$ are uniformly bounded both in $L^2{}_0H^1$, and $L^\infty H^0$.  Up to the extraction of a subsequence, we then know that
  \begin{align}
  \begin{aligned}
  \theta^m\rightharpoonup \theta\ \text{weakly-}\ \text{in}\ L^2 {}_0H^1,\quad \pa_t\theta^m\rightharpoonup\pa_t\theta\ \text{weakly in}\ L^2{}_0H^1,\\
  \theta^m\stackrel{\ast}\rightharpoonup \theta\ \text{weakly-}\ast\ \text{in}\ L^\infty H^0, \quad\pa_t \theta^m\stackrel{\ast}\rightharpoonup \pa_t\theta\ \text{weakly-}\ast\ \text{in}\ L^\infty H^0.
\end{aligned}
  \end{align}
  By lower semicontinuity, the energy estimates imply that
  \begin{align}\label{est:b_low}
  \|\theta\|_{L^\infty H^0}^2+\|\pa_t\theta\|_{L^\infty H^0}^2+\|\theta\|_{L^2 H^1}^2+\|\pa_t\theta\|_{L^2H^1}^2
  \end{align}
  is bounded by \eqref{eq:xi_m} and \eqref{eq:pa_tu_m}.

Step 6 -- Strong Solutions.
  Due to the convergence, we may pass to the limit in \eqref{eq:galerkin} for almost every $t\in [0, T]$ so that
  \begin{equation}\label{eq:weak_limit}
  \begin{aligned}
  (\pa_t\theta, w)_{\mathcal{H}^0}+ k(\nabla_{\mathcal{A}}\theta,\nabla_{\mathcal{A}}w)_{\mathcal{H}^0}+ \int_{\Sigma}\theta w |\mathcal{N}|
  =\int_{\Om}F^8\cdot wJ+\int_{\Sigma}F^9w
  \end{aligned}
  \end{equation}
  for each $w\in \mathcal{H}^1(t)$.  By the elliptic theory,
\begin{equation}\label{est:diss_4}
  \begin{aligned}
  \|\theta\|_{W^{2,q_+}}^2
  \lesssim\|\pa_t\theta\|_{H^0}^2 +\|\theta\|_{H^1}^2+ \|F^8\|_{L^{q_+}}^2+\|F^9\|_{W^{1-1/q_+,q_+}}^2.
  \end{aligned}
  \end{equation}
By \eqref{eq:xi_m}, \eqref{eq:pa_tu_m}, \eqref{est:b_low} and \eqref{est:diss_4}, we have
\begin{equation}\label{est:ellip_1}
\begin{aligned}
  \|\theta\|_{L^2W^{2,q_+}}^2
  & \lesssim\exp\{\|\pa_t\eta\|_{L^\infty H^{3/2+\varepsilon_-/2}}T\}\bigg(\|(F^8+F^9)(0)\|_{(H^1)^\ast}^2+\|\theta_0\|_{W^{2,q_+}(\Om)}\\
  &\quad +\|\pa_t\theta(0)\|_{H^{1+\varepsilon_-/2}}+(1+\|\pa_t\eta\|_{L^\infty H^{3/2+\varepsilon_-/2}}^2)(\|F^8\|_{L^2L^{q_+}}^2+\|F^9\|_{L^2W^{1-1/q_+,q_+}}^2)\\
  &\quad+(1+\|\pa_t\eta\|_{L^\infty H^{3/2+\varepsilon_-/2}}^2)\|\pa_t(F^8 + F^9)\|_{(\mathcal{H}^1_T)^{\ast}}^2\bigg),
\end{aligned}
\end{equation}
as well as
\begin{equation}\label{est:ellip_2}
\begin{aligned}
  \|\theta\|_{L^\infty W^{2,q_+}}^2
  & \lesssim\exp\{\|\pa_t\eta\|_{L^\infty H^{3/2+\varepsilon_-/2}}T\}\bigg(\|\theta_0\|_{W^{2,q_+}(\Om)}+\|\pa_t\theta(0)\|_{H^{1+\varepsilon_-/2}}+\|(F^8+F^9)(0)\|_{(H^1)^\ast}^2\\
  &\quad+(1+\|\pa_t\eta\|_{L^\infty H^{3/2+\varepsilon_-/2}}^2)(\|F^8\|_{L^\infty L^{q_+}}^2+\|F^9\|_{L^\infty W^{1-1/q_+,q_+}}^2 +\|F^8\|_{L^2L^{q_+}}^2\\
  &\quad+\|F^9\|_{L^2W^{1-1/q_+,q_+}}^2)+(1+\|\pa_t\eta\|_{L^\infty H^{3/2+\varepsilon_-/2}}^2)\|\pa_t(F^8 + F^9)\|_{(\mathcal{H}^1_T)^{\ast}}^2\bigg).
\end{aligned}
\end{equation}

Due to the bounds \eqref{eq:xi_m}, \eqref{eq:pa_tu_m}, and \eqref{est:ellip_1}, $\theta$ have one more regularity, besides the weak solutions, so that $\theta$ is a strong solution to \eqref{eq:linear_heat}.

Step 7 -- Weak solution for $(\pa_t\theta)$.
Now we seek to use \eqref{eq:pa_tu_m_1} to determine the PDE satisfied by $\pa_t\theta$.
We integrate \eqref{eq:pa_tu_m_1} temporally over $[0,T]$ and pass to the limit $m\rightarrow \infty$. The result equation is combined with the equation \eqref{eq:weak_limit}, with the test function $w\in L^2\mathcal{H}^1$, to derive that
\begin{equation}\label{eq:d_tu_1}
\begin{aligned}
&\left<\pa_t^2\theta, w\right>_{\ast}+\int_0^T\left[(\nabla_{\mathcal{A}}\pa_t\theta,\nabla_{\mathcal{A}}w)_{\mathcal{H}^0}+(\pa_t\theta,w |\mathcal{N}|)_{0,\Sigma} \right]\\
&=\int_0^T\int_{\Om}\left[\pa_tF^8\cdot w+\pa_tJKF^8\cdot w\right]J +\int_0^T\int_{\Sigma}[\pa_tF^9 w |\mathcal{N}|\\
&\quad-\int_0^T\int_\Om\frac{k}{2}(\nabla_{\pa_t\mathcal{A}}\theta \cdot \nabla_{\mathcal{A}}w+\nabla_{\mathcal{A}}\theta \cdot \nabla_{\pa_t\mathcal{A}}w+\pa_tJK\nabla_{\mathcal{A}}\theta \cdot\nabla_{\mathcal{A}}w)J\\
&\quad-(\pa_tJK\pa_t\theta, w)_{\mathcal{H}^0_T} - \int_0^T(\theta,w \pa_t|\mathcal{N}|)_{0,\Sigma}.
\end{aligned}
\end{equation}

Then the definition of $R$ that $
R^T=-\pa_tM^T(M)^{-T}=-\pa_tJKI_{2\times 2}-\pa_t\mathcal{A}\mathcal{A}^{-1}$,
and integration by parts yield that
\begin{equation}\label{eq:d_tu_2}
\begin{aligned}
 &\int_\Om\frac{k}{2}(\nabla_{\pa_t\mathcal{A}}\theta \cdot \nabla_{\mathcal{A}}w+\nabla_{\mathcal{A}}\theta \cdot \nabla_{\pa_t\mathcal{A}}w+\pa_tJK\nabla_{\mathcal{A}}\theta \cdot\nabla_{\mathcal{A}}w)J\\
 &=\int_\Om k(\nabla_{\pa_t\mathcal{A}}\theta-R\nabla_{\mathcal{A}}\theta)\cdot \nabla_{\mathcal{A}}wJ\\
 &=-k\left(\dive_{\mathcal{A}}(\nabla_{\pa_t\mathcal{A}}\theta-R\nabla_{\mathcal{A}}\theta),w\right)_{\mathcal{H}^0}+k\left<\nabla_{\pa_t\mathcal{A}}\theta\cdot\mathcal{N}+\nabla_{\mathcal{A}}\theta \cdot\pa_t\mathcal{N},w\right>_{-1/2,\Sigma}.
\end{aligned}
\end{equation}
Due to $R^T=-\pa_tJKI_{2\times 2}-\pa_t\mathcal{A}\mathcal{A}^{-1}$ and $\pa_j(J\mathcal{A}_{ij})=0$, we also have
\begin{align}\label{eq:d_tu_3}
 (\dive_{\mathcal{A}}R\nabla_{\mathcal{A}}\theta,w)_{\mathcal{H}^0} = (-\pa_tJK \Delta_{\mathcal{A}}\theta, w)_{\mathcal{H}^0} + (-\dive_{\pa_t\mathcal{A}}\nabla_{\mathcal{A}}\theta,w)_{\mathcal{H}^0}.
\end{align}

  By the above inequalities \eqref{eq:d_tu_1}--\eqref{eq:d_tu_3}, we have that
\begin{equation}\label{eq:weak_dt_u_2}
\begin{aligned}
 &\left<\pa_t^2\theta, w\right>_{\ast}+k\int_0^T(\nabla_{\mathcal{A}}\pa_t\theta,\nabla_{\mathcal{A}}w)_{\mathcal{H}^0} + \int_0^T(\pa_t\theta, w|\mathcal{N}|)_{0, \Sigma}\\
 &=k\int_0^T\int_\Om\left[\dive_{\mathcal{A}}\nabla_{\pa_t\mathcal{A}}\theta+ \dive_{\pa_t\mathcal{A}}\nabla_{\mathcal{A}}\theta\right]\cdot w J+\int_0^T\int_\Om\pa_tF^8 wJ \\
 &\quad-\int_0^T\int_\Sigma(k\nabla_{\pa_t\mathcal{A}}\theta \cdot\mathcal{N}+k\nabla_{\mathcal{A}}\theta\cdot\pa_t\mathcal{N} + \theta\pa_t|\mathcal{N}| - \pa_t F^9|) w\\
 &\quad-(\pa_tJK(\pa_t\theta -k\Delta_{\mathcal{A}}\theta -F^8), w)_{\mathcal{H}^0_T}.
\end{aligned}
\end{equation}
 Since $\theta$ is a strong solution to \eqref{eq:linear_heat}, we then plug the equations of \eqref{eq:linear_heat} into the last line of \eqref{eq:weak_dt_u_2} to cancel the terms of $\pa_tJ$ to arrive that the weak solution to \eqref{eq:weak_dt_u} follows from \eqref{eq:weak_dt_u_1} in the sense of \eqref{eq:weak_dt_u_1}.

  \end{proof}

\subsection{Higher Regularity}

In order to state our higher regularity results for the problem \eqref{eq:modified_linear}, we must be able to bound the forcing term that results from temporally differentiating \eqref{eq:modified_linear} one time. We consider the system with $j=1, 2$:
\begin{equation}\label{eq:linear_heat2}
\left\{
\begin{aligned}
  &\pa_t^{j+1}\theta - k \Delta_{\mathcal{A}} \pa_t^j\theta=F^{8,j}\quad&\text{in}&\quad\Om,\\
  &k\mathcal{N} \cdot\nabla_{\mathcal{A}} \pa_t^j\theta + \pa_t^j\theta |\mathcal{N}|=F^{9, j}\quad&\text{on}&\quad\Sigma,\\
  &\pa_t^j\theta = 0\quad&\text{on}&\quad\Sigma_s,
\end{aligned}
\right.
\end{equation}

Now, with the forcing terms of $F^{i,1}$ defined in \eqref{def:F_11}, we have the following estimate .
\begin{lemma}\label{lemma:est_force}
  The following estimates hold.
  \begin{equation}\label{est:force_11}
  \begin{aligned}
  \|F^{8,1}\|_{L^2 L^{q_-}}^2&\lesssim \|\pa_tF^8\|_{L^2L^{q_-}}^2+\mathfrak{E}(\eta)(\|\theta\|_{L^2W^{2,q_-}}^2+ \|\theta\|_{L^2H^1}^2)\\
  &\quad+\mathfrak{D}(\eta)(\|\theta_0\|_{W^{2,q_+}}^2+\|\theta\|_{L^2W^{2,q_-}}\|\pa_t \theta\|_{L^2H^1}),
  \end{aligned}
  \end{equation}
  \begin{equation}\label{est:force_41}
  \begin{aligned}
  \|F^{9,1}\|_{L^2W^{1-1/q_-,q_-}}^2&\lesssim \|\pa_tF^9\|_{L^2W^{1-1/q_-,q_-}}^2+\mathfrak{E}(\eta)\|\theta\|_{L^2W^{2,q_-}}^2.
  \end{aligned}
  \end{equation}
\end{lemma}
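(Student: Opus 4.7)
The plan is first to make the structure of $F^{8,1}$ and $F^{9,1}$ explicit by differentiating \eqref{eq:linear_heat} in time; exactly as in the derivation of $G^8, G^9$ in Theorem \ref{thm:linear_low}, one obtains
\[
F^{8,1}=\pa_tF^8+k\,\dive_{\mathcal{A}}\nabla_{\pa_t\mathcal{A}}\theta+k\,\dive_{\pa_t\mathcal{A}}\nabla_{\mathcal{A}}\theta,
\]
\[
F^{9,1}=\pa_tF^9-k\,\nabla_{\pa_t\mathcal{A}}\theta\cdot\mathcal{N}-k\,\nabla_{\mathcal{A}}\theta\cdot\pa_t\mathcal{N}-\theta\,\pa_t|\mathcal{N}|.
\]
The contributions $\pa_tF^8$ and $\pa_tF^9$ are absorbed directly into the right-hand sides of \eqref{est:force_11} and \eqref{est:force_41}, so the work lies entirely in the commutators. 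Throughout I would appeal, as background, to the pointwise bounds $\|\mathcal{A}-I\|_{L^\infty}+\|\pa_t\mathcal{A}\|_{L^\infty}\lesssim \|\bar\eta\|_{W^{1,\infty}}+\|\pa_t\bar\eta\|_{W^{1,\infty}}$ together with the Poisson-extension estimates that transfer boundary norms of $\eta$ and $\pa_t\eta$ to bulk norms of $\bar\eta$ and $\pa_t\bar\eta$, so that each occurrence of $\pa_t\bar\eta$, $\nabla\pa_t\bar\eta$, or $\nabla^2\pa_t\bar\eta$ is controlled by $\sqrt{\mathfrak{E}(\eta)}$ or $\sqrt{\mathfrak{D}(\eta)}$ depending on whether $L^\infty_t$ or $L^2_t$ is used.

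For \eqref{est:force_11}, I would expand the two commutator divergences by Leibniz into pieces of schematic type $(\pa_t\mathcal{A})(\nabla^2\theta)$, $(\nabla\pa_t\mathcal{A})(\nabla\theta)$ and lower-order analogues with extra factors of $\mathcal{A}$ or $\nabla\mathcal{A}$. The first type closes by putting $\pa_t\mathcal{A}$ in $L^\infty_tL^\infty_x$, controlled by $\sqrt{\mathfrak{E}(\eta)}$ through the embedding $H^{3/2+(\varepsilon_--\alpha)/2}\hookrightarrow W^{1,\infty}$, and $\nabla^2\theta$ in $L^2_tL^{q_-}_x$, producing the $\mathfrak{E}(\eta)\|\theta\|_{L^2W^{2,q_-}}^2$ contribution. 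The second type is more delicate: $\nabla^2\pa_t\bar\eta$ is only in $L^2_tL^{q_-}$ at the dissipation level, so it must be paired with $\nabla\theta$ at a borderline Hölder-conjugate exponent. I would bound that factor by the fundamental theorem of calculus combined with a Gagliardo--Nirenberg-type interpolation against $\|\pa_t\theta\|_{L^2H^1}$, producing an estimate of the form $\|\nabla\theta\|_{L^\infty_t(\cdot)}^2\lesssim \|\theta_0\|_{W^{2,q_+}}^2+\|\theta\|_{L^2W^{2,q_-}}\|\pa_t\theta\|_{L^2H^1}$, which is precisely how the cross term and the initial-data term on the right side of \eqref{est:force_11} arise. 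Squaring and time-integrating then gives \eqref{est:force_11}.

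For \eqref{est:force_41} I would extend each boundary commutator term to $\Om$ and apply the trace inequality $\|\cdot\|_{W^{1-1/q_-,q_-}(\Sigma)}\lesssim\|\cdot\|_{W^{1,q_-}(\Om)}$. Each of $\nabla_{\pa_t\mathcal{A}}\theta\cdot\mathcal{N}$, $\nabla_{\mathcal{A}}\theta\cdot\pa_t\mathcal{N}$ and $\theta\,\pa_t|\mathcal{N}|$ is analogous to a term in the analysis of $F^{8,1}$ but one spatial derivative lower, so every $\pa_t\bar\eta$ factor can be placed in $L^\infty_t$ at energy level, and the estimate closes with a single $\mathfrak{E}(\eta)\|\theta\|_{L^2W^{2,q_-}}^2$ contribution, yielding \eqref{est:force_41}. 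The main obstacle is the interpolation argument producing the cross term $\|\theta\|_{L^2W^{2,q_-}}\|\pa_t\theta\|_{L^2H^1}$ in \eqref{est:force_11}: it must be set up so that the companion factor of $\nabla^2\pa_t\bar\eta$ can be left in $L^2_t$ and controlled by $\sqrt{\mathfrak{D}(\eta)}$, while no time regularity of $\theta$ beyond what is already present in the energy--dissipation pair is needed. Once this one interpolation identity is in hand, the remaining bounds reduce to a mechanical application of Hölder, Sobolev embedding, and the trace theorem.
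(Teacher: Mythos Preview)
Your proposal is correct and follows essentially the same route as the paper. The paper expands $F^{8,1}$ via Leibniz into products of derivatives of $\bar\eta$ against derivatives of $\theta$, places $\pa_t\mathcal{A}$ in $L^\infty_tL^\infty_x$ (controlled by $\|\pa_t\eta\|_{L^\infty H^{3/2+(\varepsilon_--\alpha)/2}}\le\sqrt{\mathfrak{E}(\eta)}$) against $\|\theta\|_{L^2W^{2,q_-}}$, and places $\nabla\pa_t\mathcal{A}$ in $L^2_t$ (controlled by $\|\pa_t\eta\|_{L^2W^{3-1/q_-,q_-}}\le\sqrt{\mathfrak{D}(\eta)}$) against $\|\theta\|_{L^\infty H^{1+\varepsilon_-/2}}$; the latter is then bounded by the time-differentiation identity $\tfrac{d}{dt}\|\theta\|_{H^{1+\varepsilon_-/2}}^2\le\|\theta\|_{H^{1+\varepsilon_-}}\|\pa_t\theta\|_{H^1}\lesssim\|\theta\|_{W^{2,q_-}}\|\pa_t\theta\|_{H^1}$ followed by integration, which is precisely your ``fundamental theorem of calculus plus interpolation'' step and yields exactly the cross term $\|\theta_0\|_{W^{2,q_+}}^2+\|\theta\|_{L^2W^{2,q_-}}\|\pa_t\theta\|_{L^2H^1}$. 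For $F^{9,1}$ the paper simply says ``similarly'', and your trace-then-product argument is the intended filling.
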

\begin{proof}

  According to the definition of $F^{8,1}$ and $F^{9,1}$ in \eqref{def:F_11}, we use Leibniz rule to rewrite $F^{i,1}$ as a sum of products for two terms. One term is a product of various derivatives of $\bar{\eta}$, and the other is linear for derivatives of $\theta$. Then for a.e. $t\in[0,T]$, we estimate these resulting products using the product estimates in \cite{GT2020}, the Sobolev embedding and trace theory, the smallness of $\mathfrak{K}(\eta)$ and the finite volume of $|\Om|$. Then the resulting inequality after integrating over $[0,T]$ reveals
  \[
  \begin{aligned}
  \|F^{8,1}\|_{L^2L^{q_-}}&\lesssim \|\pa_tF^8\|_{L^2L^{q_-}}+\|\pa_t\eta\|_{L^\infty H^{3/2+(\varepsilon_--\alpha)/2}}(1+\|\eta\|_{L^\infty W^{1-1/q_+, q_+}})\|\theta\|_{L^2W^{2,q_-}}\\
  &\quad+\|\pa_t\eta\|_{L^2W^{3-1/q_-, q_-}}\|\theta\|_{L^\infty H^{1+\varepsilon_-/2}}.
  \end{aligned}
  \]
  Then we use the Sobolev extension and restriction theory to have that
  \[
  \begin{aligned}
   \f{d}{dt}\|\theta\|_{H^{1+\varepsilon_-/2}}^2\le\|\theta\|_{H^{1+\varepsilon_-}}\|\pa_t \theta\|_{H^1}\lesssim\|\theta\|_{W^{2,q_-}}\|\pa_t \theta\|_{H^1},
  \end{aligned}
  \]
  which, after an integration by parts, implies
  \[
  \|\theta\|_{L^\infty H^{1+\varepsilon_-/2}}^2\lesssim\|\theta_0\|_{W^{2,q_+}}^2+\|\theta\|_{L^2W^{2,q_-}}\|\pa_t \theta\|_{L^2H^1}.
  \]
Thus we have the bounds for \eqref{est:force_11}. Similarly, we have the bounds for \eqref{est:force_41}.
\end{proof}

We now present the dual estimates for forcing terms.
\begin{lemma}\label{lem:est_force_dual}
The following estimates hold that
\[
\begin{aligned}
  &\|F^{8,1} +F^{9,1}\|_{L^\infty(\mathcal{H}^1)^\ast}
  \lesssim \mathfrak{E}(\eta)(\|\theta\|_{L^2W^{2,q_-}}
  +\|\pa_t(F^8 +F^9)\|_{L^\infty(\mathcal{H}^1)^\ast},
\end{aligned}
\]
  \[
\begin{aligned}
  &\|F^{8,1} +F^{9,1}(0)\|_{(\mathcal{H}^1)^\ast}
  \lesssim \mathfrak{E}(\eta)(0)(\|\theta_0\|_{W^{2,q_+}}
  +\|\pa_t(F^8 +F^9)(0)\|_{(\mathcal{H}^1)^\ast},
\end{aligned}
  \]
  \[
  \begin{aligned}
\|\pa_t(F^{8,1} +F^{9,1})\|_{(\mathcal{H}^1_T)^\ast}^2
  &\lesssim \|\pa_t^2(F^8 +F^9)\|_{(\mathcal{H}^1_T)^\ast}
  +\mathfrak{E}(\eta)(\|\pa_t\theta\|_{L^2W^{2,q_-}}
  +\|\theta\|_{L^2W^{2,q_-}}\\
  &\quad +\|\pa_tF^8\|_{L^2L^{q_-}}+\|\pa_tF^9\|_{L^2 W^{2-1/q_-,q_-}}) + \mathfrak{D}(\eta)\|\theta\|_{L^\infty W^{2,q_+}}.
  \end{aligned}
  \]
\end{lemma}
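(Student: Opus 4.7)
The plan is to estimate each dual norm by pairing $F^{8,1}$ and $F^{9,1}$ against an arbitrary test function $\phi\in\mathcal{H}^1$ (or $\mathcal{H}^1_T$ in the time-integrated case) and taking the supremum over $\|\phi\|_{\mathcal{H}^1}\le 1$. First I would unpack the explicit form of $F^{8,1}$ and $F^{9,1}$ from \eqref{def:F_11}, which, via the Leibniz rule applied to $\pa_t(-k\Delta_{\mathcal{A}}\theta)$ and $\pa_t(k\mathcal{N}\cdot\nabla_{\mathcal{A}}\theta + \theta|\mathcal{N}|)$, decomposes as $\pa_tF^8$, $\pa_tF^9$ plus commutator contributions involving $\pa_t\mathcal{A}$, $\pa_tJ$, $\pa_t\mathcal{N}$, $\pa_t|\mathcal{N}|$ paired with derivatives of $\theta$ (at most second-order in $\Om$ and first-order on $\Sigma$).

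The key observation that separates the dual estimate from the strong estimate in Lemma~\ref{lemma:est_force} is that when a commutator term such as $\dive_{\pa_t\mathcal{A}}\nabla_{\mathcal{A}}\theta$ or $\nabla_{\pa_t\mathcal{A}}\theta\cdot\mathcal{N}$ is tested against $\phi J$ over $\Om$ or against $\phi$ over $\Sigma$, one integration by parts moves a derivative from $\theta$ to $\phi$, leaving boundary residuals that cancel against the $\pa_t\mathcal{N}$-terms from $F^{9,1}$. This is precisely the reduction that allows the second-order derivatives of $\theta$ to be replaced by a first-order pairing of type $\|\nabla\theta\|_{L^2}\|\nabla\phi\|_{L^2}$. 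After a H\"older split and the embeddings $H^{3/2+(\varepsilon_--\alpha)/2}\hookrightarrow L^\infty$ on $\Sigma$, $W^{3-1/q_+,q_+}\hookrightarrow W^{1,\infty}$, together with the standard trace theorem, each commutator is dominated by $\mathfrak{E}(\eta)\|\theta\|_{L^2W^{2,q_-}}\|\phi\|_{\mathcal{H}^1}$, while the remaining $\pa_t(F^8+F^9)$-contribution is controlled by duality; this yields the first bound. Setting $t=0$ and replacing the $L^\infty_t$-type norms by instantaneous values (and $\|\theta(0)\|_{W^{2,q_+}}$ in place of $\|\theta\|_{L^2_t W^{2,q_-}}$) gives the second bound.

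For the third estimate I would differentiate $F^{8,1}+F^{9,1}$ in $t$ once more, test the result against $\phi\in L^2\mathcal{H}^1$, and integrate over $\Om\times[0,T]$ or $\Sigma\times[0,T]$. The terms split into: (a) $\pa_t^2(F^8+F^9)$, which contributes the $\|\pa_t^2(F^8+F^9)\|_{(\mathcal{H}^1_T)^\ast}$ piece directly; (b) second-order commutators with one time derivative on the coefficient and one on $\theta$, e.g.\ $\pa_t\mathcal{A}\cdot\nabla^2\pa_t\theta$, which after the same integration-by-parts reduction yield $\mathfrak{E}(\eta)\|\pa_t\theta\|_{L^2W^{2,q_-}}$ and the analogous $\|\pa_tF^8\|_{L^2L^{q_-}}$, $\|\pa_tF^9\|_{L^2W^{2-1/q_-,q_-}}$ contributions coming from the boundary; (c) second-order commutators with both time derivatives on the coefficient, $\pa_t^2\mathcal{A}\cdot\nabla^2\theta$ and its boundary analogues with $\pa_t^2\mathcal{N}$, $\pa_t^2|\mathcal{N}|$; and (d) lower-order products handled by the product estimates of \cite{GT2020}.

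The principal obstacle is case (c). Because $\pa_t^2\bar\eta$ is controlled only at the dissipation level $L^2_t W^{3-1/q_-,q_-}$ (through $\mathfrak{D}(\eta)$) rather than at the energy level uniformly in time, the only H\"older balance that closes places $\pa_t^2\mathcal{A}$ in $L^2_t$ and $\theta$ in $L^\infty_t W^{2,q_+}$; this is exactly what produces the final summand $\mathfrak{D}(\eta)\|\theta\|_{L^\infty W^{2,q_+}}$. The bookkeeping needed to make this balance compatible with pairing against $\phi\in L^2\mathcal{H}^1$, while simultaneously keeping boundary traces at the correct regularity via $W^{1-1/q_-,q_-}(\Sigma)$-to-$W^{1,q_-}(\Om)$ extension, is the only delicate step; once it is carried out, the remaining terms follow from the same product-rule, Sobolev embedding, and trace arguments already used in the proof of Lemma~\ref{lemma:est_force}.
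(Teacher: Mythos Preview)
Your overall architecture is right: split $F^{8,1}+F^{9,1}$ (and its time derivative) into $\pa_t^j(F^8+F^9)$ plus the commutators $G^8,G^9$ from \eqref{def:F_11}, pair against $\psi\in\mathcal{H}^1$ (or $\mathcal{H}^1_T$), and bound term by term. You also correctly isolate the critical obstruction---the $\pa_t^2\mathcal{A}$ terms in the third estimate---and the resolution, namely putting $\pa_t^2\bar\eta$ in $L^2_t$ via $\mathfrak{D}(\eta)$ against $\theta\in L^\infty_t W^{2,q_+}$.

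Where your proposal diverges from the paper is the ``key observation'' about integration by parts. The paper does \emph{not} integrate by parts in this lemma: it estimates $\int_\Om \pa_tG^8\,\psi J$ and $\int_\Sigma \pa_tG^9\,\psi$ directly by H\"older, keeping $\nabla^2\theta$ in $L^{q_\pm}$ and using $H^1\hookrightarrow L^{2/\varepsilon_\pm}$ to absorb $\psi$; see \eqref{est:dual_4}--\eqref{est:dual_5}. The gain over the strong estimate of Lemma~\ref{lemma:est_force} comes from this Sobolev embedding on the test function, not from moving derivatives. Your integration-by-parts route with the bulk/boundary cancellation you describe is genuine (it is precisely the computation \eqref{eq:d_tu_2} in Step~7 of Theorem~\ref{thm:linear_low}) and would work for the first two inequalities. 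But if applied uniformly to the third inequality it fails at case~(c): integration by parts there produces $\nabla\pa_t^2\mathcal{A}\sim\nabla^2\pa_t^2\bar\eta$, and $\pa_t^2\eta$ is only in $L^2_tH^{3/2-\alpha}$, so $\nabla^2\pa_t^2\bar\eta$ is not controlled in any $L^p$. You implicitly switch back to a direct H\"older balance for that case, but the proposal does not flag this, and your case~(b) claim that integration by parts yields a $\|\pa_t\theta\|_{L^2W^{2,q_-}}$ bound is internally inconsistent (if you really integrated by parts, only $\|\pa_t\theta\|_{L^2H^1}$ would be needed). The cleanest fix is to drop the integration-by-parts step entirely and follow the paper's direct H\"older argument, which handles all terms uniformly.
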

\begin{proof}
Since the proof of the first inequality is similar to Lemma \ref{lemma:est_force}, so we only give the proof  of the third inequality, which is much harder.
  From the notation in Section \ref{sec:notation}, we have that
  \begin{equation}\label{est:dual_1}
\left<\pa_t(F^{8,1}+F^{9,1}),\psi\right>_{(\mathcal{H}^1_T)^\ast}=\int_0^T\int_{\Om}\pa_tF^{8,1} \psi J + \int_0^T\int_{-\ell}^{\ell}\pa_tF^{9,1} \psi,
\end{equation}
for each $\psi\in \mathcal{H}^1_T$.
By the definition of  $F^{i,j}$ in \eqref{def:F_11}, \eqref{est:dual_1} is reduced to
\begin{equation}\label{est:dual_2}
\begin{aligned}
\left<\pa_t(F^{8,1} +F^{9,1}),\psi\right>_{(\mathcal{H}^1_T)^\ast}=\left<\pa_t^2(F^8 +F^9),\psi\right>_{(\mathcal{H}^1_T)^\ast}\\
+\int_0^T\int_{\Om}\pa_tG^8 \psi J + \int_0^T\int_{-\ell}^{\ell}\pa_tG^9 \psi,
\end{aligned}
\end{equation}
for each $\psi\in \mathcal{H}^1$, where $G^8$ and $G^9$ are defined as in Theorem \ref{thm:linear_low}. It is trivial that
\begin{equation}\label{est:dual_3}
\left<\pa_t^2(F^8 +F^9),\psi\right>_{(\mathcal{H}^1_T)^\ast}\le\|\pa_t^2(F^8 +F^9)\|_{(\mathcal{H}^1_T)^\ast}\|\psi\|_{\mathcal{H}^1_T}.
\end{equation}

By the H\"older inequality, Sobolev embedding inequality and usual trace theory, we have
\begin{align}\label{est:dual_4}
  \begin{aligned}
    \int_0^T\int_{\Om}\pa_tG^8 \psi J & \lesssim \int_0^T \|\psi\|_{L^{2/\varepsilon_-}}\|\nabla^2 \theta\|_{L^{q_-}} \|\pa_t\bar{\eta}\|_{W^{1,\infty}}^2 \\
    & \quad +\int_0^T \|\psi\|_{L^{4/(3\varepsilon_--\alpha)}}\|\nabla \theta\|_{L^{2/(1-\varepsilon_-)}} \|\pa_t\bar{\eta}\|_{W^{2,4/(2+\alpha-\varepsilon_-)}} \| \pa_t\bar{\eta}\|_{W^{1,\infty}}\\
    & \quad +\int_0^T \|\psi\|_{L^{2/(\varepsilon_+-2\alpha)}} \| \pa_t^2\eta\|_{W^{2,2/(1+2\alpha)}} \|\theta\|_{W^{1, 2/(1-\varepsilon_+)}} \\
    & \quad +\int_0^T \|\psi\|_{L^{2/(\varepsilon_--\alpha)}} \| \pa_t^2\eta\|_{W^{1,2/\alpha}} \|\nabla^2 \theta\|_{L^{q_-}} \\
    &\lesssim (\|\pa_t\eta\|_{L^\infty H^{3/2+(\varepsilon_--\alpha)/2}} \|\theta\|_{L^2W^{2,q_-}} + \|\theta\|_{L^\infty W^{2,q_+}}\|\pa_t^2\eta\|_{L^2H^{3/2-\alpha}})\|\psi\|_{\mathcal{H}^1_T}.
  \end{aligned}
\end{align}
We now turn to estimate the integration on the boundary. By the H\"older's inequality and Sobolev inequalities, we have
\begin{align}\label{est:dual_5}
  \begin{aligned}
    \int_0^T\int_{-\ell}^{\ell}\pa_tG^9 \psi & \lesssim \int_0^T\int_{-\ell}^\ell |\pa_t^2\mathcal{A}||\nabla \theta| |\psi| + |\pa_t\mathcal{A}||\nabla \pa_t\theta| |\psi| + |\pa_t\mathcal{A}||\nabla\theta| |\pa_t\pa_1\eta| |\psi| + |\theta||\pa_1\pa_t^2\eta| |\psi|\\
    &\quad + \int_0^T\int_{-\ell}^\ell |\nabla \pa_t\theta||\pa_1\pa_t\eta| |\psi| + |\nabla \theta||\pa_1\pa_t^2\eta| |\psi| + |\pa_t\eta||\pa_1\pa_t\theta||\psi| + |\theta| |\pa_1\pa_t\eta||\psi| \\
    &\lesssim \int_0^T (\|\pa_t\eta\|_{W^{1, 1/\alpha}} + \|\pa_t^2\eta\|_{W^{1, 1/\alpha}}) \|\nabla \theta\|_{L^{1/(1-\varepsilon_+)}(\Sigma)} \|\psi\|_{L^{1/(\varepsilon_+-\alpha)}(\Sigma)}\\
    &\quad + \int_0^T \|\pa_t\eta\|_{W^{1,\infty}} \|\pa_t\theta\|_{W^{1, 1/(1-\varepsilon_-)}(\Sigma)} \|\psi\|_{L^{1/(\varepsilon_-)}(\Sigma)}\\
    &\quad + \int_0^T \|\pa_t\eta\|_{W^{1,\infty}}^2 \|\theta\|_{W^{1, 1/(1-\varepsilon_-)}(\Sigma)} \|\psi\|_{L^{1/(\varepsilon_-)}(\Sigma)}\\
    &\lesssim (\|\pa_t\eta\|_{L^2H^{3/2-\alpha}}+\|\pa_t^2\eta\|_{L^2H^{3/2-\alpha}})\|\theta\|_{L^\infty W^{2, q_+}}\|\psi\|_{\mathcal{H}^1_T}\\
    &\quad + \|\pa_t\eta\|_{L^\infty H^{3/2+(\varepsilon_--\alpha)/2}}(\|\pa_t\theta\|_{L^2 W^{2, q_-}} +\|\theta\|_{L^2 W^{2, q_-}})\|\psi\|_{\mathcal{H}^1_T}.
  \end{aligned}
\end{align}

Then combining \eqref{est:dual_3} -- \eqref{est:dual_5} yields the result.
\end{proof}

\begin{theorem}\label{thm:higher order}
  Assume that the forcing terms $F^{i,j}$ satisfy the conditions in Lemma \ref{lemma:est_force} and \ref{lem:est_force_dual}, that $\mathfrak{K}(\eta)\le\delta$ is smaller than $\delta_0$ in Lemma \ref{lem:equivalence_norm} and Theorem 4.7 in \cite{GT2020}.
  Then there exists a unique strong solution $\theta$ to \eqref{eq:linear_heat} on temporal interval $[0, T]$. Moreover,
  $\pa_t^j\theta$ satisfies \eqref{eq:linear_heat2}in the strong sense with initial data $\pa_t^j\theta(0)$ for $j=0,1$ and $\pa_t^2\theta$ solves \eqref{eq:linear_heat2} in the weak sense for $j=2$. Moreover, there exists a constant $C_0=C_0(\sigma, \ell, \Om)$, such that the solution satisfies the estimate

  \begin{equation}\label{est:higher}
  \begin{aligned}
  &\sup_{0\le t\le T} \Big(\|\theta(t)\|_{W^{2, q_+}}^2 + \|\pa_t\theta(t)\|_{H^{1+\varepsilon_-/2}}^2+ \sum_{j=0}^2\|\pa_t^j\theta(t)\|_{H^0}^2\Big)\\
  &\quad + \int_0^T\Big(\|\theta(t)\|_{W^{2, q_+}}^2 + \|\pa_t\theta(t)\|_{W^{2, q_-}}^2 + \sum_{j=0}^2\|\pa_t^j\theta(t)\|_{H^1}^2\Big)\\
  &\le C_0\exp\{\mathfrak{E}(\eta)T\}\bigg(\|\theta_0\|_{W^{2, q_+}}^2 + \|\pa_t\theta(0)\|_{H^{3/2+(\varepsilon_+-\alpha)/2}}^2 + \|\pa_t^2\theta(0)\|_{H^0}^2+\|\pa_t(F^8+F^9)(0)\|_{(H^1)^\ast}^2\\
  &\quad+\|F^8(0)\|_{L^{q_+}}^2 +\|F^9\|(0)\|_{W^{1-1/q_{+}, q_+}}^2+(1+\mathfrak{E}(\eta))\Big(\|F^8\|_{L^\infty L^{q_+}}^2+\|F^9\|_{L^\infty W^{1-1/q_+,q_+}}^2\\
 &\quad+\|\pa_tF^8\|_{L^2L^{q_-}}^2+\|\pa_tF^9\|_{L^2W^{1-1/q_-,q_-}}^2+\|\pa_t^2(F^8+F^9)\|_{(\mathcal{H}^1_T)^{\ast}}^2 \Big)\bigg).
  \end{aligned}
  \end{equation}
\end{theorem}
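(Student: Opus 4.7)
The plan is a two--tier bootstrap that applies Theorem \ref{thm:linear_low} twice. In the first tier, I apply Theorem \ref{thm:linear_low} directly to \eqref{eq:linear_heat} with the given forcing $(F^8,F^9)$ and initial datum $\theta_0$. This produces $\theta$ as the unique strong solution satisfying \eqref{est:bound_linear}, together with $\pa_t\theta$ as the weak solution of \eqref{eq:weak_dt_u}. A pointwise--in--time application of the elliptic Theorem \ref{thm:elliptic_1} upgrades the $L^2 W^{2,q_+}$ estimate on $\theta$ to $L^\infty W^{2,q_+}$ using the $L^\infty H^0$ control on $\pa_t\theta$, and simple interpolation between the norms already obtained yields the intermediate bound $\|\pa_t\theta\|_{L^\infty H^{1+\varepsilon_-/2}}$.

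In the second tier, I view \eqref{eq:linear_heat2} with $j=1$ as a new instance of the $\mathcal{A}$--linear heat system in which the unknown is $\pa_t\theta$, the forcing is $(F^{8,1},F^{9,1})$, and the initial datum is $\pa_t\theta(0)$. Lemma \ref{lemma:est_force} and Lemma \ref{lem:est_force_dual} verify the hypotheses of Theorem \ref{thm:linear_low} in the $q_-$ setting by bounding $F^{8,1}\in L^2 L^{q_-}$, $F^{9,1}\in L^2 W^{1-1/q_-,q_-}$, the trace $(F^{8,1}+F^{9,1})(0)$ in $(H^1)^\ast$, and the dual $\pa_t(F^{8,1}+F^{9,1})$ in $(\mathcal{H}^1_T)^\ast$, each in terms of the original data together with $\mathfrak{E}(\eta)$ and $\mathfrak{D}(\eta)$ times the tier--one output. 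A second invocation of Theorem \ref{thm:linear_low} then delivers $\pa_t\theta$ as a strong solution in $L^2 W^{2,q_-}$ and $\pa_t^2\theta$ as the unique weak solution of \eqref{eq:linear_heat2} for $j=2$. Uniqueness at each tier follows from Proposition \ref{prop:unique}, and the Galerkin initial--data convergences from Theorem \ref{thm:initial_convergence} and Proposition \ref{prop:converge_dt2u0} justify the second--tier Galerkin scheme, as flagged in the remark following Proposition \ref{prop:converge_dt2u0}.

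To assemble \eqref{est:higher}, I add the first-- and second--tier energy inequalities and substitute the forcing bounds from Lemma \ref{lemma:est_force} and Lemma \ref{lem:est_force_dual}. This produces a master inequality whose left--hand side is precisely the quantity controlled in \eqref{est:higher}, but whose right--hand side contains residual terms of the form $\mathfrak{E}(\eta)\bigl(\|\pa_t\theta\|_{L^2 W^{2,q_-}}^2+\|\theta\|_{L^2 W^{2,q_-}}^2\bigr)$ and $\mathfrak{D}(\eta)\|\theta\|_{L^\infty W^{2,q_+}}^2$. I expect the main obstacle to be closing the inequality by absorbing these residuals into the left--hand side. This is where the smallness assumption $\mathfrak{K}(\eta)\le\delta$ is used decisively: since $\mathfrak{D}(\eta)+\mathfrak{E}(\eta)\le C\delta$, choosing $\delta$ small enough permits absorption, after which Gronwall's inequality applied to the time--dependent coefficient $\|\pa_tJK\|_{L^\infty}\lesssim\|\pa_t\eta\|_{H^{3/2+(\varepsilon_--\alpha)/2}}$ produces the factor $\exp\{\mathfrak{E}(\eta)T\}$ and completes \eqref{est:higher}.
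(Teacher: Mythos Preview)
Your high--level two--tier strategy is the right one, and your closing absorption argument is exactly what is needed at the level of the \emph{a priori} estimate. However, there is a genuine circularity in the existence step that your proposal glosses over, and the paper devotes its entire Step~1 to resolving it.

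The problem is in your claim that ``Lemma~\ref{lemma:est_force} and Lemma~\ref{lem:est_force_dual} verify the hypotheses of Theorem~\ref{thm:linear_low} \ldots\ each in terms of the original data together with $\mathfrak{E}(\eta)$ and $\mathfrak{D}(\eta)$ times the tier--one output.'' This is not true for the dual term: by the third estimate of Lemma~\ref{lem:est_force_dual}, $\|\pa_t(F^{8,1}+F^{9,1})\|_{(\mathcal{H}^1_T)^\ast}$ is controlled by (among other things) $\mathfrak{E}(\eta)\|\pa_t\theta\|_{L^2W^{2,q_-}}$, and $\|\pa_t\theta\|_{L^2W^{2,q_-}}$ is precisely the tier--two output, not tier--one. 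Tier one only places $\pa_t\theta$ in $L^\infty H^0\cap L^2H^1$. Consequently you do not know, before running tier two, that the tier--two forcing even lies in the spaces required to invoke Theorem~\ref{thm:linear_low} as a black box. Your later absorption would close an inequality that you have not yet legitimately derived. The paper flags this explicitly: ``the estimate for $\pa_t(F^{8,1}+F^{9,1})$ involves $\|\pa_t\theta\|_{L^2W^{2,q_-}}$, which is our aim \ldots\ So we cannot directly use [the two--tier method].''

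The paper breaks the circularity by an iteration: one sets $F^{8,1,n}=\pa_tF^8+G^8(\theta^n)$, $F^{9,1,n}=\pa_tF^9+G^9(\theta^n)$ with $\theta^n$ the \emph{previous} iterate (so the forcing regularity is known), solves \eqref{eq:iteration_1} for $\vartheta^{n+1}$ via Theorem~\ref{thm:linear_low}, and defines $\theta^{n+1}=\theta_0+\int_0^t\vartheta^{n+1}$. The smallness of $\mathfrak{K}(\eta)$ then gives a strict contraction in the coupled norms $\mathfrak{M}(\vartheta^{n+1})$, $\mathfrak{N}(\theta^n)$ (see \eqref{est:iterated_contract1}--\eqref{est:iterated_contract2}), producing a fixed point $(\vartheta,\tilde\theta)$. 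A separate consistency step (the paper's Step~2) then shows $\tilde\theta=\theta$ by integrating \eqref{eq:derivative_linear} in time, matching initial data, and invoking the weak uniqueness of Proposition~\ref{prop:unique}; only after this identification does one obtain $\pa_t\theta=\vartheta$ as a genuine strong solution of \eqref{eq:linear_heat2} with $j=1$. Your plan omits both the iteration and this consistency check. Once these are in place, the combination of estimates and the interpolation bound for $\|\pa_t\theta\|_{L^\infty H^{1+\varepsilon_-/2}}$ proceed essentially as you describe.
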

\begin{proof}
\
\paragraph{\underline{Step 1 -- Construction of iteration}}
 Theorem \ref{thm:linear_low} guarantees  that $\pa_t^j\theta$ is a solution to \eqref{eq:linear_heat2} in the strong sense when $j=0$ and in the weak sense when $j=1$. For $j=1$, the assumption of Theorem \ref{thm:linear_low} are satisfied by Lemma \ref{lemma:est_force}, Lemma \ref{lem:est_force_dual} and compatibility conditions for initial data in Section \ref{sec:linear}. So based on the initial data convergence for Galerkin setting in Proposition \ref{prop:converge_dt2u0}, we want to use the result in Theorem \ref{thm:linear_low} to deduce that $\pa_t^j\theta$ is a solution to \eqref{eq:linear_heat} in the strong sense when $j=1$ and in the weak sense when $j=2$.

 Note that in Proposition \ref{prop:converge_dt2u0}, the estimate for $\pa_t(F^{8, 1} + F^{9,1})$ involves $\|\pa_t\theta\|_{L^2W^{2, q_-}}$, which is our aim to be obtained in this two-tier energy method. So we cannot directly use Proposition \ref{prop:converge_dt2u0}. In order to overcome this difficulty, we introduce an iteration:
 \begin{align}\label{eq:iteration_1}
   \left\{
   \begin{aligned}
     &\pa_t\vartheta^{n+1} - k \Delta_{\mathcal{A}} \vartheta^{n+1}=F^{8,1,n}\quad&\text{in}&\quad\Om,\\
  &k\mathcal{N} \cdot\nabla_{\mathcal{A}} \vartheta^{n+1} + \vartheta^{n+1} |\mathcal{N}|=F^{9,1,n}\quad&\text{on}&\quad\Sigma,\\
  &\vartheta^{n+1} = 0\quad&\text{on}&\quad\Sigma_s,
   \end{aligned}
  \right.
 \end{align}
 and
 \begin{align}\label{eq:iteration_2}
   \theta^{n+1} = \theta_0 + \int_0^t \vartheta^{n+1}(s)\,\mathrm{d}s.
 \end{align}
 Here we set $F^{8,1,n} = \pa_t F^8 + G^8(\theta^n)$ and $F^{9,1,n} = \pa_t F^9 + G^9(\theta^n)$. When $n=0$, we can choose $\theta^0 = \theta_0$ as the start point. By the Theorem \ref{thm:linear_low}, we know that $\vartheta^{n+1}$ solves \eqref{eq:iteration_1} with initial data $\vartheta^{n+1}(0) = \pa_t\theta(0)$ and $\pa_t\vartheta^{n+1}$ is a weak solution with initial data $\pa_t\vartheta^{n+1}(0) = \pa_t^2\theta(0)$ to
\begin{align}\label{eq:iteration_3}
   \left\{
   \begin{aligned}
     &\pa_t(\pa_t\vartheta^{n+1}) - k \Delta_{\mathcal{A}} \pa_t\vartheta^{n+1}=F^{8,2,n}\quad&\text{in}&\quad\Om,\\
  &k\mathcal{N} \cdot\nabla_{\mathcal{A}} \pa_t\vartheta^{n+1} + \pa_t\vartheta^{n+1} |\mathcal{N}|=F^{9,2,n}\quad&\text{on}&\quad\Sigma,\\
  &\pa_t\vartheta^{n+1} = 0\quad&\text{on}&\quad\Sigma_s,
   \end{aligned}
  \right.
 \end{align}
 where $F^{8,2,n} = \pa_t F^{8,1,n} + k\dive_{\pa_t\mathcal{A}} \nabla_{\mathcal{A}}\vartheta^{n+1} + k\dive_{\mathcal{A}} \nabla_{\pa_t\mathcal{A}}\vartheta^{n+1}$, and $F^{9,2,n} = \pa_t F^{9,1,n} - k\pa_t\mathcal{N} \cdot\nabla_{\mathcal{A}} \vartheta^{n+1} -\vartheta^{n+1} \pa_t|\mathcal{N}|$.

 We now derive the energy structure of \eqref{eq:iteration_3} by choosing the test function $\pa_t\vartheta^{n+1}$ such that
 \begin{align}
   \begin{aligned}
     \f12 \f{d}{dt} \int_\Om |\pa_t\vartheta^{n+1}|^2 J + k \int_\Om |\nabla_{\mathcal{A}} \pa_t \vartheta^{n+1}|^2J +\int_{-\ell}^\ell |\pa_t\vartheta^{n+1}|^2\sqrt{|\mathcal{N}|}\\
     = \left<F^{8,2,n} + F^{9,2,n}, \pa_t\vartheta^{n+1}\right>_\ast -\f12\int_\Om |\pa_t\vartheta^{n+1}|^2 \pa_tJ.
   \end{aligned}
 \end{align}
 By Lemma \ref{lemma:est_force} and Lemma \ref{lem:est_force_dual}, we have
 \begin{align}\label{est:iteration_1}
   \begin{aligned}
  &\|\pa_t\vartheta^{n+1}\|_{L^\infty H^0}^2 + \|\pa_t\vartheta^{n+1}\|_{L^2 H^1}^2+\|\pa_t\vartheta^{n+1}\|_{L^2 H^0(\Sigma)}^2\\
  &\le C\bigg( \|\pa_t^2\theta(0)\|_{H^0}^2  +\mathfrak{E}(\eta)(\|\pa_tF^8\|_{L^2L^{q_-}}^2+\|\pa_tF^9\|_{L^2W^{1-1/q_-,q_-}}^2 + \|\pa_t\theta^n\|_{L^2W^{2,q_-}}^2)\\
  &\quad + \mathfrak{D}(\eta)\|\theta^n\|_{L^\infty W^{2,q_-}}^2 +\|\pa_t^2(F^8+F^9)\|_{(\mathcal{H}^1_T)^{\ast}}^2 \bigg).
  \end{aligned}
 \end{align}
Furthermore, by the energy estimates and elliptic estimates to \eqref{eq:iteration_1}, we have
  \begin{equation}\label{est:high_d}
\begin{aligned}
  &\|\vartheta^{n+1}\|_{L^\infty H^0}^2+\|\vartheta^{n+1}\|_{L^2W^{2,q_-}}^2+ \|\vartheta^{n+1}\|_{L^2 H^1}^2+\|\vartheta^{n+1}\|_{L^2 H^0(\Sigma)}^2 \\
  &\le C\bigg( \|\pa_t\theta(0)\|_{H^0}^2 + \|\pa_t\vartheta\|_{L^2 H^0}^2 + \|\pa_tF^8\|_{L^2L^{q_-}}^2+\|\pa_tF^9\|_{L^2W^{1-1/q_-,q_-}}^2 \bigg).
\end{aligned}
  \end{equation}

Now we have to show that the sequences $\{\vartheta^n\}$ and $\{\theta^n\}$ are Cauchy sequences. Set
\[
\mathfrak{M}(\vartheta^{n+1}):=\sum_{j=0}^1\|\pa_t^j\vartheta^{n+1}\|_{L^\infty H^0}^2+\|\vartheta^{n+1}\|_{L^2W^{2,q_-}}^2+\sum_{j=0}^1\big(\|\pa_t^j\vartheta^{n+1}\|_{L^2 H^1}^2 +\|\pa_t^j\vartheta^{n+1}\|_{L^2 H^0(\Sigma)}^2),
\]
\[
\mathfrak{N}(\theta^n) :=\|\theta^n\|_{L^\infty W^{2,q_-}}^2+ \|\theta^n\|_{L^2 W^{2,q_-}}^2+ \|\theta^n\|_{L^2H^1}^2+\|\pa_t\theta^n\|_{L^2W^{2,q_-}}^2,
\]
and
\[
\begin{aligned}
  \mathcal{Z}&:=\sum_{j=1}^2\|\pa_t^j\theta(0)\|_{H^0}^2+\|\pa_t^2(F^1-F^4-F^5)\|_{(\mathcal{H}^1_T)^\ast}^2+\mathfrak{D}(\eta)(\|F^8\|_{L^\infty L^{q_+}}^2+\|F^9\|_{L^\infty W^{2-1/q_+,q_+}}^2)\\
 &\quad+(1+\mathfrak{E}(\eta))(\|\pa_tF^8\|_{L^2L^{q_-}}^2+\|\pa_tF^9\|_{L^2 W^{2-1/q_-,q_-}}^2)+\|\pa_t(F^8+F^9)(0)\|_{(H^1)^\ast}^2.
\end{aligned}
\]
In the following estimates \eqref{est:cauchy_1}-- \eqref{est:cauchy_3}, $C$ is a constant independent of $n$, which is allowed to be changed from line to line.
By \eqref{est:iteration_1} and \eqref{est:high_d}, we have
\begin{align}\label{est:cauchy_1}
  \mathfrak{M}(\vartheta^{n+1}) \le C \mathcal{Z} + C\mathfrak{K}(\eta)\mathfrak{N}(\theta^n).
\end{align}
From \eqref{eq:iteration_2}, we get
\begin{align}\label{est:cauchy_2}
  \mathfrak{N}(\theta^{n+1}) \le CT (\|\theta_0\|_{W^{2,q_+}}^2 + \|\theta_0\|_{H^1}^2) + (1+T) \mathfrak{M}(\vartheta^{n+1}).
\end{align}
Then for a fixed $T>0$, due to the smallness of $\mathfrak{\eta}$, we have
\begin{align}\label{est:cauchy_3}
  \mathfrak{M}(\vartheta^{n+1}) + \mathfrak{N}(\theta^{n+1}) \le C \mathcal{Z}
\end{align}

Now we show that the systems \eqref{eq:iteration_1} coupled with \eqref{eq:iteration_2} is a contraction mapping : $\theta^n \to \vartheta^{n+1}$ under the topology $\mathfrak{M}$ and $\mathfrak{N}$. Suppose that the mapping $\theta^{n+1} \to \vartheta^{n+2}$  satisfies the systems \eqref{eq:iteration_1} and \eqref{eq:iteration_2}, then the mapping $\theta^{n+1}-\theta^n \to \vartheta^{n+2}-\vartheta^{n+1}$ also satisfies \eqref{eq:iteration_1} and \eqref{eq:iteration_2} with the forces $F^8= F^9= 0$ and zero initial data. Therefore the results in \eqref{est:cauchy_1} and \eqref{est:cauchy_2} imply
  \begin{align}\label{est:iterated_contract1}
\begin{aligned}
  \mathfrak{M}(\vartheta^{n+2}-\vartheta^{n+1}) \lesssim \mathfrak{K}(\eta) \mathfrak{N}(\theta^{n+1}-\theta^n).
\end{aligned}
  \end{align}
  Then we use \eqref{eq:iteration_2} to deduce that
  \begin{align}\label{est:contract_u}
\|\theta^{n+1}-\theta^n\|_{L^2W^{2,q_-}}^2 \lesssim T\|\vartheta^{n+1}-\vartheta^n\|_{L^2W^{2,q_-}}^2, \quad \|\theta^{n+1}-\theta^n\|_{L^\infty W^{2,q_-}}^2 \lesssim \|\vartheta^{n+1}-\vartheta^n\|_{L^2W^{2,q_-}}^2.
  \end{align}
  We still use \eqref{eq:iteration_2} to see that
  \begin{align}
\pa_t\theta^{n+1} = \vartheta^{n+1},
  \end{align}
  so that
  \begin{align}\label{est:contract_dtp}
  \begin{aligned}
\|\pa_t\theta^{n+1} - \pa_t\theta^n\|_{L^2W^{2,q_-}}^2 = \|\vartheta^{n+1}-\vartheta^n\|_{L^2W^{2,q_-}}^2.
  \end{aligned}
  \end{align}
  By combining all the estimates \eqref{est:contract_u}--\eqref{est:contract_dtp}, we have that
  \begin{align}\label{est:iterated_contract2}
\begin{aligned}
  \mathfrak{N}(\theta^{n+1}-\theta^n) \lesssim (T+1)\mathfrak{K}(\eta)\mathfrak{M}(\vartheta^{n+1}-\vartheta^n).
\end{aligned}
  \end{align}
  Hence for the restricted smallness of $\mathfrak{K}(\eta) < \gamma$, the estimates \eqref{est:iterated_contract1} and \eqref{est:iterated_contract2} show that the operator for \eqref{eq:iteration_1} and \eqref{eq:iteration_2} is strictly contracted. So that \eqref{eq:linear_heat2} has a unique fixed point when $j=1$. That means $\vartheta^n$ converges to $\vartheta$ in the topology of $\mathfrak{M}$, and $\theta^n$ converges to $\tilde{\theta}$ in the topology of $\mathfrak{N}$. Moreover, $\vartheta$ and $\tilde{\theta}$ are strong solutions to
  \begin{equation}\label{eq:limit1}
\left\{
\begin{aligned}
  &\pa_t\vartheta-k\Delta_{\mathcal{A}}\vartheta=F^{8,1} \quad&\text{in}&\quad\Om,\\
  &k\mathcal{N}\cdot\nabla_{\mathcal{A}}\vartheta + \vartheta |\mathcal{N}| =F^{9,1} \quad&\text{on}&\quad\Sigma,\\
  &\vartheta  =0 \quad&\text{on}&\quad\Sigma_s,
\end{aligned}
\right.
\end{equation}
  and
  \begin{align}\label{equ:limit2}
\tilde{\theta} = \theta_0 + \int_0^t \vartheta(s)\,\mathrm{d}s,
  \end{align}
  with the initial data $\vartheta(0) = \pa_t\theta(0)$ and $\tilde{\theta}(0) = \theta_0$.

  \paragraph{\underline{Step 2 -- Consistency of Solution}}
  In the following, we show that $\tilde{\theta} = \theta$, which guarantees that \eqref{eq:limit1} is exactly equal to the one time derivative of \eqref{eq:linear_heat2} with $j=1$.
  We first use \eqref{equ:limit2} and the facts that $\vartheta = \theta_0 =0$ on $\Sigma_s$ to obtain that $\tilde{\theta} =0$ on $\Sigma_s$.

  We use \eqref{equ:limit2} to get that $\vartheta = \pa_t\tilde{\theta}$, which might be plugged into \eqref{eq:limit1} so that  $\pa_t\tilde{\theta}$ is a strong solution to
  \begin{align}\label{eq:derivative_linear}
\left\{
  \begin{aligned}
&\pa_t(\pa_t\tilde{\theta}-k\Delta_{\mathcal{A}}\tilde{\theta})=\pa_tF^8 \quad &\text{in} \quad &\Om,\\
&\pa_t(k\nabla_{\mathcal{A}} \tilde{\theta}\cdot\mathcal{N} + \tilde{\theta} |\mathcal{N}|)=\pa_tF^9 \quad &\text{on} \quad &\Sigma,\\
&\pa_t\tilde{\theta}=0 \quad &\text{on} \quad &\Sigma_s.
  \end{aligned}
  \right.
  \end{align}
   Since the first equation in \eqref{eq:derivative_linear} holds in $L^{q_-}(\Om)$, we have
  \begin{align}
\frac{d}{dt}\|\pa_t\tilde{\theta}-k\Delta_{\mathcal{A}}\tilde{\theta}-F^8\|_{L^{q_-}}^{q_-} =0.
  \end{align}
  After integrating in time from $0$ to $t <T$, we have
  \begin{align}\label{eq:integral_1}
\|\pa_t\tilde{\theta}-k\Delta_{\mathcal{A}}\tilde{\theta}-F^8\|_{L^{q_-}} = \|-F^8(0)+\pa_t\tilde{\theta}(0) -k\Delta_{\mathcal{A}_0}\tilde{\theta}_0\|_{L^{q_-}}.
  \end{align}
  We can still use the similar computation to deal with the boundary terms:
  \begin{align}
\begin{aligned}
  &\| \nabla_{\mathcal{A}} \tilde{\theta}\cdot\mathcal{N} + \tilde{\theta} |\mathcal{N}|-F^9 \|_{L^{q_-}}= \|\nabla_{\mathcal{A}_0} \tilde{\theta}_0\cdot\mathcal{N} (0)+ \tilde{\theta}_0 |\mathcal{N}_0|)-F^9(0)\|_{L^{q_-}},
\end{aligned}
  \end{align}
  and
  \begin{align}\label{eq:integral_2}
\|\tilde{\theta}\|_{L^2(\Sigma_s)}=\|\tilde{\theta}_0\|_{L^2(\Sigma_s)}.
  \end{align}

  Thanks to the initial data of $(\theta_0, \pa_t\theta(0))$ satisfying \eqref{eq:linear_heat} at $t=0$, we directly see that the initial terms in \eqref{eq:integral_1} -- \eqref{eq:integral_2} all vanish. Then from the regularity of $\tilde{\theta}$, \eqref{eq:integral_1} -- \eqref{eq:integral_2} implies that $\tilde{\theta}$ solves \eqref{eq:linear_heat} at least in the sense of weak solutions, since we have shown that $\theta$ is also a solution to \eqref{eq:linear_heat}. Then the uniqueness of weak solutions to \eqref{eq:linear_heat} in Proposition \ref{prop:unique} guarantees that $\tilde{\theta} = \theta$. Then by the Theorem \ref{thm:linear_low} for strong solutions to \eqref{eq:linear_heat}, we can see that $\tilde{\theta} =\theta$ is the strong solution to \eqref{eq:linear_heat}. Moreover,
  $
\mathfrak{M}(\theta)\lesssim \mathcal{Z}.
  $

\paragraph{\underline{Step 3 -- Estimates for energy and dissipation.}}

 We combine the estimates following from Theorem \ref{thm:linear_low}, the boundedness of $\mathfrak{M}(\theta)$ and Lemma \ref{lemma:est_force}, along with $\mathfrak{K}(\eta)$ sufficiently small to obtain that
  \begin{equation}\label{est:high_d1}
  \begin{aligned}
  &\sum_{j=0}^2\|\pa_t^j\theta\|_{L^\infty H^0}^2+\|\theta\|_{L^2W^{2,q_+}}^2+\|\theta\|_{L^\infty W^{2,q_+}}^2+\|\pa_t\theta\|_{L^2W^{2,q_-}}^2+\sum_{j=0}^2\left(\|\pa_t^j\theta\|_{L^2 H^1}^2+\|\pa_t^j\theta\|_{L^2 H^0(\Sigma)}^2\right)\\
  &\lesssim \exp\{\mathfrak{E}(\eta)T\}\bigg(\mathcal{E}(0)+\|F^8(0)\|_{L^{q_-}}^2+\|F^9(0)\|_{W^{1-1/q_-, q_-}}^2 +\|\pa_t(F^8+F^9)(0)\|_{(H^1)^\ast}^2\\
  &\quad+(1+\mathfrak{E}(\eta))(\|\pa_tF^8\|_{L^2L^{q_-}}^2+\|\pa_tF^9\|_{L^2W^{1-1/q_-,q_-}}^2+\|F^8\|_{L^\infty L^{q_+}}^2+\|F^9\|_{L^\infty W^{1-1/q_+,q_+}}^2\\
  &\quad +\|\pa_t^2(F^8+F^9)\|_{(\mathcal{H}^1_T)^{\ast}}^2)\bigg).
  \end{aligned}
  \end{equation}
  We now use Sobolev extension and restriction theory to estimate $
  \f{d}{dt}\frac12\|\pa_t\theta\|_{H^{1+\varepsilon_-/2}}^2\le \frac12\|\pa_t\theta\|_{1+\varepsilon_-}^2+\frac12\|\pa_t^2\theta\|_1^2$
  so that
  \begin{equation}\label{est:continu_dtu}
\|\pa_t\theta\|_{L^\infty H^{1+\varepsilon_-/2}}^2\lesssim \|\pa_t\theta(0)\|_{ H^{1+\varepsilon_-/2}}^2+\|\pa_t\theta\|_{L^2H^{1+\varepsilon_-}}^2+\|\pa_t^2\theta\|_{L^2H^1}^2.
  \end{equation}
 Finally, we combine \eqref{est:high_d1} and  \eqref{est:continu_dtu} conclusion \eqref{est:higher}.
  \end{proof}

\section{Local Well-Posedness for Nonlinear $\varepsilon$--Regularized Boussinesq System}

\subsection{Solving the linear coupled system \eqref{eq:linear_heat} and \eqref{eq:modified_linear}}
We have established the well-posedness for \eqref{eq:linear_heat}. Then we can put the $\theta$ into \eqref{eq:modified_linear} to solve the coupled linear system \eqref{eq:linear_heat} and \eqref{eq:modified_linear}.
With $\mathcal{E}$ in \eqref{energy} and $\mathcal{D}$ in \eqref{dissipation}, we define the quantities as follows.
\[
\begin{aligned}
\mathscr{D}(u,p,\xi,\theta):&=\|\mathcal{D}(u, p, \xi,\theta)\|_{L^2_t([0,T])}^2+\varepsilon\|\pa_t^2\xi\|_{L^2H^1}^2
+\varepsilon^2\|\xi\|_{L^2W^{3-1/q_+,q_+}}^2+\varepsilon^2\|\pa_t\xi\|_{L^2W^{3-1/q_-,q_-}}^2,\\
\mathscr{E}(u,p,\xi,\theta):&=\sup_{t\in[0,T]}\mathcal{E}(u, p, \xi,\theta)+\varepsilon\|\pa_t^2\xi\|_{L^\infty H^{3/2}}^2+\varepsilon^2\|\pa_t\xi\|_{L^\infty W^{3-1/q_+,q_+}}^2,\\
\mathscr{K}(u,p,\xi,\theta):&=\mathscr{E}(u,p,\xi,\theta)+\mathscr{D}(u,p,\xi,\theta).
\end{aligned}
\]

Now for convenience, we introduce two new spaces
\begin{equation}\label{def:xy}
\begin{aligned}
\mathcal{X}=\Big\{(u,p,\eta,\theta)|\mathscr{E}(u,p,\eta,\theta)<\infty\Big\},\quad \|(u,p,\eta,\theta)\|_{\mathcal{X}}=\left[\mathscr{E}(u,p,\eta,\theta)\right]^{1/2},\\ \mathcal{Y}=\Big\{(u,p,\eta,\theta)|\mathscr{D}(u,p,\eta,\theta)<\infty\Big\}, \quad\|(u,p,\eta,\theta)\|_{\mathcal{Y}}=\left[\mathscr{D}(u,p,\eta,\theta)\right]^{1/2}.
\end{aligned}
\end{equation}

By some minor modifications to \cite{GTWZ2023}, we can directly establish the solutions to \eqref{eq:modified_linear}, which is stated in the following theorem.
\begin{theorem}\label{thm:linear}
  Assume that the forcing terms $F^{i,j}$ satisfy the conditions in Appendix \ref{sec:initial} and that $\mathfrak{K}(\eta)\le\delta$ is smaller than $\delta_0$, which is the same as in Theorem 4.7 in \cite{GTWZ2023}. Suppose that the initial data $u_0$, $p_0$, $\theta_0$ and $\eta_0$ satisfy $\mathcal{E}(0)<\infty$ and compatibility conditions in Appendix \ref{sec:initial}.
  Then for each $0<\varepsilon\le1$ there exist $T_\varepsilon>0$ such that for $0<T\le T_\varepsilon$, then there exists a unique strong solution $(u,p,\xi, \theta)$ to \eqref{eq:modified_linear} on $[0, T]$ such that
  $(u,p,\xi)\in\mathcal{X}\cap\mathcal{Y}$.
  The triple $(D_t^ju, \pa_t^jp, \pa_t^j\xi, \pa_t^j\theta)$ satisfies \eqref{eq:modified_linear}
  in the strong sense with initial data $(D_t^ju(0), \pa_t^jp(0), \pa_t^j\xi(0), \pa_t^j\theta(0))$ for $j=0,1$ and $(D_t^2u, \pa_t^2\xi, \pa_t^2\theta)$ solves \eqref{eq:modified_linear} in the pressureless weak sense for $j=2$. Moreover, $(u, p, \xi, \theta)$ satisfy the estimate
  \begin{equation}\label{est:coupled_linear}
  \begin{aligned}
  \mathscr{K}(u,p,\xi, \theta)&\le C_0\left[1+\frac T\varepsilon\right]\exp\{\mathfrak{E}(\eta)T\}\bigg(\mathcal{E}(0)+\|\pa_t(F^1-F^4-F^5)(0)\|_{(H^1)^\ast}+\|F^3(0)\|_{L^2}\\
  &\quad +\|\pa_t(F^8+F^9)(0)\|_{(H^1)^\ast}^2+ \|F^1(0)\|_{L^{q_+}}^2 + \|F^4(0)\|_{W^{1-1/q_+, q_+}}^2 + \|F^5(0)\|_{W^{1-1/q_+, q_+}}^2\\
  &\quad + \|F^8(0)\|_{L^{q_+}}^2 + \|F^9(0)\|_{W^{1-1/q_+, q_+}}^2+\frac T{\varepsilon^2}\mathfrak{E}(\eta_0)(\mathcal{E}(0)+\|\pa_1F^3(0)\|_{W^{1-1/q_-,q_-}}^2 \\
  &\quad +\|F^4(0)\|_{W^{1-1/q_-,q_-}}^2)+(1+\mathfrak{E}(\eta))(\|F^1\|_{L^\infty L^{q_+}}^2+\|F^4\|_{L^\infty W^{1-1/q_+,q_+}}^2\\
  &\quad+\|F^5\|_{L^\infty W^{1-1/q_+,q_+}}^2+\|\pa_1F^3\|_{L^\infty W^{1-1/q_+,q_+}}^2+\|\pa_tF^1\|_{L^2L^{q_-}}^2+\|\pa_tF^4\|_{L^2W^{1-1/q_-,q_-}}^2\\
 &\quad+\|\pa_tF^5\|_{L^2W^{1-1/q_-,q_-}}^2+\|\pa_1\pa_tF^3\|_{L^2W^{1-1/q_-,q_-}}^2+\|\pa_t^2(F^1-F^4-F^5)\|_{(\mathcal{H}^1_T)^{\ast}}^2\\
 &\quad +\|F^8\|_{L^\infty L^{q_+}}^2+\|F^9\|_{L^\infty W^{1-1/q_+,q_+}}^2+\|\pa_tF^8\|_{L^2L^{q_-}}^2+\|\pa_tF^9\|_{L^2W^{1-1/q_-,q_-}}^2\\
 &\quad +\|\pa_t^2(F^8+F^9)\|_{(\mathcal{H}^1_T)^{\ast}}^2)+\sum_{j=0}^2\|[F^{7,j}]_\ell\|_{L^2}^2+\frac T\varepsilon\sum_{j=0}^2\|F^{3,j}\|_{L^\infty H^0}^2\\
 &\quad+\sum_{j=0}^2\|F^{3,j}\|_{L^2 H^{1/2-\alpha}}^2+\sum_{j=0}^1(\|\pa_t^jF^1\|_{L^\infty H^0}^2+\|[F^{7,j}]_\ell\|_{L^\infty_t}^2)\bigg),
  \end{aligned}
  \end{equation}
  where $C_0=C_0(g,\sigma, \ell, \Om)$ is a positive constant independent of $\varepsilon$.
\end{theorem}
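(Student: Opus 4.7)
The plan is to decouple the coupled linear system by first constructing the temperature $\theta$ from the heat subsystem \eqref{eq:linear_heat}, and then treating the buoyancy term $g\theta e_2$ as an additional contribution to the forcing in the Navier-Stokes-type system \eqref{eq:modified_linear}. This is consistent with the strategy outlined in Section \ref{section:discussion} and exploits the fact that, once linearized, \eqref{eq:linear_heat} is independent of $(u, p, \xi)$.

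First, I would apply Theorem \ref{thm:higher order} to \eqref{eq:linear_heat} with forcing terms $F^8, F^9$ and initial data $(\theta_0, \pa_t\theta(0), \pa_t^2\theta(0))$, producing a unique strong solution $\theta$ together with $\pa_t\theta$, $\pa_t^2\theta$ and the estimate \eqref{est:higher}. Since the hypothesis $\mathfrak{K}(\eta) \le \delta$ of Theorem \ref{thm:higher order} is assumed and the compatibility conditions listed in Appendix \ref{sec:initial} are in force, this step is essentially immediate.

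Next, I would introduce the modified volume forcing $\tilde F^1 := F^1 - g\theta e_2$ and view \eqref{eq:modified_linear} as the isolated $\varepsilon$-regularized velocity-pressure-surface system with data $(\tilde F^1, F^3, F^4, F^5, F^7)$. The temperature-dependent surface tension coefficient $\sigma_1 - \sigma_2\theta$ appearing through the nonlinear system \eqref{eq:epsilon} can be handled analogously: writing the corresponding surface stress as the equilibrium part plus a small-$\theta$ perturbation, one absorbs the perturbation into $F^4$, since $\theta$ is controlled by the heat-equation estimate. With these redefinitions the system is exactly of the form analyzed in \cite[Theorem 4.7]{GTWZ2023}, so applying that result yields a unique strong solution $(u, p, \xi)\in \mathcal{X}\cap\mathcal{Y}$ on some interval $[0, T_\varepsilon]$ together with the analogue of \eqref{est:coupled_linear} in the forcing norms of $(\tilde F^1, F^3, F^4, F^5, F^7)$.

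The remaining step is to absorb the $\theta$-contributions into the right-hand side of \eqref{est:coupled_linear}. Using Sobolev embedding and the trace theory one has the bounds
\begin{align*}
\|g\theta e_2\|_{L^\infty L^{q_+}} &\lesssim \|\theta\|_{L^\infty W^{2,q_+}}, \\
\|\pa_t(g\theta e_2)\|_{L^2 L^{q_-}} &\lesssim \|\pa_t\theta\|_{L^2 H^1}, \\
\|\pa_t^2(g\theta e_2)\|_{(\mathcal{H}^1_T)^\ast} &\lesssim \|\pa_t^2\theta\|_{L^2 H^0},
\end{align*}
together with the analogous initial-time and dual-space bounds for the $\theta$-dependent surface contributions. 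By Theorem \ref{thm:higher order} each of these is dominated by the $F^8, F^9, \mathcal{E}(0)$ terms already appearing on the right-hand side of \eqref{est:coupled_linear}, so combining the two subsystem estimates and reorganizing produces precisely \eqref{est:coupled_linear}.

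The main obstacle will be bookkeeping: carrying the explicit $\varepsilon$-dependence $[1 + T/\varepsilon]$ and the $T/\varepsilon^2$ factor cleanly through the chain of estimates (these already arise in \cite{GTWZ2023} from the regularization of $\pa_t\eta$ in the dynamic contact-point law), verifying that the compatibility conditions of both subsystems are simultaneously consistent with the shared initial data constructed in Appendix \ref{sec:initial}, and checking that every mixed surface-tension term involving a product of $\theta$ with $\eta$-derivatives is sufficiently small to fall within the $F^4$-type estimates of the \cite{GTWZ2023} framework rather than requiring a separate contraction argument.
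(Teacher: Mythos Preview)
Your proposal is correct and matches the paper's approach exactly: the paper states just before the theorem that ``We have established the well-posedness for \eqref{eq:linear_heat}. Then we can put the $\theta$ into \eqref{eq:modified_linear} to solve the coupled linear system,'' and then simply asserts that ``By some minor modifications to \cite{GTWZ2023}, we can directly establish the solutions to \eqref{eq:modified_linear},'' without giving further details. Your decomposition---solve the heat block via Theorem~\ref{thm:higher order}, fold $g\theta e_2$ and the $\sigma_2\theta$-perturbation into the Navier--Stokes forcing, then invoke \cite[Theorem 4.7]{GTWZ2023}---is precisely the intended argument, and your identification of the $\varepsilon$-bookkeeping as the only real nuisance is accurate.
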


\subsection{Preliminaries for the nonlinear system}\label{sec:nonlinear}

We now give some preparations for the local well-posedness of the nonlinear $\varepsilon-$ regularized system \eqref{eq:epsilon}.

From the formulation of \eqref{eq:geometric}, we have the nonlinear interaction terms as the forcing given by Appendix \ref{sec:dive_forcing}.
In order to close the energy estimates, we need to control the forcing terms in the sense of Theorem \ref{thm:linear_low} and \ref{thm:higher order}. We first estimate the forcing terms in the bulk.
\begin{proposition}\label{prop:force_bulk}
  It holds that
  \begin{align}
  \begin{aligned}
  \|F^8\|_{L^\infty L^2}+\|\pa_tF^8\|_{L^\infty L^2}+\|\pa_t(F^8+F^9)\|_{L^\infty(\mathcal{H}^1)^\ast}\lesssim \mathfrak{E}+\mathfrak{E}^{3/2},\\
  \|\pa_tF^8\|_{L^2 L^{q_-}}+\|\pa_t^2(F^8+F^9)\|_{L^2(\mathcal{H}^1)^\ast}\lesssim \mathfrak{E}^{1/2}\mathfrak{D}^{1/2}.
  \end{aligned}
  \end{align}
\end{proposition}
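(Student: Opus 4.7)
The plan is first to read off $F^8$ and $F^9$ by matching the third equation of the nonlinear system \eqref{eq:geometric}, together with its fifth boundary condition in \eqref{eq:geo_bc}, against the linear template \eqref{eq:linear_heat}, which yields
\[
F^8 = \pa_t\bar\eta\, WK\, \pa_2\theta - u\cdot\nabla_{\mathcal{A}}\theta,\qquad F^9 = k\,\mathcal{N}\cdot\nabla_{\mathcal{A}-I}\theta.
\]
Because the coefficients $\mathcal{A}-I$, $\mathcal{N}-e_2$, and $WK-1$ all vanish to first order in $\bar\eta$, every monomial in $F^8$ and $F^9$ is at least bilinear in $(\bar\eta,u,\theta)$; the leading bilinear parts produce the $\mathfrak{E}$ (respectively $\mathfrak{E}^{1/2}\mathfrak{D}^{1/2}$) contributions, while the $\bar\eta$-quadratic corrections in the coefficients produce the $\mathfrak{E}^{3/2}$ tail.

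For the $L^\infty L^2$ and $L^2 L^{q_-}$ bounds I would apply the Leibniz rule to $F^8$ and $\pa_t F^8$, obtaining a finite sum of products of derivatives of $\bar\eta$, $u$, $\theta$ and of the coefficients, then dispatch each product with H\"older and Sobolev embedding exactly as in the proof of Lemma \ref{lemma:est_force}. The guiding principle is to put the lowest-regularity factor in $L^\infty$ using the energy norm and the highest-derivative factor in either $L^\infty_t H^k$ (for the $\mathfrak{E}$-type bound) or $L^2_t H^k$ (for the $\mathfrak{E}^{1/2}\mathfrak{D}^{1/2}$-type bound). For instance,
\[
\|\pa_t\bar\eta\, WK\,\pa_2\theta\|_{L^\infty L^2} \lesssim \|\pa_t\bar\eta\|_{L^\infty L^\infty}\bigl(1+\|\bar\eta\|_{L^\infty W^{1,\infty}}\bigr)\|\theta\|_{L^\infty H^1},
\]
which is controlled by $\mathfrak{E}+\mathfrak{E}^{3/2}$ via the embeddings $H^{3/2+(\varepsilon_--\alpha)/2}\hookrightarrow L^\infty$, $W^{3-1/q_+,q_+}\hookrightarrow W^{1,\infty}$, and the Poisson-extension estimate relating $\bar\eta$ to $\eta$; the term $u\cdot\nabla_{\mathcal{A}}\theta$ is handled identically. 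For $\pa_t F^8$ in $L^2L^{q_-}$ I would place the factor carrying the extra time derivative in $L^2_t$, which is what generates the mixed combination $\mathfrak{E}^{1/2}\mathfrak{D}^{1/2}$.

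The hard part will be the dual-space estimates for $\pa_t(F^8+F^9)$ in $L^\infty(\mathcal{H}^1)^\ast$ and for $\pa_t^2(F^8+F^9)$ in $L^2(\mathcal{H}^1)^\ast$, because two time derivatives on $F^9$ generate $\pa_t^3\bar\eta$- and $\pa_t^2\nabla\theta$-type factors on $\Sigma$ that are not individually controlled in $\mathfrak{D}$. Following the pattern of Lemma \ref{lem:est_force_dual}, I would pair $\pa_t^j(F^8+F^9)$ against a test function $\psi\in\mathcal{H}^1$, recognize $F^9$ as the trace of a bulk quantity (since $\mathcal{A}-I$ extends into $\Omega$ through $\bar\eta$), and integrate by parts so that each boundary integral becomes a bulk integral with one derivative moved onto $\psi$. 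The highest-order $\bar\eta$- or $\theta$-derivative then goes into the dissipation norm $\mathfrak{D}$, the remaining $\bar\eta$-factors sit in $L^\infty_t W^{1,\infty}$ controlled by $\mathfrak{E}$, and the lost regularity on $\psi$ is absorbed into $\|\psi\|_{\mathcal{H}^1}$ using H\"older with the indices $\varepsilon_\pm,\alpha$ from \eqref{parameters}; summing the finitely many Leibniz terms and taking the sup or integral in $t$ yields the stated bounds.
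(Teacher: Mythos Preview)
Your treatment of $F^8$ is correct and matches the paper's approach: expand by Leibniz, then H\"older and Sobolev term by term, placing the time-differentiated factor in $L^\infty_t$ for the energy bound and in $L^2_t$ for the dissipation bound.

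The genuine problem is your identification of $F^9$. In this paper $F^9=0$; see \eqref{eq:force0} in Appendix~\ref{sec:dive_forcing}, and the proof of the proposition itself, which writes ``it is trivial that $\|\pa_t(F^8+F^9)\|_{L^\infty(\mathcal{H}^1)^\ast}\le \|\pa_tF^8\|_{L^\infty L^2}$, since $F^9=0$.'' The boundary condition for $\theta$ on $\Sigma$ in \eqref{eq:geo_bc} is a typographical slip (missing $k$ and $\nabla_{\mathcal{A}}$); the intended condition is $k\mathcal{N}\cdot\nabla_{\mathcal{A}}\theta+\theta|\mathcal{N}|=0$, which is exactly the linear template \eqref{eq:linear_heat} with zero right-hand side. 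Consequently the ``hard part'' you describe---boundary terms with $\pa_t^3\bar\eta$ or $\pa_t^2\nabla\theta$ on $\Sigma$, and the integration-by-parts manoeuvre to convert them to bulk integrals---never arises here. The dual-norm estimate for $\pa_t(F^8+F^9)$ is just the embedding $L^2(\Omega)\hookrightarrow(\mathcal{H}^1)^\ast$ applied to $\pa_tF^8$, and for $\pa_t^2(F^8+F^9)$ the paper simply pairs $\pa_t^2F^8$ against $\psi\in\mathcal{H}^1$ in the bulk and estimates each Leibniz term by H\"older and Sobolev, with no boundary contribution at all. Your proposed strategy would not be wrong, just unnecessary; but as written your plan misrepresents where the work lies and invents a difficulty that the actual forcing terms do not contain.
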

\begin{proof}
  It is directly to combine the H\"older inequality, Sobolev embedding estimates and trace theory, and the bounds of $K$ and $\mathcal{A}$ to estimate
  \begin{align}
  \begin{aligned}
  \|F^8\|_{L^\infty L^2}&\lesssim (\|\pa_t\bar{\eta}\|_{L^\infty_{t,x}}+\|u\|_{L^\infty_{t,x}})\|\nabla \theta\|_{L^\infty L^2}\\
  &\lesssim (\|\pa_t\eta\|_{L^\infty H^{3/2+(\varepsilon_--\alpha)/2}}+\|u\|_{L^\infty W^{2,q_+}})\|\theta\|_{L^\infty W^{2,q_+}}.
  \end{aligned}
  \end{align}

  Then we consider $
\pa_tF^1=\pa_t^2\bar{\eta}W\p_2\theta+\pa_t\bar{\eta}W\pa_tK\p_2\theta +\pa_t\bar{\eta}W\p_2\pa_t\theta
-\pa_tu\cdot\nabla_{\mathcal{A}}\theta - u\cdot\nabla_{\pa_t\mathcal{A}}\theta -u\cdot\nabla_{\mathcal{A}}\pa_t \theta$.
 The similar argument as $F^8$ enables us to obtain
  \begin{align}
  \begin{aligned}
\|\pa_tF^8\|_{L^\infty H^0} & \lesssim (\|\pa_t^2\bar{\eta}\|_{L^\infty_{x,t}}+\|\pa_t\bar{\eta}\|_{L^\infty_{x,t}}\|\nabla\pa_t\bar{\eta}\|_{L^\infty_{x,t}}+\|\nabla\pa_t\bar{\eta}\|_{L^\infty_{x,t}}\|u\|_{L^\infty_{x,t}}+\|\pa_tu\|_{L^\infty_{x,t}})\|\nabla \theta\|_{L^\infty L^2}\\
& \quad +(\|\pa_t\bar{\eta}\|_{L^\infty_{x,t}}+\|u\|_{L^\infty_{x,t}})\|\nabla\pa_t\theta\|_{L^2}\\
& \lesssim (\|\pa_t^2\eta\|_{L^\infty H^1}+\|\pa_t\eta\|_{L^\infty H^{3/2+(\varepsilon_--\alpha)/2}}^2+\|\pa_tu\|_{L^\infty H^{1+\varepsilon_-/2}})\|\theta\|_{L^\infty W^{2,q_+}}\\
& \quad +\|\pa_t\eta\|_{L^\infty H^{3/2+(\varepsilon_--\alpha)/2}}(\|\theta\|_{L^\infty W^{2,q_+}}^2+\|\pa_t\theta\|_{L^\infty H^{1+\varepsilon_-/2}}).
  \end{aligned}
  \end{align}
  Then it is trivial that $
\|\pa_t(F^8+F^9)\|_{L^\infty(\mathcal{H}^1)^\ast}\le \|\pa_tF^8\|_{L^\infty L^2}$,
  since $F^9=0$.

  In addition, we bound the $\|\pa_tF^8\|_{L^2 L^{q_-}}$ by
  \begin{equation}
\begin{aligned}
  \|\pa_tF^8\|_{L^2 L^{q_-}}&\lesssim \|\pa_t^2\bar{\eta}\|_{L^2L^{2/\alpha}}\|\nabla \theta\|_{L^\infty L^{1/(1-(\varepsilon_-+\alpha)/2)}}+\|\pa_t\bar{\eta}\|_{L^\infty_{x,t}}(\|\pa_t\bar{\eta}\|_{L^\infty H^1}\|\theta\|_{L^2H^1}+\|\pa_t\theta\|_{L^2H^1})\\
  &\quad+(\|\pa_tu\|_{L^\infty_{x,t}} + \|\nabla\pa_t\bar{\eta}\|_{L^\infty_{x,t}}\|u\|_{L^\infty_{x,t}})\|\theta\|_{L^2H^1} + \|u\|_{L^\infty_{x,t}}\|\pa_t\theta\|_{L^2H^1}\\
  &\lesssim(\|\pa_t\theta\|_{L^2H^1}+\|\pa_t\eta\|_{L^\infty H^{3/2+(\varepsilon_--\alpha)/2}}\|\theta\|_{L^2H^1})\|u\|_{L^\infty W^{2,q_+}}\\
  &\quad+\|\pa_t\eta\|_{L^\infty H^{3/2+(\varepsilon_--\alpha)/2}}(\|\theta\|_{L^2H^1}+\|\pa_t\theta\|_{L^2H^1}) + \|\pa_tu\|_{L^\infty H^{1+\varepsilon_-/2}}\|\theta\|_{L^2H^1}\\
  &\quad+\|\pa_t^2\eta\|_{L^2H^{3/2-\alpha}}\|\theta\|_{L^\infty W^{2,q_+}}.
\end{aligned}
  \end{equation}

  For $\pa_t^2F^8$, we consider
  \begin{align}
  \begin{aligned}
\left<\pa_t^2F^8,\psi\right>_\ast&=\int_\Omega (\pa_t^3\bar{\eta}WK\p_2\theta+2\pa_t^2\bar{\eta}W\pa_tK\p_2\theta +2\pa_t^2\bar{\eta}WK\p_2\pa_t\theta)\cdot\psi\\
&\quad+\int_\Omega (\pa_t\bar{\eta}\pa_t^2K W\p_2\theta+2\pa_t\bar{\eta}W\pa_tK\p_2\pa_t\theta +\pa_t\bar{\eta}WK\p_2\pa_t^2\theta)\cdot\psi\\
&\quad-\int_\Omega(\pa_t^2u\cdot\nabla_{\mathcal{A}}\theta+2\pa_tu\cdot\nabla_{\pa_t\mathcal{A}}\theta+2\pa_tu\cdot\nabla_{\mathcal{A}}\pa_t \theta)\cdot\psi J\\
&\quad-\int_\Omega(u\cdot\nabla_{\pa_t^2\mathcal{A}}\theta+2u\cdot\nabla_{\pa_t\mathcal{A}}\pa_t\theta+u\cdot\nabla_{\mathcal{A}}\pa_t^2 \theta)\cdot\psi J
  \end{aligned}
  \end{align}
  for any $\psi\in H^1(\Om)$. We first use H\"older inequality and the finite volume of $\Omega$ to see that
  \begin{align}
  \begin{aligned}
\left<\pa_t^2F^8,\psi\right>_{(\mathcal{H}^1)^\ast}&\lesssim \|\nabla \theta\|_{L^{2/(1-\varepsilon_+)}}\Big(\|\pa_t^3\bar{\eta}\|_{L^{2/\varepsilon_+}}+\|\pa_t^2\bar{\eta}\|_{L^\infty}\|\pa_t\bar{\eta}\|_{W^{1,\infty}}+\|\pa_t^2\bar{\eta}\|_{W^{1,2/\alpha}}\|\pa_t\bar{\eta}\|_{L^{\infty}}\\
&\quad+\|\pa_t^2u\|_{L^{2/\varepsilon_+}}+\|\pa_tu\|_{L^{2/\varepsilon_+}}\|\pa_t\bar{\eta}\|_{W^{1,\infty}}+\|\pa_t^2\bar{\eta}\|_{W^{1,2/\alpha}}\|u\|_{L^\infty}\Big)\|\psi\|_{L^2}\\
&\quad+\|\nabla\pa_t\theta\|_{H^0}(\|\pa_t^2\bar{\eta}\|_{L^\infty}+\|\pa_t\bar{\eta}\|_{W^{1,\infty}}\|\pa_t\bar{\eta}\|_{L^\infty}+\|\pa_tu\|_{L^\infty}\\
&\quad+\|u\|_{L^\infty}\|\pa_t\bar{\eta}\|_{W^{1,\infty}})\|\psi\|_{L^2}+\|\nabla\pa_t^2\theta\|_{H^0}(\|u\|_{L^\infty}+\|\pa_t\bar{\eta}\|_{L^\infty})\|\psi\|_{L^2}.
  \end{aligned}
  \end{align}
  Then we employ Sobolev embedding estimates and trace theory with $\mathfrak{E}$ small to deduce
  \begin{align}
  \begin{aligned}
\left<\pa_t^2F^8,\psi\right>_\ast&\lesssim \|\theta\|_{L^\infty W^{2,q_+}}(\|\pa_t^3\eta\|_{L^2H^{1/2}}+\|\pa_t^2\eta\|_{L^2H^{3/2-\alpha}}+\|\pa_tu\|_{L^2H^1}+\|\pa_t^2u\|_{L^2H^1}\\
&\quad + \|u\|_{L^2W^{2,q_-}})\|\psi\|_{L^2H^1} + \|\pa_t\theta\|_{L^2H^1}(\|\pa_t^2\eta\|_{L^\infty H^1}+ \|\pa_tu\|_{L^\infty H^{1+\varepsilon_-/2}} \\
&\quad+ \|\pa_t\eta\|_{L^\infty H^{3/2+(\varepsilon_--\alpha)/2}}(\|\pa_t\eta\|_{L^\infty H^{3/2+(\varepsilon_--\alpha)/2}}+\|u\|_{L^\infty W^{2,q_+}}))\|\psi\|_{L^2H^1} \\
&\quad + \|\pa_t^2\theta\|_{L^2H^1}(\|u\|_{L^\infty W^{2,q_+}}+\|\pa_t\eta\|_{L^\infty H^{3/2+(\varepsilon_--\alpha)/2}})\|\psi\|_{L^2H^1}.
  \end{aligned}
  \end{align}
  Hence the completion of proof by combining all the above inequalities and the definition of $\mathfrak{E}$ and $\mathfrak{D}$.
\end{proof}

\subsection{Initial Data for the Nonlinear $\varepsilon$-Regularized System}\label{sec:initial_nonlinear1}

We assume that the initial data
\begin{align*}
\begin{aligned}
(u_0, p_0, \theta_0, \eta_0, \pa_tu(0), \pa_tp(0), \pa_t\theta(0), \pa_t\eta(0), \pa_t^2u(0), \pa_t^2\theta(0), \pa_t^2\eta(0), \varepsilon \pa_t\eta(0), \sqrt{\varepsilon}\pa_t^2\eta(0), \varepsilon\pa_t^2\eta(0))
\end{aligned}
\end{align*}
for \eqref{eq:geometric} are in the space $X^\varepsilon$ defined via
 \begin{align}\label{initial_ep_nonlinear}
 \begin{aligned}
 X^\varepsilon:&=W^{2,q_+}(\Om)\times W^{1,q_+}(\Om)\times W^{2,q_+}(\Om)\times W^{3-1/q_+,q_+}(\Sigma)\times H^{1+\varepsilon_-/2}(\Om)\times H^0(\Om)\\
 &\times H^{1+\varepsilon_-/2}(\Om)\times H^{3/2+(\varepsilon_--\alpha)/2}(\Sigma)\times H^0(\Om)\times H^0(\Sigma)\times H^1(\Sigma)\times W^{3-1/q_+,q_+}(\Sigma)\\
 &\times H^{3/2-\alpha}(\Sigma)\times W^{3-1/q_+,q_+}(\Sigma)
 \end{aligned}
 \end{align}
 and satisfy the compatibility conditions \eqref{compat_C2}
as well as zero average conditions \eqref{cond:zero},
 where $X^\varepsilon$ is a Banach space, with the square norm
\begin{align}
\begin{aligned}
&\|(u_0, p_0, \theta_0, \eta_0, \pa_tu(0), \pa_tp(0), \pa_t\theta(0), \pa_t\eta(0), \pa_t^2u(0), \pa_t^2\theta(0), \pa_t^2\eta(0), \pa_t^3\eta(0))\|_{X^\varepsilon}^2\\
&=\mathcal{E}(0)+\varepsilon^2\|\pa_t\eta(0)\|_{ W^{3-1/q_+,q_+}}^2+\varepsilon^2\|\pa_t^2\eta(0)\|_{ W^{3-1/q_+,q_+}}^2+\varepsilon\|\pa_t^2\eta(0)\|_{ H^{3/2-\alpha}}^2,
\end{aligned}
\end{align}
where $\mathcal{E}(0)$ is defined via \eqref{energy},

From the Theorem \ref{thm:initial}, we have known that our initial data really exist, provided that $\|\pa_t^2u(0)\|_{H^0}^2+\|\pa_t^2\eta(0)\|_{H^1}^2+\varepsilon^2\|\pa_t\eta(0)\|_{ W^{3-1/q_+,q_+}}^2+\|\pa_t^2\eta(0)\|_{W^{2-1/q_+,q_+}}^2+\varepsilon^2\|\pa_t^2\eta(0)\|_{ W^{3-1/q_+,q_+}}^2+\varepsilon\|\pa_t^2\eta(0)\|_{ H^{3/2-\alpha}}^2+\|\pa_t^2\theta(0)\|_{H^0}^2$ is sufficiently small.

\subsection{Existence of Solutions to nonlinear $\varepsilon$--regularized system}

We now consider the local well-posedness of the nonlinear problem prescribed by \eqref{eq:epsilon}. Our strategy is to
work in a complete metric space, that requires high regularity bounds,
but that is endowed with a metric only involves low-regularity. First we will find a complete metric space, endowed with a weak choice of a metric, compatible with the linear estimates in Theorem \ref{thm:higher order}. Then we will prove that the fixed point on this metric space gives a solution to \eqref{eq:epsilon}.

\begin{definition}\label{def:S}
  Suppose that $T>0$. For $\delta\in(0,\infty)$ we define the space
  \begin{equation}
  \begin{aligned}
S(T, \delta)
&=\Big\{(u, p, \theta, \eta):\Om\to\mathbb{R}^2\times\mathbb{R}\times\mathbb{R}\Big|(u,p, \theta, \eta))\in\mathcal{X}\cap\mathcal{Y},\ \text{with}\ \mathfrak{K}(u, p, \theta, \eta)^{1/2}\le\delta\\
&\quad\text{and}\ (u,p,\theta,\eta)\ \text{achieve the initial data in Appendix \ref{sec:initial}}\Big\}.
  \end{aligned}
  \end{equation}
  We endow this space with the metric
  \begin{equation}\label{def:metric}
  \begin{aligned}
  d((u,p,\theta,\eta),(v,q, \vartheta,\xi))&=\|u-v\|_{L^\infty H^1}+\|u-v\|_{L^2 H^1}+\|u-v\|_{L^2 W^{2,q_+}}+\|p-q\|_{L^2 W^{1,q_+}}\\
  &\quad+\|\theta-\vartheta\|_{L^\infty H^1}+\|\theta-\vartheta\|_{L^2 H^1}+\|\theta-\vartheta\|_{L^2 W^{2,q_+}}+\|\eta-\xi\|_{L^\infty H^1}\\
  &\quad+\|\eta-\xi\|_{L^2 H^{3/2-\alpha}}+\|\eta-\xi\|_{L^2 W^{3-1/q_+.q_+}}+\|\pa_tu-\pa_tv\|_{L^\infty H^1}\\
  &\quad+\|\pa_tu-\pa_tv\|_{L^2 H^1}+\|\pa_t\eta-\pa_t\xi\|_{L^\infty H^1}+\|\pa_t\eta-\pa_t\xi\|_{L^2 H^{3/2-\alpha}}\\
  &\quad+\varepsilon\|\pa_t^2\eta-\pa_t^2\xi\|_{L^2H^1}+\varepsilon^{1/2}\|\pa_t\eta-\pa_t\xi\|_{L^2 H^1}\\
  &\quad+\|[\pa_t\eta-\pa_t\xi]_\ell\|_{L^2([0,T])}+\|[\pa_t^2\eta-\pa_t^2\xi]_\ell\|_{L^2([0,T])}
  \end{aligned}
  \end{equation}
 for any $(u,p,\eta),(v,q,\xi)\in S(T, \delta)$, where the temporal norm is evaluated on the set $[0,T]$.
\end{definition}

It is easy to see that the space $S(T, \delta)$ is complete for each fixed $\delta$. Now we employ the metric space $S(T,\delta)$ and a contraction mapping argument to construct a sequence of approximate solutions $(u^\varepsilon, p^\varepsilon, \theta^\varepsilon, \eta^\varepsilon)$ for each $0<\varepsilon\le1$ satisfying \eqref{eq:epsilon}. For simplicity, the superscript $\varepsilon$ will be typically suppressed in the notations and  we denote the unknown as $(u,p,\theta,\eta)$ instead of $(u^\varepsilon, p^\varepsilon, \theta^\varepsilon, \eta^\varepsilon)$.

\begin{theorem}\label{thm: fixed point}
Suppose that there exists a universal constant $\delta>0$ sufficiently small such that the initial data for the nonlinear system are given as in Section \ref{sec:initial_nonlinear1} satisfying
\begin{align}
\|(u_0, p_0, \theta_0, \eta_0, \pa_tu(0), \pa_tp(0), \pa_t\theta(0), \pa_t\eta(0), \pa_t^2u(0), \pa_t^2\theta(0), \pa_t^2\eta(0), \pa_t^3\eta(0))\|_{X^\varepsilon}^2\le \delta.
\end{align}
Then there exists a constant $C>0$ such that for each $0<\varepsilon\le\min\{1, 1/(8C)\}$ there exists a unique solution $(u, p, \theta, \eta)$ to \eqref{eq:epsilon}, belonging to the metric space $S(T_\varepsilon, \delta)$, where $T_\varepsilon>0$ is sufficiently small.  In particular $(u, p, \theta, \eta)\in \mathcal{X}\cap\mathcal{Y}$, where $\mathcal{X}$ and $\mathcal{Y}$ are defined in \eqref{def:xy}.
\end{theorem}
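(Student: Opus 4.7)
The plan is to realize $(u, p, \theta, \eta)$ as the unique fixed point of a map $\Psi$ on the complete metric space $S(T_\varepsilon, \delta)$ that encodes solving the linear coupled system with forcings built from the input. Given $(v, q, \vartheta, \xi) \in S(T, \delta)$, I define the forcings $F^{1}, \dots, F^{9}$ by isolating the nonlinearities in \eqref{eq:epsilon}: the geometric transport terms $\pa_t\bar\xi W K \pa_2 v - v \cdot \nabla_{\mathcal{A}(\xi)} v$ and $\pa_t\bar\xi W K \pa_2 \vartheta - v \cdot \nabla_{\mathcal{A}(\xi)} \vartheta$, the capillary remainder $\sigma \pa_1 \mathcal{R}(\pa_1\zeta_0, \pa_1\xi)$ together with its contact-line trace, the temperature-dependent surface-tension piece $\sigma_2 \vartheta \pa_1(\pa_1\xi/(1+|\pa_1\zeta_0|^2)^{3/2})$, the buoyancy $-g\vartheta e_2$, and the kinematic discrepancy. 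Theorem \ref{thm:linear} (combined with the heat part provided by Theorem \ref{thm:higher order}) produces a unique strong solution $(u, p, \theta, \eta) = \Psi(v, q, \vartheta, \xi)$ attaining the prescribed initial data from Section \ref{sec:initial_nonlinear1}; the compatibility conditions \eqref{compat_C2} propagate to $\Psi(v, q, \vartheta, \xi)$ because they are baked into the initial data and the linear construction.

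The self-mapping property $\Psi(S(T_\varepsilon, \delta)) \subset S(T_\varepsilon, \delta)$ follows by combining the forcing bounds from Proposition \ref{prop:force_bulk} with the analogous estimates for $F^{1}, \dots, F^{7}$ adapted from \cite{GTWZ2023}: each such forcing is controlled by $\mathscr{K}(v, q, \vartheta, \xi) + \mathscr{K}(v, q, \vartheta, \xi)^{3/2} \lesssim \delta^2 + \delta^3$. Plugging into the linear estimate \eqref{est:coupled_linear} and using the initial-data smallness yields
\[
\mathscr{K}(u, p, \theta, \eta) \le C_0 \left[ 1 + \frac{T_\varepsilon}{\varepsilon} \right] e^{C T_\varepsilon} \bigl( \delta + C(\delta^2 + \delta^3) \bigr),
\]
and I first shrink $\delta$ and then pick $T_\varepsilon$ small enough that $T_\varepsilon / \varepsilon$ remains controlled and the right-hand side is $\le \delta^2$, giving $\Psi(v, q, \vartheta, \xi) \in S(T_\varepsilon, \delta)$.

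For strict contraction, take $(v_j, q_j, \vartheta_j, \xi_j) \in S(T_\varepsilon, \delta)$ for $j = 1, 2$ and set $(\bar u, \bar p, \bar\theta, \bar\eta) := \Psi(v_1, \dots) - \Psi(v_2, \dots)$. The difference solves the same linear system with zero initial data and forcings $F^{i}(v_1, \dots) - F^{i}(v_2, \dots)$. Each forcing difference is multilinear, so it can be expanded as a sum of terms containing exactly one low-regularity difference (contributing to $d$) multiplied by high-regularity factors from either input (bounded by $\delta$). A low-regularity version of the linear estimate -- obtained by dropping one derivative throughout the proof of Theorem \ref{thm:linear} so that only the norms appearing in $d$ are required on the left -- then yields
\[
d\bigl(\Psi(v_1, \dots), \Psi(v_2, \dots)\bigr) \le C \left[ 1 + \frac{T_\varepsilon}{\varepsilon} \right] \delta \cdot d\bigl((v_1, \dots), (v_2, \dots)\bigr).
\]
For $\varepsilon \le \min\{1, 1/(8C)\}$ and $\delta$ small, the prefactor is $\le 1/2$, and Banach's fixed-point theorem supplies the unique $(u, p, \theta, \eta) \in S(T_\varepsilon, \delta)$, which by construction solves \eqref{eq:epsilon}.

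The main obstacle is the mismatch between the high-regularity norms in $\mathscr{K}$ that must be propagated to close the nonlinear forcing estimates and the low-regularity metric $d$ in which strict contraction must be witnessed: quadratic terms such as $v \cdot \nabla_{\mathcal{A}(\xi)} v$ and $v \cdot \nabla_{\mathcal{A}(\xi)} \vartheta$ force a careful allocation of one factor to the low-regularity difference and the other to the $\delta$-bounded high-regularity factor, and the temperature-dependent capillary coefficient $\sigma_1 - \sigma_2 \vartheta$ together with the remainder $\mathcal{R}(\pa_1\zeta_0, \pa_1\xi)$ couple the heat equation and the Navier--Stokes boundary data in a way that demands simultaneous control of $\vartheta$ and $\xi$ in their low-regularity components. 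A secondary issue is the factor $[1 + T/\varepsilon]$ in \eqref{est:coupled_linear}, which forces $T_\varepsilon \to 0$ as $\varepsilon \to 0$; this is acceptable here because uniform-in-$\varepsilon$ control is deferred to Theorem \ref{thm:uniform1}.
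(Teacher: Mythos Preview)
Your overall architecture matches the paper's: define the solution map on $S(T_\varepsilon,\delta)$, establish the self-map via the linear bound \eqref{est:coupled_linear} together with nonlinear forcing estimates, and then prove contraction in the weak metric $d$. The self-mapping portion (the paper's Step~1) is essentially what you describe, including the choice $T_\varepsilon\le\varepsilon^2$ to tame the $T/\varepsilon$ and $T/\varepsilon^2$ factors.

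The contraction argument, however, contains a genuine gap. You assert that the difference $(\bar u,\bar p,\bar\theta,\bar\eta)$ ``solves the same linear system with zero initial data and forcings $F^i(v_1,\dots)-F^i(v_2,\dots)$.'' This is not correct: the principal linear operators $\dive_{\mathcal{A}}S_{\mathcal{A}}$, $\Delta_{\mathcal{A}}$, $\mathcal{N}$, $|\mathcal{N}|$ have coefficients built from the \emph{input} surface function $\xi$, so the two linear problems being subtracted have different operators $\mathcal{A}^1=\mathcal{A}(\xi_1)$ and $\mathcal{A}^2=\mathcal{A}(\xi_2)$. The resulting difference equation therefore contains, in addition to the $F^i$-differences you list, commutator-type terms such as $k\dive_{\mathcal{A}^1}(\nabla_{\mathcal{A}^1-\mathcal{A}^2}\tilde\theta^2)$, $k\dive_{\mathcal{A}^1-\mathcal{A}^2}(\nabla_{\mathcal{A}^2}\tilde\theta^2)$, $k\nabla_{\mathcal{A}^2}\tilde\theta^2\cdot(\mathcal{N}^1-\mathcal{N}^2)$ and $\tilde\theta^2(|\mathcal{N}^1|-|\mathcal{N}^2|)$, which involve the \emph{outputs} $(\tilde u^2,\tilde p^2,\tilde\theta^2,\tilde\eta^2)$ rather than the inputs; see \eqref{linear_fix1}--\eqref{remainder2'}. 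These cannot be packaged as $F^i(v_1,\dots)-F^i(v_2,\dots)$ and are not covered by applying a black-box linear estimate to difference forcings. The paper does not invoke any ``low-regularity version of Theorem~\ref{thm:linear}''; instead it writes out the difference PDE explicitly and performs direct energy estimates for $\pa_t\tilde\theta$ and $\tilde\theta$ (Steps~3--4) followed by an elliptic estimate for $\tilde\theta$ (Step~5), bounding each output-dependent remainder using the high-regularity control $\mathscr{K}(\tilde u^2,\tilde p^2,\tilde\theta^2,\tilde\eta^2)\le\delta^2$ that comes from the self-map. The Navier--Stokes portion of the contraction is then imported from \cite{GTWZ2023}. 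Your sketch omits both the difference PDE and these output-dependent commutators, so as written the contraction step does not close.
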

\begin{proof}
\
\paragraph{\underline{Step 1 -- Solving the Linear Problem}}
Suppose that $\mathfrak{K}(u,p,\theta,\eta)\le\delta^2$ is sufficiently small. For every $(u,p,\theta,\eta)\in S(T_\varepsilon, \delta)$ given, let $(\tilde{u},\tilde{p},\tilde{\theta},\tilde{\eta})$ instead of $(u, p, \theta, \xi)$ be the unique solution to the linear problems \eqref{eq:epsilon}.
Then we apply the Theorem \ref{thm:linear_low} and \ref{thm:higher order} to system \eqref{eq:epsilon}, with the Propositions \ref{prop:force_bulk} under the assumption on $\mathfrak{K}(u,p,\theta,\eta)$ to obtain that
\begin{equation}
\mathfrak{K}(\tilde{u}, \tilde{p}, \tilde{\theta}, \tilde{\eta})\le C_0\Big[1+\frac T\varepsilon\Big]e^{T\delta}\left(C_1\Big(1+\frac{T}{\varepsilon^2}\Big)\mathcal{E}(0)+C_2\Big(1+\frac{T}{\varepsilon}\Big)\delta^4\right),
\end{equation}
for some universal constants $C_0$, $C_1$ and $C_2$. We first choose $T=T_\varepsilon$ such that $T_\varepsilon\le \varepsilon^2$ for any $\varepsilon\in(0, 1]$. Then we choose $\delta\in (0, 1)$ such that $4C_0C_2e\delta^2\le\frac12$. Finally, we restrict the initial data $\mathcal{E}(0)$ so that $4C_0C_1e\mathcal{E}(0)\le\frac12\delta^2$. Consequently, we have $\mathfrak{K}(\tilde{u}, \tilde{p}, \tilde{\theta}, \tilde{\eta})\le\delta^2$ so that $(\tilde{u}, \tilde{p}, \tilde{\theta}, \tilde{\eta})\in S(T_\varepsilon, \delta)$.

\paragraph{\underline{Step 2 -- PDEs for the Differences}}
Define the operator
\begin{equation}\label{def:map_a}
A:S(T_\varepsilon, \delta)\rightarrow S(T_\varepsilon, \delta)
\end{equation}
 via $A(u,p,\theta,\eta)=(\tilde{u},\tilde{p}, \tilde{\theta}, \tilde{\eta})$ for each $(u,p,\theta,\eta)\in S(T_\varepsilon, \delta)$.

  In order to show that the operator $A$ is contract on $S(T_\varepsilon, \delta)$, we choose two elements $(u^i,p^i,\theta^i,\eta^i)\in S(T_\varepsilon, \delta)$, and define $A(u^i,p^i, \theta^i, \eta^i)=(\tilde{u}^i,\tilde{p}^i, \tilde{\theta}^i, \tilde{\eta}^i)$ as above, $i=1,2$. For simplicity, we will abuse notation and denote $u=u^1-u^2$, $p=p^1-p^2$, $\theta = \theta^1 - \theta^2$, $\eta=\eta^1-\eta^2$ and the same for $\tilde{u},\tilde{p}, \tilde{\theta}, \tilde{\eta}$. From the difference of equation for $(\tilde{u}^i,\tilde{p}^i, \tilde{\theta}^i, \tilde{\eta}^i)$, $i=1,2$, we know that
\begin{equation}\label{linear_fix1}
  \left\{
  \begin{aligned}
&\pa_t\tilde{\theta}+ k\dive_{\mathcal{A}^1}\nabla_{\mathcal{A}^1}\tilde{\theta}= k\dive_{\mathcal{A}^1}(\nabla_{\mathcal{A}^1-\mathcal{A}^2}\tilde{\theta}^2)+R^8 \quad &\text{in}&\quad \Om,\\
&k \nabla_{\mathcal{A}^1}\tilde{\theta} \cdot\mathcal{N}^1 + \tilde{\theta}|\mathcal{N}^1| = k\nabla_{\mathcal{A}^1-\mathcal{A}^2}\tilde{\theta}^2\cdot\mathcal{N}^1 +R^9 \quad &\text{on}&\quad\Sigma,\\
&\tilde{\theta}=0 \quad &\text{on}&\quad\Sigma_s
  \end{aligned}
  \right.
\end{equation}
with zero initial data, where $R^8$  and $R^9$ are defined by
\begin{equation}\label{remainder3}
  \begin{aligned}
R^8&= k\dive_{(\mathcal{A}^1-\mathcal{A}^2)}(\nabla_{\mathcal{A}^2}\tilde{\theta}^2)-(\pa_t\bar{\eta}^1-\pa_t\bar{\eta}^2)K^1W\pa_2\theta^1+\pa_t\bar{\eta}^2(K^1-K^2)W\pa_2\theta^1\\
&\quad+\pa_t\bar{\eta}^2K^2W\pa_2\theta+u\cdot\nabla_{\mathcal{A}^1}\theta^1+u^2\cdot\nabla_{\mathcal{A}^1-\mathcal{A}^2}\theta^1+u^2\cdot\nabla_{\mathcal{A}^2}\theta,\\
R^9&= k\nabla_{\mathcal{A}^2}\tilde{\theta}^2\cdot(\mathcal{N}^1-\mathcal{N}^2)+\tilde{\theta}^2 |\mathcal{N}^1-\mathcal{N}^2|
  \end{aligned}
  \end{equation}
and $\mathcal{A}^i$, $\mathcal{N}^i$, $J^i$ and $K^i$ are in terms of $\eta^i$, $i=1,2$.

We now focus on \eqref{linear_fix1}. The solutions are regular enough to be differentiated in time to result in the system
\begin{equation}\label{linear_fix2}
   \left\{
  \begin{aligned}
&\pa_t^2\tilde{\theta} - k \dive_{\mathcal{A}^1}\nabla_{\mathcal{A}^1}\pa_t\tilde{\theta}= k \dive_{\mathcal{A}^1}(\nabla_{\mathcal{A}^1-\mathcal{A}^2}\pa_t\tilde{\theta}^2) +k \dive_{\mathcal{A}^1}(\nabla_{\pa_t(\mathcal{A}^1-\mathcal{A}^2)}\tilde{\theta}^2)\\
&\quad \quad \quad \quad \quad \quad \quad \quad \quad \quad \quad \quad \quad \quad + k \dive_{\mathcal{A}^1}\nabla_{\pa_t\mathcal{A}^1}\pa_t\tilde{\theta}+R^{8,1} \quad &\text{in}&\quad \Om,\\
&k \nabla_{\mathcal{A}^1}\pa_t\tilde{\theta} \cdot\mathcal{N}^1 + \pa_t\tilde{\theta}|\mathcal{N}^1| =- k\nabla_{\mathcal{A}^1-\mathcal{A}^2}\pa_t\tilde{\theta}^2\cdot\mathcal{N}^1 -k \nabla_{\pa_t(\mathcal{A}^1-\mathcal{A}^2)}\tilde{\theta}^2\cdot\mathcal{N}^1\\
&\quad \quad \quad \quad \quad \quad \quad \quad \quad \quad \quad \quad \quad \quad
-k \nabla_{\pa_t\mathcal{A}^1}\tilde{\theta} \cdot\mathcal{N}^1+R^{9,1} \quad &\text{on}&\quad\Sigma,\\
&\pa_t\tilde{\theta}=0 \quad &\text{on}&\quad\Sigma_s
  \end{aligned}
  \right.
\end{equation}
with zero initial data, where $R^{8,1}$, $R^{9,1}$ are defined by
  \begin{equation}\label{remainder2'}
  \begin{aligned}
R^{8,1}&=\pa_tR^8+ k \dive_{\pa_t\mathcal{A}^1}(\nabla_{(\mathcal{A}^1-\mathcal{A}^2)}\tilde{\theta}^2)+ k \dive_{\pa_t\mathcal{A}^1}(\nabla_{\mathcal{A}^1}\tilde{\theta}),\\
R^{9,1}&=\pa_tR^9- k\nabla_{\mathcal{A}^1}\tilde{\theta} \cdot\pa_t\mathcal{N}^1 -\tilde{\theta}\pa_t|\mathcal{N}^1|- k\nabla_{\mathcal{A}^1-\mathcal{A}^2}\tilde{\theta}^2\cdot\pa_t\mathcal{N}^1.
  \end{aligned}
  \end{equation}

\paragraph{\underline{Step 3 -- Energy Estimates for $\pa_t\tilde{\theta}$}}
By multiplying the first equation of \eqref{linear_fix2} by $\pa_t\tilde{\theta}J^1$, integrating over $\Om$ and integrating by parts, we have the equation
\begin{equation}\label{energy_fix}
\begin{aligned}
  &\frac{d}{dt}\int_\Om\frac{|\pa_t\tilde{\theta}|^2}2J^1+\frac k2\int_\Om|\nabla_{\mathcal{A}^1}\pa_t\tilde{\theta}|^2J^1+ \int_{\Sigma}|\pa_t\tilde{\theta}|^2|\mathcal{N}^1| \\
  &=\int_\Om\frac{|\pa_t\tilde{\theta}|^2}2\pa_tJ^1-\frac k2\int_\Om\left( \nabla_{\pa_t\mathcal{A}^1-\pa_t\mathcal{A}^2}\tilde{\theta}^2+\nabla_{\mathcal{A}^1-\mathcal{A}^2}\pa_t\tilde{\theta}^2+\nabla_{\pa_t\mathcal{A}^1}\tilde{\theta}\right)\cdot\nabla_{\mathcal{A}^1}\pa_t\tilde{\theta}J^1\\
  &\quad+\int_\Om R^{8,1}\cdot\pa_t\tilde{\theta}J^1 + \int_{-\ell}^\ell R^{9,1}\pa_t\tilde{\theta}.
\end{aligned}
\end{equation}

We now estimate the terms in right-hand side of \eqref{energy_fix} and below.  For terms in bulk, by definition in \eqref{remainder3}, we first handle terms involving $\pa_tR^8$:
\begin{equation}\label{est:remain_bulk11}
\begin{aligned}
&\int_\Om \pa_tR^8\cdot\pa_t\tilde{\theta}J^1\\
&\lesssim\int_\Om |\nabla\pa_t\bar{\eta}|(|\nabla^2\bar{\eta}^2||\nabla\tilde{\theta}^2|+|\nabla^2\tilde{\theta}^2|)|\pa_t\tilde{\theta}|+|\nabla\bar{\eta}|(|\nabla^2\pa_t\bar{\eta}^2||\nabla\tilde{\theta}^2|+|\nabla\pa_t\bar{\eta}^2||\nabla^2\tilde{\theta}^2|)|\pa_t\tilde{\theta}|\\
&\quad+\int_\Om |\nabla\bar{\eta}| (|\nabla^2\bar{\eta}^2||\nabla\pa_t\tilde{\theta}^2|+|\nabla^2\pa_t\tilde{\theta}^2|)|\pa_t\tilde{\theta}|+((|\pa_t\bar{\eta}^1|+|\pa_t\bar{\eta}^2|)|\nabla\pa_t\bar{\eta}|+|\pa_t^2\bar{\eta}^2||\nabla\bar{\eta}|)|\pa_2\theta^1||\pa_t\tilde{\theta}|\\
&\quad+\int_\Om(|\pa_t\bar{\eta}|+|\pa_t\bar{\eta}^2||\nabla\bar{\eta}|)|\pa_2\pa_t\theta^1||\pa_t\tilde{\theta}|+(|\pa_t^2\bar{\eta}^2|+|\pa_t\bar{\eta}^2||\nabla\pa_t\bar{\eta}^2|)|\pa_2\theta||\pa_t\tilde{\theta}|+|\pa_t\bar{\eta}^2||\pa_2\pa_t\theta||\pa_t\tilde{\theta}|\\
&\quad+\int_\Om(|\pa_tu|+|u||\nabla\pa_t\bar{\eta}^1|+|\pa_t u^2||\nabla\bar{\eta}|+| u^2||\nabla\pa_t\bar{\eta}|)|\nabla \theta^1||\pa_t\tilde{\theta}|+(|u|+|u^2||\nabla\bar{\eta}|)|\nabla \pa_t\theta^1||\pa_t\tilde{\theta}|\\
&\quad+\int_\Om(|\pa_tu^2|+|u^2||\nabla\pa_t\tilde{\eta}^2|)|\nabla \theta||\pa_t\tilde{\theta}|+|u^2||\nabla\pa_t\theta||\pa_t\tilde{\theta}|+|\pa_t^2\bar{\eta}||\pa_2\theta^1||\pa_t\tilde{\theta}|,
\end{aligned}
\end{equation}
where we have used $\|\mathcal{A}^i\|_{L^\infty}+\|J^i\|_{L^\infty}+\|K^i\|_{L^\infty}\lesssim 1$, $i=1, 2$.
Then by H\"older inequality, Sobolev embedding and trace theory, after integrating over $[0, T]$, the second line in \eqref{est:remain_bulk1} is bounded by
\begin{equation}\label{est:remain_bulk12}
\begin{aligned}
&\int_0^T\|\nabla\pa_t\bar{\eta}\|_{L^{2/\alpha}}(\|\nabla^2\bar{\eta}^2\|_{L^{2/(1-\varepsilon_+)}}\|\nabla\tilde{\theta}^2\|_{L^2}
+\|\nabla^2\tilde{\theta}^2\|_{L^{q_+}})\|\pa_t\tilde{\theta}\|_{L^{2/(\varepsilon_+-\alpha)}}\\
&\quad+\|\nabla\bar{\eta}\|_{L^{4/(\alpha-\varepsilon_-)}}\|\nabla^2\pa_t\bar{\eta}^2\|_{L^{4/(2-\varepsilon_-+\alpha)}}\|\nabla\tilde{\theta}^2\|_{L^{2/(1-\varepsilon_+)}}\|\pa_t\tilde{\theta}\|_{L^{2/\varepsilon_+}}\\
&\quad+\|\nabla\bar{\eta}\|_{L^\infty}\|\nabla\pa_t\bar{\eta}^2\|_{L^{2/(\varepsilon_+-\varepsilon_-)}}\|\nabla^2\tilde{\theta}^2\|_{L^{q_+}}\|\pa_t\tilde{\theta}\|_{L^{2/\varepsilon_-}},
\end{aligned}
\end{equation}
so that we have the estimate
\begin{equation}
\begin{aligned}
\eqref{est:remain_bulk12}&\lesssim \int_0^T\|\pa_t\eta\|_{3/2-\alpha}(\|\eta^2\|_{W^{3-1/q_+,q_+}}\|\tilde{\theta}^2\|_1+\|\tilde{\theta}^2\|_{W^{2,q_+}})\|\pa_t\tilde{\theta}\|_1\\
&\quad+\|\eta\|_{W^{3-1/q_+,q_+}}\|\pa_t\eta^2\|_{3/2+(\varepsilon_--\alpha)/2}\|\tilde{\theta}^2\|_{W^{2,q_+}}\|\pa_t\tilde{\theta}\|_{1}\\
&\lesssim \delta^{1/2}(\|\pa_t\eta\|_{L^2H^{3/2-\alpha}}+\|\eta\|_{L^2 W^{3-1/q_+,q_+}})\|\pa_t\tilde{\theta}\|_{L^2H^1}.
\end{aligned}
\end{equation}
The third line in \eqref{est:remain_bulk1}, after integrating over $[0, T]$ in time, are estimated by
\begin{equation}\label{est:remain_bulk13}
\begin{aligned}
&\int_0^T\|\nabla\bar{\eta}\|_{L^{2/\alpha}}\bigg(\|\nabla^2\bar{\eta}^2\|_{L^{2/(1-\varepsilon_+)}}\|\nabla\pa_t\tilde{\theta}^2\|_{L^{4/(2-\varepsilon_-)}}\|\pa_t\tilde{\theta}\|_{L^{4/(2\varepsilon_++\varepsilon_--2\alpha)}}+\|\nabla^2\pa_t\tilde{\theta}^2\|_{L^{q_-}}\\
&\quad\times\|\pa_t\tilde{\theta}\|_{L^{2/(\varepsilon_--\alpha)}}\bigg)+(\|\pa_t\bar{\eta}^1\|_{L^\infty}+\|\pa_t\bar{\eta}^2\|_{L^\infty})\|\nabla\pa_t\bar{\eta}\|_{L^{2/\alpha}}\|\pa_2\theta^1\|_{L^2}\|\pa_t\tilde{\theta}\|_{L^{2/(1-\alpha)}}\\
&\quad+\|\pa_t^2\bar{\eta}^2\|_{L^\infty}\|\nabla\bar{\eta}\|_{L^\infty}\|\pa_2\theta^1\|_{L^2}\|\pa_t\tilde{\theta}\|_{L^2}\\
&\lesssim \int_0^T\|\eta\|_{H^{3/2-\alpha}}(\|\bar{\eta}^2\|_{W^{3-1/q_+, q_+}}\|\pa_t\tilde{\theta}^2\|_{H^{1+\varepsilon_-/2}}+\|\pa_t\tilde{\theta}^2\|_{W^{2,q_-}})\|\pa_t\tilde{\theta}\|_{H^1}\\
&\quad+(\|\pa_t\eta^1\|_{3/2+(\varepsilon_--\alpha)/2}+\|\pa_t\eta^2\|_{3/2+(\varepsilon_--\alpha)/2})\|\pa_t\eta\|_{3/2-\alpha})\|\theta^1\|_{1}\|\pa_t\tilde{\theta}\|_{H^1}\\
&\quad+\|\pa_t^2\eta^2\|_{1}\|\eta\|_{W^{3-1/q_+,q_+}}\|\theta^1\|_{1}\|\pa_t\tilde{\theta}\|_{L^2}\\
&\lesssim \delta^{1/2}(\|\pa_t\eta\|_{L^2 H^{3/2-\alpha}}+\|\eta\|_{L^\infty H^{3/2-\alpha}} )\|\pa_t\tilde{\theta}\|_{L^2H^1}+\delta^{1/2}\|\eta\|_{L^2 W^{3-1/q_+,q_+}}\|\pa_t\tilde{\theta}\|_{L^\infty H^0},
\end{aligned}
\end{equation}
where we have used the Sobolev embedding $H^{\varepsilon_-/2}\hookrightarrow L^{4/(2-\varepsilon_-)}$ and $H^1\hookrightarrow L^{4/(2\varepsilon_++\varepsilon_--2\alpha)}$ for bounded domain $\Om$.
The fourth line in \eqref{est:remain_bulk1}, after integrating over $[0, T]$ in time (without loss of generality, we may assume $T<1$), are estimated by
\begin{equation}\label{est:remain_bulk14}
\begin{aligned}
&\int_0^T(\|\pa_t\bar{\eta}\|_{L^\infty}+\|\pa_t\bar{\eta}^2\|_{L^\infty}\|\nabla\bar{\eta}\|_{L^\infty})\|\nabla\pa_t\theta^1\|_{L^2}\|\pa_t\tilde{\theta}\|_{L^2}+\|\pa_2\theta\|_{L^2}\|\pa_t\tilde{\theta}\|_{L^2}(\|\pa_t^2\bar{\eta}^2\|_{L^\infty}\\
&\quad+\|\pa_t\bar{\eta}^2\|_{L^\infty}\|\nabla\pa_t\bar{\eta}^2\|_{L^\infty})+\|\pa_t\bar{\eta}^2\|_{L^\infty}\|\pa_2\pa_t\theta\|_{L^2}\|\pa_t\tilde{\theta}\|_{L^2}\\
&\lesssim \int_0^T(\|\pa_t\eta\|_{H^1}+\|\pa_t\eta^1\|_{H^1}\|\eta\|_{W^{3-1/q_+,q_+}})\|\pa_t\theta^1\|_{1+\varepsilon_-/2}\|\pa_t\tilde{\theta}\|_{L^2}+\|\theta\|_{1}\|\pa_t\tilde{\theta}\|_{L^2}(\|\pa_t^2\eta^2\|_{1}\\
&\quad+\|\pa_t\eta^2\|_{1}\|\pa_t\eta^2\|_{3/2+(\varepsilon_--\alpha)/2})+\|\pa_t\eta^2\|_{1}\|\pa_t\theta\|_{1}\|\pa_t\tilde{\theta}\|_{L^2}\\
&\lesssim \sqrt{T}\delta^{1/2}(\|\pa_t\eta\|_{L^\infty H^1}+\|\eta\|_{L^2 W^{3-1/q_+,q_+}}+\|\theta\|_{L^2W^{2,q_+}}+\|\pa_t\theta\|_{L^2H^1})\|\pa_t\tilde{\theta}\|_{L^\infty H^0}.
\end{aligned}
\end{equation}
After integrating over $[0, T]$ in time, the fifth line in \eqref{est:remain_bulk1} are estimated by
\begin{equation}\label{est:remain_bulk15_1}
\begin{aligned}
  &\int_0^T(\|\pa_tu\|_{L^2}+\|u\|_{L^2}\|\nabla\pa_t\bar{\eta}^1\|_{L^\infty}+\|\pa_t u^2\|_{L^2}\|\nabla\bar{\eta}\|_{L^\infty})\|\nabla \theta^1\|_{L^{2/(1-\varepsilon_+)}}\|\pa_t\tilde{\theta}\|_{L^{2/\varepsilon_+}}\\
&\quad+\|u^2\|_{L^2}\|\nabla\pa_t\bar{\eta}\|_{L^{2/\alpha}}\|\nabla \theta^1\|_{L^{2/(1-\varepsilon_+)}}\|\pa_t\tilde{\theta}\|_{L^{2/(\varepsilon_+-\alpha)}}+(\|u\|_{L^\infty}+\|u^2\|_{L^\infty}\|\nabla\bar{\eta}\|_{L^\infty})\\
&\quad \times \|\nabla \pa_t\theta^1\|_{L^2}\|\pa_t\tilde{\theta}\|_{L^2}\\
&\lesssim \int_0^T(\|\pa_tu\|_{L^2}+\|u\|_{L^2}\|\pa_t\eta^1\|_{3/2+(\varepsilon_--\alpha)/2}+\|\pa_t u^2\|_{L^2}\|\eta\|_{W^{3-1/q_+,q_+}})\| \theta^1\|_{W^{2,q_+}}\|\pa_t\tilde{\theta}\|_{1}\\
&\quad+(\|u\|_{W^{2,q_+}}+\|u^2\|_{W^{2,q_+}}\|\eta\|_{W^{3-1/q_+,q_+}})\| \pa_t\theta^1\|_{1}\|\pa_t\tilde{\theta}\|_{L^2}\\
&\quad +\|u^2\|_{L^2}\|\pa_t\eta\|_{3/2-\alpha}\| \theta^1\|_{W^{2,q_+}}\|\pa_t\tilde{\theta}\|_{1},
\end{aligned}
\end{equation}
so that
\begin{equation}\label{est:remain_bulk15}
\begin{aligned}
\eqref{est:remain_bulk15_1}&\lesssim \delta^{1/2}(\|\pa_t\eta\|_{L^2 H^{3/2-\alpha}}+\|\eta\|_{L^2 W^{3-1/q_+,q_+}}+\|u\|_{L^2W^{2,q_+}}+\|\pa_tu\|_{L^2H^1})\|\pa_t\tilde{\theta}\|_{L^2H^1}\\
&\quad+\delta^{1/2}(\|\eta\|_{L^2 W^{3-1/q_+,q_+}}+\|u\|_{L^2W^{2,q_+}})\|\pa_t\tilde{\theta}\|_{L^\infty H^0},
\end{aligned}
\end{equation}
Similarly, the sixth line in \eqref{est:remain_bulk1} are estimated by
\begin{equation}\label{est:remain_bulk16}
\begin{aligned}
&\int_0^T(\|\pa_tu^2\|_{L^\infty}+\|u^2\|_{L^\infty}\|\nabla\pa_t\tilde{\eta}^2\|_{L^\infty})\|\nabla \theta\|_{L^2}\|\pa_t\tilde{\theta}\|_{L^2}+\|u^2\|_{L^\infty}\|\nabla\pa_t\theta\|_{L^2}\|\pa_t\tilde{\theta}\|_{L^2}\\
&\quad+\|\pa_t^2\bar{\eta}\|_{L^\infty}\|\pa_2\theta^1\|_{L^2}\|\pa_t\tilde{\theta}\|_{L^2},
\end{aligned}
\end{equation}
so that
\begin{equation}
\begin{aligned}
\eqref{est:remain_bulk16}&\lesssim \int_0^T(\|\pa_tu^2\|_{1+\varepsilon_-/2}+\|u^2\|_{W^{2,q_+}}\|\pa_t\eta^2\|_{3/2+(\varepsilon_--\alpha)/2})\| \theta\|_{W^{2,q_+}}\|\pa_t\tilde{\theta}\|_{L^2}\\
&\quad+\|u^2\|_{W^{2,q_+}}\|\pa_t\theta\|_{1}\|\pa_t\tilde{\theta}\|_{L^2}+\|\pa_t^2\eta\|_{1}\|\theta^1\|_{W^{2,q_+}}\|\pa_t\tilde{\theta}\|_{L^2}\\
&\lesssim \delta^{1/2}(\sqrt{T}\|\pa_t^2\eta\|_{L^2 H^1}+\|u\|_{L^2W^{2,q_+}}+\|\pa_t\theta\|_{L^2H^1})\|\pa_t\tilde{\theta}\|_{L^\infty H^0},
\end{aligned}
\end{equation}
We combine \eqref{est:remain_bulk11}--\eqref{est:remain_bulk16} to obtain that
\begin{equation}\label{est:remain_bulk1}
\begin{aligned}
&\int_\Om \pa_tR^1\cdot\pa_t\tilde{\theta}J^1\\
&\lesssim\delta^{1/2}(\|\pa_t\eta\|_{L^2 H^{3/2-\alpha}}+\|\eta\|_{L^2 W^{3-1/q_+,q_+}}+\|\eta\|_{L^\infty H^{3/2-\alpha}} +\|u\|_{L^2W^{2,q_+}}+\|\pa_tu\|_{L^2H^1})\\
&\quad \times \|\pa_t\tilde{u}\|_{L^2H^1} +\delta^{1/2}(\sqrt{T}\|\pa_t^2\eta\|_{L^2 H^1}+\|\pa_t\eta\|_{L^\infty H^1}+\|\eta\|_{L^2 W^{3-1/q_+,q_+}}\\
&\quad+\|u\|_{L^2W^{2,q_+}}+\|\pa_tu\|_{L^2H^1})\|\pa_t\tilde{u}\|_{L^\infty H^0}.
\end{aligned}
\end{equation}

We now estimate other terms in the bulk such that
\begin{equation}\label{est:remain_bulk21}
\begin{aligned}
&\int_0^T\int_\Om \dive_{\pa_t\mathcal{A}^1}(\nabla_{(\mathcal{A}^1-\mathcal{A}^2)}\tilde{\theta}^2)\cdot\pa_t\tilde{\theta}J^1\lesssim\int_0^T\int_\Om |\nabla\pa_t\bar{\eta}^1|\left(|\nabla^2\bar{\eta}||\nabla\tilde{u}^2|+|\nabla\bar{\eta}||\nabla^2\tilde{\theta}^2|\right)|\pa_t\tilde{\theta}|\\
&\lesssim\int_0^T\|\nabla\pa_t\bar{\eta}^1\|_{L^\infty}(\|\nabla^2\bar{\eta}\|_{L^{2/(1-\varepsilon_+)}}\|\nabla\tilde{\theta}^2\|_{L^{2/(1-\varepsilon_+)}}\|\pa_t\tilde{\theta}\|_{L^{1/\varepsilon_+}}+\|\nabla\bar{\eta}\|_{L^\infty}\|\nabla^2\tilde{\theta}^2\|_{L^{q_+}}\|\pa_t\tilde{\theta}\|_{L^{2/\varepsilon_+}})\\
&\lesssim\int_0^T\|\pa_t\eta^1\|_{3/2+(\varepsilon_--\alpha)/2}\|\eta\|_{W^{3-1/q_+,q_+}}\|\tilde{\theta}^2\|_{W^{2,q_+}}\|\pa_t\tilde{\theta}\|_{1}\lesssim\delta\|\eta\|_{L^2W^{3-1/q_+,q_+}}\|\pa_t\tilde{\theta}\|_{L^2H^1},
\end{aligned}
\end{equation}
and
\begin{equation}\label{est:remain_bulk22}
\begin{aligned}
&\int_0^T\int_\Om \dive_{\pa_t\mathcal{A}^1}(\nabla_{\mathcal{A}^1}\tilde{\theta})\cdot\pa_t\tilde{\theta}J^1\lesssim\int_0^T\int_\Om |\nabla\pa_t\bar{\eta}^1|\left(|\nabla^2\bar{\eta}^1||\nabla\tilde{\theta}|+|\nabla^2\tilde{\theta}|\right)|\pa_t\tilde{\theta}|\\
&\lesssim\int_0^T\|\nabla\pa_t\bar{\eta}^1\|_{L^\infty}\big[\|\nabla^2\bar{\eta}^1\|_{L^{2/(1-\varepsilon_+)}}\|\nabla\tilde{\theta}\|_{L^{2/(1-\varepsilon_+)}}\|\pa_t\tilde{\theta}\|_{L^{1/\varepsilon_+}}+(\|\nabla^2\tilde{\theta}\|_{L^{q_+}})\|\pa_t\tilde{\theta}\|_{L^{2/\varepsilon_+}}\big]\\
&\lesssim\int_0^T\|\pa_t\eta^1\|_{3/2+(\varepsilon_--\alpha)/2}\big[(1+\|\eta^1\|_{W^{3-1/q_+,q_+}})\|\tilde{\theta}\|_{W^{2,q_+}}\big]\|\pa_t\tilde{\theta}\|_{1}\\
&\lesssim\delta^{1/2}\|\tilde{\theta}\|_{L^2W^{2,q_+}}\|\pa_t\tilde{\theta}\|_{L^2H^1}.
\end{aligned}
\end{equation}
Consequently, the result in \eqref{est:remain_bulk1}, \eqref{est:remain_bulk21} and \eqref{est:remain_bulk22} imply that
\begin{equation}\label{est:remain_bulk2}
\begin{aligned}
&\int_\Om R^{8,1}\cdot\pa_t\tilde{\theta}J^1\\
&\lesssim \delta^{1/2}(\|\pa_t\eta\|_{L^2 H^{3/2-\alpha}}+\|\eta\|_{L^2 W^{3-1/q_+,q_+}}+\|\eta\|_{L^\infty H^{3/2-\alpha}} +\|u\|_{L^2W^{2,q_+}}\\
&\quad +\|\pa_tu\|_{L^2H^1})\|\pa_t\tilde{\theta}\|_{L^2H^1}+\delta^{1/2}(\sqrt{T}\|\pa_t^2\eta\|_{L^2 H^1}+\|\pa_t\eta\|_{L^\infty H^1}+\|\eta\|_{L^2 W^{3-1/q_+,q_+}}\\
&\quad+\|u\|_{L^2W^{2,q_+}}+\|\pa_tu\|_{L^2H^1})\|\pa_t\tilde{\theta}\|_{L^\infty H^0}+\delta^{1/2}\|\tilde{u}\|_{L^2W^{2,q_+}}\|\pa_t\tilde{\theta}\|_{L^2H^1}
\end{aligned}
\end{equation}

The integrals in terms of symmetric gradient of $\pa_t\tilde{u}$ are estimated by
\begin{equation}\label{est:remain_bulk3}
  \begin{aligned}
&|\int_0^T\int_\Om\left( \nabla_{\pa_t\mathcal{A}^1-\pa_t\mathcal{A}^2}\tilde{\theta}^2+\nabla_{\mathcal{A}^1-\mathcal{A}^2}\pa_t\tilde{\theta}^2+\nabla_{\pa_t\mathcal{A}^1}\tilde{\theta}\right)\cdot \nabla_{\mathcal{A}^1}\pa_t\tilde{\theta}J^1|\\
&\lesssim \int_0^T(\|\nabla\pa_t\tilde{\eta}\|_{L^{2/\alpha}(\Om)}\|\nabla\tilde{\theta}^2\|_{L^{2/(1-\varepsilon_+)}}
+\|\nabla\tilde{\eta}\|_{L^{2/\alpha}(\Om)}\|\nabla\pa_t\tilde{\theta}^2\|_{L^{2/(1-\varepsilon_+)}}\\
&\quad+\|\nabla\pa_t\bar{\eta}^1\|_{L^\infty}\|\nabla\tilde{\theta}\|_{L^2})\|\nabla\pa_t\tilde{\theta}\|_{L^2}\\
&\lesssim \delta^{1/2}(\|\eta\|_{L^2H^{3/2-\alpha}}+\|\pa_t\eta\|_{L^2H^{3/2-\alpha}}+\|\tilde{\theta}\|_{L^2H^1})\|\tilde{\theta}\|_{L^2H^1}.
  \end{aligned}
\end{equation}

We now consider the integrals on the boundary.  We first estimate the terms of $\pa_tR^9$:
\begin{equation}\label{est:remain_b21}
  \begin{aligned}
|\int_0^T\int_{-\ell}^\ell \pa_tR^9\pa_t\tilde{\theta}|&\lesssim\int_0^T\int_{-\ell}^\ell\left[(|\nabla\pa_t\bar{\eta}^2||\nabla \tilde{\theta}^2|+|\nabla\pa_t\tilde{\theta}^2|)|\pa_1\eta|+(|\nabla \tilde{\theta}^2|)|\pa_1\pa_t\eta|\right]|\pa_t\tilde{\theta}|\\
&\quad+(|\pa_t\tilde{\theta}^2||\pa_1\eta|+|\tilde{\theta}^2||\pa_1\pa_t\eta|)|\pa_t\tilde{\theta}|.
  \end{aligned}
\end{equation}
The first line in the right-hand side of \eqref{est:remain_b21}, by H\"older and Sobolev inequalities, are estimated by
\begin{equation}\label{est:remain_b22}
  \begin{aligned}
&\int_0^T\bigg[(\|\nabla\pa_t\bar{\eta}^2\|_{L^\infty}\|\nabla \tilde{\theta}^2\|_{L^{1/(1-\varepsilon_-)}(\Sigma)}+\|\nabla\pa_t\tilde{\theta}^2\|_{L^{1/(1-\varepsilon_-)}(\Sigma)})\bigg]\|\pa_t\tilde{\theta}\|_{L^{1/(\varepsilon_--\alpha)}(\Sigma)}\\
 &\quad\quad\times\|\pa_1\eta\|_{L^{1/\alpha}}+\|\nabla \tilde{\theta}^2\|_{L^{1/(1-\varepsilon_+)}(\Sigma)}\|\pa_1\pa_t\eta\|_{L^{1/\alpha}}\|\pa_t\tilde{\theta}\|_{L^{1/(\varepsilon_+-\alpha)}(\Sigma)}\\
&\lesssim\int_0^T\bigg[(|\pa_t\eta^2\|_{3/2+(\varepsilon_--\alpha)/2}\| \tilde{\theta}^2\|_{W^{2,q_+}}+\|\pa_t\tilde{\theta}^2\|_{W^{2,q_-}})\bigg]\|\pa_t\tilde{\theta}\|_{1}\|\eta\|_{3/2-\alpha}\\
&\quad\quad+\|\tilde{\theta}^2\|_{W^{2,q_+}}\|\pa_t\eta\|_{3/2-\alpha}\|\pa_t\tilde{\theta}\|_{1}\\
&\lesssim\delta^{1/2}(\|\eta\|_{L^{\infty}H^{3/2-\alpha}}+\|\pa_t\eta\|_{L^2H^{3/2-\alpha}})\|\pa_t\tilde{\theta}\|_{L^2H^1}.
  \end{aligned}
\end{equation}
The second line in the right-hand side of \eqref{est:remain_b21}, by the similar method, are bounded by
\begin{equation}\label{est:remain_b25}
  \begin{aligned}
&\int_0^T(\|\pa_t\theta^2\|_{L^{1/(1-\varepsilon_-)}}\|\pa_1\eta\|_{L^{1/\alpha}}+\|\theta^2\|_{L^{1/(1-\varepsilon_-)}}\|\pa_1\pa_t\eta\|_{L^{1/\alpha}})\|\pa_t\tilde{\theta}\|_{L^{1/(\varepsilon_--\alpha)}(\Sigma)}\\
&\lesssim \delta^{1/2}(\|\eta\|_{L^\infty H^{3/2-\alpha}}+\|\pa_t\eta\|_{L^2 H^{3/2-\alpha}})\|\pa_t\tilde{\theta}\|_{L^2 H^1}.
  \end{aligned}
\end{equation}
The combination of \eqref{est:remain_b21} and \eqref{est:remain_b25} gives the bound
\begin{equation}\label{est:remain_b2}
  \begin{aligned}
|\int_0^T\int_{-\ell}^\ell \pa_tR^9\cdot\pa_t\tilde{\theta}|\lesssim \delta^{1/2}(\|\pa_t\eta\|_{L^\infty H^1}+\|\eta\|_{L^{\infty}H^{3/2-\alpha}}+\|\pa_t\eta\|_{L^2H^{3/2-\alpha}})\|\pa_t\tilde{\theta}\|_{L^2H^1}.
  \end{aligned}
\end{equation}
Similarly, we estimate
\begin{equation}\label{est:remain_b31}
  \begin{aligned}
|\int_0^T\int_{-\ell}^\ell \nabla_{\mathcal{A}^1-\mathcal{A}^2}\tilde{\theta}^2\pa_t\mathcal{N}^1 \pa_t\tilde{\theta} |
&\lesssim \int_0^T\int_{-\ell}^\ell|\nabla\bar{\eta}||\nabla \tilde{\theta}^2||\pa_t\pa_1\eta^1||\pa_t\tilde{\theta}|\\
&\lesssim \int_0^T\|\nabla\bar{\eta}\|_{L^{1/\alpha}}\|\nabla \tilde{\theta}^2\|_{L^{1/(1-\varepsilon_+)}(\Sigma)}\|\pa_t\pa_1\eta^1\|_{L^{1/\alpha}}\|\pa_t\tilde{\theta}\|_{L^{1/(\varepsilon_+-2\alpha)}}\\
&\lesssim\int_0^T \|\tilde{\eta}\|_{W^{3-1/q_+,q_+}}\|\tilde{\theta}^2\|_{W^{2,q_+}}\|\pa_t\eta^1\|_{3/2+}\|\pa_t\tilde{\theta}\|_1\\
&\lesssim \delta^{1/2}\|\tilde{\eta}\|_{L^2W^{3-1/q_+,q_+}}\|\pa_t\tilde{\theta}\|_{L^2H^1}.
  \end{aligned}
\end{equation}

So the results in \eqref{est:remain_b2} and \eqref{est:remain_b31} imply
\begin{equation}\label{est:remain_b3}
  \begin{aligned}
|\int_0^T\int_{-\ell}^\ell R^{9,1}\cdot\pa_t\tilde{\theta}\lesssim\delta^{1/2}(\|\pa_t\eta\|_{L^\infty H^1}+\|\eta\|_{L^{\infty}H^{3/2-\alpha}}+\|\pa_t\eta\|_{L^2H^{3/2-\alpha}}\\
+\|\tilde{\eta}\|_{L^2W^{3-1/q_+,q_+}}+\|\tilde{\theta}\|_{L^2W^{2,q_+}})\|\pa_t\tilde{\theta}\|_{L^2H^1}
  \end{aligned}
\end{equation}

After integrating \eqref{energy_fix} over $[0,T]$, we use the Cauchy inequality and Lemma \eqref{lem:equivalence_norm}, Gronwall's Lemma along with \eqref{est:remain_bulk2}, \eqref{est:remain_bulk3} and\eqref{est:remain_b3} to deduce the bound
\begin{equation}\label{est:contract_1}
  \begin{aligned}
&\|\pa_t\tilde{\theta}\|_{L^\infty H^0}^2+\|\pa_t\tilde{\theta}\|_{L^2H^1}^2+\|\pa_t\tilde{\theta}\|_{L^2H^0(\Sigma)}^2\\
&\le e^{C_1T\delta^{1/2}}C_2\delta(\|\eta\|_{L^\infty H^1}^2+\|\eta\|_{L^2 W^{3-1/q_+,q_+}}^2+\|\eta\|_{L^2 H^{3/2-\alpha}}^2 +\|\pa_t\eta\|_{L^2 H^{3/2-\alpha}}^2\\
&\quad+\|\pa_t\eta\|_{L^\infty H^1}^2+T\|\pa_t^2\eta\|_{L^2 H^1}^2+\|u\|_{L^2W^{2,q_+}}^2+\|\pa_tu\|_{L^2H^1}^2+\|\tilde{u}\|_{L^2W^{2,q_+}}^2\\
&\quad+\|\tilde{u}\|_{L^\infty H^0}^2+\|\pa_t\tilde{u}\|_{L^2H^1}^2+\|\tilde{\eta}\|_{L^2W^{3-1/q_+,q_+}}^2+\|\pa_t\tilde{\theta}\|_{L^2H^1}^2+\|\tilde{\theta}\|_{L^2W^{2,q_+}}^2),
  \end{aligned}
\end{equation}
where the universal constants $C_1$ and $C_2$ are determined by the estimates \eqref{est:remain_bulk2}, \eqref{est:remain_bulk3} and \eqref{est:remain_b3}.

\paragraph{\underline{Step 4 -- Energy Estimates for $\tilde{\theta}$}}
By multiplying the first equation of \eqref{linear_fix1} by $\tilde{\theta}J^1$, integrating over $\Om$ and integrating by parts, we have the equation
\begin{equation}\label{energy_fix2}
\begin{aligned}
  &\frac{d}{dt}\int_\Om\frac{|\tilde{\theta}|^2}2J^1+ \int_\Om|\nabla_{\mathcal{A}^1}\tilde{\theta}|^2J^1+\int_{\Sigma}|\tilde{\theta}|^2|\mathcal{N}^1| \\
  &=\int_\Om\frac{|\tilde{\theta}|^2}2\pa_tJ^1-\frac\mu2\int_\Om \nabla_{\mathcal{A}^1-\mathcal{A}^2}\tilde{\theta}^2 \cdot \nabla_{\mathcal{A}^1}\tilde{\theta}J^1+\int_\Om R^8\pa_t\tilde{\theta}J^1 + \int_{-\ell}^\ell R^9\tilde{\theta}.
\end{aligned}
\end{equation}
Then we use the same arguments as in Step 3 to estimate the terms in the third line of \eqref{energy_fix2}:
\begin{equation}
  \begin{aligned}
&|\int_0^T\int_\Om\frac{|\tilde{\theta}|^2}2\pa_tJ^1-\int_0^T\int_\Om \nabla_{\mathcal{A}^1-\mathcal{A}^2}\tilde{\theta}^2 \cdot \nabla_{\mathcal{A}^1}\tilde{\theta}J^1|\\
&\lesssim \int_0^T\|\nabla\pa_t\bar{\eta}^1\|_{L^\infty(\Om)}\|\tilde{\theta}\|_{L^2}^2+\|\nabla\bar{\eta}\|_{L^\infty(\Om)}\|\tilde{\theta}^2\|_{H^1}\|\tilde{\theta}\|_{H^1}\\
&\lesssim\delta^{1/2}(T\|\tilde{\theta}\|_{L^\infty H^0}^2+\|\eta\|_{L^2W^{3-1/q_+,q_+}}\|\tilde{\theta}\|_{L^2H^1}),
  \end{aligned}
\end{equation}
and
\begin{equation}\label{est:remain_bulk41}
  \begin{aligned}
&|\int_0^T\int_\Om R^8 \tilde{\theta}J^1|\\
&\lesssim\int_0^T\|\nabla\bar{\eta}\|_{L^\infty(\Om)}(\|\nabla^2\tilde{\theta}^2\|_{L^{q_+}}\|\tilde{\theta}\|_{L^{2/\varepsilon_+}}+\|\nabla\tilde{\theta}^2\|_{L^{2/(1-\varepsilon_+)}}\|\nabla^2\bar{\eta}^2\|_{L^{2/(1-\varepsilon_+)}}\|\tilde{\theta}\|_{L^{1/\varepsilon_+}})\\
  &\quad+\int_0^T(\|\pa_t\bar{\eta}\|_{L^\infty(\Om)}+\|\pa_t\bar{\eta}^1\|_{L^\infty(\Om)}\|\nabla\bar{\eta}\|_{L^\infty(\Om)})\|\pa_2\theta^1\|_{L^2}\|\tilde{\theta}\|_{L^2} + \|u^2\|_{L^\infty}\|\nabla \theta\|_{L^2}\|\tilde{\theta}\|_{L^2}\\
  &\quad+\int_0^T\|\pa_t\bar{\eta}^1\|_{L^\infty(\Om)}\|\pa_2\theta\|_{L^2}\|\tilde{\theta}\|_{L^2}+(\|u\|_{L^\infty}+\|u^2\|_{L^\infty}\|\nabla\bar{\eta}\|_{L^\infty(\Om)})\|\nabla \theta^1\|_{L^2}\|\tilde{\theta}\|_{L^2}\\
  &\lesssim \delta^{1/2}(\|\eta\|_{L^2W^{3-1/q_+,q_+}}+\|\pa_t\eta\|_{L^\infty H^1}+\|u\|_{L^2W^{2,q_+}}+\|\theta\|_{L^2W^{2,q_+}})\|\tilde{\theta}\|_{L^\infty H^0}\\
  &\quad + \delta^{1/2}(\|\eta\|_{L^2W^{3-1/q_+,q_+}}\|\tilde{\theta}\|_{L^2H^1}).
  \end{aligned}
\end{equation}
The term with $R^9$ are estimated by
\begin{equation}\label{est:remain_b5}
  \begin{aligned}
\bigg|\int_0^T\int_{-\ell}^\ell R^9\cdot\tilde{\theta}\bigg|
&\lesssim\int_0^T\|\nabla\bar{\eta}^2\|_{L^\infty(\Sigma)}\|\nabla\tilde{\theta}^2\|_{L^{1/(1-\varepsilon_+)}(\Sigma)}\|\pa_1\eta\|_{L^\infty}\|\tilde{\theta}\|_{L^{1/\varepsilon_+}(\Sigma)}\\
&\quad+\|\pa_1\eta^2\|_{L^\infty}^2\|\pa_1\eta\|_{L^2}\|\tilde{\theta}\|_{L^2(\Sigma)}\\
  &\lesssim \delta^{1/2}(\|\eta\|_{L^2W^{3-1/q_+,q_+}}+\|\eta\|_{L^2H^1})\|\tilde{\theta}\|_{L^2H^1},
  \end{aligned}
\end{equation}

Then by combining \eqref{energy_fix2}--\eqref{est:remain_b5}, we have
\begin{equation}\label{est:contract_2}
  \begin{aligned}
&\|\tilde{\theta}\|_{L^\infty H^0}^2+\|\tilde{\theta}\|_{L^2H^1}^2+\|\tilde{\theta}\|_{L^2H^0(\Sigma)}^2\\
&\le e^{C_4T\delta^{1/2}}C_5\delta(\|\eta\|_{L^\infty H^1}^2+\|\eta\|_{L^2 W^{3-1/q_+,q_+}}^2+\|\eta\|_{L^2 H^{3/2-\alpha}}^2 +\|\pa_t\eta\|_{L^\infty H^1}^2\\
&\quad+\|u\|_{L^2W^{2,q_+}}^2+\|\theta\|_{L^2W^{2,q_+}}^2+\|\tilde{\theta}\|_{L^2W^{2,q_+}}^2),
  \end{aligned}
\end{equation}
for some uniform constants $C_4$ and $C_5$.

\paragraph{\underline{Step 5 -- Elliptic Estimates for $\tilde{\theta}$}}

The elliptic estimates for \eqref{linear_fix1} enjoys the esitmate
\begin{equation}\label{est:contract_ellip1}
  \begin{aligned}
\|\tilde{\theta}\|_{L^2W^{2,q_+}}^2& \lesssim\|-\dive_{\mathcal{A}^1}(\nabla_{\mathcal{A}^1-\mathcal{A}^2}\tilde{\theta}^2)+R^8-\pa_t\tilde{\theta}\|_{L^2L^{q_+}}^2\\
&\quad+\|\mathbb{D}_{\mathcal{A}^1-\mathcal{A}^2}\tilde{\theta}^2\mathcal{N}^1+R^9\|_{L^2W^{1-1/q_+,q_+}}^2.
  \end{aligned}
\end{equation}
We now estimate the terms under the first line in \eqref{est:contract_ellip1}. By $\|\mathcal{A}^1\|_{L^\infty}\lesssim1$,
\begin{equation}
  \begin{aligned}
\|\dive_{\mathcal{A}^1}(\nabla_{\mathcal{A}^1-\mathcal{A}^2}\tilde{\theta}^2)\|_{L^2L^{q_+}}^2 & \lesssim \||\nabla\bar{\eta}|\nabla^2\tilde{\theta}^2\|_{L^2L^{q_+}}^2+\||\nabla^2\bar{\eta}|\nabla\tilde{\theta}^2\|_{L^2L^{q_+}}^2\\
&\lesssim \|\nabla\bar{\eta}\|_{L^2L^\infty}^2\|\nabla^2\tilde{\theta}^2\|_{L^\infty L^{q_+}}^2+\|\nabla^2\bar{\eta}\|_{L^2L^2}^2\|\nabla\tilde{\theta}^2\|_{L^\infty L^{2/(1-\varepsilon)}}^2\\
& \lesssim \|\eta\|_{L^2W^{3-1/q_+,q_+}}^2\|\tilde{\theta}^2\|_{L^\infty W^{2,q_+}}^2\\
& \lesssim\delta \|\eta\|_{L^2W^{3-1/q_+,q_+}}^2.
  \end{aligned}
\end{equation}
The definition of $R^1$, with $\|K^i\|_{L^\infty}\lesssim1$ and  $\delta<1$, shows that
\begin{equation}
  \begin{aligned}
\|R^8\|_{L^2L^{q_+}}^2&\lesssim \|\nabla\bar{\eta}\|_{L^2L^\infty(\Om)}^2\|\nabla^2\tilde{\theta}^2\|_{L^\infty L^{q_+}}^2+\|\pa_t\bar{\eta}\|_{L^2L^2(\Om)}\|\nabla\tilde{\theta}^1\|_{L^\infty L^{2/(1-\varepsilon_+)}}\\
&\quad+\|\pa_t\bar{\eta}^2\|_{L^\infty L^2(\Om)}^2\|\nabla\bar{\eta}\|_{L^2L^\infty(\Om)}^2\|\nabla\tilde{\theta}^1\|_{L^\infty L^{2/(1-\varepsilon_+)}}^2+\|\pa_t\bar{\eta}^2\|_{L^\infty L^2(\Om)}^2\|\nabla\tilde{\theta}\|_{L^2 L^{2/(1-\varepsilon_+)}}^2\\
&\quad+\|u\|_{L^2L^2}\|\nabla\tilde{\theta}^1\|_{L^\infty L^{2/(1-\varepsilon_+)}}+\|u^2\|_{L^\infty L^2}^2\|\nabla\bar{\eta}\|_{L^2L^\infty}^2\|\nabla\tilde{\theta}^1\|_{L^\infty L^{2/(1-\varepsilon_+)}}^2\\
&\quad+\|u^2\|_{L^\infty L^2}^2\|\nabla\tilde{\theta}\|_{L^2 L^{2/(1-\varepsilon_+)}}^2\\
&\lesssim \|\eta\|_{L^2W^{3-1/q_+,q_+}}^2\|\tilde{\theta}^2\|_{L^\infty W^{2,q_+}}^2+(\|\pa_t\eta\|_{L^2H^{3/2-\alpha}}^2+\|\pa_t\eta^2\|_{L^\infty H^{3/2}}^2\|\eta\|_{L^2W^{3-1/q_+,q_+}}^2\\
&\quad+\|u\|_{L^2H^1}^2+\|u^2\|_{L^\infty L^2}^2\|\eta\|_{L^2W^{3-1/q_+,q_+}}^2)\|\tilde{u}^1\|_{L^\infty W^{2,q_+}}^2\\
&\quad+(\|\pa_t\eta^2\|_{L^\infty H^{3/2}}^2+\|u^2\|_{L^\infty L^2}^2)\|\tilde{\theta}\|_{L^2W^{2,q_+}}^2\\
&\lesssim \delta(\|\eta\|_{L^2W^{3-1/q_+,q_+}}^2+\|u\|_{L^2H^1}^2+\|\tilde{\theta}\|_{L^2W^{2,q_+}}^2),
  \end{aligned}
\end{equation}

It is trivial that
\begin{equation}
  \|\pa_t\tilde{\theta}\|_{L^2L^{q_+}}^2\lesssim T\|\pa_t\tilde{\theta}\|_{L^\infty L^2}^2, \quad q_+<2.
\end{equation}

Note that $W^{1,q_+}(\Sigma)$ is also an algebra. Then it can  be used to bound
\begin{equation}
  \begin{aligned}
\|\nabla_{\mathcal{A}^1-\mathcal{A}^2}\tilde{\theta}^2\mathcal{N}^1\|_{L^2W^{1-1/q_+,q_+}}^2&\lesssim \|\nabla\tilde{\theta}^2\|_{L^\infty W^{1-1/q_+,q_+}(\Sigma)}^2\|\nabla\bar{\eta}\|_{L^2W^{1,q_+}(\Sigma)}^2\lesssim \delta\|\eta\|_{L^2W^{3-1/q_+,q_+}}^2,
  \end{aligned}
\end{equation}
and
\begin{equation}\label{est:contract_ellip7}
  \begin{aligned}
\|R^9\|_{L^2W^{1-1/q_+,q_+}}^2&\lesssim \bigg(\|\mathcal{A}^1\|_{L^\infty W^{1,q_+}(\Sigma)}\|\nabla\tilde{\theta}^2\|_{L^\infty W^{1-1/q_+,q_+}(\Sigma)}^2 + \|\tilde{\theta}^2\|_{L^\infty W^{1, q_+}(\Sigma)}\bigg)\|\pa_1\eta\|_{L^2W^{1,q_+}}^2\\
&\lesssim\|\tilde{\theta}^2\|_{L^\infty W^{2,q_+}}^2\|\tilde{\eta}\|_{L^2 W^{3-1/q_+,q_+}}^2\\
&\lesssim\delta\|\tilde{\eta}\|_{L^2 W^{3-1/q_+,q_+}}^2.
  \end{aligned}
\end{equation}

The combination of \eqref{est:contract_ellip1}--\eqref{est:contract_ellip7}, after restricting $\delta$ to be smaller if necessary, implies that
\begin{equation}\label{est:contract_6}
  \begin{aligned}
\|\tilde{\theta}\|_{L^2W^{2,q_+}}^2\lesssim \delta(\|\eta\|_{L^2W^{3-1/q_+,q_+}}^2+\|u\|_{L^2H^1}^2+\|\tilde{\eta}\|_{L^2 W^{3-1/q_+,q_+}}^2)+\|\pa_t\tilde{\theta}\|_{L^\infty L^2}^2.
  \end{aligned}
\end{equation}

\paragraph{\underline{Step 6 -- Fixed Point}}
We now choose $T<T_\varepsilon<\varepsilon$ for each $\varepsilon\in (0,1)$. Then the differences for Navier-Stokes equations are handled like \cite{GTWZ2023}, which along with the estimates \eqref{est:contract_1} and \eqref{est:contract_2} allow us to get the estimate
\begin{equation}\label{est:fixed_point1}
  \begin{aligned}
&\|\pa_t\tilde{u}\|_{L^\infty H^0}^2+\|\pa_t\tilde{u}\|_{L^2H^1}^2+\|\pa_t\tilde{\eta}\|_{L^\infty H^1}^2+\varepsilon\|\pa_t^2\tilde{\eta}\|_{L^2 H^1}^2+\|[\pa_t^2\tilde{\eta}]_\ell\|_{L^2}^2+\|\pa_t\tilde{\eta}\|_{L^2H^{3/2-\alpha}}^2\\
&\quad+\|\tilde{u}\|_{L^\infty H^1}^2+\|\tilde{u}\|_{L^2H^1}^2+\|\tilde{\eta}\|_{L^\infty H^1}^2+\|[\pa_t\tilde{\eta}]_\ell\|_{L^2}^2+\|\tilde{\eta}\|_{L^2H^{3/2-\alpha}}^2+\|\tilde{p}\|_{L^\infty H^0}^2\\
&\quad + \|\pa_t\tilde{\theta}\|_{L^\infty H^0}^2 + \|\pa_t\tilde{\theta}\|_{L^2H^1}^2 + \|\tilde{\theta}\|_{L^\infty H^1}^2 + \|\tilde{\theta}\|_{L^2H^1}^2 +\|\tilde{\theta}\|_{L^2W^{2,q_+}}^2\\
&\quad+\|\tilde{u}\|_{L^2W^{2,q_+}}^2+\|\tilde{p}\|_{L^2W^{1,q_+}}^2+\|\tilde{\eta}\|_{L^2W^{3-1/q_+,q_+}}^2+\varepsilon^2\|\pa_t\tilde{\eta}\|_{L^2W^{3-1/q_+,q_+}}^2\\
&\le C\delta^{1/2}(\|\eta\|_{L^\infty H^1}^2+\|\eta\|_{L^2 W^{3-1/q_+,q_+}}^2+\|\eta\|_{L^2 H^{3/2-\alpha}}^2 +\|\pa_t\eta\|_{L^2 H^{3/2-\alpha}}^2+\|\pa_t\eta\|_{L^\infty H^1}^2+\varepsilon\|\pa_t^2\eta\|_{L^2 H^1}^2\\
&\quad+\|u\|_{L^2W^{2,q_+}}^2+\|\pa_tu\|_{L^2H^1}^2+\|u\|_{L^2H^1}^2+\|[\pa_t^2\eta]_\ell\|_{L^2}^2+\|[\pa_t\eta]_\ell\|_{L^2}^2+ \|\theta\|_{L^2W^{2,q_+}}^2 + \|\pa_t\theta\|_{L^2H^1}^2\\
&\quad + \|\theta\|_{L^2H^1}^2+\|\tilde{\theta}\|_{L^\infty H^0}^2+ \|\tilde{\theta}\|_{L^2W^{2,q_+}}^2 + \|\pa_t\tilde{\theta}\|_{L^2H^1}^2 +\|\tilde{u}\|_{L^\infty H^0}^2+ \|\tilde{u}\|_{L^2W^{2,q_+}}^2 + \|\pa_t\tilde{u}\|_{L^2H^1}^2\\
&\quad +\|\tilde{p}\|_{L^2W^{1,q_+}}^2 +\|\tilde{p}\|_{L^\infty H^0}^2+\|\tilde{\eta}\|_{L^2W^{3-1/q_+,q_+}}^2+\varepsilon^2\|\tilde{\eta}\|_{L^2W^{3-1/q_+,q_+}}^2+\|[\pa_t^2\tilde{\eta}]_\ell\|_{L^2}^2 ),
  \end{aligned}
\end{equation}
where $C$ is a universal constant allowed to change from line to line, and we have used $\delta<\frac12$. Then we might restrict $\delta$ to be smaller such that all the terms with superscript $\widetilde{}$ below the sign of inequality in \eqref{est:fixed_point1} be absorbed into terms above the sign of inequality in \eqref{est:fixed_point1}. Moreover, we might restrict $\delta$ to be smaller again so that we can deduce the following inequality:
\begin{equation}\label{est:fixed_point2}
  \begin{aligned}
&\|\pa_t\tilde{u}\|_{L^\infty H^0}^2+\|\pa_t\tilde{u}\|_{L^2H^1}^2+\|\pa_t\tilde{\eta}\|_{L^\infty H^1}^2+\varepsilon\|\pa_t^2\tilde{\eta}\|_{L^2 H^1}^2+\|[\pa_t^2\tilde{\eta}]_\ell\|_{L^2}^2+\|\pa_t\tilde{\eta}\|_{L^2H^{3/2-\alpha}}^2\\
&\quad+\|\tilde{u}\|_{L^\infty H^1}^2+\|\tilde{u}\|_{L^2H^1}^2+\|\tilde{\eta}\|_{L^\infty H^1}^2+\|[\pa_t\tilde{\eta}]_\ell\|_{L^2}^2+\|\tilde{\eta}\|_{L^2H^{3/2-\alpha}}^2+\|\tilde{p}\|_{L^\infty H^0}^2\\
&\quad + \|\pa_t\tilde{\theta}\|_{L^\infty H^0}^2 + \|\pa_t\tilde{\theta}\|_{L^2H^1}^2 + \|\tilde{\theta}\|_{L^\infty H^1}^2 + \|\tilde{\theta}\|_{L^2H^1}^2 +\|\tilde{\theta}\|_{L^2W^{2,q_+}}^2\\
&\quad+\|\tilde{u}\|_{L^2W^{2,q_+}}^2+\|\tilde{p}\|_{L^2W^{1,q_+}}^2+\|\tilde{\eta}\|_{L^2W^{3-1/q_+,q_+}}^2+\varepsilon^2\|\pa_t\tilde{\eta}\|_{L^2W^{3-1/q_+,q_+}}^2\\
&\le\tilde{C}(\|\eta\|_{L^\infty H^1}^2+\|\eta\|_{L^2 W^{3-1/q_+,q_+}}^2+\|\eta\|_{L^2 H^{3/2-\alpha}}^2 +\|\pa_t\eta\|_{L^2 H^{3/2-\alpha}}^2+\|\pa_t\eta\|_{L^\infty H^1}^2+\varepsilon\|\pa_t^2\eta\|_{L^2 H^1}^2\\
&\quad+\|u\|_{L^2W^{2,q_+}}^2+\|\pa_tu\|_{L^2H^1}^2+\|u\|_{L^2H^1}^2 +\|\theta\|_{L^2W^{2,q_+}}^2 + \|\pa_t\theta\|_{L^2H^1}^2 + \|\theta\|_{L^2H^1}^2\\
&\quad +\|[\pa_t^2\eta]_\ell\|_{L^2}^2+\|[\pa_t\eta]_\ell\|_{L^2}^2),
  \end{aligned}
\end{equation}
where $\tilde{C}$ is a small universal constant that guarantees the square root of \eqref{est:fixed_point2} satisfying
\begin{equation}
  d((\tilde{u}^1,\tilde{p}^1,\tilde{\theta}^1, \tilde{\eta}^1),(\tilde{u}^2,\tilde{p}^2,\tilde{\theta}^2,\tilde{\eta}^2))<\frac12d((u^1,0, \theta^1, \eta^1), (u^2, 0, \theta^2, \eta^2)).
\end{equation}
Here $d(\cdot,\cdot)$ is the metric defined in \eqref{def:metric}. Consequently, the map $A$ defined in \eqref{def:map_a} is contracted. So the Banach's fixed point theory implies that the nonlinear $\varepsilon-$ regularized system \eqref{eq:epsilon} with $(D_t^ju, \pa_t^jp, \pa_t^j\theta, \pa_t^j\eta)$ instead of $(D_t^ju, \pa_t^jp, \pa_t^j\theta, \pa_t^j\xi)$, has a solution $(u, p, \theta, \eta)\in S(T,\delta)$ for $T<T_\varepsilon\le\varepsilon$.
The uniqueness for \eqref{eq:epsilon} with $(D_t^ju, \pa_t^jp, \pa_t^j\theta, \pa_t^j\eta)$ instead of $(D_t^ju, \pa_t^jp, \pa_t^j\theta, \pa_t^j\xi)$ can  be proved similarly.
\end{proof}

\section{Global Well-Posedness for the Boussinesq System}

\subsection{Uniform Bounds for $\varepsilon$--Regularized Solutions}

We have proved the the existence and uniqueness of solutions $(u^\varepsilon, p^\varepsilon, \theta^\varepsilon, \eta^\varepsilon)$ to the regularized system \eqref{eq:geometric1} when $\varepsilon>0$. In order to get the solutions solving \eqref{eq:geometric} when $\varepsilon=0$, we have to prove that the solutions $(u^\varepsilon, p^\varepsilon, \theta^\varepsilon, \eta^\varepsilon)$ obtained to \eqref{eq:geometric1} is uniformly bounded with respect to $\varepsilon$, and pass the limit $\varepsilon\to0$ to get the results desired.

Note that $(\pa_t^j u^\varepsilon, \pa_t^j p^\varepsilon, \pa_t^j \theta^\varepsilon, \pa_t^j \eta^\varepsilon)$, $j=0, 1$ and $(\pa_t^2 u^\varepsilon, \pa_t^2 \theta^\varepsilon, \pa_t^2 \eta^\varepsilon)$ satisfy the equations
 \begin{equation}\label{eq:geometric1}
\left\{
\begin{aligned}
 &\pa_t^{j+1}u^\varepsilon+ \mu\dive_{\mathcal{A}^\varepsilon}S_{\mathcal{A}^\varepsilon}(\pa_t^jp^\varepsilon,\pa_t^ju^\varepsilon) = g \pa_t^j\theta e_2+ F^{1,j,\varepsilon}\quad&\text{in}&\ \Om,\\
 &\dive_{\mathcal{A}^\varepsilon}\pa_t^ju^\varepsilon=F^{2,j,\varepsilon}\quad&\text{in}&\ \Om,\\
 &\pa_t^{j+1}\theta^\varepsilon + k\dive_{\mathcal{A}^\varepsilon}\nabla_{\mathcal{A}^\varepsilon} \pa_t^j\theta^\varepsilon =  F^{8,j,\varepsilon}\quad&\text{in}&\ \Om,\\
 &S_{\mathcal{A}^\varepsilon}(\pa_t^jp^\varepsilon,\pa_t^ju^\varepsilon)\mathcal{N}^\varepsilon=\left[g\pa_t^j\eta^\varepsilon-\sigma_1\pa_1\left(\frac{\pa_1\pa_t^j\eta+\varepsilon\pa_t^{j+1}\eta^\varepsilon}{(1+|\pa_1\zeta_0|^2)^{3/2}}+F^{3,j,\varepsilon}\right)\right]\mathcal{N}^\varepsilon+F^{4,j,\varepsilon}\quad&\text{on}&\ \Sigma,\\
 &\pa_t^{j+1}\eta^\varepsilon-\pa_t^ju^\varepsilon\cdot\mathcal{N}^\varepsilon=F^{6,j,\varepsilon}\quad&\text{on}&\ \Sigma,\\
 &k \nabla_{\mathcal{A}^\varepsilon} \pa_t^j\theta^\varepsilon \cdot \mathcal{N}^\varepsilon + \pa_t^j\theta^\varepsilon = F^{9,j,\varepsilon}\quad&\text{on}&\ \Sigma,\\
 &(S_{\mathcal{A}^\varepsilon}(\pa_t^jp^\varepsilon,\pa_t^ju^\varepsilon)\cdot\nu-\beta\pa_t^ju^\varepsilon)\cdot\tau=F^{5,j,\varepsilon}\quad&\text{on}&\ \Sigma_s,\\
 &\pa_t^ju^\varepsilon\cdot\nu=0\quad&\text{on}&\ \Sigma_s,\\
 &\pa_t^j\theta^\varepsilon = 0\quad&\text{on}&\ \Sigma_s,\\
 &\kappa\pa_t^{j+1}\eta^\varepsilon=\mp\sigma_1 \left(\frac{\pa_1\pa_t^j\eta+\varepsilon\pa_t^{j+1}\eta^\varepsilon}{(1+|\pa_1\zeta_0|^2)^{3/2}}+F^{3,j,\varepsilon}\right)-\kappa F^{7,j,\varepsilon}\quad&\text{on}&\ \pm\ell,
\end{aligned}
\right.
 \end{equation}
 in the strong and weak sense respectively, where the forcing terms are given in Appendix \ref{sec:dive_forcing}.

 We now show the natural energy-dissipation estimate. We denote $\mathcal{A}^\varepsilon$, $\mathcal{N}^\varepsilon$, $J^\varepsilon$ and $K^\varepsilon$ are defined in terms of $\eta^\varepsilon$, in the same way as $\mathcal{A}$, $\mathcal{N}$, $J$ and $K$ are defined in terms of $\eta$.

 \begin{theorem}\label{thm:uniform}
Suppose that there exists a universal constant $\delta\in (0,1)$ such that if a solution to \eqref{eq:geometric1} exists on the time interval $[0,T)$ for $0<T\le\infty$  and obeys the estimate
$\sup_{0\le t\le T}\mathcal{E}^\varepsilon(t)\le \delta_0$,
Then there exists universal constants $C>0$ and $\lambda>0$ such that
\begin{equation}
 \begin{aligned}
\sup_{0\le t\le T}e^{\lambda t}\mathcal{E}^\varepsilon(t)+\int_0^T\mathcal{D}^\varepsilon(t)\,\mathrm{d}t\le C\mathcal{E}^\varepsilon(0).
 \end{aligned}
\end{equation}
 \end{theorem}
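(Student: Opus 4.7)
The plan is to derive a differential inequality of the form $\frac{d}{dt}\mathcal{E}^\varepsilon + \mathcal{D}^\varepsilon \lesssim \sqrt{\mathcal{E}^\varepsilon}\,\mathcal{D}^\varepsilon$ together with a coercivity bound $\mathcal{E}^\varepsilon \lesssim \mathcal{D}^\varepsilon$ (up to controllable auxiliary terms), from which the exponential decay and the integrated dissipation bound follow by a standard Gronwall argument under the smallness assumption $\mathcal{E}^\varepsilon \le \delta_0$. The novelty compared to \cite{GTWZ2023} is the coupling with the heat equation; the key point is that the Boussinesq forcing $-g\theta e_2$ in the momentum equation and the temperature-dependent surface tension coefficient $\sigma_1 - \sigma_2\theta$ interact with the velocity--surface system in a way that preserves the two-tier energy-dissipation structure, because $\theta$ and its time derivatives are controlled by the heat equation's own dissipation constructed in Section~\ref{sec:linear}.

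First I would derive the nonlinear energy identity for $(u^\varepsilon, \eta^\varepsilon)$ by testing the momentum equation with $J^\varepsilon u^\varepsilon$ and integrating by parts, producing the physical energy $\tfrac12\|\sqrt{J^\varepsilon}u^\varepsilon\|_0^2 + \tfrac{g}{2}\|\eta^\varepsilon\|_{L^2(\Sigma)}^2 + \text{capillary potential}$, together with dissipation $\mu\|\mathbb{D}_{\mathcal{A}^\varepsilon}u^\varepsilon\|_0^2 + \beta\|u^\varepsilon\|_{L^2(\Sigma_s)}^2 + \kappa[\partial_t\eta^\varepsilon]_\ell^2 + \kappa[\hat{\mathscr{W}}(\partial_t\eta^\varepsilon)\partial_t\eta^\varepsilon]_\ell + \varepsilon\|\partial_t\partial_1\eta^\varepsilon/(1+|\partial_1\zeta_0|^2)^{3/4}\|_{L^2(\Sigma)}^2$; coupled with the analogous identity for $\theta^\varepsilon$ tested against $J^\varepsilon \theta^\varepsilon$ giving $\tfrac12\|\sqrt{J^\varepsilon}\theta^\varepsilon\|_0^2$ plus $k\|\nabla_{\mathcal{A}^\varepsilon}\theta^\varepsilon\|_0^2 + \|\theta^\varepsilon\sqrt{|\mathcal{N}^\varepsilon|}\|_{L^2(\Sigma)}^2$. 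The cross terms $(g\theta^\varepsilon e_2, u^\varepsilon)$ and the $\sigma_2\theta^\varepsilon$ contribution to the capillary pressure are absorbed using Cauchy--Schwarz and the dissipation of $\theta^\varepsilon$.

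Next I would obtain the same-level identities for the time-differentiated systems \eqref{eq:geometric1} with $j=1,2$, using $D_t$ on the momentum equation to preserve $\dive_{\mathcal{A}^\varepsilon}(D_t^j u^\varepsilon)=F^{2,j,\varepsilon}$. Each identity yields energy control of $\partial_t^j u^\varepsilon$, $\partial_t^j\theta^\varepsilon$, and the contact-point quadratic forms $[\partial_t^{j+1}\eta^\varepsilon]_\ell^2$. Using Lemma~\ref{lem:equivalence_norm}, the $\mathcal{H}^k$ norms are equivalent to the standard ones, and the forcing terms $F^{\cdot,j,\varepsilon}$ are estimated by $\sqrt{\mathcal{E}^\varepsilon}\,\mathcal{D}^\varepsilon$ using Proposition~\ref{prop:force_bulk}, product/trace/Sobolev embeddings exactly as in the contraction step of Theorem~\ref{thm: fixed point}, and the smallness of $\mathcal{E}^\varepsilon$. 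Then I invoke the elliptic theory of Theorems~\ref{thm:elliptic} and \ref{thm:elliptic_1} for $\theta^\varepsilon$ and $\partial_t\theta^\varepsilon$, combined with the $W^{2,q_\pm}$ elliptic estimates for the Stokes system from \cite{GT2020, GTWZ2023}, to recover the top-regularity norms $\|u^\varepsilon\|_{W^{2,q_+}}, \|p^\varepsilon\|_{W^{1,q_+}}, \|\theta^\varepsilon\|_{W^{2,q_+}}, \|\eta^\varepsilon\|_{W^{3-1/q_+,q_+}}$ and their $\partial_t$ analogues in $q_-$, along with the $\varepsilon^2$-weighted regularity terms in $\mathcal{D}^\varepsilon$ that come directly from the regularizing $\varepsilon\partial_t\eta^\varepsilon$ terms in the boundary conditions.

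The hard part will be establishing the coercivity $\mathcal{E}^\varepsilon \lesssim \mathcal{D}^\varepsilon$ uniformly in $\varepsilon$, since $\mathcal{E}^\varepsilon$ contains top-order norms (for example $\|u^\varepsilon\|_{W^{2,q_+}}^2$ and $\|\theta^\varepsilon\|_{W^{2,q_+}}^2$) that also appear in $\mathcal{D}^\varepsilon$ but not as pure dissipation from the basic energy identity; they must be recovered by elliptic regularity using the time derivatives as forcing, and this recovery requires the zero-average condition \eqref{cond:zero} together with a Poincaré-type inequality on $\Sigma$ to control $\|\eta^\varepsilon\|$ by $\|\partial_t\eta^\varepsilon\|$ after using the kinematic boundary condition and $\int_{-\ell}^\ell \partial_t\eta^\varepsilon = 0$. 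The $\varepsilon$-dependent top-regularity terms $\varepsilon^2\|\partial_t\eta^\varepsilon\|_{W^{3-1/q_+,q_+}}^2$ in $\mathcal{E}^\varepsilon$ must be matched against the corresponding $\varepsilon^2$-weighted dissipation in $\mathcal{D}^\varepsilon$, which is exactly what the regularizing term $\varepsilon\partial_t\eta^\varepsilon$ provides in the linear theory \eqref{eq:modified_linear} via Theorem~\ref{thm:linear}. Once this coercivity is established, a differential inequality
\begin{equation*}
\frac{d}{dt}\mathcal{E}^\varepsilon + \mathcal{D}^\varepsilon \le C\sqrt{\mathcal{E}^\varepsilon}\,\mathcal{D}^\varepsilon
\end{equation*}
combined with smallness of $\mathcal{E}^\varepsilon$ yields $\frac{d}{dt}\mathcal{E}^\varepsilon + \tfrac12\mathcal{D}^\varepsilon \le 0$, and $\mathcal{E}^\varepsilon \lesssim \mathcal{D}^\varepsilon$ then gives $\frac{d}{dt}\mathcal{E}^\varepsilon + 2\lambda\mathcal{E}^\varepsilon \le 0$ for some $\lambda > 0$ independent of $\varepsilon$, concluding the proof.
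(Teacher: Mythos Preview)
Your proposal is correct and follows essentially the same approach as the paper: energy--dissipation identities at each temporal level $j=0,1,2$, elliptic enhancement of both energy and dissipation, and then the coercivity $\mathcal{E}^\varepsilon\lesssim\mathcal{D}^\varepsilon$ combined with Gronwall (the paper in fact sketches only the new $\theta^\varepsilon$ contributions and defers the $(u^\varepsilon,p^\varepsilon,\eta^\varepsilon)$ estimates to \cite{GTWZ2023}). One cosmetic difference: the paper packages the estimates as integrated inequalities between arbitrary $0\le s\le t$ rather than a literal differential inequality for the full $\mathcal{E}^\varepsilon$ (since the top-order elliptic norms in $\mathcal{E}^\varepsilon$ are not natural energies), which produces absorbable $(\mathcal{E}^\varepsilon)^{3/2}$ terms at the $j=2$ level, but the conclusion is obtained the same way.
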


 \begin{proof}[Proof of Theorem \ref{thm:uniform}.]
We only give the sketch of proof for the uniform bounds of $\theta^\varepsilon$. Other parts can be established by the argument in \cite{GTWZ2023} with the modifications to surface tension.

\paragraph{\underline{Step 1 -- Energy-Dissipation Estimates for temperature}}
By the standard argument, we have
  the energy-dissipation equality for $\theta^\varepsilon$ with $j=0$ in \eqref{eq:geometric1}, and for $\pa_t\theta^\varepsilon$ with $j=1$:
 \begin{equation}\label{eq:e_d1}
\begin{aligned}
\frac{d}{dt} \frac12\int_\Om|\theta^\varepsilon|^2J^\varepsilon +\frac{k}2\int_\Om|\nabla_{\mathcal{A}^\varepsilon}\theta^\varepsilon|^2J^\varepsilon + \int_{-\ell}^\ell |\mathcal{N}^\varepsilon||\theta^\varepsilon|^2=0,
\end{aligned}
 \end{equation}
 \begin{equation}\label{eq:e_d2}
\begin{aligned}
\frac{d}{dt}\frac12\int_\Om|\pa_t\theta^\varepsilon|^2J^\varepsilon +\frac{k}2\int_\Om|\nabla_{\mathcal{A}^\varepsilon}\pa_t\theta^\varepsilon|^2J^\varepsilon + \int_{-\ell}^\ell |\mathcal{N}^\varepsilon||\pa_t\theta^\varepsilon|^2 =\mathcal{F}^{8,\varepsilon}(\pa_tu^\varepsilon, \pa_t\theta^\varepsilon, \pa_t\eta^\varepsilon),
\end{aligned}
  \end{equation}
  \begin{equation}
  \begin{aligned}
  \text{where}\ \mathcal{F}^{8,\varepsilon}(\pa_tu^\varepsilon, \pa_tp^\varepsilon, \pa_t\eta^\varepsilon)
&=\int_\Om F^{8,1,\varepsilon}\cdot\pa_t\theta^\varepsilon J^\varepsilon -\int_{-\ell}^\ell F^{9,1,\varepsilon}\cdot\pa_t\theta^\varepsilon.
\end{aligned}
 \end{equation}

 Then the nonlinear interaction estimates for \eqref{eq:e_d1} and \eqref{eq:e_d2} imply the energy-dissipation estimate for $\theta^\varepsilon$:
 \begin{equation}\label{est:e_d1}
\begin{aligned}
 &\|\theta^\varepsilon(t)\|_{H^0}^2 +\int_s^t \left(\|\theta^\varepsilon\|_{H^1}^2+\|u^\varepsilon\|_{L^2(\Sigma)}^2 \right)
 &\lesssim \|\theta^\varepsilon(s)\|_{H^0}^2,
\end{aligned}
 \end{equation}
 and the energy-dissipation estimate for $\pa_t\theta^\varepsilon$:
 \begin{equation}\label{est:e_d2}
\begin{aligned}
 &\|\pa_t\theta^\varepsilon(t)\|_{H^0}^2+ +\int_s^t \left(\|\pa_t\theta^\varepsilon\|_{H^1}^2+\|\pa_t\theta^\varepsilon\|_{L^2(\Sigma)}^2 \right)\\
 &\lesssim \|\pa_t\theta^\varepsilon(s)\|_{H^0}^2 +\int_s^t\sqrt{\mathcal{E}^\varepsilon}\mathcal{D}^\varepsilon,
\end{aligned}
 \end{equation}
  as well as the energy-dissipation estimate for $\pa_t^2\theta^\varepsilon$:
 \begin{equation}\label{est:e_d3}
\begin{aligned}
 &\|\pa_t^2\theta^\varepsilon(t)\|_{H^0}^2 - \left(\mathcal{E}^\varepsilon(t)\right)^{3/2}+\int_s^t \left(\|\pa_t^2\theta^\varepsilon\|_{H^1}^2+\|\pa_t^2\theta^\varepsilon\|_{L^2(\Sigma)}^2 \right)\\
 &\lesssim \|\pa_t^2\theta^\varepsilon(s)\|_{H^0}^2 +(\mathcal{E}^\varepsilon(s))^{3/2}+\int_s^t\sqrt{\mathcal{E}^\varepsilon}\mathcal{D}^\varepsilon
\end{aligned}
 \end{equation}
 for all $0\le s\le t\le T$.  We set
 $
\mathcal{E}_\shortparallel^\varepsilon:=\sum_{j=0}^2\|\pa_t^j\theta^\varepsilon\|_{L^2}^2$,
 and
 $
\mathcal{D}_\shortparallel^\varepsilon:=\sum_{j=0}^2\|\pa_t^j\theta^\varepsilon\|_{H^1}^2+\|\pa_t^j\theta^\varepsilon\|_{L^2(\Sigma)}^2.$
 Then the inequalities \eqref{est:e_d1}--\eqref{est:e_d3} produce the estimate
 \begin{equation}\label{est:energy_dissipation1}
\mathcal{E}_\shortparallel^\varepsilon(t)-(\mathcal{E}^\varepsilon(t))^{3/2}+\int_s^t \mathcal{D}_\shortparallel^\varepsilon \lesssim\mathcal{E}_\shortparallel^\varepsilon(s)+(\mathcal{E}^\varepsilon(s))^{3/2}+\int_s^t\sqrt{\mathcal{E}^\varepsilon}\mathcal{D}^\varepsilon
 \end{equation}
 for all $0\le s\le t\le T$.

\paragraph{\underline{Step 2 -- Enhanced Dissipation and Energy Estimates}}
The elliptic theory for $\theta^\varepsilon$ in \eqref{eq:geometric1}, with $j=0, 1$, tells us that
 \begin{equation}
\begin{aligned}
 \int_s^t \|\theta^\varepsilon\|_{W^{2,q_+}}^2
 \lesssim \int_s^t\|\pa_t\theta^\varepsilon\|_{H^1}^2+\sqrt{\mathcal{E}^\varepsilon}\mathcal{D}^\varepsilon,
\end{aligned}
 \end{equation}
 and
 \begin{equation}
\begin{aligned}
 \int_s^t \|\pa_t\theta^\varepsilon\|_{W^{2,q_-}}^2\lesssim \int_s^t\|\pa_t^2\theta^\varepsilon\|_{H^1}^2+ \sqrt{\mathcal{E}^\varepsilon}\mathcal{D}^\varepsilon,
\end{aligned}
 \end{equation}
 for all $0\le s\le t\le T$.

 Consequently, we can  get the estimate
  \begin{equation}\label{est:energy_dissipation3}
 \begin{aligned}
\mathcal{E}_\shortparallel^\varepsilon(t)-(\mathcal{E}^\varepsilon(t))^{3/2} +\int_s^t \mathcal{D}_\shortparallel^\varepsilon + \|\theta^\varepsilon\|_{W^{2,q_+}}^2 +\|\pa_t\theta^\varepsilon\|_{W^{2,q_-}}^2 \\
\lesssim\mathcal{E}_\shortparallel^\varepsilon(s) +(\mathcal{E}^\varepsilon(s))^{3/2}+\int_s^t\sqrt{\mathcal{E}^\varepsilon}\mathcal{D}^\varepsilon.
 \end{aligned}
 \end{equation}

The interpolation theory allows us to enhance the energy:
 \begin{equation}
\begin{aligned}
 &\|\pa_t\theta^\varepsilon(t)\|_{H^{1+\varepsilon_-/2}}^2\lesssim\|\pa_tu^\varepsilon(s)\|_{H^{1+\varepsilon_-/2}}^2+\int_s^t\mathcal{D}^\varepsilon.
\end{aligned}
 \end{equation}

  The elliptic theory for the \eqref{eq:geometric1} with $j=0$ produces the bounded estimate:
 \begin{equation}
\begin{aligned}
 \|\theta^\varepsilon\|_{W^{2,q_+}}^2\lesssim \|\pa_t\theta^\varepsilon\|_{L^2}^2 +\mathcal{E}^\varepsilon.
\end{aligned}
 \end{equation}

 Thus, with the estimate in \eqref{est:energy_dissipation3}, we have
 \begin{equation}\label{est:energy_dissipation4}
 \begin{aligned}
&\mathcal{E}_\shortparallel^\varepsilon(t) +\|\theta^\varepsilon(t)\|_{W^{2,q_+}}^2
-(\mathcal{E}^\varepsilon(t))^{3/2} +\int_s^t (\mathcal{D}_\shortparallel^\varepsilon +\|\theta^\varepsilon\|_{W^{2,q_+}}^2+ \|\pa_t\theta^\varepsilon\|_{W^{2,q_-}}^2)\\
&\lesssim\mathcal{E}^\varepsilon(s)+(\mathcal{E}^\varepsilon(s))^{3/2}+\int_s^t\sqrt{\mathcal{E}^\varepsilon}\mathcal{D}^\varepsilon.
 \end{aligned}
 \end{equation}

Finally, with the argument in \cite{GTWZ2023}, we combine \eqref{est:energy_dissipation4} with the uniform bounds for $(u^\varepsilon, p^\varepsilon, \eta^\varepsilon)$, to deduce that
\begin{equation}\label{est:enhance_energy6}
  \begin{aligned}
\sup_{0\le t\le t^\prime} \left(\mathcal{E}^\varepsilon(t)-(\mathcal{E}^\varepsilon(t))^{3/2}\right) +\int_0^{t^\prime}\mathcal{D}^\varepsilon
\lesssim\mathcal{E}^\varepsilon(0)+(\mathcal{E}^\varepsilon(0))^{3/2}+\int_0^{t^\prime}\sqrt{\mathcal{E}^\varepsilon}\mathcal{D}^\varepsilon,
  \end{aligned}
\end{equation}
for any $t^\prime\le T$.

\paragraph{\underline{Step 3 -- Synthesis}}
We take $s=0$ and restrict $\delta$ to be smaller if necessary, so that  if
\begin{align}
\sup_{0\le t\le T}\mathcal{E}^\varepsilon(t)\le\delta \ll 1,
\end{align}
then we can absorb the power terms of $\mathcal{E}$ in \eqref{est:enhance_energy6} to obtain the estimate
\begin{equation}
  \sup_{0\le t\le t^\prime}\mathcal{E}^\varepsilon(t)+\int_0^{t^\prime}\mathcal{D}^\varepsilon(s)\,\mathrm{d}s\lesssim \mathcal{E}^\varepsilon(0),
\end{equation}
for each $t^\prime\le T$, so that
\begin{equation}\label{est:energy5}
  \mathcal{E}^\varepsilon(t)+\int_0^t\mathcal{D}^\varepsilon(s)\,\mathrm{d}s\lesssim \mathcal{E}^\varepsilon(0)
\end{equation}
for any $t\le T$

Since $\mathcal{E}^\varepsilon\lesssim \mathcal{D}^\varepsilon$, $\mathcal{E}^\varepsilon$ is integrable over $(0,T)$. The Gronwall's lemma guarantees that there exists a universal constant $\lambda>0$ such that
$
  e^{\lambda t}\mathcal{E}^\varepsilon(t)\lesssim\mathcal{E}^\varepsilon(0)$,
for all $t<T$. Then the inequality \eqref{est:energy5} also implies
\begin{equation}
  \int_0^T\mathcal{D}^\varepsilon\lesssim\mathcal{E}^\varepsilon(0).
\end{equation}
 \end{proof}

\subsection{Global well-posedness of nonlinear contact points problem}
In this section, we give the global existence of solutions to the original nonlinear system \eqref{eq:geometric} when $\varepsilon=0$. We first prescribe the initial conditions for \eqref{eq:geometric} with $\varepsilon=0$, then give the global results for the $\varepsilon$-regularized nonlinear system \eqref{eq:geometric} and finally passing to the limit $\varepsilon\to0$ to complete the proof.

The initial data $(u_0, p_0, \theta_0, \eta_0, \pa_tu(0), \pa_tp(0), \pa_t\theta(0), \pa_t\eta(0), \pa_t^2u(0), \pa_t^2\theta(0), \pa_t^2\eta(0))$ for \eqref{eq:geometric} are assumed in the space of $\mathcal{E}$.
In addition, we assume that the compatibility conditions \eqref{compat_C2} that the initial data satisfied and that $\pa_t^k\eta(0)$, $k=0, 1, 2$, satisfying the zero average condition \eqref{cond:zero}.

Assume the initial data $(u_0^\varepsilon, p_0^\varepsilon, \theta_0^\varepsilon, \eta_0^\varepsilon, \pa_tu^\varepsilon(0), \pa_tp^\varepsilon(0), \pa_t\theta^\varepsilon(0), \pa_t\eta^\varepsilon(0), \pa_t^2u^\varepsilon(0), \pa_t^2\theta^\varepsilon(0), \pa_t^2\eta^\varepsilon(0))$ for the $\varepsilon$- regularized nonlinear system \eqref{eq:epsilon} satisfy the compatibility conditions \eqref{compat_C2}, zero average conditions \eqref{cond:zero} and $\mathcal{E}^\varepsilon(0)<\delta$ for a universal constant $\delta>0$ sufficiently small and $0<\varepsilon<1$. Then the local well-posedness in Theorem \ref{thm: fixed point} and the uniform bounds in Theorem \ref{thm:uniform} allow us to apply the standard continuity argument to arrive at the global well-posedness for the nonlinear system \eqref{eq:epsilon} with $(D_t^ju, \pa_t^jp, \pa_t^j\theta, \pa_t^j\eta)$ instead of $(D_t^ju, \pa_t^jp, \pa_t^j\theta, \pa_t^j\xi)$.

\begin{theorem}
  Suppose that the initial data $(u_0, p_0, \theta_0, \eta_0, \pa_tu(0), \pa_tp(0), \pa_t\theta(0), \pa_t\eta(0), \pa_t^2u(0), \\ \pa_t^2\theta(0), \pa_t^2\eta(0))$ for \eqref{eq:epsilon} as well as compatibility conditions \eqref{compat_C2}
as well as zero average conditions \eqref{cond:zero}. Then there exists a unique solution $(u^\varepsilon, p^\varepsilon, \theta^\varepsilon, \eta^\varepsilon)$ to \eqref{eq:epsilon} global in time. Moreover, $(u^\varepsilon, p^\varepsilon, \theta^\varepsilon, \eta^\varepsilon)$ obeys
  \begin{equation}
 \sup_{t>0}e^{\lambda t}\mathcal{E}^\varepsilon(t)+\int_0^\infty\mathcal{D}^\varepsilon(t)\,\mathrm{d}t\le C\mathcal{E}^\varepsilon(0),
  \end{equation}
  for universal constants $C>0$ and $\lambda>0$, where $\mathcal{E}^\varepsilon$ and $\mathcal{D}^\varepsilon$ are defined in \eqref{def:ep_energy} and \eqref{def:ep_dissipation} respectively.
\end{theorem}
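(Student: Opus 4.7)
The plan is to promote the local-in-time solution of Theorem \ref{thm: fixed point} to a global one by combining it with the uniform a priori estimate of Theorem \ref{thm:uniform} via a standard continuation (bootstrap) argument, and to extract the exponential decay directly from the Gronwall step that closes Theorem \ref{thm:uniform}. Fix $\varepsilon \in (0,1]$ and pick the initial-data threshold $\delta_\ast > 0$ small enough that $\delta_\ast < \delta_0$ (the local well-posedness threshold from Theorem \ref{thm: fixed point}) and $C\,\delta_\ast \le \delta/2$, where $C$ and $\delta$ are the constant and threshold from Theorem \ref{thm:uniform}. If $\mathcal{E}^\varepsilon(0) < \delta_\ast$ and the compatibility conditions \eqref{compat_C2} and zero-average conditions \eqref{cond:zero} hold, Theorem \ref{thm: fixed point} gives a unique solution on some $[0, T_\varepsilon]$ lying inside the ball $\mathcal{E}^\varepsilon \le \delta$.

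Next I would define
\[
T_{\max} := \sup\Big\{ T \ge T_\varepsilon : \text{a solution extends to } [0,T] \text{ with } \sup_{0\le t \le T}\mathcal{E}^\varepsilon(t) \le \delta \Big\}
\]
and argue by contradiction that $T_{\max} = \infty$. If $T_{\max} < \infty$, then on $[0,T_{\max})$ Theorem \ref{thm:uniform} yields
\[
\sup_{0\le t < T_{\max}} e^{\lambda t}\mathcal{E}^\varepsilon(t) + \int_0^{T_{\max}} \mathcal{D}^\varepsilon \le C\,\mathcal{E}^\varepsilon(0) \le \frac{\delta}{2}.
\]
In particular $\mathcal{E}^\varepsilon$ remains bounded by $\delta/2$ up to $T_{\max}$, and the compatibility and zero-average conditions are preserved along the flow: for \eqref{compat_C2} this follows from temporally differentiating $\mathrm{div}_\mathcal{A}u = 0$ and the kinematic/Navier-slip boundary relations together with the definition of $D_t$; for \eqref{cond:zero} integrate $\pa_t \eta = u\cdot\mathcal{N}$ over $(-\ell,\ell)$ and use $\mathrm{div}_\mathcal{A} u = 0$ with the no-penetration condition on $\Sigma_s$ to get $\frac{d}{dt}\int_{-\ell}^\ell \pa_t^k \eta = 0$ inductively. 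Restarting Theorem \ref{thm: fixed point} from time $T_{\max}$ with the data $(u^\varepsilon,p^\varepsilon,\theta^\varepsilon,\eta^\varepsilon)(T_{\max})$ produces an extension to $[T_{\max}, T_{\max} + T'_\varepsilon]$ still satisfying $\mathcal{E}^\varepsilon \le \delta$ by continuity, contradicting maximality. Hence $T_{\max} = \infty$, and the decay estimate is simply Theorem \ref{thm:uniform} applied on $[0,T]$ and letting $T\to\infty$.

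The main obstacle will be the bookkeeping at the restart time $T_{\max}$: Theorem \ref{thm: fixed point} requires initial data in the space $X^\varepsilon$ of Section \ref{sec:initial_nonlinear1}, which involves norms on $\pa_t^2\eta$ in $W^{3-1/q_+, q_+}$ and $H^{3/2-\alpha}$ (with $\varepsilon$-weights) that are present in $\mathcal{D}^\varepsilon$ but not in $\mathcal{E}^\varepsilon$. To extract pointwise-in-time values of these norms at $t=T_{\max}$, I would combine $\int_0^{T_{\max}}\mathcal{D}^\varepsilon < \infty$ with the differential identities satisfied by $\pa_t^j \eta$ (obtained from the contact-point ODE \eqref{eq:epsilon}$_{10}$ and the kinematic identity) to get continuity in time into the required trace spaces via a standard Sobolev-in-time interpolation, as in the analogous construction of \cite{GTWZ2023}. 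Once this continuity is checked, the restart is legitimate and the continuity argument closes.
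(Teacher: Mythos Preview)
Your proposal is correct and follows exactly the approach the paper takes: the paper's own argument for this theorem is a single sentence invoking ``the standard continuity argument'' that combines the local well-posedness of Theorem \ref{thm: fixed point} with the uniform a priori bound of Theorem \ref{thm:uniform}, and your write-up is precisely that argument spelled out. Your honest flagging of the bookkeeping issue at the restart time---that the $X^\varepsilon$ data space requires $\varepsilon^2\|\pa_t^2\eta\|_{W^{3-1/q_+,q_+}}^2$, which is not part of $\mathcal{E}^\varepsilon$---is a point the paper does not address explicitly either; it is absorbed into the phrase ``standard continuity argument'' and, as you note, is handled by the extra dissipation-level control propagated through $\mathscr{D}$ and the trace/interpolation machinery already used in \cite{GTWZ2023}.
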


Consequently, we can state the global result for the original nonlinear contact points system \eqref{eq:geometric} at $\varepsilon=0$.

\begin{theorem}\label{thm:conditioal_global}
  Assume that if as $\varepsilon\to 0$, $\mathcal{E}^\varepsilon(0)\to \mathcal{E}(0)$ or specifically,
  \begin{align}
  \begin{aligned}
  &(u_0^\varepsilon, p_0^\varepsilon, \theta_0^\varepsilon, \eta_0^\varepsilon, \pa_tu^\varepsilon(0), \pa_tp^\varepsilon(0), \pa_t\theta^\varepsilon(0), \pa_t\eta^\varepsilon(0), \pa_t^2u^\varepsilon(0), \pa_t^2\theta^\varepsilon(0), \pa_t^2\eta^\varepsilon(0), \varepsilon \pa_t\eta^\varepsilon(0), \varepsilon \pa_t^2\eta^\varepsilon(0),\\ &\varepsilon^{1/2} \pa_t^2\eta^\varepsilon(0))
  \to (u_0, p_0, \theta_0, \eta_0, \pa_tu(0), \pa_tp(0),\pa_t\theta(0) \pa_t\eta(0), \pa_t^2u(0), \pa_t^2\theta(0), \pa_t^2\eta(0),0,0,0),\quad \text{as}\ \varepsilon\to0,
  \end{aligned}
  \end{align}
  in the space
  \begin{align}
  \begin{aligned}
  W^{2,q_+}(\Om)\times W^{1,q_+}(\Om)\times W^{2,q_+}(\Om)\times W^{3-1/q_+,q_+}(\Sigma)\times H^{1+\varepsilon_-/2}(\Om)\times H^0(\Om)\times H^{1+\varepsilon_-/2}(\Om)\\
  \times H^{3/2+(\varepsilon_--\alpha)/2}(\Sigma) \times H^0(\Om)
  \times H^0(\Om)\times H^1(\Sigma) \times W^{3-1/q_+,q_+}(\Sigma)\times W^{3-1/q_+,q_+}(\Sigma) \times H^{3/2-\alpha}(\Sigma).
  \end{aligned}
  \end{align}

  Then $(u^\varepsilon, p^\varepsilon, \theta^\varepsilon, \eta^\varepsilon)$ converges to the limit $(u, p, \theta, \eta)$ that is the unique solution to the original nonlinear system \eqref{eq:geometric} at $\varepsilon=0$ with initial data stated in Theorem \ref{thm:main}. The limit $(u, p, \theta, \eta)$ obeys the estimate
  \begin{equation}
 \sup_{t>0}e^{\lambda t}\mathcal{E}(t)+\int_0^\infty\mathcal{D}(t)\,\mathrm{d}t\le C\mathcal{E}(0),
  \end{equation}
  for universal constants $C>0$ and $\lambda>0$, where $\mathcal{E}$ and $\mathcal{D}$ are defined in \eqref{energy} and \eqref{dissipation} respectively.
\end{theorem}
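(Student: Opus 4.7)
The plan is to pass to the limit $\varepsilon \to 0^+$ in the globally existing family $(u^\varepsilon, p^\varepsilon, \theta^\varepsilon, \eta^\varepsilon)$ provided by the previous theorem. By hypothesis on the initial data, $\mathcal{E}^\varepsilon(0)$ is uniformly bounded (and even converges to $\mathcal{E}(0)$), so Theorem \ref{thm:uniform1} gives the uniform estimate $\sup_{t\ge 0} e^{\lambda t}\mathcal{E}^\varepsilon(t) + \int_0^\infty \mathcal{D}^\varepsilon(t)\,\mathrm{d}t \lesssim \mathcal{E}^\varepsilon(0) \lesssim 1$. Since $\mathcal{E}^\varepsilon$ dominates $\mathcal{E}$ (the extra $\varepsilon$-weighted terms being non-negative), this delivers uniform-in-$\varepsilon$ control on $(u^\varepsilon, p^\varepsilon, \theta^\varepsilon, \eta^\varepsilon)$ in all the function spaces built into $\mathcal{E}$ and $\mathcal{D}$. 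By standard Banach--Alaoglu weak-$\ast$ and weak compactness I extract a (non-relabeled) subsequence and a limit $(u, p, \theta, \eta)$ such that, for example, $u^\varepsilon \stackrel{\ast}{\rightharpoonup} u$ in $L^\infty W^{2,q_+}$, $\partial_t u^\varepsilon \rightharpoonup \partial_t u$ in $L^2 W^{2,q_-}$, and similarly for $p^\varepsilon$, $\theta^\varepsilon$, $\eta^\varepsilon$ and their time derivatives at all levels captured by $\mathcal{E}$ and $\mathcal{D}$.

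Next, to pass to the limit inside the quasilinear terms, I need strong convergence of low-order quantities. A straightforward Aubin--Lions argument applied with the pairs $(W^{2,q_+}, H^0)$, $(W^{2,q_-}, H^0)$, $(W^{3-1/q_+, q_+}(\Sigma), H^1(\Sigma))$, together with the uniform bounds on temporal derivatives, yields strong convergence of $u^\varepsilon$, $\theta^\varepsilon$ in, say, $L^2 C^0(\bar\Omega)$ and of $\eta^\varepsilon$, $\partial_t\eta^\varepsilon$ in $L^2 C^1([-\ell,\ell])$ along a further subsequence. This produces strong convergence of the geometric coefficients $\mathcal{A}^\varepsilon \to \mathcal{A}$, $J^\varepsilon \to J$, $\mathcal{N}^\varepsilon \to \mathcal{N}$ and of the remainder $\mathcal{R}(\partial_1\zeta_0, \partial_1\eta^\varepsilon) \to \mathcal{R}(\partial_1\zeta_0, \partial_1\eta)$ pointwise and in appropriate $L^p$ spaces, which is exactly what is needed to take weak-times-strong limits in the convection terms $u^\varepsilon \cdot \nabla_{\mathcal{A}^\varepsilon} u^\varepsilon$, $u^\varepsilon \cdot \nabla_{\mathcal{A}^\varepsilon}\theta^\varepsilon$, the transport term $\partial_t\bar\eta^\varepsilon W K^\varepsilon \partial_2 u^\varepsilon$, the thermal buoyancy $g\theta^\varepsilon e_2$, and the quasilinear boundary operator $\dive_{\mathcal{A}^\varepsilon} S_{\mathcal{A}^\varepsilon}(p^\varepsilon, u^\varepsilon)$.

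The crucial regularization terms to eliminate are $\varepsilon \partial_1\partial_t\eta^\varepsilon$ in the free-surface stress balance and $\varepsilon \partial_1\partial_t\eta^\varepsilon$ in the contact-point law. The energy control gives $\varepsilon \|\partial_t\eta^\varepsilon\|_{W^{3-1/q_+,q_+}}^2 \le \mathcal{E}^\varepsilon \lesssim \mathcal{E}^\varepsilon(0)$, so $\sqrt{\varepsilon}\,\partial_t\eta^\varepsilon$ is uniformly bounded in $L^\infty W^{3-1/q_+,q_+}(\Sigma)$; multiplying by the remaining factor $\sqrt{\varepsilon} \to 0$ shows that $\varepsilon \partial_1\partial_t\eta^\varepsilon \to 0$ strongly in the relevant trace space, and hence drops out of both \eqref{eq:geometric} and \eqref{eq:contact law} in the limit. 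Combined with the convergences above, this shows that $(u, p, \theta, \eta)$ is a distributional solution of \eqref{eq:geometric}--\eqref{eq:geo_bc} at $\varepsilon = 0$. The initial data are matched by standard strong-continuity-in-time arguments: the uniform bounds give $u^\varepsilon \in C^0([0,T]; H^0)$ uniformly and, combined with the hypothesized convergence of $(u_0^\varepsilon, p_0^\varepsilon, \theta_0^\varepsilon, \eta_0^\varepsilon, \dots)$ to $(u_0, p_0, \theta_0, \eta_0,\dots)$, allow identification of the initial trace of the limit with the prescribed data.

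The energy-dissipation estimate then follows from lower-semicontinuity of norms under weak and weak-$\ast$ convergence applied to $\mathcal{E}^\varepsilon(t)$ and $\int_0^\infty \mathcal{D}^\varepsilon$, using the fact that the $\varepsilon$-weighted pieces only add non-negative contributions, together with $\mathcal{E}^\varepsilon(0) \to \mathcal{E}(0)$ by hypothesis; this yields $\sup_{t\ge 0}e^{\lambda t}\mathcal{E}(t) + \int_0^\infty \mathcal{D}(t)\,\mathrm{d}t \le C\mathcal{E}(0)$ with universal constants. Uniqueness of the limit (hence convergence of the full net, not merely a subsequence) is proved by a contraction-type energy estimate on the difference of two putative solutions, completely analogous to Step 2--Step 5 of the contraction argument in the proof of Theorem \ref{thm: fixed point} but now with $\varepsilon = 0$; the smallness required to absorb the difference terms is supplied by the uniform smallness of $\mathcal{E}$. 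I expect the main technical obstacle to be verifying that each nonlinear boundary term on $\Sigma$ and at the contact points $\pm\ell$ passes to the limit in its natural dual space: the traces involve low-regularity products of $\partial_t\eta^\varepsilon$ against $\nabla u^\varepsilon$ and against $\theta^\varepsilon$, so one must carefully match the strong/weak convergences obtained from Aubin--Lions to the product estimates used in Proposition \ref{prop:force_bulk} and Lemma \ref{lem:est_force_dual}, paying particular attention to the corner regularity constraints that force the exponents $q_\pm$ in the first place.
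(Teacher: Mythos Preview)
Your approach is correct and is precisely the compactness/limit-passage argument that the paper indicates in its strategy section (Section~\ref{section:discussion}): uniform bounds from Theorem~\ref{thm:uniform1}, weak-$\ast$ compactness, Aubin--Lions for strong convergence of low-order quantities, passage to the limit in the nonlinear terms, vanishing of the $\varepsilon$-regularization, and lower semicontinuity for the final estimate. The paper in fact states Theorem~\ref{thm:conditioal_global} without a detailed proof, so your proposal supplies details the paper omits.

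One small correction: in the definition \eqref{def:ep_energy} the extra term is $\varepsilon^2\|\partial_t\eta^\varepsilon\|_{W^{3-1/q_+,q_+}}^2$, not $\varepsilon\|\cdot\|^2$, so the bound you actually get is that $\varepsilon\,\partial_t\eta^\varepsilon$ is uniformly bounded in $L^\infty W^{3-1/q_+,q_+}(\Sigma)$, not merely $\sqrt{\varepsilon}\,\partial_t\eta^\varepsilon$. This still suffices: since $\partial_t\eta^\varepsilon$ is also uniformly bounded in $L^\infty H^{3/2+(\varepsilon_--\alpha)/2}$ (from the $\varepsilon$-free part of $\mathcal{E}^\varepsilon$), one has $\varepsilon\,\partial_t\eta^\varepsilon \to 0$ strongly in that weaker space and weakly in $W^{3-1/q_+,q_+}$, which is enough to kill the regularization term $\varepsilon\,\partial_1\!\left(\tfrac{\partial_1\partial_t\eta^\varepsilon}{(1+|\partial_1\zeta_0|^2)^{3/2}}\right)$ in the limit of the boundary condition and the contact-point law.
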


We note that Theorem \ref{thm:conditioal_global} is conditional. In order to let Theorem \ref{thm:conditioal_global} make sense, we have to prove that there really exist the initial data satisfying the limiting conditions in Theorem \ref{thm:conditioal_global}. Thanks to Theorem \ref{thm:initial}, the one more term $\pa_t^2\eta(0)\in W^{2-1/q_+,q_+}(\Sigma)$ beyond the initial energy $\mathcal{E}(0)$ can guarantee that these initial data do exist and obey the conditions in Appendix \ref{sec:initial_nonlinear}. Consequently, it is sufficient to complete the global well-posedness in Theorem \ref{thm:main}.

\begin{appendix}

\makeatletter
\renewcommand \theequation {%
A.%
\@arabic\c@equation} \@addtoreset{equation}{section}
\makeatother

\section{Forcing Terms in $\dive_\mathcal{A}$--Free System}\label{sec:dive_forcing}

We now present the nonlinear interaction terms in \eqref{eq:linear_heat} and \eqref{eq:modified_linear}, $j=0, 1, 2$. From \eqref{eq:geometric}, we directly set
\begin{equation}\label{eq:force0}
 F^8=\pa_t\bar{\eta}WK\pa_2\theta-u\cdot\nabla_{\mathcal{A}}\theta,\quad F^9=0.
\end{equation}

\[
 \begin{aligned}
  &F^1=\pa_t\bar{\eta}WK\pa_2u-u\cdot\nabla_{\mathcal{A}}u - g\theta e_2,\quad F^3=\sigma_1\mathcal{R}(\pa_1\zeta_0,\pa_1\eta),\quad F^4= \sigma_2 \theta \pa_1\left(\frac{\pa_1\eta+\varepsilon\pa_1\pa_t\eta}{(1+|\pa_1\eta|)^{1/2}}\right)\mathcal{N},\\
  & F^5=0,\quad F^7=\kappa\hat{\mathscr{W}}(\pa_t\eta), \quad F^8=\pa_t\bar{\eta}WK\pa_2\theta-u\cdot\nabla_{\mathcal{A}}\theta,\quad F^9=0.
  \end{aligned}
\]
Then for $j=1$, the forcing terms are given via
\begin{align}\label{eq:force1}
\begin{aligned}
  F^{8,1}&=\pa_t F^8 +k\dive_{\pa_t\mathcal{A}}\nabla_{\mathcal{A}}\theta+ k \dive_{\mathcal{A}}\nabla_{\pa_t\mathcal{A}}\theta,\\
  F^{9,1}&=-k\nabla_{\pa_t\mathcal{A}}\theta \cdot \mathcal{N}-k\nabla_{\mathcal{A}}\theta \cdot \pa_t\mathcal{N} - \theta\pa_t|\mathcal{N}|.
  \end{aligned}
\end{align}
\[
\begin{aligned}
  F^{1,1}&=\pa_t F^1-\pa_tRu-RD_tu-R^2u-\dive_{\pa_t\mathcal{A}}S_{\mathcal{A}}(p,u)+\mu\dive_{\mathcal{A}}\mathbb{D}_{\pa_t\mathcal{A}}u+\mu\dive_{\mathcal{A}}\mathbb{D}_{\mathcal{A}}(Ru),\\
  F^{3,1}&=\sigma_1 \pa_t \mathcal{R}(\pa_1\zeta_0,\pa_1\eta),\\
  F^{4,1}&=\pa_t F^4 +\left[g(\eta+\varepsilon\pa_t\eta)-\sigma_1\pa_1\left(\frac{\pa_1\eta+\varepsilon\pa_1\pa_t\eta}{(1+|\pa_1\zeta_0|)^{3/2}}+F^3\right)\right]\pa_t\mathcal{N}-S_{\mathcal{A}}(p,u)\pa_t\mathcal{N}\\
&\quad+\mu\mathbb{D}_{\pa_t\mathcal{A}}u\mathcal{N}+\mu\mathbb{D}_{\mathcal{A}}(Ru)\mathcal{N},\\
  F^{5,1}&=\mu\mathbb{D}_{\pa_t\mathcal{A}}u\nu\cdot\tau+\mu\mathbb{D}_{\mathcal{A}}(Ru)\nu\cdot\tau, \\
  F^{7,1}&=\kappa\hat{\mathscr{W}}^\prime(\pa_t\eta)\pa_t^2\eta.
  \end{aligned}
\]

The forcing terms for $j=2$ are given via
\begin{align}\label{eq:force2}
\begin{aligned}
  F^{8,2}&=\pa_t F^{8,1} +k\dive_{\pa_t\mathcal{A}}\nabla_{\mathcal{A}}\pa_t\theta +k\dive_{\mathcal{A}}\nabla_{\pa_t\mathcal{A}}\pa_t\theta,\\
  F^{9,2}&=\pa_t F^{9,1}-k\nabla_{\mathcal{A}}\pa_t\theta \cdot \pa_t\mathcal{N} - k\nabla_{\pa_t\mathcal{A}}\pa_t\theta \cdot \mathcal{N} - \pa_t\theta\pa_t|\mathcal{N}|.
  \end{aligned}
\end{align}
\[
\begin{aligned}
  F^{1,2}&=\pa_t F^{1,1}-\pa_tRD_tu-RD_t^2u-R^2D_tu-\dive_{\pa_t\mathcal{A}}S_{\mathcal{A}}(\pa_tp,D_tu)+\mu\dive_{\mathcal{A}}\mathbb{D}_{\pa_t\mathcal{A}}D_tu\\
  &\quad+\mu\dive_{\mathcal{A}}\mathbb{D}_{\mathcal{A}}(RD_tu),\\
  F^{3,2}&=\sigma \pa_t^2 \mathcal{R}(\pa_1\zeta_0,\pa_1\eta),\\
  F^{4,2}&=\pa_t F^{4,1}+\left[g(\pa_t\eta+\varepsilon\pa_t^2\eta)-\sigma_1\pa_1\left(\frac{\pa_1\pa_t\eta+\varepsilon\pa_1\pa_t^2\eta}{(1+|\pa_1\zeta_0|)^{3/2}}+\pa_tF^3\right)\right]\pa_t\mathcal{N}\\
&\quad-S_{\mathcal{A}}(\pa_tp,D_tu)\pa_t\mathcal{N}+\mu\mathbb{D}_{\pa_t\mathcal{A}}D_tu\mathcal{N}+\mu\mathbb{D}_{\mathcal{A}}(RD_tu)\mathcal{N},\\
  F^{5,2}&=\pa_t F^{5,1}+\mu\mathbb{D}_{\pa_t\mathcal{A}}D_tu\nu\cdot\tau+\mu\mathbb{D}_{\mathcal{A}}(RD_tu)\nu\cdot\tau,\\
  F^{7,2}&=\kappa\hat{\mathscr{W}}^\prime(\pa_t\eta)\pa_t^3\eta+\kappa\hat{\mathscr{W}}''(\pa_t\eta)(\pa_t^2\eta)^2.
  \end{aligned}
\]

Then we present the forcing terms in \eqref{eq:geometric1}. For simplicity, we can directly denote $F^{i, j, \varepsilon}=F^{i,j}(u^\varepsilon, p^\varepsilon, \theta^\varepsilon, \eta^\varepsilon)$, $i=1, 3, 4, 5, 7, 8, 9$ and $j=0, 1, 2$. That means $F^{i, j, \varepsilon}$ is actually $F^{i,j}$ with $(u, p, \theta, \eta)$ replaced by $(u^\varepsilon, p^\varepsilon, \theta^\varepsilon, \eta^\varepsilon)$.

\makeatletter
\renewcommand \theequation {%
B.%
\@arabic\c@equation} \@addtoreset{equation}{section}
\makeatother

\section{Initial Data for $\varepsilon$--Regularized Nonlinear Problem}\label{sec:initial}

In the process of constructing solutions to \eqref{eq:linear_heat2} by the Galerkin method, one necessary ingredient is to ensure that the initial data of the sequence of approximate solutions converge to the initial data of PDE \eqref{eq:linear_heat2}.
Now we state our theorem for the construction of initial conditions for nonlinear system \eqref{eq:geometric}.
 \begin{theorem}\label{thm:initial}
Suppose that the initial data $\pa_t^2\eta(0)\in \mathring{H}^1(\Sigma)\cap W^{2-1/q_+, q_+}(\Sigma)$, $\pa_t^2\theta(0)\in H^0(\Om)$ and $ D_t^2u(0)\in H^0(\Om)$ are given and satisfy $\dive_{\mathcal{A}_0}D_t^2u(0)=0$. Then we can  construct $(u_0, p_0, \theta_0, \eta_0)$ and $(\pa_tu(0), \pa_tp(0), \pa_t\theta(0), \pa_t\eta(0))$
solving the system \eqref{eq:geometric} at $t=0$ such that
\begin{align}
\mathcal{E}^\varepsilon(0) + \|\pa_t^2\eta^\varepsilon(0)\|_{W^{2-1/q_+, q_+}}^2\le \delta
\end{align}
for a universal constant $\delta>0$ sufficiently small provided that
\begin{align}
\|D_t^2u(0)\|_{H^0}^2+\|\pa_t^2\theta(0)\|_{H^0}^2+\|\pa_t^2\eta(0)\|_{H^1}^2+\|\pa_t^2\eta(0)\|_{W^{2-1/q_+, q_+}}^2\le\delta_0
\end{align}
for a universal constant $\delta_0>0$ sufficiently small.
 \end{theorem}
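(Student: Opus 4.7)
\textbf{Proof proposal for Theorem \ref{thm:initial}.} The plan is to build the lower-order time derivatives of the initial data by descending from the prescribed top-order data $(D_t^2u(0),\pa_t^2\theta(0),\pa_t^2\eta(0))$ to $(u_0,p_0,\theta_0,\eta_0)$, and then from $(\pa_tu(0),\pa_tp(0),\pa_t\theta(0),\pa_t\eta(0))$. At each layer we solve a \emph{stationary} elliptic problem on $\Om$ or on $(-\ell,\ell)$, using exactly the equations of \eqref{eq:epsilon} evaluated at $t=0$. The $\varepsilon$--regularization plays a crucial role: the combinations $\eta+\varepsilon\pa_t\eta$ and $\pa_t\eta+\varepsilon\pa_t^2\eta$ appearing in the surface--tension boundary terms and in the contact--point law give access to one additional spatial derivative of $\eta$ and $\pa_t\eta$, which is what allows us to promote $\pa_t^2\eta(0)\in W^{2-1/q_+,q_+}$ into $\pa_t\eta(0)\in W^{3-1/q_+,q_+}$ and $\eta_0\in W^{3-1/q_+,q_+}$, paying an $\varepsilon^{-1}$ factor that is absorbed in the weights $\varepsilon^2,\varepsilon$ of $\mathcal{E}^\varepsilon(0)$.

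First I would build $(\eta_0,\pa_t\eta(0))$. The boundary values at $\pm\ell$ come from the dynamic contact--point law and its first time derivative, which, after linearization in $\hat{\mathscr{W}}$ around $0$, provide Dirichlet / Robin data of the form $\kappa\pa_t^{k+1}\eta(0)\pm\sigma_1(1+|\pa_1\zeta_0|^2)^{-3/2}\pa_1(\pa_t^k\eta(0)+\varepsilon\pa_t^{k+1}\eta(0))|_{\pm\ell}=\ldots$ for $k=0,1$. Combined with the kinematic condition $\pa_t\eta(0)=u_0\cdot\mathcal{N}_0$ and $\pa_t^2\eta(0)=\pa_tu(0)\cdot\mathcal{N}_0+u_0\cdot\pa_t\mathcal{N}(0)$, and with the zero--average compatibility $\int\pa_t^k\eta(0)=0$, I can set up a linear second--order elliptic equation for $\pa_t\eta(0)$ on $(-\ell,\ell)$ with the $\varepsilon$--regularized principal part; standard one--dimensional elliptic theory (analogous to Theorem \ref{thm:elliptic_1}) then yields $\pa_t\eta(0)\in W^{3-1/q_+,q_+}$ with norm controlled by $\varepsilon^{-1}\|\pa_t^2\eta(0)\|_{W^{2-1/q_+,q_+}}$. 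The same procedure, applied one step lower, produces $\eta_0\in W^{3-1/q_+,q_+}$.

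With $(\eta_0,\pa_t\eta(0))$ fixed, the maps $\Phi(0), \mathcal{A}_0, \mathcal{N}_0, J_0, K_0$ and their time derivatives at $0$ are determined. I would then construct $(\pa_tu(0),\pa_tp(0))$ by solving the $\mathcal{A}_0$--Stokes system obtained by time--differentiating \eqref{eq:epsilon}$_{1,2,4\text{--}6}$ and setting $t=0$; here $\pa_t^2u(0)$ is known via $\pa_t^2u(0)=D_t^2u(0)+(\text{admissible lower--order corrections in }R(0),\pa_tR(0))$, and $\pa_t^2\eta(0)$ is the prescribed kinematic datum, so the resulting stationary Stokes problem with Navier--slip and mixed contact--angle boundary data falls in the framework of \cite{GT2020}/\cite{GTWZ2023}. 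An analogous Stokes solve, now with $\pa_tu(0)$ playing the role of forcing, yields $(u_0,p_0)\in W^{2,q_+}\times W^{1,q_+}$. For the temperature layer I would invoke the elliptic theory in Theorem \ref{thm:elliptic_1}: first solve the $\mathcal{A}_0$--Robin/Dirichlet problem for $\pa_t\theta(0)$ with source $\pa_t^2\theta(0)$ and known forcing built from already--constructed quantities, then solve the analogous problem for $\theta_0$. The divergence--free compatibility $\dive_{\mathcal{A}_0}D_t^j u(0)=0$ for $j=0,1$ follows automatically from the preservation of divergence by $D_t$ at $t=0$ together with $\dive_{\mathcal{A}_0}D_t^2u(0)=0$, read backwards by inspecting the $R$--identities in the remark after Theorem \ref{thm:main}.

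The main obstacle is \emph{quantitative smallness uniform enough to feed into the energy space $\mathcal{E}^\varepsilon(0)$}. Two points must be handled carefully. First, the elliptic construction of $\pa_t\eta(0)$ costs $\varepsilon^{-1}$, so one verifies that the prefactor $\varepsilon^2\|\pa_t\eta(0)\|_{W^{3-1/q_+,q_+}}^2$ built into $\mathcal{E}^\varepsilon(0)$ exactly compensates this loss, leaving a bound by $\|\pa_t^2\eta(0)\|_{W^{2-1/q_+,q_+}}^2$; likewise the $\varepsilon\|\pa_t^2\eta(0)\|_{H^{3/2-\alpha}}^2$ weight absorbs the half--step in the contact--point solve. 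Second, the Stokes solves for $(u_0,p_0)$ and $(\pa_tu(0),\pa_tp(0))$ must be carried out on the genuine corner domain $\Om$, so the $W^{2,q_+}$/$W^{1,q_+}$ estimates of the stationary problem in \cite{GT2020} are invoked; their smallness hypothesis on $\eta_0$ is satisfied because the already--constructed $\eta_0,\pa_t\eta(0)$ are bounded by $\delta_0$ up to the fixed $\varepsilon$--independent constants. Once every component is estimated in its appropriate norm by a constant multiple of $\delta_0$, choosing $\delta_0$ small enough yields $\mathcal{E}^\varepsilon(0)+\|\pa_t^2\eta(0)\|_{W^{2-1/q_+,q_+}}^2\le\delta$ as claimed.
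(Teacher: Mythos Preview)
Your sequential descent---first $(\eta_0,\pa_t\eta(0))$, then the Stokes pairs, then the temperature---does not close, and this is a genuine gap rather than a stylistic difference. The initial data are \emph{nonlinearly coupled}: the coefficients $\mathcal{A}_0,\mathcal{N}_0,J_0$ are built from the unknown $\eta_0$; the forcing $F^{8,1}(0)$ in the elliptic problem for $\pa_t\theta(0)$ contains $\mathfrak{G}^8(\theta_0)$ and $\pa_tF^8(0)$, both of which involve the still-unknown $\theta_0,u_0,\pa_t\eta(0)$; the Stokes problem for $(u_0,p_0)$ carries $g\theta_0 e_2$ as a bulk force; and the stress and kinematic boundary conditions tie $(\eta_0,\pa_t\eta(0))$ back to $(u_0,p_0)$. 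There is no order in which a one-pass construction can be carried out. The paper resolves this by setting up a \emph{contraction mapping}: one fixes a candidate tuple $(u_0,p_0,\theta_0,\eta_0,D_tu(0),\pa_tp(0),\pa_t\theta(0),\pa_t\eta(0))$ in a small ball, uses it to generate the coefficients and nonlinear forcings, solves the resulting linear Stokes and Robin/Dirichlet elliptic problems (Lemma \ref{lem:initial_dt} and the Stokes analogue from \cite{GTWZ2023}) to produce $(v_0,q_0,\vartheta_0,\xi_0,D_tv(0),\pa_tq(0),\pa_t\vartheta(0),\pa_t\xi(0))$, and then shows in Lemma \ref{lem:contract_1} that the map is a strict contraction in the appropriate product norm. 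Banach's fixed-point theorem then gives the unique initial tuple. Even the temperature block alone already requires an internal iteration (equations \eqref{eq:iteration_initial1}--\eqref{eq:iteration_initial2}) because $F^{8,1}(0)$ depends on $\theta_0$.

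A second, smaller issue: your claim that $\dive_{\mathcal{A}_0}D_t^ju(0)=0$ for $j=0,1$ ``follows automatically, read backwards'' from $\dive_{\mathcal{A}_0}D_t^2u(0)=0$ is not correct. The identity $J\dive_{\mathcal{A}}(D_tv)=\pa_t(J\dive_{\mathcal{A}}v)$ propagates the divergence-free condition \emph{forward} in $j$, not backward. In the paper the divergence constraint at levels $j=0,1$ is not inferred from $j=2$; it is built into the construction because at each step one solves a genuine $\mathcal{A}_0$--Stokes system with $\dive_{\mathcal{A}_0}v=0$ imposed.
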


 \begin{remark}
It is important to note that the initial data in Theorem \ref{thm:initial} implies the existence of $\mathcal{E}^\varepsilon(0)$ with $\mathcal{E}^\varepsilon$ defined in \eqref{def:ep_energy}. So Theorem \ref{thm:initial} guarantees that our initial data for the nonlinear system \eqref{eq:geometric} are well defined.
 \end{remark}

\subsection{Initial Data for Linear heat PDEs}\label{sec:initial_linear}

Our aim is to use the Galerkin method to give the existence and uniqueness of solutions to \eqref{eq:linear_heat}. So, the important thing is to ensure that the initial data of the sequence of approximate solutions converge to the initial data of PDE \eqref{eq:linear_heat}. We assume
the forcing terms
\begin{equation}
F^8(0),\ \pa_tF^8(0)\in L^{q_+}(\Om), \quad F^9(0),\ \pa_tF^9(0)\in W^{1-1/q_+,q_+}(\Sigma)
\end{equation}
 and $\mathcal{A}_0$, $\mathcal{N}(0)$, $J(0)=1/K(0)$ are given in terms of $\eta_0$.
 The construction of the initial data are presented in Lemma \ref{lem:initial_dt}.

We first prescribe the initial data of higher order $\pa_t^2\theta(0)$, then construct the initial data of lower order $\pa_t\theta(0)$ and $\theta_0$. This is a backward procedure for time regularity.  $\pa_t^2\theta(0)\in H^0(\Om)$ is natural for the
dynamical energy of the linear and nonlinear system for $t>0$.

We now use elliptic theory to construct $\pa_t\theta(0)$. First, it is necessary to
 assume the initial data $F^{8,1}(0)$ and $F^{9,1}(0)$  are defined as
\begin{equation}\label{def:F_11}
\begin{aligned}
&F^{8,1}:=\pa_tF^8+\mathfrak{G}^8(\theta), \quad F^{9,1}:=\pa_tF^9+\mathfrak{G}^9(\theta),
\end{aligned}
\end{equation}
where
\begin{equation}\label{def:map_g}
\begin{aligned}
\mathfrak{G}^8(\theta)=k\dive_{\mathcal{A}}\nabla_{\pa_t\mathcal{A}}\theta+ k \dive_{\pa_t\mathcal{A}}\mathbb{D}_{\mathcal{A}}\theta,\
\mathfrak{G}^9(\theta)=-k\nabla_{\pa_t\mathcal{A}}\theta \cdot \mathcal{N} -k\nabla_{\mathcal{A}}\theta \cdot \pa_t\mathcal{N} - \theta \pa_t|\mathcal{N}|.
\end{aligned}
\end{equation}

We define
\begin{equation}\label{def:forcej2}
  F^{8,2}:=\mathfrak{G}^8(\pa_t\theta)+\pa_tF^{8,1},\
  F^{9,2}:=\mathfrak{G}^9(\pa_t\theta)+\pa_tF^{9,1}.
\end{equation}

Then we have the following lemma.
\begin{lemma}\label{lem:initial_force1}
  $F^{8,1}(0)\in L^{q_+}(\Om),\
F^{9,1}(0)\in W^{1-1/q_+, q_+}(\Sigma)$T are implied by the following estimates
\begin{equation}\label{est:initial_f11}
\|F^{8,1}(0)\|_{L^{q_+}(\Om)}\lesssim \|\pa_tF^8(0)\|_{L^{q_+}}+\|\pa_t\eta(0)\|_{H^{3/2+(\varepsilon_--\alpha)/2}}\|\theta_0\|_{W^{2,q_+}},
\end{equation}
\begin{equation}\label{est:initial_f51}
  \begin{aligned}
\|F^{9,1}(0)\|_{W^{1-1/q_+, q_+}(\Sigma)}\lesssim\|\pa_tF^9(0)\|_{W^{1-1/q_+, q_+}(\Sigma)}+\|\pa_t\eta(0)\|_{H^{3/2+(\varepsilon_--\alpha)/2}}\|\theta_0\|_{W^{2,q_+}}.
  \end{aligned}
\end{equation}

\end{lemma}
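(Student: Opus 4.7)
The plan is to prove both estimates by writing out $\mathfrak{G}^8$ and $\mathfrak{G}^9$ via the Leibniz rule, then estimating each resulting product term by H\"older's inequality combined with Sobolev embeddings, trace theory, and the mapping properties of the extension operator $\bar\eta = \mathcal{P}E\eta$. Since the rest of $F^{8,1}$ and $F^{9,1}$ is $\pa_tF^8$ and $\pa_tF^9$ respectively, the triangle inequality will immediately reduce the job to bounding $\mathfrak{G}^8(\theta_0)$ and $\mathfrak{G}^9(\theta_0)$ evaluated at $t=0$ with coefficients depending on $\mathcal{A}$, $\mathcal{N}$, $\pa_t\mathcal{A}$, $\pa_t\mathcal{N}$ at $t=0$.

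For the bulk estimate, I would expand
\[
\mathfrak{G}^8(\theta_0) = k\,\mathcal{A}_{ij}\pa_j(\pa_t\mathcal{A}_{ik}\pa_k\theta_0) + k\,\pa_t\mathcal{A}_{ij}\pa_j(\mathcal{A}_{ik}\pa_k\theta_0),
\]
so that distributing derivatives yields a sum of products of the form (derivative of $\pa_t\mathcal{A}$ or $\mathcal{A}$) times (first or second derivative of $\theta_0$). Recalling from \eqref{eq:components} that $\pa_t\mathcal{A}$ and its first derivative are controlled by $\pa_t\bar\eta$, $\nabla\pa_t\bar\eta$, $\nabla^2\pa_t\bar\eta$ (with $\mathcal{A}$ itself bounded in $L^\infty$ by the smallness hypothesis), I apply H\"older's inequality with an exponent pair such as $\tfrac{1}{q_+} = \tfrac{1}{2} + \tfrac{\varepsilon_+}{2}$, placing the $\pa_t\bar\eta$ factors in $L^{2/\varepsilon_+}$-type spaces and the derivatives of $\theta_0$ in the natural Sobolev slot. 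The Sobolev embedding $H^{3/2+(\varepsilon_--\alpha)/2}(\Sigma)\hookrightarrow W^{1,\infty}(\Sigma)$ composed with the boundedness of the Poisson-type extension gives $\|\pa_t\bar\eta\|_{W^{2,p}(\Om)}\lesssim\|\pa_t\eta\|_{H^{3/2+(\varepsilon_--\alpha)/2}}$ for suitable $p$, while $\|\theta_0\|_{W^{2,q_+}}$ directly absorbs $\nabla^2\theta_0$ in $L^{q_+}$ and (via Sobolev embedding) $\nabla\theta_0$ in $L^{2/(1-\varepsilon_+)}$.

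For the boundary estimate I would work in $W^{1-1/q_+,q_+}(\Sigma)$ through its characterization as the trace space of $W^{1,q_+}(\Om)$, which reduces the problem to bounding a suitable extension in $W^{1,q_+}(\Om)$. Since $W^{1,q_+}$ (and by trace $W^{1-1/q_+,q_+}$ on the one-dimensional boundary) is a Banach algebra under the conditions on $q_+$, the products in $\mathfrak{G}^9(\theta_0) = -k\nabla_{\pa_t\mathcal{A}}\theta_0\cdot\mathcal{N} - k\nabla_{\mathcal{A}}\theta_0\cdot\pa_t\mathcal{N} - \theta_0\,\pa_t|\mathcal{N}|$ are controlled by products of the individual norms. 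The factors involving $\pa_t\mathcal{A}$, $\pa_t\mathcal{N}$, $\pa_t|\mathcal{N}|$ reduce, via \eqref{eq:components} and $\mathcal{N}=\mathcal{N}_0-\pa_1\eta e_1$, to derivatives of $\pa_t\bar\eta$ that lie in $W^{1,q_+}$ with norm controlled by $\|\pa_t\eta\|_{H^{3/2+(\varepsilon_--\alpha)/2}}$ (using that $3/2+(\varepsilon_--\alpha)/2 > 3/2 - 1/q_+$ for the chosen parameter regime \eqref{parameters}), while the $\theta_0$ factors are controlled by $\|\theta_0\|_{W^{2,q_+}}$ via the trace inclusion $W^{2,q_+}(\Om)\hookrightarrow W^{2-1/q_+,q_+}(\Sigma)\hookrightarrow W^{1,q_+}(\Sigma)$.

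The main obstacle is verifying that the specific regularity index $H^{3/2+(\varepsilon_--\alpha)/2}$ for $\pa_t\eta$ is both strong enough to produce a $W^{1,\infty}$ bound on $\pa_t\bar\eta$ in the bulk and an $W^{1,q_+}$ bound on the boundary traces of $\pa_t\mathcal{A}$, $\pa_t\mathcal{N}$; this is a careful bookkeeping exercise using the relation $q_+ = 2/(2-\varepsilon_+)$ from \eqref{parameters} together with $\varepsilon_+ > \varepsilon_-$. Once the exponent matching is carried out, both inequalities \eqref{est:initial_f11} and \eqref{est:initial_f51} follow by summing the product bounds and absorbing the constant $k$. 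Since this is routine product/trace bookkeeping in the spirit of the estimates already performed in the proofs of Lemma \ref{lemma:est_force} and Lemma \ref{lem:est_force_dual}, I would expect the proof to be quite short, essentially noting ``apply the Leibniz rule, H\"older, and the embedding/trace results, as in Lemma \ref{lemma:est_force}.''
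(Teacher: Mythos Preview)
Your proposal is correct and follows essentially the same route as the paper: expand $\mathfrak{G}^8(\theta_0)$ and $\mathfrak{G}^9(\theta_0)$ by the Leibniz rule, bound each product by H\"older plus Sobolev embeddings in the bulk, and use the trace characterization $W^{1-1/q_+,q_+}(\Sigma)=\mathrm{tr}\,W^{1,q_+}(\Om)$ together with product estimates on the boundary. One small slip: the H\"older split you quote, $\tfrac{1}{q_+}=\tfrac12+\tfrac{\varepsilon_+}{2}$, is not the correct identity (since $q_+=2/(2-\varepsilon_+)$ gives $\tfrac{1}{q_+}=\tfrac12+\tfrac{1-\varepsilon_+}{2}$); the paper places $\nabla\mathcal{A}_0$ or $\nabla\pa_t\mathcal{A}(0)$ in $L^2$ and $\nabla\theta_0$ in $L^{2/(1-\varepsilon_+)}$, while the zero-order factors $\pa_t\mathcal{A}(0)$, $\pa_tJ(0)K(0)$ go in $L^\infty$.
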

\begin{proof}
  We estimate $\mathfrak{G}^8(\theta_0)$ term by term. By $\|\mathcal{A}_0\|_{L^\infty(\Om)}\lesssim1$, we have
  \begin{equation}\label{est:initial_g11}
\begin{aligned}
  &\|\pa_tJ(0)K(0)\Delta_{\mathcal{A}_0}\theta_0\|_{L^{q_+}}\\
  &\lesssim\|\pa_tJ(0)K(0)\|_{L^\infty(\Om)}(\|\nabla^2\theta_0\|_{L^{q_+}}+\|\nabla\mathcal{A}_0\|_{L^2(\Om)}\|\nabla \theta_0\|_{L^{2/(1-\varepsilon_+)}})\\
  &\lesssim \|\pa_t\eta(0)\|_{H^{3/2+(\varepsilon_--\alpha)/2}}(1+\|\eta_0\|_{W^{3-1/q_+,q_+}})\|\theta_0\|_{W^{2,q_+}}.
\end{aligned}
  \end{equation}
  Similarly, we have
  \begin{equation}\label{est:initial_g13}
\begin{aligned}
  &\|\dive_{\mathcal{A}_0}\nabla_{\pa_t\mathcal{A}(0)}\theta_0 + \dive_{\pa_t\mathcal{A}(0)}\nabla_{\mathcal{A}_0}\theta_0\|_{L^{q_+}}\\
  &\lesssim \|\pa_t\mathcal{A}(0)\|_{L^\infty(\Om)}\|\nabla^2\theta_0\|_{L^{q_+}}+\|\nabla\pa_t\mathcal{A}(0)\|_{L^2(\Om)}\|\nabla \theta_0\|_{L^{2/(1-\varepsilon_+)}}\\
  &\quad+\|\pa_t\mathcal{A}(0)\|_{L^\infty(\Om)}\|\nabla^2 \theta_0\|_{L^{q_+}}+\|\pa_t\mathcal{A}(0)\|_{L^\infty(\Om)}\|\nabla\mathcal{A}_0\|_{L^2(\Om)}\|\nabla \theta_0\|_{L^{2/(1-\varepsilon_+)}}\\
  &\lesssim\|\pa_t\eta(0)\|_{H^{3/2+(\varepsilon_--\alpha)/2}}(1+\|\eta_0\|_{W^{3-1/q_+,q_+}})\|\theta_0\|_{W^{2,q_+}}.
\end{aligned}
  \end{equation}
  The combination of \eqref{est:initial_g11}--\eqref{est:initial_g13} gives the bound of \eqref{est:initial_f11}.

  We turn to the bounds for $\mathfrak{G}^9(\theta_0)$. We first employ Sobolev embedding theory and usual trace theory to deduce that
  \begin{equation}\label{est:initial_g41}
  \begin{aligned}
\|\nabla_{\pa_t\mathcal{A}(0)}\theta_0 \cdot\mathcal{N}(0)\|_{W^{1-1/q_+, q_+}(\Sigma)}&\lesssim \|\nabla_{\pa_t\mathcal{A}(0)}\theta_0\|_{W^{1, q_+}(\Om)}\\
&\lesssim \|\nabla_{\pa_t\mathcal{A}(0)}\theta_0\|_{L^{q_+}}+\|\pa_t\mathcal{A}(0)\nabla\theta_0)\|_{L^{q_+}}\\
&\quad+\|\pa_t\mathcal{A}(0)\nabla^2 \theta_0)\|_{L^{q_+}}\\
&\lesssim \|\pa_t\eta(0)\|_{H^{3/2+(\varepsilon_--\alpha)/2}}\|\theta_0\|_{W^{2,q_+}}.
  \end{aligned}
  \end{equation}
  The similar argument allows us to deduce
  \begin{equation}\label{est:initial_g42}
  \begin{aligned}
\|\nabla_{\mathcal{A}_0}\theta_0\pa_t\mathcal{N}(0)\|_{W^{1-1/q_+, q_+}(\Sigma)}
&\lesssim \|\nabla_{\mathcal{A}_0}\theta_0\|_{W^{1, q_+}(\Om)}(\|\pa_t\mathcal{N}(0)\|_{L^\infty} +\|\pa_t\mathcal{N}(0)\|_{W^{1, q_+}(\Sigma)})\\
&\lesssim \|\nabla \theta_0\|_{W^{1, q_+}}+\||\nabla\mathcal{A}_0|\nabla \theta_0\|_{L^{q_+}})\|\pa_t\eta(0)\|_{H^{3/2+(\varepsilon_--\alpha)/2}}\\
&\lesssim \|\pa_t\eta(0)\|_{H^{3/2+(\varepsilon_--\alpha)/2}}\|\theta_0\|_{W^{2,q_+}},
  \end{aligned}
  \end{equation}
  and
  \begin{equation}\label{est:initial_g43}
\|\theta_0\pa_t|\mathcal{N}(0)|\|_{W^{1-1/q_+, q_+}(\Sigma)}\lesssim \|\pa_t\eta(0)\|_{H^{3/2+(\varepsilon_--\alpha)/2}}\|\theta_0\|_{W^{2,q_+}}.
  \end{equation}

  The estimates \eqref{est:initial_g41}--\eqref{est:initial_g43} along with the assumption $\pa_tF^9(0)\in W^{1-1/q_+, q_+}(\Sigma)$ imply \eqref{est:initial_f51}.

\end{proof}

Under the assumption on $\pa_t^2\theta(0)$ and Lemma \ref{lem:initial_force1}, we wish to construct $\pa_t\theta(0)$. However, from Lemma \ref{lem:initial_force1}, we know that the forcing terms depend on $\theta_0$, whichh is yet unknown. So we need an iteration as follows.
\begin{align}\label{eq:iteration_initial1}
  \left\{
  \begin{aligned}
    & - k\Delta_{\mathcal{A}_0} \pa_t\theta^{n+1} (0) = \pa_t^2\theta(0) + F^{8,1}(\theta_0^n) \quad & \text{in}& \ \Om,\\
    &k\nabla_{\mathcal{A}_0}\pa_t\theta^{n+1} (0) \cdot\mathcal{N}(0) + \pa_t\theta^{n+1} (0)|\mathcal{N}(0)| = F^{9,1}(\theta_0^n) \quad & \text{on}& \ \Sigma,\\
    &\pa_t\theta^{n+1} (0) =0\quad & \text{on}& \ \Sigma_s,
  \end{aligned}
  \right.
\end{align}
and
\begin{align}\label{eq:iteration_initial2}
  \left\{
  \begin{aligned}
    & - k\Delta_{\mathcal{A}_0} \theta^{n+1} (0) = \pa_t\theta^n(0) + F^8(0) \quad & \text{in}& \ \Om,\\
    &k\nabla_{\mathcal{A}_0}\theta^{n+1} (0) \cdot\mathcal{N}(0) + \theta^{n+1} (0)|\mathcal{N}(0)| = F^9(0) \quad & \text{on}& \ \Sigma,\\
    &\theta^{n+1} (0) =0\quad & \text{on}& \ \Sigma_s.
  \end{aligned}
  \right.
\end{align}
These two coupled iterations can present the construction of both $\pa_t\theta(0)$ and $\theta_0$, which obey the elliptic estimates:
\begin{lemma}\label{lem:initial_dt}
   \begin{align}\label{est:initial_h}
   \begin{aligned}
&\|\theta_0\|_{W^{2, q_+}}^2+\|\pa_t\theta(0)\|_{W^{2, q_+}}^2 + \|\pa_t\theta(0)\|_{H^{1+\varepsilon_-/2}}^2\\
&\lesssim\|\pa_t^2\theta(0)\|_{L^2}^2+
 \|F^8(0)\|_{L^{q_+}}^2+\|F^9(0)\|_{W^{1-1/q_+,q_+}}^2+\\
 &\quad+\|\pa_tF^8(0)\|_{L^{q_+}}^2+\|\pa_tF^9(0)\|_{W^{1-1/q_+,q_+}}^2.
 \end{aligned}
  \end{align}
\end{lemma}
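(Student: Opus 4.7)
The strategy is to realize the coupled iteration \eqref{eq:iteration_initial1}--\eqref{eq:iteration_initial2} as a contraction on the product space $W^{2,q_+}(\Om)\times W^{2,q_+}(\Om)$ under the smallness hypothesis on the initial data, then pass to the fixed point and extract the $H^{1+\varepsilon_-/2}$ estimate by embedding.

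First I would check well-posedness of each iteration step. Because $\mathcal{A}_0$ is determined by $\eta_0$ with $\|\eta_0\|_{W^{3-1/q_+,q_+}}$ controlled by $\sqrt{\mathcal{E}^\varepsilon(0)}$, Theorem \ref{thm:elliptic_1} applies directly to both \eqref{eq:iteration_initial1} and \eqref{eq:iteration_initial2} and yields
\begin{align*}
\|\pa_t\theta^{n+1}(0)\|_{W^{2,q_+}}^2 &\lesssim \|\pa_t^2\theta(0)\|_{L^{q_+}}^2 + \|F^{8,1}(\theta_0^n)\|_{L^{q_+}}^2 + \|F^{9,1}(\theta_0^n)\|_{W^{1-1/q_+,q_+}}^2,\\
\|\theta^{n+1}(0)\|_{W^{2,q_+}}^2 &\lesssim \|\pa_t\theta^n(0)\|_{L^{q_+}}^2 + \|F^8(0)\|_{L^{q_+}}^2 + \|F^9(0)\|_{W^{1-1/q_+,q_+}}^2,
\end{align*}
where the embedding $L^2(\Om)\hookrightarrow L^{q_+}(\Om)$ on the bounded domain handles the $\pa_t^2\theta(0)$ source. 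Lemma \ref{lem:initial_force1} then exposes the small multiplicative factor $\delta_1 := \|\pa_t\eta(0)\|_{H^{3/2+(\varepsilon_--\alpha)/2}}$, itself bounded by $\sqrt{\mathcal{E}^\varepsilon(0)}$, in front of $\|\theta_0^n\|_{W^{2,q_+}}$ on the right-hand side of the first estimate.

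Next I would establish contraction by examining the differences $\Theta^n := \theta_0^{n+1}-\theta_0^n$ and $\Psi^n := \pa_t\theta^{n+1}(0)-\pa_t\theta^n(0)$. These satisfy the same elliptic problems but with all constant data terms canceled, so Theorem \ref{thm:elliptic_1} combined with the bilinear structure used in Lemma \ref{lem:initial_force1} yields
\[
\|\Psi^n\|_{W^{2,q_+}} \lesssim \delta_1 \|\Theta^{n-1}\|_{W^{2,q_+}}, \qquad \|\Theta^n\|_{W^{2,q_+}} \lesssim \|\Psi^{n-1}\|_{W^{2,q_+}}.
\]
Endowing the product space with the weighted norm $\|(\theta,\psi)\|_\ast := \|\theta\|_{W^{2,q_+}} + M\|\psi\|_{W^{2,q_+}}$ and optimizing in $M\sim\sqrt{\delta_1}$, the coupled map becomes a strict contraction with factor $\kappa\sim\sqrt{\delta_1}<1$ for $\mathcal{E}^\varepsilon(0)$ small. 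Passing to the fixed point produces $(\theta_0,\pa_t\theta(0))\in W^{2,q_+}(\Om)^2$ solving the limit elliptic system; absorbing the small $\delta_1^2\|\theta_0\|_{W^{2,q_+}}^2$ contribution into the left-hand side yields the bound on $\|\theta_0\|_{W^{2,q_+}}^2+\|\pa_t\theta(0)\|_{W^{2,q_+}}^2$ by the right-hand side of \eqref{est:initial_h}. The remaining $H^{1+\varepsilon_-/2}$ estimate on $\pa_t\theta(0)$ then follows from the Sobolev embedding $W^{2,q_+}(\Om)\hookrightarrow H^{1+\varepsilon_+}(\Om)\hookrightarrow H^{1+\varepsilon_-/2}(\Om)$, valid since $q_+=2/(2-\varepsilon_+)$ and $\varepsilon_-<\varepsilon_+$ by \eqref{parameters}.

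The main obstacle is the asymmetry of the coupling: the source $\pa_t\theta^n(0)$ in \eqref{eq:iteration_initial2} carries no small multiplicative factor, whereas $F^{8,1}(\theta_0^n)$ in \eqref{eq:iteration_initial1} does. A naive step-by-step contraction in an unweighted norm therefore fails, and the smallness must be harvested across the composite half-step $\theta_0^n \mapsto \pa_t\theta^{n+1}(0)\mapsto \theta_0^{n+2}$ (or, equivalently, via the weighted norm above). A secondary subtlety is ensuring that the target fixed point is compatible with the PDE \eqref{eq:linear_heat} at $t=0$ using the prescribed value $\pa_t^2\theta(0)\in H^0$, which dictates the precise choice of source term in \eqref{eq:iteration_initial1}.
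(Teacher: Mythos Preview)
Your proposal is correct and follows essentially the same approach as the paper: both apply the elliptic estimate of Theorem \ref{thm:elliptic_1} to the coupled iteration \eqref{eq:iteration_initial1}--\eqref{eq:iteration_initial2}, invoke Lemma \ref{lem:initial_force1} to expose the small factor $\|\pa_t\eta(0)\|_{H^{3/2+(\varepsilon_--\alpha)/2}}$, and then run a contraction argument to obtain the fixed point and the bound \eqref{est:initial_h}. Your treatment of the asymmetric coupling via a weighted norm (or equivalently the composite two-step map) is more explicit than the paper, which simply records the two elliptic estimates \eqref{est:initial_3}--\eqref{est:initial_4} and defers to a ``standard contraction argument''; likewise your derivation of the $H^{1+\varepsilon_-/2}$ bound by the embedding $W^{2,q_+}\hookrightarrow H^{1+\varepsilon_+}$ is a clean way to recover what the paper folds directly into \eqref{est:initial_3}.
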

\begin{proof}
The elliptic theory to \eqref{eq:iteration_initial1} allows us to construct $\pa_t\theta^{n+1}(0)$ such that
  \begin{equation}\label{est:initial_3}
  \begin{aligned}
\|\pa_t\theta^{n+1}(0)\|_{W^{2, q_+}}^2 + \|\pa_t\theta^{n+1}(0)\|_{H^{1+\varepsilon_-/2}}^2
&\lesssim\|\pa_t\eta(0)\|_{H^{3/2+(\varepsilon_--\alpha)/2}}^2\|\theta_0^n\|_{W^{2,q_+}}^2+ \|\pa_t^2\theta(0)\|_{H^0}^2\\
&+\|\pa_tF^8(0)\|_{L^{q_+}}^2+\|\pa_tF^9(0)\|_{W^{1-1/q_+, q_+}}^2.
\end{aligned}
  \end{equation}
and to \eqref{eq:iteration_initial2} allows us to construct $\theta^{n+1}_n$ such that
  \begin{equation}\label{est:initial_4}
  \begin{aligned}
\|\theta^{n+1}(0)\|_{W^{2, q_+}}^2
\lesssim \|\pa_t\theta^n(0)\|_{H^0}^2
+\|F^8(0)\|_{L^{q_+}}^2+\|F^9(0)\|_{W^{1-1/q_+, q_+}}^2.
\end{aligned}
  \end{equation}
  by a standard contraction argument, we can show that $(\pa_t\theta^n(0), \theta_0^n) \in H^1(\Om) \times H^1(\Om)$ is a Cauchy sequence. Hence, by passing the limit $n\to \infty$,
$(\pa_t\theta^n(0), \theta_0^n) \times \left( W^{2, q_+}(\Om) \cap  H^{1+\varepsilon_-/2} \right) \times W^{2, q_+}(\Om)$ obeys the estimate \eqref{est:initial_h}.
\end{proof}

\subsection{Contraction and Initial Data for Nonlinear PDEs}\label{sec:initial_nonlinear}

In this section, we give the construction of the initial data for $\varepsilon$ regularized nonlinear system \eqref{eq:geometric} and \eqref{eq:geo_bc}. In this section, all the $(u, p, \theta, \eta)$  and $(v, q, \vartheta, \xi)$ should be considered with superscript $\varepsilon$. But for simplicity, we ignore $\varepsilon$.

The main result in this section is to show Theorem \ref{thm:initial} by the fixed point theory. We first assume that $(u_0, p_0, \theta_0, \eta_0)$, $(D_tu(0), \pa_tp(0), \pa_t\theta(0), \pa_t\eta(0))$, and $(D_t^2u(0), \pa_t^2\theta(0), \pa_t^2\eta(0))$ are given so that
$\mathcal{E}^\varepsilon (u,p,\theta,\eta)(0) + \|\pa_t\eta(0)\|_{W^{2-1/q_+, q_+}}^2\le \delta$ for $\delta$ sufficiently small.  Then we denote the unknowns $(v_0, q_0, \vartheta_0, \xi_0)$ in \eqref{eq:modified_linear} and \eqref{eq:linear_heat} with $j=0$ at $t=0$ instead of $(u_0, p_0, \theta_0, \xi_0)$, and the unknowns $(D_tv(0), \pa_tq(0), \pa_t\vartheta(0), \pa_t\xi(0))$ in \eqref{eq:modified_linear} and \eqref{eq:linear_heat} with $j=1$ at $t=0$ instead of $(D_tu(0), \pa_tp(0), \pa_t\theta(0), \pa_t\eta(0))$, and obtain the boundedness $\mathcal{E}^\varepsilon (v,q, \vartheta, \xi)(0) + \|\pa_t^2\xi(0)\|_{W^{2-1/q_+, q_+}}^2\le \delta$ for nonlinear initial data, subjected to the assumptions that $D_t^2v(0) = D_t^2u(0)$, $\pa_t^2\vartheta(0) = \pa_t^2\theta(0)$ and $\pa_t^2\xi(0) = \pa_t^2 \eta(0)$. Finally we use the contraction to show the existence of fixed points.

In order to use Lemma \ref{lem:initial_dt}, we give the estimate of forcing terms in \eqref{est:initial_h}.
\begin{lemma}\label{est:initial_force3}
It holds that
  \begin{equation}\label{est:initial_force1}
  \begin{aligned}
\|F^8(0)\|_{L^{q_+}}
 \lesssim(\|\pa_t\eta(0)\|_{H^{3/2+(\varepsilon_--\alpha)/2}}+\|u_0\|_{W^{2,q_+}}^2)\|\theta_0\|_{W^{2,q_+}},
\end{aligned}
  \end{equation}
  \begin{equation}\label{est:initial_force2}
  \begin{aligned}
 \|\pa_tF^8(0)\|_{L^{q_+}}
 &\lesssim(\|\pa_t^2\eta(0)\|_{H^1} + \|u_0\|_{H^1}\|\pa_t\eta(0)\|_{H^{3/2+(\varepsilon_--\alpha)/2}} + \|D_tu(0)\|_{H^1}) \|\theta_0\|_{W^{2,q_+}}\\
 &\quad + \|u_0\|_{H^1}\|\pa_t\theta(0)\|_{H^{1+\varepsilon_-/2}} .
\end{aligned}
  \end{equation}
\end{lemma}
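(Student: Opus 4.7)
The plan is to write $F^8 = \pa_t\bar\eta\, WK\,\pa_2\theta - u\cdot\nabla_{\mathcal{A}}\theta$ directly from \eqref{eq:force0} and bound each of the two factors at $t=0$ by H\"older's inequality in $L^{q_+}$ paired with an appropriate choice of exponents, using the bounds $\|WK\|_{L^\infty}\lesssim 1$ and $\|\mathcal{A}_0\|_{L^\infty}\lesssim 1$ coming from the smallness of $\eta_0$. For the first inequality, the conjugate pairing $1/q_+ + \varepsilon_+/2 = 1$ together with the Sobolev embeddings $H^{3/2+(\varepsilon_--\alpha)/2}(\Sigma)\hookrightarrow L^\infty(\Sigma)$ (so $\|\pa_t\bar\eta(0)\|_{L^\infty}\lesssim \|\pa_t\eta(0)\|_{H^{3/2+(\varepsilon_--\alpha)/2}}$ via the Poisson-extension definition of $\bar\eta$) and $W^{2,q_+}\hookrightarrow L^\infty$ for $u_0$, combined with $\|\nabla\theta_0\|_{L^{2/\varepsilon_+}} \lesssim \|\theta_0\|_{W^{2,q_+}}$ , give the desired bound after summing the two contributions.

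For the second inequality, I would first compute
\[
\pa_tF^8 = \pa_t^2\bar\eta\,WK\pa_2\theta + \pa_t\bar\eta\,W\pa_tK\,\pa_2\theta + \pa_t\bar\eta\,WK\,\pa_2\pa_t\theta - \pa_tu\cdot\nabla_{\mathcal{A}}\theta - u\cdot\nabla_{\pa_t\mathcal{A}}\theta - u\cdot\nabla_{\mathcal{A}}\pa_t\theta,
\]
then substitute $\pa_tu(0) = D_tu(0) + R(0)u_0$ from the definition of $D_t$ given in the remark after Theorem \ref{thm:main}, noting that $R(0)$ is controlled by $\pa_tJK(0)$ and $\pa_t\mathcal{A}(0)\mathcal{A}_0^{-1}$, which in turn are controlled by $\|\pa_t\eta(0)\|_{H^{3/2+(\varepsilon_--\alpha)/2}}$. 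Each of the six resulting terms is then estimated separately in $L^{q_+}$: the $\pa_t^2\bar\eta$ term uses $H^1\hookrightarrow L^{2/\varepsilon_-}$ with the pairing $1/q_+ + (1-\varepsilon_+)/2 + \varepsilon_-/2 \le 1$ (one may also use $1/q_++\varepsilon_+/2=1$ at the cost of pulling $\nabla\theta_0$ into $L^{2/(1-\varepsilon_+)}$); the $D_tu(0)$ and $R u_0$ terms are bounded through $\|D_tu(0)\|_{H^1}\cdot \|\theta_0\|_{W^{2,q_+}}$ and $\|u_0\|_{H^1}\|\pa_t\eta(0)\|_{H^{3/2+(\varepsilon_--\alpha)/2}}\|\theta_0\|_{W^{2,q_+}}$; the $u\cdot\nabla_{\pa_t\mathcal{A}}\theta$ term produces $\|u_0\|_{H^1}\|\pa_t\eta(0)\|_{H^{3/2+(\varepsilon_--\alpha)/2}}\|\theta_0\|_{W^{2,q_+}}$; and the last term involving $\nabla\pa_t\theta(0)$ is dominated by $\|u_0\|_{H^1}\|\pa_t\theta(0)\|_{H^{1+\varepsilon_-/2}}$ via $H^{\varepsilon_-/2}\hookrightarrow L^{2/(1-\varepsilon_-)}$ on the gradient.

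The only mildly delicate step is the H\"older book-keeping: one must ensure the conjugate exponents exist and the Sobolev embeddings are admissible in two dimensions for the chosen $\varepsilon_\pm,\alpha$ from \eqref{parameters}; the assumptions $\alpha < \varepsilon_-/2 < \varepsilon_+/2$ and $\varepsilon_+<1$ leave enough room that all the relevant $L^p$ spaces embed continuously from $H^1$ or $W^{2,q_+}$. Since all factors of $W$, $K$ and $\mathcal{A}$ at $t=0$ are bounded by universal constants under the smallness hypothesis, no further geometric structure is needed. I expect no genuine obstruction here; the lemma is a routine H\"older-Sobolev computation, and the main care is simply in replacing $\pa_tu(0)$ by $D_tu(0)+R(0)u_0$ so that the right-hand side of \eqref{est:initial_force2} features $\|D_tu(0)\|_{H^1}$ rather than $\|\pa_tu(0)\|_{H^1}$, as required to couple with the initial-data construction scheme in Theorem \ref{thm:initial}.
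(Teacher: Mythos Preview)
Your proposal is correct and follows essentially the same route as the paper: direct H\"older estimates on each summand of $F^8=\pa_t\bar\eta\,WK\,\pa_2\theta-u\cdot\nabla_{\mathcal{A}}\theta$, and for $\pa_tF^8(0)$ the same six-term expansion together with the substitution $\pa_tu(0)=D_tu(0)+R(0)u_0$, with $R(0)$ controlled via $\|\pa_t\eta(0)\|_{H^{3/2+(\varepsilon_--\alpha)/2}}$. The paper's proof is exactly this term-by-term bookkeeping (see \eqref{est:initial_f111}--\eqref{est:initial_dtf114}), so your outline matches both in strategy and in the specific H\"older/Sobolev pairings used.
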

\begin{proof}
  From the definition of $F^8(0)$, we directly use the H\"older inequality to show that
  \begin{equation}\label{est:initial_f111}
  \begin{aligned}
\|F^8(0)\|_{L^{q_+}}&\lesssim \|\pa_t\bar{\eta}(0)\pa_2\theta_0\|_{L^{q_+}}+\|u_0\cdot\nabla_{\mathcal{A}_0}\theta_0\|_{L^{q_+}}\\
&\lesssim (\|\pa_t\eta(0)\|_{H^{3/2+(\varepsilon_-\alpha)/2}}+\|u_0\|_{W^{2,q_+}})\|\theta_0\|_{W^{2,q_+}},
\end{aligned}
  \end{equation}
  where in the last inequality, we have used the fact $\|\mathcal{A}_0\|\lesssim 1$, and the Sobolev embedding theory and usual trace theory.

 We estimate $\pa_tF^8(0)$ term by term. First, by the H\"older inequality, and Sobolev embedding theory and usual trace theory, we have that
 \begin{equation}\label{est:initial_dtf111}
\begin{aligned}
 \|\pa_t^2\bar{\eta}(0)K(0)\pa_2\theta_0\|_{L^{q_+}}\lesssim \|\pa_t^2\bar{\eta}(0)\|_{L^\infty}\|\pa_2\theta_0\|_{L^{q_+}}\lesssim \|\pa_t^2\eta(0)\|_{H^1}\|\theta_0\|_{W^{2,q_+}},
\end{aligned}
 \end{equation}
 where we have used $\|K(0)\|_{L^\infty}\lesssim 1$. Similarly,
 \begin{equation}\label{est:initial_dtf112}
\begin{aligned}
 \|\pa_t\bar{\eta}(0)\pa_tK(0)\pa_2\theta_0+\pa_t\bar{\eta}(0)K(0)\pa_2\pa_t\theta(0)\|_{L^{q_+}}\lesssim \|\pa_t\eta(0)\|_{H^1}(\|\theta_0\|_{W^{2,q_+}}+\|\pa_t\theta(0)\|_{H^{1+\varepsilon_-/2}}).
\end{aligned}
 \end{equation}

Then the lower order terms are bounded by
\begin{equation}\label{est:initial_dtf114}
  \begin{aligned}
&\|D_tu(0)\nabla_{\mathcal{A}_0}\theta_0\|_{L^{q_+}}+\|u_0\nabla_{\pa_t\mathcal{A}(0)}\theta_0\|_{L^{q_+}} + \|u_0\nabla_{\mathcal{A}_0}\pa_t\theta(0)\|_{L^{q_+}}+\|(R(0)u_0)\nabla_{\mathcal{A}_0}\theta_0\|_{L^{q_+}}\\
&\lesssim (\|D_tu(0)\|_{H^1}+\|u_0\|_{H^1}\|\pa_t\eta(0)\|_{H^{3/2+(\varepsilon_--\alpha)/2}})\|\theta_0\|_{W^{2,q_+}}+\|u_0\|_{H^1}\|\pa_t\theta(0)\|_{H^{1+\varepsilon_-/2}}.
  \end{aligned}
\end{equation}

So all the above estimates from \eqref{est:initial_dtf111}--\eqref{est:initial_dtf114} show the estimate \eqref{est:initial_force2}.
\end{proof}

In order to prove the contraction, we refer to velocities as $v^i, u^i$, pressures as $q^i, p^i$, temperatures as $\vartheta^i, \theta^i$ and surface functions as $\eta^i, \xi^i$, $i=1, 2$. We suppose that $(u_0^j, p_0^j, \theta_0^j, \eta_0^j, D_tu^j(0), \pa_tp^j(0), \pa_t\eta^j(0))$ for $j=1, 2$,
as well as$
(D_t^2v^1(0), \pa_t^2\theta^1(0), \pa_t^2\eta^1(0))=(D_t^2v^2(0), \pa_t^2\theta^2(0), \pa_t^2\eta^2(0))$
 are two initial data for the nonlinear $\varepsilon-$ regularized system \eqref{eq:modified_linear} and \eqref{eq:linear_heat} with $\xi$ replaced by $\eta$. Then it is concerned with the following systems with $j=0, 1$:
 \begin{equation}\label{eq:pde_hdt1}
\left\{
\begin{aligned}
  &\pa_t^{j+1}\vartheta^i(0) - k \Delta_{\mathcal{A}^i_0} \pa_t^j\vartheta^i(0)=\mathfrak{f}^{8,i}_j(0)\quad&\text{in}&\quad\Om,\\
  & k \mathcal{N}^i(0) \cdot\nabla_{\mathcal{A}_0} \pa_t^j\vartheta^i(0) + \vartheta |\mathcal{N}^i(0)| =\mathfrak{f}^{9,i}_j(0)\quad&\text{on}&\quad\Sigma,\\
  &\pa_t^j\vartheta^i(0) = 0\quad&\text{on}&\quad\Sigma_s,
\end{aligned}
\right.
\end{equation}
where $ \mathfrak{f}^{k,i}_j(0) = \pa_t \pa_tF^{k}(\theta_0^i,\eta_0^i,\pa_t\eta^i(0))+ \mathfrak{G}^k(\theta_0^i)$, $k = 8, 9$, $i = 1, 2$, and
\begin{equation}\label{eq:pde_dt1}
  \left\{
  \begin{aligned}
  &(D_t^{j+1}v^i)(0)+\nabla_{\mathcal{A}^i_0}(\pa_t^jq^i)(0)-\mu\Delta_{\mathcal{A}^i_0}(D_t^jv^i(0)+2^jR^i(0)D_t^jv^i(0) \\ &\quad\quad\quad\quad\quad\quad\quad\quad\quad\quad\quad\quad\quad\quad\quad\quad\quad\quad\quad\quad- g\pa_t^j\theta^i(0)e_2=\mathfrak{f}^{1,i}_{j}(0)\quad&\text{in}&\quad\Om,\\
  &\dive_{\mathcal{A}^i_0}(D_t^jv^i)(0)=0\quad&\text{in}&\quad\Om,\\
  &S_{\mathcal{A}^i_0}(\pa_t^jq^i(0),D_t^jv^i(0))\mathcal{N}^i(0)=\bigg[g(\pa_t^j\xi^i(0)+\varepsilon\pa_t^{j+1}\xi^i(0))\\
  &\quad\quad\quad\quad-\sigma_1\pa_1\left(\frac{\pa_1\pa_t^j\xi^i(0)+\varepsilon\pa_1\pa_t^{j+1}\xi^i(0)}{(1+|\pa_1\zeta_0|^2)^{3/2}}\right)-\pa_1 \mathfrak{f}^{3,i}_j(0)\bigg]\mathcal{N}(0)+\mathfrak{f}^{4,i}_j(0)\quad&\text{on}&\quad\Sigma,\\
  &D_t^jv^i(0)\cdot\mathcal{N}^i(0)=\pa_t^{j+1}\xi^i(0)\quad&\text{on}&\quad \Sigma,\\
  &(S_{\mathcal{A}^i_0}(\pa_t^jq^i(0),D_t^jv^i(0))\nu-\beta D_t^jv^i(0))\cdot\tau=\mathfrak{f}^{5,i}_j(0)\quad&\text{on}&\quad\Sigma_s,\\
  &D_t^jv^i(0)\cdot\nu=0\quad&\text{on}&\quad\Sigma_s,\\
  &\mp\sigma_1\frac{\pa_1\pa_t^j\xi^i(0)+\varepsilon\pa_1\pa_t^{j+1}\xi^i(0)}{(1+|\pa_1\zeta_0|^2)^{3/2}}(\pm\ell)=\kappa(D_t^jv^i(0)\cdot\mathcal{N}^i(0))(\pm\ell)\\
  &\quad\quad\quad\quad\quad\quad\quad\quad\quad\quad\quad\quad\quad\quad\quad\quad\pm \mathfrak{f}^{3,i}_j(\pm\ell,0)-\mathfrak{f}^{7,i}_j(\pm\ell,0),
  \end{aligned}
  \right.
\end{equation}
where $\mathfrak{f}^{k,i}_0$ for $i=1, 2$, $k=1, 3, 4, 5, 9$ can be given in \cite{GTWZ2023}.  We define $\mathcal{A}^i$, $\mathcal{N}^i$, etc, in terms of $\eta^i$, $i=1, 2$. Set
\[
\begin{aligned}
  \widetilde{\mathfrak{E}}_0
  &=\|D_tu^i(0)\|_{H^{1+\varepsilon_-/2}}^2+\|\pa_tp^i(0)\|_{H^{\varepsilon_-/2}}^2+\|\pa_t\theta^i(0)\|_{H^{1+\varepsilon_-/2}}^2+\|\pa_t\xi^i(0)\|_{W^{3-1/q_+, q_+}}^2\\
  &\quad+\|u_0^i\|_{W^{2, q_+}}^2+\|p_0^i\|_{W^{1, q_+}}^2+\|\theta_0^i\|_{W^{2, q_+}}^2+\|\xi_0^i\|_{W^{3-1/q_+, q_+}}^2.
\end{aligned}
\]

We use Lemma \ref{lem:initial_dt} and \ref{est:initial_force3} to get that
\begin{lemma}
  The solutions $\pa_t^j\vartheta(0)$ solving \eqref{eq:pde_hdt1}  are bounded by
  \begin{equation}
  \|\vartheta_0^i\|_{W^{2, q_+}}^2+\|\pa_t\vartheta^i(0)\|_{H^{1+\varepsilon_-/2}}^2
  \lesssim \|\pa_t^2\theta(0)\|_{L^2}^2+\widetilde{\mathfrak{E}}_0.
  \end{equation}
\end{lemma}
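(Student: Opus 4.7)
The plan is to interpret the elliptic system \eqref{eq:pde_hdt1} (for $j=0,1$) as two instances of the Poisson-type problem treated in Lemma \ref{lem:initial_dt}, and then estimate the forcing terms $\mathfrak{f}^{8,i}_j(0)$, $\mathfrak{f}^{9,i}_j(0)$ using Lemma \ref{lem:initial_force1} and Lemma \ref{est:initial_force3}. The crucial feature is that when $j=1$ the temporal derivative $\pa_t^2\vartheta^i(0)$ is \emph{prescribed} (equal to the given $\pa_t^2\theta(0)$), and when $j=0$ the derivative $\pa_t\vartheta^i(0)$ has just been produced in the previous step; in both cases the left-hand side of \eqref{eq:pde_hdt1} can be rewritten as an inhomogeneous elliptic problem with Robin-type boundary condition for the same operator studied in Theorem \ref{thm:elliptic_1}.

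First I would apply Lemma \ref{lem:initial_dt} directly with $\theta_0$ playing the role of $\vartheta_0^i$ and $\pa_t\theta(0)$ playing the role of $\pa_t\vartheta^i(0)$, but with forcing data
\[
F^8(0) \rightsquigarrow \mathfrak{f}^{8,i}_0(0), \qquad F^9(0) \rightsquigarrow \mathfrak{f}^{9,i}_0(0),\qquad \pa_tF^8(0) \rightsquigarrow \mathfrak{f}^{8,i}_1(0), \qquad \pa_tF^9(0) \rightsquigarrow \mathfrak{f}^{9,i}_1(0),
\]
and with $\pa_t^2\theta(0)$ interpreted as the prescribed datum $\pa_t^2\vartheta^i(0)=\pa_t^2\theta(0)$. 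Since the assumptions $\|\eta_0^i\|_{W^{3-1/q_+,q_+}}<\gamma_0$ and the Robin boundary structure of \eqref{eq:pde_hdt1} match the hypotheses of Theorem \ref{thm:elliptic_1}, the iteration argument of Lemma \ref{lem:initial_dt} goes through verbatim and produces
\begin{equation}\label{plan:elliptic}
\|\vartheta_0^i\|_{W^{2,q_+}}^2+\|\pa_t\vartheta^i(0)\|_{H^{1+\varepsilon_-/2}}^2
\lesssim \|\pa_t^2\theta(0)\|_{L^2}^2 + \sum_{j=0}^1 \Big(\|\mathfrak{f}^{8,i}_j(0)\|_{L^{q_+}}^2 +\|\mathfrak{f}^{9,i}_j(0)\|_{W^{1-1/q_+,q_+}}^2\Big).
\end{equation}

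Second, I would bound each forcing term. For $j=0$ the functions $\mathfrak{f}^{8,i}_0(0)$ and $\mathfrak{f}^{9,i}_0(0)$ coincide up to the commutator $\mathfrak{G}^k$ with the nonlinearities in $F^8(0)$, so Lemma \ref{est:initial_force3} combined with Lemma \ref{lem:initial_force1} gives
\[
\|\mathfrak{f}^{8,i}_0(0)\|_{L^{q_+}} + \|\mathfrak{f}^{9,i}_0(0)\|_{W^{1-1/q_+,q_+}} \lesssim \big(\|\pa_t\eta^i(0)\|_{H^{3/2+(\varepsilon_--\alpha)/2}} + \|u_0^i\|_{W^{2,q_+}}\big)\|\theta_0^i\|_{W^{2,q_+}}.
\]
For $j=1$ the forcing $\mathfrak{f}^{8,i}_1(0) = \pa_tF^8(\theta^i,u^i,\eta^i)(0) + \mathfrak{G}^8(\theta_0^i)$ (and similarly on $\Sigma$) is bounded by \eqref{est:initial_force2} and \eqref{est:initial_g11}--\eqref{est:initial_g13}, yielding
\[
\|\mathfrak{f}^{8,i}_1(0)\|_{L^{q_+}} + \|\mathfrak{f}^{9,i}_1(0)\|_{W^{1-1/q_+,q_+}} \lesssim \widetilde{\mathfrak{E}}_0^{1/2} \, \widetilde{\mathfrak{E}}_0^{1/2},
\]
where the splitting uses the smallness of one factor to absorb the higher-regularity factor. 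Feeding these bounds back into \eqref{plan:elliptic} completes the estimate, since each summand is quadratic in quantities already controlled by $\widetilde{\mathfrak{E}}_0$.

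The main obstacle will be the coupling between $\vartheta_0^i$ and $\pa_t\vartheta^i(0)$ coming from the $\mathfrak{G}^k$ commutators: $\mathfrak{G}^8(\theta_0^i)$ and $\mathfrak{G}^9(\theta_0^i)$ involve second derivatives of $\theta_0^i$, so naively they are of the same order as the left-hand side of the $j=1$ equation and cannot simply be put on the right. This is precisely what the iteration \eqref{eq:iteration_initial1}--\eqref{eq:iteration_initial2} inside the proof of Lemma \ref{lem:initial_dt} is designed to resolve: under the smallness assumption $\|\eta_0^i\|_{W^{3-1/q_+,q_+}}<\gamma_0$, the coefficient in front of $\|\theta_0^i\|_{W^{2,q_+}}$ arising from $\mathfrak{G}^k$ is itself small, so a standard contraction argument closes the estimate. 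I expect no genuinely new difficulty beyond reproducing this contraction for the "input-indexed" coefficients $\mathcal{A}_0^i$, $\mathcal{N}^i(0)$ in place of $\mathcal{A}_0$, $\mathcal{N}(0)$, which is harmless since the smallness assumption $\mathcal{E}^\varepsilon(0)\le\delta$ provides the required bounds uniformly in $i$.
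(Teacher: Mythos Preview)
Your approach is correct and matches the paper's: the paper simply states that the lemma follows from Lemma \ref{lem:initial_dt} and Lemma \ref{est:initial_force3}, and you have correctly unpacked this into the elliptic estimate \eqref{plan:elliptic} plus bounds on the forcing terms.

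One minor point: the ``obstacle'' you flag at the end is not actually present here. In Lemma \ref{lem:initial_dt} the iteration was needed because $\mathfrak{G}^k(\theta_0)$ involved the \emph{unknown} $\theta_0$. In the present setting, however, the forcing terms $\mathfrak{f}^{k,i}_1(0)$ contain $\mathfrak{G}^k(\theta_0^i)$ with $\theta_0^i$ the \emph{given} input data (already controlled by $\widetilde{\mathfrak{E}}_0$), not the unknown $\vartheta_0^i$. Hence the two elliptic problems for $j=1$ and $j=0$ decouple sequentially---first solve for $\pa_t\vartheta^i(0)$ using Theorem \ref{thm:elliptic_1} with $\pa_t^2\theta(0)$ prescribed, then solve for $\vartheta_0^i$ with the freshly obtained $\pa_t\vartheta^i(0)$ as data---and no contraction is required. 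Invoking the full iteration of Lemma \ref{lem:initial_dt} is harmless but unnecessary.
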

Therefore, by assuming $\|D_t^2u(0)\|_{L^2}^2+\|\pa_t^2\theta(0)\|_{L^2}^2+\|\pa_t^2\eta(0)\|_{W^{3-1/q_+, q_+}}^2\lesssim \delta_0$, we have $
  \widetilde{\mathfrak{E}}_0\lesssim \delta
$
for a universal constant $\delta$ sufficiently small, $i=1, 2$.

For simplicity,  we might abuse the notation to denote
\[
  \begin{aligned}
  &u_0=u_0^1-u_0^2,\ p_0=p_0^1-p_0^2,\ \theta_0 = \theta_0^1-\theta_0^2,\ \xi_0=\xi_0^1-\xi_0^2,\ v_0=v_0^1-v_0^2,\\ &q_0=q_0^1-q_0^2,\ \vartheta_0 = \vartheta_0^1 - \vartheta_0^2,\ \eta_0=\eta_0^1-\eta_0^2,
  \end{aligned}
\]
\[
\begin{aligned}
  D_tu(0)=D_tu^1(0)-D_tu^2(0),\ \pa_tp(0)=\pa_tp^1(0)-\pa_tp^2(0),\ \pa_t\theta(0) = \pa_t\theta^1(0) - \pa_t\theta^2(0), \\ \pa_t\xi(0)=\pa_t\xi^1(0)-\pa_t\xi^2(0),\ D_tv(0)=D_tv^1(0)-D_tv^2(0),\ \pa_tq=\pa_tq^1(0)-\pa_tq^2(0),\\
   \pa_t\vartheta(0) = \pa_t\vartheta^1(0) - \pa_t\vartheta^2(0),\ \pa_t\eta(0)=\pa_t\eta^1(0)-\pa_t\eta^2(0).
\end{aligned}
\]
\begin{lemma}\label{lem:contract_1}
  $(D_tv(0), \pa_tq(0), \pa_t\vartheta(0), \pa_t\xi(0))$ obey the estimate
  \begin{equation}\label{est:initial_contract_4}
\begin{aligned}
  &\|D_tv(0)\|_{H^1}^2+\|\pa_tq(0)\|_{H^0}^2 + \|\pa_t\vartheta(0)\|_{H^1}^2+\|\pa_t\xi(0)\|_{H^{3/2}}^2\\
  &\le C\delta(\|D_tu(0)\|_{H^1}^2+\|u_0\|_{W^{2,q_+}}^2+\|\pa_t\theta(0)\|_{H^1}^2+\|\theta_0\|_{W^{2,q_+}}^2+\|\eta_0\|_{W^{3-1/q_+,q_+}}^2\\
&\quad+\|\pa_t\eta(0)\|_{H^{3/2}}^2(\|v_0\|_{W^{2,q_+}}^2 +\|\vartheta_0\|_{W^{2,q_+}}^2 )+\|q_0\|_{W^{1,q_+}}^2+\|\xi_0\|_{W^{3-1/q_+,q_+}}^2)
\end{aligned}
  \end{equation}
  and $(v_0, q_0, \theta_0, \xi_0)$  obey the estimate
\begin{equation}
  \begin{aligned}
&\|v_0\|_{W^{2,q_+}}^2+\|q_0\|_{W^{1,q_+}}^2+\|\vartheta_0\|_{W^{2,q_+}}^2+\|\xi_0\|_{W^{3-1/q_+,q_+}}^2\\
&\le C\left [ \|D_tv(0)\|_{L^2}^2 + \|\pa_t\vartheta(0)\|_{L^2}^2 +\delta(\|\eta_0\|_{W^{3-1/q_+,q_+}}^2+\|\pa_t\eta(0)\|_{H^{3/2}}^2+\|u_0\|_{W^{2,q_+}}^2+\|\theta_0\|_{W^{2,q_+}}^2) \right],
  \end{aligned}
\end{equation}
  where $C$ is a universal constant to be changed from line to line.
\end{lemma}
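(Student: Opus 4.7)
The plan is to obtain \eqref{est:initial_contract_4} by subtracting the two copies ($i=1,2$) of the heat system \eqref{eq:pde_hdt1} and of the Stokes system \eqref{eq:pde_dt1}, and then closing an energy/elliptic estimate on the differences. Because the hypothesis forces $D_t^2v^1(0)=D_t^2v^2(0)$, $\pa_t^2\theta^1(0)=\pa_t^2\theta^2(0)$ and $\pa_t^2\xi^1(0)=\pa_t^2\xi^2(0)$, the highest-order time derivatives cancel identically in the difference systems; what remains is driven purely by the differences in the coefficients $\mathcal{A}^i(0)$, $\mathcal{N}^i(0)$, $J^i(0)$, $R^i(0)$ (which depend on $\eta_0^i$ and $\pa_t\eta^i(0)$) and by the differences in the nonlinear forces $\mathfrak{f}^{k,i}_j(0)$. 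Each such difference factors as a multiplicative combination of a low-norm ``coefficient difference'' and a high-norm ``reference solution'', and by the standing smallness $\widetilde{\mathfrak{E}}_0\lesssim\delta$ the factors involving superscript $1$ or $2$ are bounded by $\delta^{1/2}$.

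The first step is the heat part. I will rewrite the difference equation for $\pa_t\vartheta(0)$ in the perturbed form used in Theorem~\ref{thm:elliptic_1}, with coefficients frozen at $\mathcal{A}^1(0),\mathcal{N}^1(0)$ and all extra terms pushed to the right-hand side. Applying the $W^{2,q_+}$ elliptic theory for the $\mathcal{A}$--Laplacian with Robin-type boundary datum then bounds $\|\pa_t\vartheta(0)\|_{W^{2,q_+}}$ by the $L^{q_+}(\Om)$ and $W^{1-1/q_+,q_+}(\Sigma)$ norms of the difference forcing. These norms, using the product-rule decomposition of $F^{8},F^{9},\mathfrak{G}^8,\mathfrak{G}^9$ as in Lemma~\ref{lem:initial_force1} and Lemma~\ref{est:initial_force3}, together with Sobolev embedding, trace theory, and $\|\mathcal{A}^i(0)\|_{L^\infty},\|J^i(0)\|_{L^\infty}\lesssim1$, contract to products of $\delta^{1/2}$ with the quantities $\|u_0\|_{W^{2,q_+}}$, $\|\theta_0\|_{W^{2,q_+}}$, $\|\eta_0\|_{W^{3-1/q_+,q_+}}$, $\|\pa_t\eta(0)\|_{H^{3/2}}$, $\|\pa_t\theta(0)\|_{H^1}$, and $\|D_tu(0)\|_{H^1}$. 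This accounts for the $\pa_t\vartheta(0)$ term in \eqref{est:initial_contract_4}, and a similar argument, coupled with the trace identity $\pa_t\xi(0)=(D_tv(0)\cdot\mathcal{N}^1(0))+\text{(difference)}$, yields the $\pa_t\xi(0)$ bound in $H^{3/2}$.

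The second step is the Stokes part, which is handled in the same spirit as the contraction argument for Theorem~\ref{thm: fixed point} (Steps~3--5 there). Subtracting \eqref{eq:pde_dt1} for $i=1,2$ and freezing coefficients at $\eta^1_0$, one obtains a linear $\varepsilon$-regularized Stokes problem for $(D_tv(0),\pa_tq(0),\pa_t\xi(0))$ with forcing given by the coefficient-difference terms, the buoyancy mismatch $g\pa_t\theta(0)e_2$, and the differences of the surface-tension remainders including the $\sigma_2\theta$ contribution from $F^{4}$. Invoking Theorem~\ref{thm:linear} (which is just the statement from \cite{GTWZ2023} adapted to include $\theta$), the $H^1\times H^0\times H^{3/2}$ norm of these differences is controlled by the $(L^{q_+},W^{1-1/q_+,q_+})$ and $(\mathcal{H}^1)^\ast$ norms of the forcing differences; each of the latter, once again by the product estimates of Section~\ref{sec:nonlinear} and the smallness $\widetilde{\mathfrak{E}}_0\lesssim\delta$, reduces to $\delta^{1/2}$ times a norm of the input differences already appearing on the right of \eqref{est:initial_contract_4}. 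Combining with the heat estimate completes \eqref{est:initial_contract_4}. The second inequality of the lemma then follows by viewing \eqref{eq:pde_hdt1} with $j=0$ as an elliptic problem for $\vartheta_0$ with right-hand side $\pa_t\vartheta(0)+F^8(0)$, and \eqref{eq:pde_dt1} with $j=0$ as a stationary $\mathcal{A}$-Stokes system for $(v_0,q_0,\xi_0)$ with right-hand side $D_tv(0)+\text{nonlinear}$; applying Theorem~\ref{thm:elliptic_1} and the $W^{2,q_+}\times W^{1,q_+}\times W^{3-1/q_+,q_+}$ Stokes estimate of \cite{GTWZ2023} gives the claimed bound.

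The main technical obstacle is the sharp bookkeeping of the product estimates on both the bulk and the boundary: every forcing difference must be written as ``(difference term)$\times$(reference term)'', with one factor absorbed into $\delta^{1/2}$ via $\widetilde{\mathfrak{E}}_0\le\delta$ and the other placed in one of the Sobolev norms appearing in the right of \eqref{est:initial_contract_4}. The new difficulty compared with \cite{GTWZ2023} is the temperature–velocity coupling through $g\theta e_2$ and through the surface-tension coefficient $\sigma_1-\sigma_2\theta$; however, since $\sigma_2\theta$ multiplies a factor $\pa_1(\pa_1\eta/\sqrt{1+|\pa_1\eta|^2})$ which is already quadratic in $\eta$, its difference contributes an additional factor of $\delta^{1/2}$ and is absorbed without trouble. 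Once this routine bookkeeping is carried out, the choice of $\delta_0$ sufficiently small turns \eqref{est:initial_contract_4} into a strict contraction and yields Theorem~\ref{thm:initial} by the Banach fixed-point theorem.
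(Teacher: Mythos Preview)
Your overall strategy—subtract the two copies of \eqref{eq:pde_hdt1} and \eqref{eq:pde_dt1}, freeze coefficients at $\eta^1_0$, and bound the forcing differences via product estimates and the smallness $\widetilde{\mathfrak{E}}_0\lesssim\delta$—is exactly what the paper does. Two differences are worth noting.

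For $\pa_t\vartheta(0)$ you propose the $W^{2,q_+}$ elliptic estimate of Theorem~\ref{thm:elliptic_1}. The paper instead tests the difference system \eqref{eq:diff_initial1} against $J^1(0)\pa_t\vartheta(0)$ and reads off the $H^1$ bound directly from the resulting energy identity
\[
\tfrac{k}{2}\int_\Om|\nabla_{\mathcal{A}^1_0}\pa_t\vartheta(0)|^2J^1(0)+\int_\Sigma|\pa_t\vartheta(0)|^2|\mathcal{N}^1(0)|=\int_\Om\mathfrak{g}^8_1\,\pa_t\vartheta(0)J^1(0)+\int_\Sigma\mathfrak{g}^9_1\,\pa_t\vartheta(0).
\]
Both routes are valid and require essentially the same product bounds on $\mathfrak{g}^8_1,\mathfrak{g}^9_1$ (the paper's bound still carries $\|\vartheta_0\|_{W^{2,q_+}}$ on the right, so no regularity is saved); your route simply delivers more than the lemma asserts. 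For $\vartheta_0$ both you and the paper use the elliptic estimate.

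For the Stokes part, Theorem~\ref{thm:linear} is the wrong citation: it is a time-dependent well-posedness result on $[0,T]$, and the $(\mathcal{H}^1)^\ast$ norm you invoke is a time-integrated dual object with no meaning at a single instant $t=0$. What is actually needed is an instantaneous estimate—either the stationary $\mathcal{A}$-Stokes elliptic theory from \cite{GTWZ2023}, or an energy identity obtained by testing the difference Stokes system with $J^1(0)D_tv(0)$ (parallel to the heat case above). The paper itself only writes ``we use the similar argument as in \cite{GTWZ2023}'' for this step, so your plan is no less complete than the paper's; the only genuine slip is the theorem reference.
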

\begin{proof}
We first consider the temperature.
By $\pa_t^2\vartheta^1(0)=\pa_t^2\vartheta^2(0)$, $\pa_t\vartheta(0)$ satisfies the equations
\begin{equation}\label{eq:diff_initial1}
  \left\{
  \begin{aligned}
&-k\Delta_{\mathcal{A}_0^1}\pa_t\vartheta(0) =\mathfrak{g}^8_1\quad&\text{in}&\ \Om,\\
&k\nabla_{\mathcal{A}_0^1}\pa_t\vartheta(0)\cdot\mathcal{N}^1(0) + \pa_t\vartheta(0)|\mathcal{N}^1(0)|=\mathfrak{g}^9_1&\text{on}&\ \Sigma,\\
  &\pa_t\vartheta(0)=0&\text{on}&\ \Sigma_s,
  \end{aligned}
  \right.
\end{equation}
where the forcing terms $\mathfrak{g}^k_1$, $k=8, 9$, are presented in Appendix \ref{sec:initial_forcing}.

We multiply the first equation of \eqref{eq:diff_initial1} by $J^1(0)\pa_t\vartheta(0)$, and then integrate over $\Om$ and integrate by parts to deduce that
\[
  \begin{aligned}
&\frac k2\int_\Om|\nabla_{\mathcal{A}^1_0}\pa_t\vartheta(0)|^2J^1(0)+\int_{\Sigma}|\pa_t\vartheta(0)|^2|\mathcal{N}^1(0)|\\
&=\int_\Om\mathfrak{g}^8_1\cdot \pa_t\vartheta(0)J^1(0)+\int_\Sigma\mathfrak{g}^9_1\pa_t\vartheta(0).
  \end{aligned}
\]
 By the smallness of $\delta_0$ and integration by parts, we employ the same arguments as in the proof of Lemma \ref{lem:initial_dt} to obtain the bounds
 \[
  \begin{aligned}
&\left|\int_\Om\mathfrak{g}^8_1\cdot \pa_t\vartheta(0)J^1(0)+\int_\Sigma\mathfrak{g}^9_1\pa_t\vartheta(0)\right|\\
&\lesssim \delta^{1/2}(\|\pa_t\eta(0)\|_{H^{3/2-\alpha}}+\|\eta_0\|_{W^{3-1/q_+,q_+}}+\|D_tu(0)\|_{H^1}+\|u_0\|_{W^{2,q_+}}\\
&\quad +\|\theta_0\|_{W^{2,q_+}}+\|\vartheta_0\|_{W^{2,q_+}})\|\pa_t\vartheta(0)\|_{H^1}.
  \end{aligned}
\]
By the  Cauchy inequality, we have the bounds
\[
  \begin{aligned}
\|\pa_t\vartheta(0)\|_{H^1}^2&\le C\delta(\|D_tu(0)\|_{H^1}^2+\|u_0\|_{W^{2,q_+}}^2+\|\theta_0\|_{W^{2,q_+}}^2+\|\eta_0\|_{W^{3-1/q_+,q_+}}^2+\|\pa_t\eta(0)\|_{H^{3/2-\alpha}}^2)\\
&\quad+C\delta\|\vartheta_0\|_{W^{2,q_+}}^2.
  \end{aligned}
\]

$\vartheta_0$ satisfies the equations
\begin{equation}\label{eq:diff_initial2}
  \left\{
  \begin{aligned}
&-k\Delta_{\mathcal{A}_0^1}\vartheta_0 =\mathfrak{g}^8\quad&\text{in}&\ \Om,\\
&k\nabla_{\mathcal{A}_0^1}\vartheta_0\cdot\mathcal{N}^1(0) + \vartheta_0|\mathcal{N}^1(0)|=\mathfrak{g}^9&\text{on}&\ \Sigma,\\
  &\vartheta_0=0&\text{on}&\ \Sigma_s.
  \end{aligned}
  \right.
\end{equation}
We directly use the elliptic estimate to get that
\[
  \begin{aligned}
    \|\vartheta_0\|_{W^{2,q_+}}^2\lesssim \|\pa_t\vartheta(0)\|_{L^2}^2+\delta(\|\eta_0\|_{W^{3-1/q_+,q_+}}^2+\|\pa_t\eta(0)\|_{H^{3/2-\alpha}}^2+\|u_0\|_{W^{2,q_+}}^2+\|\theta_0\|_{W^{2,q_+}}^2 ).
  \end{aligned}
\]

Then we use the similar argument as in \cite{GTWZ2023} to estimate the velocity $(v_0, D_tv(0))$, pressure $q_0$ and surface functions $(\xi_0, \pa_t\xi(0))$ to complete the proof.

\end{proof}

The Lemma \ref{lem:contract_1} gives the initial data for $\varepsilon$- nonlinear system \eqref{eq:geometric}.
\begin{proof}[Proof of Theorem \ref{thm:initial}.]
  The results in Lemma \ref{lem:contract_1} imply the estimate
  \[
  \begin{aligned}
&\|v_0\|_{W^{2,q_+}}^2+\|q_0\|_{W^{1,q_+}}^2+\|\vartheta_0\|_{W^{2,q_+}}^2+\|\xi_0\|_{W^{3-1/q_+,q_+}}^2+\|D_tv(0)\|_{H^1}^2 +\|\pa_t\vartheta(0)\|_{H^1}^2 \\
&\quad+\|\pa_tq(0)\|_{H^0}^2+\|\pa_t\xi(0)\|_{H^{3/2-\alpha}}^2\\
  &\le \frac12(\|D_tu(0)\|_{H^1}^2+\|\pa_t\theta(0)\|_{H^1}^2+\|u_0\|_{W^{2,q_+}}^2+\|\theta_0\|_{W^{2,q_+}}^2+\|\eta_0\|_{W^{3-1/q_+,q_+}}^2+\|\pa_t\eta(0)\|_{H^{3/2-\alpha}}^2)
\end{aligned}
  \]
  for $\delta$ sufficiently small.

  Then by the Banach's fixed point theory, given $D_t^2u(0)$, $\pa_t^2\theta(0)$ and $\pa_t^2\eta(0)$ satisfying the conditions in Theorem \ref{thm:initial}, there exists a unique $(u_0, p_0, \theta_0, \eta_0, D_tu(0), \pa_tp(0), \pa_t\theta(0), \pa_t\eta(0))$ obeying the full initial conditions.
\end{proof}

\subsection{Initial Forcing Terms for Contraction}\label{sec:initial_forcing}

We give the forcing terms in the system of \eqref{eq:diff_initial1} and \eqref{eq:diff_initial2}.
\begin{equation}
  \begin{aligned}
\mathfrak{g}^8&= k\dive_{\mathcal{A}_0^1- \mathcal{A}_0^2}\nabla_{\mathcal{A}_0^1}\vartheta_0^2 + k\dive_{\mathcal{A}_0^2}\nabla_{\mathcal{A}_0^1-\mathcal{A}_0^2}\vartheta_0^2 + F^8(\eta^1_0, u^1_0, \theta^1_0)- F^8(\eta^2_0, u^2_0, \theta^2_0),\\
  \mathfrak{g}^9&=-  k [\mathcal{N}^1(0)-\mathcal{N}^2(0)] \cdot \nabla_{\mathcal{A}_0^1}  \vartheta_0^2 - k\mathcal{N}^2(0) \cdot \nabla_{\mathcal{A}_0^1-\mathcal{A}_0^2}\vartheta_0^2- \vartheta_0^2\cdot[|\mathcal{N}^1(0)|- |\mathcal{N}^2(0)|],\\
  \mathfrak{g}^8_1&= k \dive_{\pa_t\mathcal{A}^1(0)}\nabla_{\mathcal{A}^1_0}(\vartheta_0^1 -\vartheta_0^2) + k \dive_{\mathcal{A}^1_0}\nabla_{\pa_t\mathcal{A}^1(0)}(\vartheta_0^1 -\vartheta_0^2) + k\dive_{\pa_t\mathcal{A}^1(0)- \pa_t\mathcal{A}^2(0)}\nabla_{\mathcal{A}_0^1}\vartheta_0^2 \\
  &\quad + k\dive_{\mathcal{A}_0^1- \mathcal{A}_0^2}\nabla_{\pa_t\mathcal{A}^1(0)}\vartheta_0^2 + k\dive_{\mathcal{A}_0^1- \mathcal{A}_0^2}\nabla_{\mathcal{A}_0^1}\pa_t\vartheta^2(0) + k\dive_{\pa_t\mathcal{A}^2(0)}\nabla_{\mathcal{A}_0^1-\mathcal{A}_0^2}\vartheta_0^2 \\ &\quad +k\dive_{\mathcal{A}_0^2}\nabla_{\pa_t\mathcal{A}^1(0)-\pa_t\mathcal{A}^2(0)}\vartheta_0^2 +k\dive_{\mathcal{A}_0^2}\nabla_{\mathcal{A}_0^1-\mathcal{A}_0^2}\pa_t\vartheta^2(0) + \pa_t[F^8(\eta^1, u^1, \theta^1)- F^8(\eta^2, u^2, \theta^2)](0),\\
\mathfrak{g}^9_1&= -k\nabla_{\pa_t\mathcal{A}^1(0)}\vartheta_0\cdot\mathcal{N}^1(0) - k\nabla_{\mathcal{A}_0^1}\vartheta_0\cdot\pa_t\mathcal{N}^1(0) - \vartheta_0\pa_t|\mathcal{N}^1|(0)-k \pa_t[\mathcal{N}^1-\mathcal{N}^2](0) \cdot \nabla_{\mathcal{A}_0^1}  \vartheta_0^2 \\
&\quad -  k [\mathcal{N}^1(0)-\mathcal{N}^2(0)] \cdot \nabla_{\pa_t\mathcal{A}^1(0)}  \vartheta_0^2 -  k [\mathcal{N}^1(0)-\mathcal{N}^2(0)] \cdot \nabla_{\mathcal{A}_0^1}  \pa_t\vartheta^2(0)- k\pa_t\mathcal{N}^2(0) \cdot \nabla_{\mathcal{A}_0^1-\mathcal{A}_0^2}\vartheta_0^2\\
&\quad - k\mathcal{N}^2(0) \cdot \nabla_{\pa_t\mathcal{A}^1(0)-\pa_t\mathcal{A}^2(0)}\vartheta_0^2 - k\mathcal{N}^2(0) \cdot \nabla_{\mathcal{A}_0^1-\mathcal{A}_0^2}\pa_t\vartheta^2(0) - \pa_t\vartheta^2(0)\cdot[|\mathcal{N}^1(0)|- |\mathcal{N}^2(0)|]\\
&\quad - \vartheta^2_0\cdot \pa_t[|\mathcal{N}^1|- |\mathcal{N}^2|](0).
  \end{aligned}
\end{equation}


\end{appendix}


\end{document}